\tikzset{
	rline/.style ={color = red, line width =1pt}
}
\tikzset{
	bline/.style ={color = blue, line width =1pt}
}
\tikzset{
	gline/.style ={color = green, line width =1pt}
}
\newtheorem{thm}{Theorem}[section]
\newtheorem{cor}[thm]{Corollary}
\newtheorem{lem}[thm]{Lemma}
\newtheorem{prop}[thm]{Proposition}
\theoremstyle{definition}
\newtheorem{defn}[thm]{Definition}
\theoremstyle{remark}
\newtheorem{rem}[thm]{Remark}
\theoremstyle{conclusion}
\theoremstyle{conjecture}
\numberwithin{equation}{section}
\newcommand{\eps}{\varepsilon}
\newcommand{\dd}{\mathrm{d}}
\begin{document}
\title[The method of scaling spheres and blowing-up analysis]{Method of scaling spheres: Liouville theorems in inner or outer (unbounded or bounded) generalized radially convex domains, blowing-up analysis on domains with not necessarily $C^1$ boundary and other applications}

\author{Wei Dai, Guolin Qin}

\address{School of Mathematical Sciences, Beihang University (BUAA), Beijing 100191, P. R. China, and Key Laboratory of Mathematics, Informatics and Behavioral Semantics, Ministry of Education, Beijing 100191, P. R. China}
\email{weidai@buaa.edu.cn}

\address{Institute of Applied Mathematics, Chinese Academy of Sciences, Beijing 100190, and University of Chinese Academy of Sciences, Beijing 100049, P. R. China}
\email{qinguolin18@mails.ucas.ac.cn}

\thanks{Wei Dai is supported by the NNSF of China (No. 12222102 and No. 11971049) and the Fundamental Research Funds for the Central Universities.}

\begin{abstract}
In this paper, we aim to introduce the method of scaling spheres (MSS) as a unified approach to Liouville theorems on general domains in $\mathbb{R}^{n}$, and apply it to establish Liouville theorems on arbitrary unbounded or bounded MSS applicable domains in $\mathbb{R}^{n}$ for general ($\leq n$-th order) PDEs and integral equations without translation invariance or with singularities. The set of MSS applicable domains includes any unbounded or bounded generalized radially convex domains and any complementary sets of the closures of generalized radially convex domains, which is invariant under Kelvin transforms and is the maximal collection of simply connected domains in $\mathbb{R}^{n}$ such that the MSS works. For instance, $\mathbb{R}^{n}$, $\mathbb{R}^{n}_{+}$, balls, unbounded cones, bounded cone-like domains, exterior domains of a bounded cone-like domain (in the unbounded cone), convex domains, star-shaped domains and all the complements of their closures are MSS applicable domains. One should note that, MSS applicable domains is to the MSS what convex domains (at least in one direction) is to the method of moving planes. In the method of scaling spheres, we only need to dilate or shrink a subset of the sphere w.r.t. the scale of the radius and one fixed point (usually the singular point or the center of the MSS applicable domain). As applications, we derive a priori estimates from the boundary H\"{o}lder estimates for Dirichlet or Navier problems of ($\leq n$-th order) elliptic equations by applying the blowing-up argument on domains with blowing-up cone boundary (BCB domains for short). After the blowing-up procedure, the BCB domains allow the limiting shape of the domain to be a cone (half space is a cone). While the classical blowing-up techniques in previous works (c.f. e.g. \cite{CL4,DQ3,DQ0,GS1,MP,PQS,RW}) work on $C^{1}$-smooth domains, we are able to apply blowing-up analysis on more general BCB domains on which the boundary H\"{o}lder estimates hold (can be guaranteed by uniform exterior cone property etc). Consequently, by using the Leray-Schauder fixed point theorem, we derive existence of positive solutions to Lane-Emden equations in BCB domains. Since there are no smoothness conditions on the boundary of MSS applicable domain and BCB domain, our results clearly reveal that the existence and nonexistence of solutions mainly rely on topology (not smoothness) of the domain.
\end{abstract}
\maketitle {\small {\bf Keywords:} The method of scaling spheres, Liouville theorems, generalized radially convex domains, MSS applicable domains, domains with blowing-up cone boundary, H\'{e}non-Hardy type equations, a priori estimates, existence of solutions. \\

{\bf 2020 MSC} Primary: 35B53; Secondary: 35J30, 35J91.}

\tableofcontents

\section{Introduction}

\subsection{The method of scaling spheres: a unified approach to Liouville theorems for PDEs, IEs \& Systems on general domains}

Since Joseph Liouville's celebrated papers at 1844 and 1855, a large amount of works were devoted to Liouville type theorems. Nevertheless, there are still \emph{numerous open problems} on Liouville theorems for classical solutions. Even for equations involving classical second order Laplace operators, there are \emph{barely a few} Liouville type results on \emph{general domains} (such as cone-like domains) except on $\mathbb{R}^{n}$, $\mathbb{R}^{n}_{+}$ and bounded star-shaped domains with regular boundary (such that the formula of integration by parts holds).

\medskip

Liouville type theorems on PDEs or integral equations (IEs) (i.e., nonexistence of nontrivial nonnegative solutions) in the whole space $\mathbb{R}^n$ and in the half space $\mathbb{R}^n_+$ have been extensively studied. These Liouville theorems, in conjunction with the blowing-up and re-scaling arguments, are crucial in establishing a priori estimates and hence existence of positive solutions to non-variational boundary value problems for a class of elliptic equations on bounded domains or on Riemannian manifolds with boundaries (see e.g. \cite{BM,CDQ,CL4,DD,DQ3,DQ0,GS1,MR,MP,PQS,RW}). In previous works, blowing-up techniques require the limiting shape of the domain is either the whole space or a half space and hence reduce \emph{the a priori bounds} to \emph{the boundary H\"{o}lder estimates} and \emph{Liouville theorems in $\mathbb{R}^{n}$ and $\mathbb{R}^n_+$}. However, even if the boundary H\"{o}lder estimates can be derived from \emph{weaker regularity assumption} on the boundary such as \emph{Lipschitz continuity}, \emph{uniform exterior cone condition} or more weaker conditions (see e.g. \eqref{pde-boundary} and \eqref{pde-boundary2}, c.f. Theorems 3.3 and 3.11 in \cite{LZLH} and Corollaries 9.28 and 9.29 in \cite{GT}), the \emph{$C^{1}$-smoothness of the boundary} is still required to guarantee that, the limiting shape of the domain is \emph{a half space} after the blowing-up procedure. Therefore, it is desirable for us to develop a unified approach to establishing Liouville type theorems on general (convex or non-convex) simply connected domains in $\mathbb{R}^{n}$, and hence derive a priori estimates and existence of solutions via blowing-up analysis with \emph{only the boundary H\"{o}lder estimates but not necessarily $C^{1}$-smoothness of the boundary}. At least, the limiting shape of the domain should be allowed to be a \emph{cone-like domain} after the blowing-up procedure, so that blowing-up analysis could be carried out on such domains provided that the boundary H\"{o}lder estimates hold (which is guaranteed by Lipschitz domains, domains with uniform exterior cone property and so on $\cdots$).

\medskip

In this paper, our first main purpose is to introduce the \emph{method of scaling spheres} (\emph{MSS} for short) as \emph{a unified approach} to Liouville theorems for general domains in $\mathbb{R}^{n}$ and apply it to establish Liouville theorems for nonnegative solutions to general ($\leq n$-th order) PDEs (and related integral equations) \emph{without translation invariance or with singularities}:
\begin{equation}\label{PDE}
  (-\Delta)^{s}u=f(x,u(x)), \qquad \forall \,\, x\in\Omega,
\end{equation}
where $\Omega$ is an arbitrary unbounded or bounded MSS applicable domain in $\mathbb{R}^{n}$ with center $P$, $n\geq1$, $0<s\leq 1$ or $s\geq1$ is an integer, and $2s\leq n$.

\medskip

In order to give the definition of \emph{MSS applicable domain} in $\mathbb{R}^{n}$, we need the following definition of \emph{generalized radially convex domain}.
\begin{defn}[Generalized radially convex domain]\label{g-rcd}
A unbounded or bounded domain $\Omega\subseteq\mathbb{R}^{n}$ is called \emph{generalized radially convex domain} (\emph{g-radially convex domain} for short) or \emph{generalized star-shaped domain}, if there exists $x_{0}\in\overline{\Omega}$ such that the line segment $[x_{0},x]:=\{y\in\mathbb{R}^{n}\mid\,y=(1-t)x_{0}+tx,\,t\in[0,1]\}\subseteq\overline{\Omega}$ for any $x\in\Omega$. The point $x_{0}\in\overline{\Omega}$ is called the \emph{radially convex center} of $\Omega$.
\end{defn}

\medskip

One can see that \emph{unbounded or bounded convex domain} and \emph{star-shaped domain} are g-radially convex domain. The empty set $\emptyset$ can be regarded as a g-radially convex domain with arbitrary $P\in\mathbb{R}^{n}$ as its radially convex center.

\medskip

Now we give the definition of \emph{MSS applicable domain} in $\mathbb{R}^{n}$.
\begin{defn}[MSS applicable domain in $\mathbb{R}^{n}$]\label{mss-a}
A simply connected domain $\Omega\subseteq\mathbb{R}^{n}$ is called \emph{MSS applicable domain}, provided that $\Omega$ or $\mathbb{R}^{n}\setminus\overline{\Omega}$ is a g-radially convex domain. That is, MSS applicable domain is \emph{inner or outer g-radially convex domain}. A point $P\in\mathbb{R}^{n}$ is called the \emph{center of the MSS applicable domain} $\Omega$, provided that $P$ is the radially convex center of $\Omega$ if $\Omega$ is a g-radially convex domain or $P$ is the radially convex center of $\mathbb{R}^{n}\setminus\overline{\Omega}$ if $\mathbb{R}^{n}\setminus\overline{\Omega}$ is a g-radially convex domain.
\end{defn}

One can see that there is \emph{no smoothness conditions} on the boundary of a MSS applicable domain, and hence \emph{inward or outward cusps} are admissible to appear on the boundary. The set of MSS applicable domains contains \emph{quite a wide range of bounded or unbounded simply connected domains} in $\mathbb{R}^{n}$ and is \emph{invariant} under the map $\Omega\mapsto\mathbb{R}^{n}\setminus\overline{\Omega}$, arbitrary \emph{linear transforms} and \emph{translations}. Both $\mathbb{R}^{n}$ and $\emptyset$ are MSS applicable sets with arbitrary $P\in\mathbb{R}^{n}$ as their centers.

\begin{rem}[Basic properties of MSS applicable domain in $\mathbb{R}^{n}$]\label{rem0}
From Definitions \ref{g-rcd} and \ref{mss-a}, one can easily verify that, if $\Omega$ is a g-radially convex domain with radially convex center $P$, then $\mathbb{R}^{n}\setminus\overline{\widetilde{\Omega}}$ is a g-radially convex domain with radially convex center $P$; if $\mathbb{R}^{n}\setminus\overline{\Omega}$ is a g-radially convex domain with radially convex center $P$, then $\widetilde{\Omega}$ is a g-radially convex domain with radially convex center $P$, where $\widetilde{\Omega}$ is the transformed domain of $\Omega$ under the Kelvin transform $\widetilde{x}:=\frac{x-P}{|x-P|^{2}}+P$. That is, the set of MSS applicable domains is \emph{invariant under Kelvin transforms} (w.r.t. the center). Consequently, if $\Omega$ is a MSS applicable domain such that $\mathbb{R}^{n}\setminus\overline{\Omega}$ is a g-radially convex domain, then $\Omega$ must be \emph{unbounded}. That is, $\Omega$ is a \emph{bounded MSS applicable domain} if and only if $\Omega$ is a \emph{bounded g-radially convex domain}.
\end{rem}

\begin{defn}[Equivalent definition of MSS applicable domain]\label{e-mss-a}
The set of MSS applicable domains is \emph{generated} by the set of g-radially convex domains under the \emph{Kelvin transforms} (w.r.t. the radially convex center $P$).
\end{defn}

For a MSS applicable domain $\Omega$, it is possible that both $\Omega$ and $\mathbb{R}^{n}\setminus\overline{\Omega}$ are g-radially convex (e.g., $\mathbb{R}^{n}$, $\emptyset$, the unbounded cones $\mathcal{C}$ and so on $\cdots$).

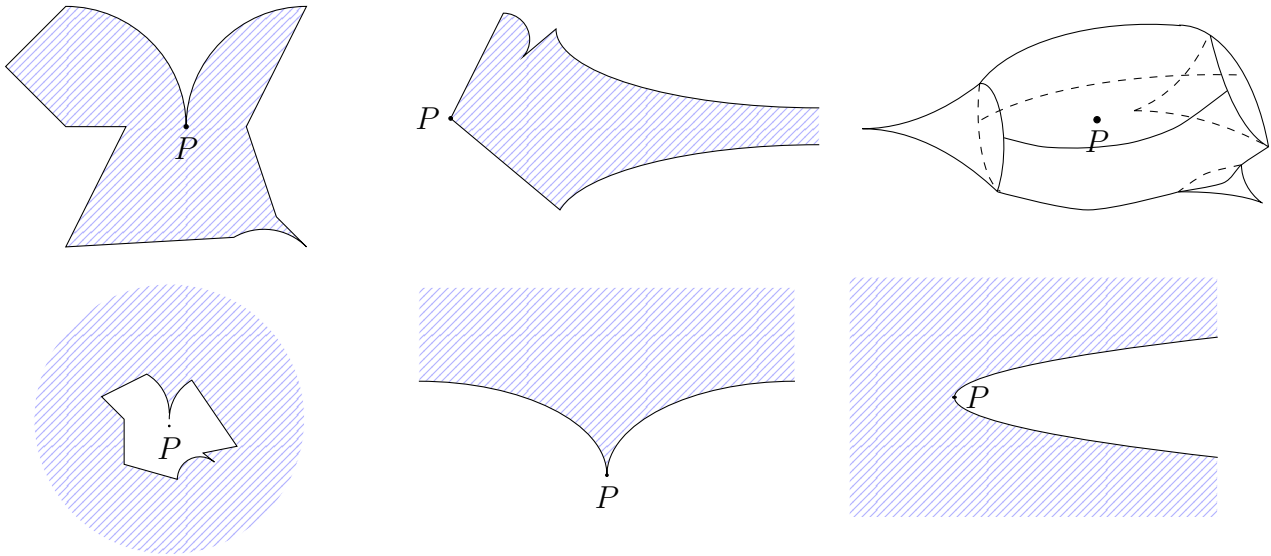
\begin{figure}[H]\label{F1}
	\centering	
	\subfigure {
		\begin{minipage}[b]{0.24\linewidth}
			\centering
			\begin{tikzpicture}[scale=0.8]
				\filldraw[pattern color=blue!30,  pattern=north east lines, draw=black]   (0,0) arc (180:90:2)--(1,0)--(1.5,-1.5)--(2,-2) arc(45:120:1)--(-2,-2)--(-1,0)--(-2,0)--(-3,1)--(-2,2) arc(90:0:2);
				\filldraw[fill=black, draw=black] (0,0) circle (1pt) node[below] {$P$};
			\end{tikzpicture}
			\label{F1-1}
		\end{minipage}
	}\hspace{20pt}
	\subfigure {
		\begin{minipage}[b]{0.24\linewidth}
			\centering
			\begin{tikzpicture}[scale=0.7]
				\filldraw[pattern color=blue!30,  pattern=north east lines, draw=white]  (0,0)--(1,2) arc(90:-40:0.5)--(2,1.7) arc (-180:-90: 5 and 1.5)--(7,-0.5) arc (90:170: 5 and 1.5)--(0,0);
				\draw   (0,0)--(1,2) arc(90:-40:0.5)--(2,1.7) arc (-180:-90: 5 and 1.5) (7,-0.5) arc (90:170: 5 and 1.5)--(0,0);
				\filldraw[fill=black, draw=black] (0,0) circle (1pt) node[left] {$P$};
			\end{tikzpicture}
			\label{F1-2}
		\end{minipage}
	}\hspace{40pt}
\subfigure {
	\begin{minipage}[b]{0.24\linewidth}
		\centering
		\begin{tikzpicture}[scale=1.2]
			\draw [dashed]  (1.3, 0.5) arc (160:260:0.3 and 0.9)  (3.01, 0.2)parabola bend (3.01, 0.2) (4.5, -0.2)(3.01, 0.2) arc(-70:-22:1.41  and 1.5)  (1.32,0.1) arc(150:84: 3 and 1);
			\draw (0,0) parabola bend (0,0) (1.5,-0.7)  (0,0) parabola bend (0,0) (1.3, 0.5) arc (160:80:2 and 1)  (1.3, 0.5) arc (94:-45:0.25 and 0.7)  (4.5, -0.2) arc (260:215:1 and 3) (4.5, -0.2) arc (19:90:1.05 and 2) (3.5, -0.7)arc (94:30:1 and 0.25) (4.2,-0.4)arc (180:225:0.75 and 0.59);
			\draw  plot[smooth] coordinates{(1.5,-0.7)   (2.5,-0.9)(3.5, -0.7) };
			\draw  plot[smooth] coordinates{ (4.5, -0.2)(4.2,-0.4)   (4,-0.6)(3.5, -0.7)  };
			\draw  plot[smooth] coordinates{ (1.57,-0.1) (2,-0.2)(2.5,-0.21)(3,-0.15) (3.5,0.03) (4.04,0.42)};
			\draw[dashed] plot[smooth] coordinates{  (4.2,-0.4)   (3.8,-0.5)(3.5, -0.7)  };
			\filldraw[fill=black, draw=black] (2.6,0.1) circle (1pt) node[below] {$P$};
		\end{tikzpicture}
		\label{F1-0}
	\end{minipage}
}\hspace{30pt}
	\subfigure {
		\begin{minipage}[b]{0.24\linewidth}
			\centering
			\begin{tikzpicture}[scale=0.3]
				
				\filldraw[pattern color=blue!30,  pattern=north east lines, draw=white,   even odd rule] (0,0) arc (180:120:2)--(3,-1.2) --(1.5,-1.5)--(2,-1.9) arc(50:180:1)--(-2,-2) --(-2,0)--(-3,1)--(-1,2) arc(60:0:2)  (0:0) circle(6cm);
				\draw (0,0) arc (180:120:2) --(3,-1.2)--(1.5,-1.5)--(2,-1.9) arc(50:180:1)--(-2,-2) --(-2,0)--(-3,1)--(-1,2) arc(60:0:2);
				\filldraw[fill=black, draw=black] (0,-0.3) circle (1pt) node[below] {$P$};
			\end{tikzpicture}
			\label{F1-3}
		\end{minipage}
	}\hspace{20pt}
	\subfigure {
		\begin{minipage}[b]{0.24\linewidth}
			\centering
			\begin{tikzpicture}[scale=0.5]
				\filldraw[pattern color=blue!30,  pattern=north east lines, draw=white]  (0,0)  arc ( 180: 90: 5 and 2.5)--(5,5)--(-5, 5)--(-5,2.5) arc (90:0: 5 and 2.5)--(0,0);
				\draw   (0,0)  arc ( 180: 90: 5 and 2.5) (-5,2.5) arc (90:0: 5 and 2.5)--(0,0);
				\filldraw[fill=black, draw=black] (0,0) circle (1pt) node[below] {$P$};
			\end{tikzpicture}
			\label{F1-4}
		\end{minipage}
	}\hspace{30pt}
\subfigure {
		\begin{minipage}[b]{0.24\linewidth}
			\centering
			\begin{tikzpicture}[rotate=90, xscale=0.4, yscale=0.7]
				\filldraw[pattern color=blue!30,  pattern=north east lines, draw=white]   (-2,-3)--(-4,-3)--(-4,4)--(4,4)--(4,-3)-- (2,-3) parabola bend (0,2) (-2,-3);
				\draw (2,-3) parabola bend (0,2) (-2,-3);
				\filldraw[fill=black, draw=black] (0,2) circle (1pt) node[right] {$P$};
			\end{tikzpicture}
			\label{F1-5}
		\end{minipage}
	}
	\centering
	\caption{ MSS applicable domains (examples)}
\end{figure}

\emph{Cone-like domains} are \emph{special examples} of MSS applicable domains.
\begin{defn}[Cone-like domains $\mathcal{C}$]\label{cone}
Let $\Sigma$ be an \emph{arbitrary sub-domain} of the unit sphere $S^{n-1}\subset\mathbb{R}^{n}$ with center $0$, the notation $$\mathcal{C}_{P,\Sigma}:=\bigcup\limits_{0<r<+\infty}r\Sigma+P$$
denotes the \emph{unbounded cone} with vortex $P\in\mathbb{R}^{n}$ and cross-section $\Sigma$;
$$\mathcal{C}^{\Omega}_{P,\Sigma}:=\mathcal{C}_{P,\Sigma}\bigcap\Omega$$
denotes the \emph{bounded cone-like domain} with vortex $P$, cross-section $\Sigma$ and bounded g-radially convex domain $\Omega$; and $$\mathcal{C}^{\Omega,e}_{P,\Sigma}:=\mathcal{C}_{P,\Sigma}\setminus\overline{\mathcal{C}^{\Omega}_{P,\Sigma}}$$
denotes the \emph{unbounded fan-shaped exterior domain} of $\mathcal{C}^{\Omega}_{P,\Sigma}$ (in the unbounded cone $\mathcal{C}_{P,\Sigma}$) with vortex $P$, cross-section $\Sigma$ and bounded MSS applicable domain $\Omega$. We use the notations $\mathcal{C}^{R}_{P,\Sigma}:=\mathcal{C}_{P,\Sigma}\bigcap B_{R}(P)$ and $\mathcal{C}^{R,e}_{P,\Sigma}:=\mathcal{C}_{P,\Sigma}\setminus\overline{\mathcal{C}^{R}_{P,\Sigma}}$ for short. A domain $\mathcal{C}$ is called a \emph{cone-like domain} if $\mathcal{C}=\mathcal{C}_{P,\Sigma}$, $\mathcal{C}^{\Omega}_{P,\Sigma}$ or $\mathcal{C}^{\Omega,e}_{P,\Sigma}$.

\medskip

For any $1\leq k\leq n$, we define the \emph{$\frac{1}{2^{k}}$-unit sphere} by
$$S^{n-1}_{2^{-k}}:=O\left(\{x=(x_{1},\cdots,x_{n})\in S^{n-1}\mid x_{1},\cdots,x_{k}>0\}\right),$$
where $O$ denotes an \emph{arbitrary orthogonal transform} on $\mathbb{R}^{n}$. Both the $\frac{1}{2^{k}}$-space $\mathbb{R}^{n}_{2^{-k}}:=\mathcal{C}_{0,S^{n-1}_{2^{-k}}}$ and the $\frac{1}{2^{k}}$-ball $B^{2^{-k}}_{R}(0):=\mathcal{C}^{R}_{0,S^{n-1}_{2^{-k}}}$ are (unbounded or bounded) cone-like domains with cross-section $\Sigma=S^{n-1}_{2^{-k}}$ and vertex $0$.
\end{defn}

\begin{figure}[H]\label{F2}
		\centering	
		\subfigure[ $\mathcal{C}_{P,\Sigma}$]{
	 \begin{minipage}[b]{0.24\linewidth}
				\centering
					\begin{tikzpicture}
					\node[below] at (0:0){$P$};
                    \filldraw[fill=gray!10, draw=gray!10] (0:0)--(55:8) arc (55:80:8) (80:8)--(0:0)--cycle;
					\draw (0:0) -- (55:6) (0:0) -- (80:6);
					\node[]at(62.5:5) {$\mathcal C_{P,\Sigma}$};
					\draw[dashed] (55:6)--(55:8) (80:6)--(80:8);
					\draw (55:2.5) arc (55:80:2.5);
                    \node[below] at (65:2.5){$\Sigma$};
				\end{tikzpicture}
				
				\label{F2-1}
 \end{minipage}
		}\hspace{15pt}
		\subfigure [ $\mathcal{C}^{R}_{P,\Sigma}$]{
	 \begin{minipage}[b]{0.24\linewidth}
				\centering
				\begin{tikzpicture}
				\node at (230:0.3) {$P$};
				
				\filldraw[fill=gray!10] (0:0)--(55:4.5) arc (55:80:4.5) (80:4.5)--(0:0)--cycle;
				\draw (55:2.5) arc (55:80:2.5);
                \node[below] at (65:2.5){$\Sigma$};
				\node[]at(62.5:4) {$\mathcal C_{P,\Sigma}^R$};
				\draw[|<->|,shift={(300:0.2)}] (0:0) -- (55:4.5) node[midway, below] {$R$};
			\end{tikzpicture}
				\label{F2-2}
  \end{minipage}
		}\hspace{15pt}
	\subfigure[ $\mathcal{C}^{R,e}_{P,\Sigma}$] {
 \begin{minipage}[b]{0.24\linewidth}
			\centering
			\begin{tikzpicture}
				\filldraw[fill=gray!10, draw=gray!10] (55:3) arc (55:80:3) (80:3)--(80:8) arc (80:55:8)--(55:3)--cycle;
				\draw  (55:3) arc (55:80:3);
				\draw (55:3)--(55:6) (80:3)--(80:6);
				\node at (230:0.3){$P$};
				\draw[dashed] (0:0) -- (55:3) (0:0) -- (80:3) (55:6)--(55:8) (80:6)--(80:8);
				\node[]at(62.5:5) {$\mathcal{C}^{R,e}_{P,\Sigma}$};
                \draw[|<->|,shift={(300:0.2)}] (0:0) -- (55:3) node[midway, below] {$R$};
				\draw[dashed] (55:2.5) arc (55:80:2.5);
                \node[below] at (65:2.5){$\Sigma$};
			\end{tikzpicture}
			\label{F2-3}
  \end{minipage}
	}%
		\centering
		\caption{ Unbounded cone $\mathcal{C}_{P,\Sigma}$, bounded cone $\mathcal{C}^{R}_{P,\Sigma}$ and its exterior domain $\mathcal{C}^{R,e}_{P,\Sigma}$}
	\end{figure}
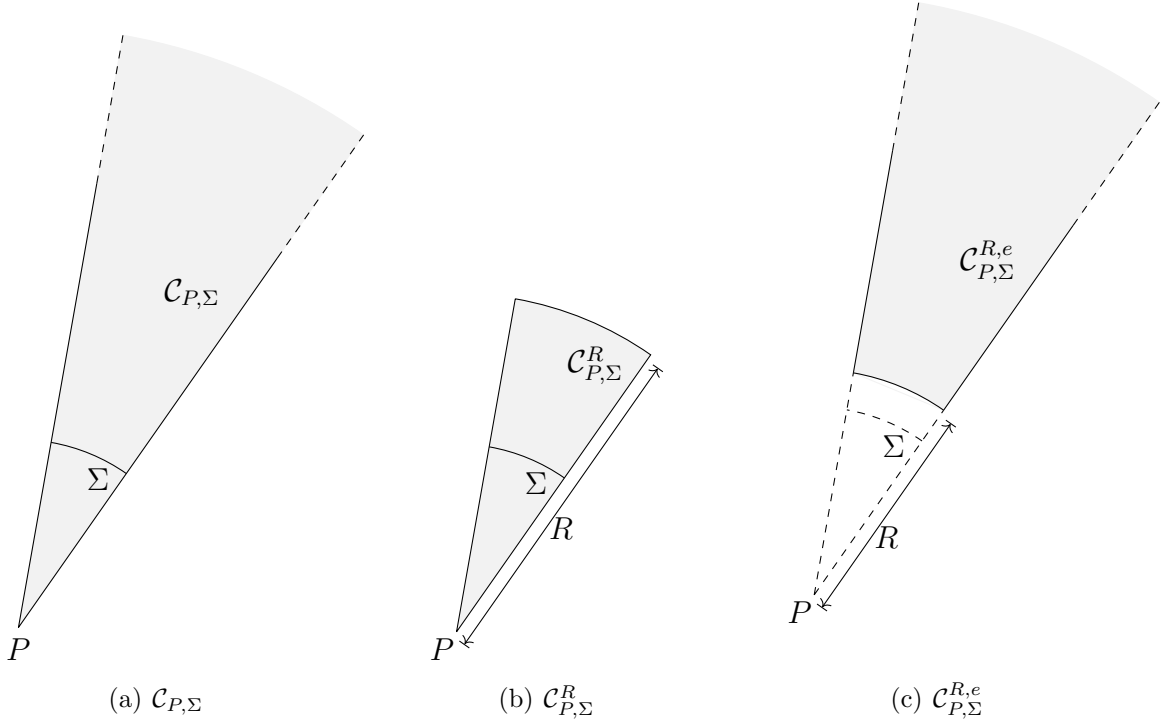

\begin{rem}\label{rem2}
For example, $\mathbb{R}^{n}\setminus\{0\}$, $\mathbb{R}^{n}_{+}$, $B_{R}(0)\setminus\{0\}$, $\mathcal{C}^{R}_{P,\Sigma}$, $\mathcal{C}^{R,e}_{P,\Sigma}$ and $\mathbb{R}^{n}\setminus\overline{B_{R}(0)}$, $\mathbb{R}^{k}\times\mathcal{C}$ with unbounded cone $\mathcal{C}\subseteq\mathbb{R}^{n-k}$ and $k=0,\cdots,n$ and so on $\cdots$ are cone-like domains. In particular, $S^{n-1}$ is a $\frac{1}{2^{k}}$-unit sphere with $k=0$, $\mathbb{R}^{n}_{+}:=\{x\in\mathbb{R}^{n}\mid \, x_{n}>0\}$ is a $\frac{1}{2^{k}}$-space with $k=1$, $\{x\in \mathbb{R}^{n}\mid x_{1},\cdots,x_{k}>0\}$ is a $\frac{1}{2^{k}}$-space with $k\geq1$ and $\{x\in B_{R}(0)\mid x_{1},\cdots,x_{k}>0\}$ is a $\frac{1}{2^{k}}$-ball with $k\geq1$.
\end{rem}

\begin{rem}\label{rem3}
From Definition \ref{mss-a}, one can see that MSS applicable domains include $\mathbb{R}^{n}$, $\mathbb{R}^{n}_{+}$, ball $B_{R}(0)$, cone-like domains $\mathcal{C}$ (unbounded cone $\mathcal{C}_{P,\Sigma}$, bounded cone-like domain $\mathcal{C}^{\Omega}_{P,\Sigma}$, exterior domain $\mathcal{C}^{\Omega,e}_{P,\Sigma}$ of a bounded cone-like domain (in the unbounded cone)), $\mathbb{R}^{k}\times\mathcal{C}$ with cone-like domain $\mathcal{C}\subseteq\mathbb{R}^{n-k}$ and $k=0,\cdots,n$, convex domain, star-shaped domain and \emph{all the complements of their closures} and so on $\cdots$.
\end{rem}

\begin{rem}\label{rem1}
Let $\Omega$ be a MSS applicable domain with center $P$. If $\Omega$ is a g-radially convex domain, then for any $0<r<\rho_{\Omega}:=\sup\limits_{x\in\Omega}|x-P|$, one has $S_{r}(P)\bigcap\Omega\neq\emptyset$ and the bounded cone $\mathcal{C}^{r}_{P,\Sigma^{r}_{\Omega}}=\mathcal{C}_{P,\Sigma^{r}_{\Omega}}\bigcap B_{r}(P)\subset\Omega$, where $S_{r}(P):=\partial B_{r}(P)$ and the cross-section $\Sigma^{r}_{\Omega}:=r^{-1}\left((S_{r}(P)\bigcap\Omega)-P\right)\subseteq S^{n-1}$. Though it is possible that $\Sigma^{r}_{\Omega}$ is a union of several disjoint connected cross-sections, i.e., $\Sigma^{r}_{\Omega}=\bigcup\limits_{i=1}^{k}\Sigma^{r}_{\Omega,i}$ for some $k\geq1$, we still use the natural notation $\mathcal{C}^{r}_{P,\Sigma^{r}_{\Omega}}$ to denote $\bigcup\limits_{i=1}^{k}\mathcal{C}^{r}_{P,\Sigma^{r}_{\Omega,i}}$ for the sake of simplicity. If $\mathbb{R}^{n}\setminus\overline{\Omega}$ is a g-radially convex domain, then by Definition \ref{g-rcd}, for any $d_{\Omega}:=\min\limits_{x\in\Omega}|x-P|<r<+\infty$, one has $S_{r}(P)\bigcap\Omega\neq\emptyset$ and the exterior fan-shaped domain $\mathcal{C}^{r,e}_{P,\Sigma^{r}_{\Omega}}:=\mathcal{C}_{P,\Sigma^{r}_{\Omega}}\setminus\overline{\mathcal{C}^{r}_{P,\Sigma^{r}_{\Omega}}}\subset\Omega$. In a word, there are \emph{infinite (bounded, unbounded, resp.) cones} in a (bounded, unbounded, resp.) MSS applicable domain.
\end{rem}

\begin{rem}[Equivalent definition of MSS applicable domain]\label{rem14}
A domain $\Omega$ is g-radially convex if and only if there exists $P\in\overline{\Omega}$ such that $\Sigma^{r}_{\Omega}$ is \emph{monotone decreasing} w.r.t. $r\in(0,\rho_{\Omega})$ in the sense of inclusion relationship. $\mathbb{R}^{n}\setminus\overline{\Omega}$ is g-radially convex if and only if there exists $P\in\mathbb{R}^{n}\setminus\Omega$ such that $\Sigma^{r}_{\Omega}$ is \emph{monotone increasing} w.r.t. $r\in(d_{\Omega},+\infty)$ in the sense of inclusion relationship. Consequently, we say $\Omega$ is a MSS applicable domain if and only if there exists $P\in\mathbb{R}^{n}$ such that $\Sigma^{r}_{\Omega}$ is either monotone increasing w.r.t. $r\in(d_{\Omega},+\infty)$ or monotone decreasing w.r.t. $r\in(0,\rho_{\Omega})$.
\end{rem}

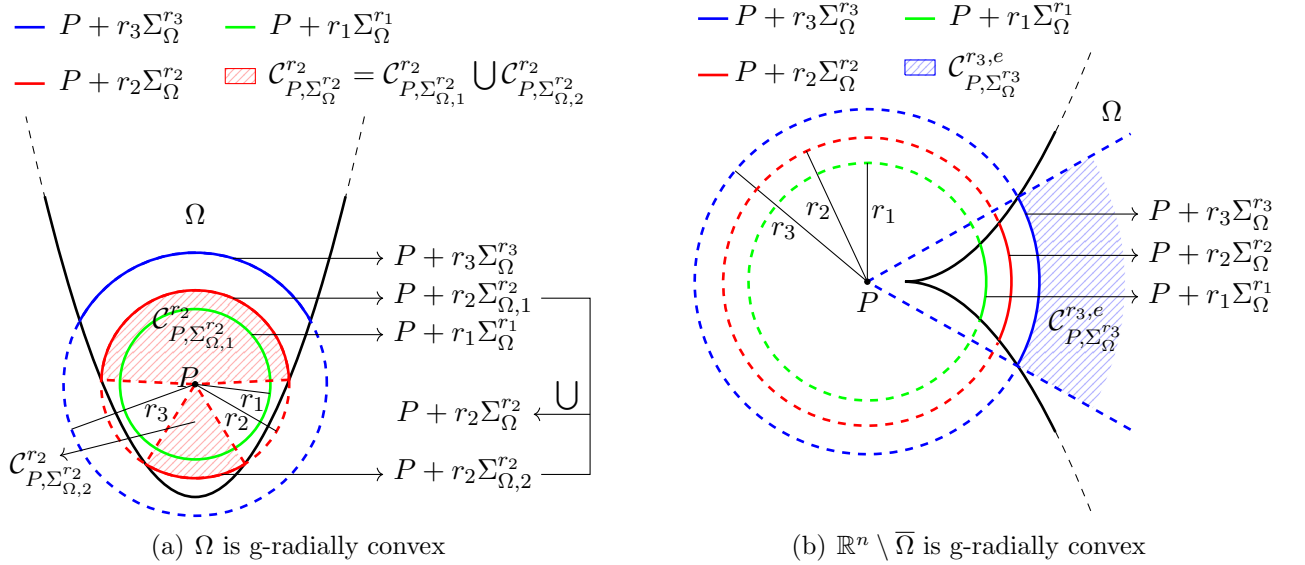
\begin{figure}[H]\label{F10}
	\centering	
	\subfigure[$\Omega$ is g-radially convex]{
		\begin{minipage}[b]{0.45\linewidth}
			\centering
			\begin{tikzpicture}[ scale=0.5]	\begin{small}
				\filldraw[pattern color=red!30,  pattern=north east lines, draw=white, even odd rule]  (-2.497,0.118)--(0,0)--(2.497,0.118)  arc (2.7:177.3:2.5)--(-2.497,0.118);
				\filldraw[pattern color=red!30,  pattern=north east lines, draw=white, even odd rule]  (-1.328,-2.118)--(0,0)--(1.328,-2.118) arc (-57.9:-122.1:2.5)--(-1.328,-2.118);
				\draw[line width=1pt] (4,5) parabola bend (0,-3) (-4,5)  ;\draw[dashed] (4.5,7.125) parabola bend (0,-3) (-4.5,7.125)  ;
				\draw[gline] (2,0) arc (0:360:2);
				\draw[rline]  (2.497,0.118) arc (2.7:177.3:2.5)  (1.328,-2.118) arc (-57.9:-122.1:2.5); \draw[rline,dashed] (2.5,0) arc (0:360:2.5)  (2.497,0.118)--(0,0)--(-2.497,0.118)   (1.328,-2.118)--(0,0)-- (-1.328,-2.118);
				\draw[bline] (3.06352,1.69248) arc (28.9:151.6:3.5);\draw[bline,dashed] (3.5,0) arc (0:360:3.5);
				\filldraw[fill=black, draw=black] (0,0) circle (2pt);\node at(-0.2,0.2) {$P$};
				\draw    (0,0)--(-7:2) (0,0)--(200:3.5)  (0,0)--(-30:2.5);\node[left] at (-13:2.2) {$r_1$};\node[right] at (205:1.8) {$r_3$};\node[left] at (-30:2) {$r_2$};\node[above] at (0,4) {$\Omega$};
				\draw[bline] (-4.8,9.5)--(-4,9.5);\node[right] at (-3.9,9.5) {$P+r_3\Sigma_\Omega^{r_3}$};
    \draw[rline]  (-4.8,8)--(-4,8);\node[right] at (-3.9,8) {$P+r_2\Sigma^{r_2}_{\Omega}$};
    \draw[gline]  (0.8,9.5)--(1.6,9.5);\node[right] at (1.7,9.5) {$P+r_1\Sigma_\Omega^{r_1}$};

    \filldraw[pattern color=red!30,  pattern=north east lines, draw=red, even odd rule]  (0.8,8)--(1.6,8)--(1.6,8.4)--(0.8,8.4)--(0.8,8);\node[right] at (1.7,8) {$\mathcal{C}^{r_2}_{P, \Sigma_\Omega^{r_2}}=\mathcal{C}^{r_2}_{P, \Sigma^{r_2}_{\Omega,1}}\bigcup \mathcal{C}^{r_2}_{P, \Sigma^{r_2}_{\Omega,2}}$};
				
				\draw[->] (1, 3.35)--(5, 3.35) node[right] {$P+r_3\Sigma_\Omega^{r_3}$};
				\draw[->] (1, 2.29)--(5, 2.29) node[right] {$P+r_2\Sigma^{r_2}_{\Omega,1}$};\draw[->] (0.7,-2.4)-- (5, -2.4);\node[right] at (5,-2.4) {$P+r_2\Sigma^{r_2}_{\Omega,2}$};
				\draw[->] (1.5, 1.32)--(5, 1.32) node[right] {$P+r_1\Sigma_\Omega^{r_1}$};
				\node[above] at (0,0.6) {$\mathcal{C}^{r_2}_{P, \Sigma^{r_2}_{\Omega,1}}$};
				\node[below] at (-3.8,-1.5) {$\mathcal{C}^{r_2}_{P, \Sigma^{r_2}_{\Omega,2}}$};\draw[->](0,-1)--(-3.6,-1.9);
				\draw (9.2, 2.29)--(10.5, 2.29)--(10.5, -2.4)--(9.2, -2.4); \draw[->] (10.5,-0.8)--(9,-0.8) node[left] {$P+r_2\Sigma_\Omega^{r_2}$};\node[right] at (9.3,-0.3) {$\bigcup$};\end{small}
			\end{tikzpicture}
			\centering
		\end{minipage}
	}\hspace{15pt}
	\subfigure [$\mathbb{R}^{n}\setminus\overline{\Omega}$ is g-radially convex]{
		\begin{minipage}[b]{0.45\linewidth}
			\centering
			\begin{tikzpicture} [  scale=0.5]	\begin{small}
				\filldraw[pattern color=blue!30,  pattern=north east lines, draw=white, even odd rule] (4,2.25) arc (29:-29:4.59)-- (4,-2.25)--(6,-3.375) arc(-29:29:6.88)-- (6,3.375)--(4,2.25);
				\draw[line width=1pt] (1,0) parabola bend (1,0) (5, 4)    (1,0) parabola bend (1,0) (5,-4)  ;
				\draw[dashed] (5, 4)  parabola bend (1,0) (6, 6.25)    (5, -4) parabola bend (1,0) (6, -6.25) ;
				\draw[bline] (4,2.25) arc (29:-29:4.59); \draw[bline,dashed] (30: 4.59) arc (30:330:4.59) (7,-3.9375)--(0,0)--(7,3.9375) ;\draw (0:0)--(140:4.59);
				\draw[rline] (3.5,1.5625) arc (24:-24:3.833); \draw[rline,dashed] (25:3.833) arc (25:335:3.833); \draw  (0:0)--(115:3.833);
				\draw[gline] (3,1) arc (18.4:-18.4:3.16); \draw[gline,dashed] (19:3.16) arc (19:341:3.16) ; \draw  (0:0)--(90:3.16);
				\filldraw[fill=black, draw=black] (0,0) circle (2pt) node[below] {$P$};
				\node[right] at (95:1.8) {$r_1$};\node[left] at (110:2) {$r_2$};\node[left] at (140:2.1) {$r_3$};\node[above] at (6.5,4) {$\Omega$};
				\draw[bline] (-4.5,7)--(-3.7,7);\node[right] at (-3.8,7) {$P+r_3\Sigma_\Omega^{r_3}$};
				\draw[rline]  (-4.5,5.5)--(-3.7,5.5);\node[right] at (-3.8,5.5) {$P+r_2\Sigma_\Omega^{r_2}$};
				\draw[gline]  (1,7)--(1.8,7);\node[right] at (1.9,7) {$P+r_1\Sigma_\Omega^{r_1}$};
				\filldraw[pattern color=blue!30,  pattern=north east lines, draw=blue, even odd rule]  (1,5.5)--(1.8,5.5)--(1.8,5.9)--(1,5.9)--(1,5.5);\node[right] at (1.9,5.5) {$\mathcal{C}^{r_3,e}_{P, \Sigma_\Omega^{r_3}}$};
				\draw[->] (4.2, 1.8)--(7.2, 1.8) node[right] {$P+r_3\Sigma_\Omega^{r_3}$};
				\draw[->] (3.77, 0.7)--(7.2, 0.7) node[right] {$P+r_2\Sigma_\Omega^{r_2}$};
				\draw[->] (3.15, -0.4)--(7.2,- 0.4) node[right] {$P+r_1\Sigma_\Omega^{r_1}$};
				\node[above] at (-20:6.2) {$\mathcal{C}^{r_3,e}_{P, \Sigma_\Omega^{r_3}}$};
		\end{small}	\end{tikzpicture}
			\centering
		\end{minipage}
	}
	\centering
	\caption{Geometric features of MSS applicable domain $\Omega$}
\end{figure}

The generalized nonlinear term $f(x,u)$ in equation \eqref{PDE} may have \emph{singularity} (at the center $P$ of the MSS applicable domain $\Omega$) and satisfies \emph{mild assumptions} (briefly speaking, $f(x,u)\geq C|x-P|^{a}u^{p}$ with $a>-2s$ and $p\geq1$ in $\mathcal{C}^{r}_{P,\Sigma}$ with $r\in(0,\rho_{\Omega})$ if $\Omega$ is a g-radially convex domain or in $\mathcal{C}^{r,e}_{P,\Sigma}$ with $r\in(d_{\Omega},+\infty)$ if $\mathbb{R}^{n}\setminus\overline{\Omega}$ is a g-radially convex domain, where $\Sigma\subset\Sigma^{r}_{\Omega}$ is a smaller cross-section). A typical case is \emph{H\'{e}non-Hardy type nonlinearity} $f(x,u)=|x-P|^{a}u^{p}$ with $a>-2s$ and $1\leq p<p_{c}(a):=\frac{n+2s+2a}{n-2s}$ ($p_{c}(a)=+\infty$ if $n=2s$). When $0<s<1$, the nonlocal fractional Laplacian $(-\Delta)^{s}$ is defined by (see e.g. \cite{CT,CLM,C,DQ1,DPV,Si})
\begin{equation}\label{nonlocal defn}
  (-\Delta)^{s}u(x)=C_{s,n} \, P.V.\int_{\mathbb{R}^n}\frac{u(x)-u(y)}{|x-y|^{n+2s}}\dd y:=C_{s,n}\lim_{\epsilon\rightarrow0}\int_{|y-x|\geq\epsilon}\frac{u(x)-u(y)}{|x-y|^{n+2s}}\dd y
\end{equation}
for any function $u\in C^{[2s],\{2s\}+\eps}_{loc}\cap\mathcal{L}_{s}(\mathbb{R}^{n})$ with $\eps>0$ arbitrarily small, where the constant $C_{s,n}=\left(\int_{\mathbb{R}^{n}}\frac{1-\cos(2\pi\zeta_{1})}{|\zeta|^{n+2s}}\dd\zeta\right)^{-1}$, $[x]$ denotes the largest integer $\leq x$, $\{x\}:=x-[x]$, and the function space
\begin{equation}\label{0-1}
  \mathcal{L}_{s}(\mathbb{R}^{n}):=\bigg\{u: \mathbb{R}^{n}\rightarrow\mathbb{R}\,\Big|\,\int_{\mathbb{R}^{n}}\frac{|u(x)|}{1+|x|^{n+2s}}\dd x<+\infty\bigg\}.
\end{equation}

\medskip

In recent years, fractional order operators have attracted more and more attentions. Besides various applications in fluid mechanics, molecular dynamics, relativistic quantum mechanics of stars (see e.g. \cite{CV,C}) and conformal geometry (see e.g. \cite{CG}), it also has many applications in probability and finance (see e.g. \cite{Ber,CT}). The fractional Laplacians can be understood as the infinitesimal generators of stable L\'{e}vy diffusion processes (see e.g. \cite{Ber}). The fractional Laplacians can also be defined equivalently (see \cite{CLM}) by Caffarelli and Silvestre's extension method (see \cite{CS}) for any $u\in C^{[2s],\{2s\}+\eps}_{loc}\cap\mathcal{L}_{s}(\mathbb{R}^{n})$. For more interesting works on fractional order nonlocal problems, please refer to \cite{BKN,CS,DDGW,DPV,DRV,DSV,FW,FS,FLS,RS,Si,TTV} and the references therein.

\begin{defn}\label{defn1}
In subcritical order case $2s<n$, we say that the nonlinear term $f$ has \emph{subcritical (critical, supercritical, resp.) growth} provided that
\begin{equation}\label{e1}
  \mu^{\frac{n+2s}{n-2s}}f(\mu^{\frac{2}{n-2s}}(x-P)+P,\mu^{-1}u)
\end{equation}
is \emph{strictly increasing (invariant, strictly decreasing, resp.)} with respect to $\mu\geq1$ or $\mu\leq1$ for all $(x,u)\in(\Omega\setminus\{P\})\times\mathbb{R}_{+}$.

\noindent In critical order case $2s=n$, we say that the nonlinear term $f$ satisfies \emph{subcritical condition (on singularity)}, provided that
\begin{equation}\label{e1}
  \mu^{n}f(\mu(x-P)+P,u)
\end{equation}
is \emph{strictly increasing} with respect to $\mu\geq1$ or $\mu\leq1$ for all $(x,u)\in(\Omega\setminus\{P\})\times\mathbb{R}_{+}$. We say $f$ satisfies \emph{critical condition (on singularity)} provided that $\mu^{n}f(\mu x,u)$ is \emph{invariant} with respect to $\mu$.
\end{defn}

\begin{rem}\label{rem4}
In the critical order case $2s=n$, being different from the \emph{subcritical growth condition on $u$} in previous works (see e.g. \cite{BM,CY,CDQ,CL,DD,DQ,Lin,LZ,WX}), there is \emph{no growth condition} on $u$ in Definition \ref{defn1} of the \emph{subcritical condition (on singularity)} (see also \cite{DQ3}). As a consequence, suppose without loss of generalities that $P=0$, then $f(x,u)=|x|^{a}g(u)$ with $a>-n$ satisfies the \emph{subcritical condition (on singularity)}, while $f(x,u)=|x|^{-n}g(u)$ satisfies the \emph{critical condition (on singularity)}, where $g(u)$ is arbitrary and may take various forms including: $u^{p}$ with $p\in \mathbb{R}$, $u^{p}\log(\tau+u^{q})$ with $p,q\in\mathbb{R}$ and $\tau\in[1,+\infty)$, $u^{p}e^{\kappa u}$ with $p\in \mathbb{R}$ and $\kappa\in\mathbb{R}$, $u^{p}\arctan(\kappa u)$, $u^{p}\cosh(\kappa u)$ and $u^{p}\sinh(\kappa u)$ with $p,\kappa\in\mathbb{R}$, or even $u^{p}\exp^{(m)}(\kappa u)$ with $p,\kappa\in \mathbb{R}$, $\exp^{(m)}:=\underbrace{\exp\cdots\exp}_{m \, \text{times}}$ ($m\geq1$) and $\exp(t):=e^{t}$, and so on. The subcritical growth condition on $u$ in previous works only admits the nonlinearity that has arbitrary order polynomial growth on $u$ (say, $f=u^{p}$ with $0<p<+\infty$), while the exponential type nonlinearity $f=e^{\kappa u}$ with $\kappa>0$ satisfies the critical growth condition on $u$ (see e.g. \cite{BM,CY,CDQ,CL,DD,DQ,Lin,LZ,WX}). The \emph{subcritical condition (on singularity)} reveals that the method of scaling spheres is essentially different from previous methods and especially suitable for dealing with problems with singularities.
\end{rem}

\begin{defn}\label{defn2}
For $1\leq q\leq +\infty$, a function $g(x,u)$ is called \emph{locally $L^q$-Lipschitz} on $u$ in $\Omega\times\overline{\mathbb{R}_{+}}$ ($\Omega\times\mathbb{R}_{+}$), provided that for any $u_{0}\in\overline{\mathbb{R}_{+}}$ ($u_{0}\in\mathbb{R}_{+}$) and $\omega\subseteq\Omega$ bounded, there exists a (relatively) open neighborhood $U(u_{0})\subset\overline{\mathbb{R}_{+}}$ ($U(u_{0})\subset\mathbb{R}_{+}$) and a non-negative function $h_{u_0}\in L^q(\omega)$ such that
     	$$\sup_{u,v\in U(u_{0})} \frac{|g(x,u)-g(x,v)|}{|u-v|}\leq h_{u_0}(x), \qquad  \forall\, x\in \omega.$$
\end{defn}

We need the following three assumptions on the nonlinear term $f(x,u)$.
\begin{itemize}
\item[$(\mathbf{f_{1}})$] The nonlinear term $f$ is \emph{non-decreasing} about $u$ in $(\Omega\setminus\{P\})\times\overline{\mathbb{R}_{+}}$, namely,
\begin{equation*}\label{e2}
  (x,u), \, (x,v)\in(\Omega\setminus\{P\})\times\overline{\mathbb{R}_{+}} \,\,\, \text{with} \,\,\, u\leq v \,\,\, \text{implies} \,\,\, f(x,u)\leq f(x,v).
\end{equation*}
\item[$(\mathbf{f_{2}})$] In the sense of Definition \ref{defn2}, $f(x,u)$ is locally $L^{q}$-Lipschitz on $u$ in $\Omega\times\overline{\mathbb{R}_{+}}$ for some $1\leq q\leq+\infty$. \\
\item[$(\mathbf{f_{3}})$] (i) If $\mathbb{R}^{n}\setminus\overline{\Omega}$ is a g-radially convex domain, there exist $d_{\Omega}<r<+\infty$, a smaller cross-section $\Sigma\subseteq\Sigma^{r}_{\Omega}$ (see Remark \ref{rem1} for definition of $\Sigma^{r}_{\Omega}$ and $d_{\Omega}$), constants $C>0$, $-2s<a<+\infty$, $1\leq p<p_{c}(a)$ if $f$ is subcritical, $p=p_{c}(a)$ if $f$ is critical and $p_{c}(a)<p<+\infty$ if $f$ is supercritical such that the nonlinear term
    \begin{equation*}
    f(x,u)\geq C|x-P|^{a}u^{p}, \qquad \forall\,\,  (x,u)\in\mathcal{C}^{r,e}_{P,\Sigma}\times\overline{\mathbb{R}_{+}};
    \end{equation*}
    (ii) if $\Omega$ is a g-radially convex domain, there exist $0<r<\rho_{\Omega}$, a smaller cross-section $\Sigma\subseteq\Sigma^{r}_{\Omega}$ (see Remark \ref{rem1} for definition of $\Sigma^{r}_{\Omega}$ and $\rho_{\Omega}$), constants $C>0$, $-2s<a<+\infty$, $1\leq p<p_{c}(a)$ if $f$ is subcritical, $p=p_{c}(a)$ if $f$ is critical and $p_{c}(a)<p<+\infty$ if $f$ is supercritical such that the nonlinear term
    \begin{equation*}
    f(x,u)\geq C|x-P|^{a}u^{p}, \qquad  \forall\,\,  (x,u)\in\mathcal{C}^{r}_{P,\Sigma}\times\overline{\mathbb{R}_{+}}.
    \end{equation*}
\end{itemize}

\begin{rem}\label{rem5}
The Hardy-H\'{e}non type nonlinearity $f(x,u)=|x-P|^{a}u^{p}$ with $a>-2s$ and $p\geq1$ is a typical model satisfying the assumptions $(\mathbf{f_{1}})$, $(\mathbf{f_{2}})$ and $(\mathbf{f_{3}})$.
\end{rem}

When $s\geq1$ is an integer, we assume the classical solution $u$ to PDEs \eqref{PDE} satisfies $u\in C^{2s}(\Omega)$ ($u\in C^{2s}(\Omega\setminus\{P\})$ if $\Omega$ is a g-radially convex domain, $P\in\Omega$ and $f$ satisfies $(\mathbf{f_{3}})$ with $a<0$), $(-\Delta)^{i}u\in C(\overline{\Omega})$ ($i=0,1,\cdots,s-1$) and the Navier boundary condition $u=-\Delta u=\cdots=(-\Delta)^{s-1}u=0$ on $\partial\Omega$. When $s\in(0,1)$, we assume the classical solution $u$ to PDEs \eqref{PDE} satisfies $u\in C^{[2s],\{2s\}+\eps}_{loc}(\Omega)\bigcap\mathcal{L}_{s}(\mathbb{R}^{n})\bigcap C(\overline{\Omega})$ ($u\in C^{[2s],\{2s\}+\eps}_{loc}(\Omega\setminus\{P\})\bigcap\mathcal{L}_{s}(\mathbb{R}^{n})\bigcap C(\overline{\Omega})$ if $\Omega$ is a g-radially convex domain, $P\in\Omega$ and $f$ satisfies $(\mathbf{f_{3}})$ with $a<0$) with $\eps>0$ arbitrarily small and the Dirichlet condition $u=0$ in $\mathbb{R}^{n}\setminus\Omega$.

\bigskip

In the typical case $f(x,u)=|x-P|^{a}u^{p}$, PDE \eqref{PDE} takes the form
\begin{equation}\label{HH}
  (-\Delta)^{s}u(x)=|x-P|^{a}u^{p}(x)
\end{equation}
with $0<s<+\infty$ are called the fractional order or higher order H\'{e}non, Lane-Emden, Hardy equations for $a>0$, $a=0$, $a<0$, respectively. These equations have numerous important applications in conformal geometry and Sobolev inequalities. In particular, in the case $a=0$, \eqref{HH} becomes the well-known Lane-Emden equation, which models many phenomena in mathematical physics and in astrophysics. We say equations \eqref{HH} have subcritical order if $0<2s<n$, critical order if $2s=n$ and supercritical order if $2s>n$. The nonlinear term in \eqref{HH} has critical growth if $p=p_{c}(a):=\frac{n+2s+2a}{n-2s}$ ($:=+\infty$ if $n\leq2s$) and has subcritical growth if $0<p<p_{c}(a)$.

\medskip

Liouville type theorems on nonnegative classical solutions to PDEs of type \eqref{PDE} or \eqref{HH} (and related IEs) in the whole space $\mathbb{R}^n$ and in the half space $\mathbb{R}^n_+$ have been extensively studied (see e.g. \cite{BP,CDQ0,CC,CDQ,CFL,CLL,CLZ,CLZC,DLQ,DQ1,DQ2,DQ3,DQ0,DQZ,DFSV,Farina,FW,GS,GL,GW,Lin,LZ1,MP,NY,PS,RW,TD,WX,ZCCY} and the references therein). Refer to e.g. \cite{MO} for Liouville theorems in Heisenberg group and \cite{CGS,CDQ0,CY,CL,CLL,CLO,CLZ,CFR,DLQ,DQ1,DQ,Lin,Li,LZ,SX,WX} for classification results of solutions in $\mathbb{R}^n$ and $\mathbb{R}^n_+$. For Liouville type theorems and related properties on systems of PDEs or IEs with respect to various types of solutions (e.g., stable, radial, weak, sign-changing, $\cdots$), please refer to e.g. \cite{BG,DDGW,FG,FS,LZ2,LB,LZou,M,PQS,RZ,S,Souto,SZ} and the references therein.

\medskip

For H\'{e}non-Hardy type equations \eqref{HH} in $\Omega$, in previous works, there are mainly \emph{three approaches} to derive Liouville type results for nonnegative classical solutions: \emph{the method of moving planes in conjunction with Kelvin transforms} (see e.g. \cite{CGS,CC,CY,CDQ,CL,CL4,CLL,CLO,DQ1,GNN,GS,Lin,WX} and the references therein), \emph{the integral estimates and feedback estimates arguments based on Pohozaev identities} (see, e.g. \cite{M,MP,PQS,PS,S,SZ} and the references therein), and \emph{the method of moving spheres} (see e.g. \cite{CL1,CLZ,DLQ,DQ,FKT,Li,LZ,LZ1,Pa,RZ} and the references therein). To establish Liouville type theorems, the method of moving planes (in conjunction with Kelvin transforms) requires $\Omega=\mathbb{R}^{n}$ or $\mathbb{R}^{n}_{+}$ and relies strongly on the structure of the equation (e.g., the potential term in nonlinearity must be \emph{strictly decreasing} as $|x|\rightarrow+\infty$ after Kelvin transform), and hence it usually requires $p>1$, in addition, $p<\frac{n+2s+a}{n-2s}<p_{c}(a):=\frac{n+2s+2a}{n-2s}$ if $a>0$, and $p>\frac{n+a}{n-2s}$ if $\Omega=\mathbb{R}^{n}_{+}$. The integral estimates and feedback estimates arguments based on Pohozaev identities usually requires the domain $\Omega=\mathbb{R}^{n}$ with special spatial dimension $n$ (say, $n=3$) or is bounded star-shaped with regular boundary such that the formula of integration by parts holds, $2s<n$ and \emph{boundedness assumptions} on $u$ if $a\neq0$. In order to derive classification results or Liouville type theorems, the method of moving spheres needs to assume $\Omega=\mathbb{R}^{n}$ or $\mathbb{R}^{n}_{+}$ and move spheres centered at \emph{every point} in $\mathbb{R}^{n}$ or $\partial\mathbb{R}^{n}_{+}$ \emph{synchronously} (i.e., either all stop at finite radius or all stop at $+\infty$) in conjunction with \emph{several key calculus lemmas}. As a consequence, the method of moving spheres usually requires \emph{the translation invariant property} of the problem and hence can hardly be employed to deal with various fractional or higher order problems \emph{with singularities} or \emph{without translation invariance} in general domains (e.g., the cases $a\neq0$ or $\Omega$ is general interior or exterior domain).

\medskip

In \cite{DQ0}, the authors introduced the \emph{method of scaling spheres} and applied it to establish Liouville theorems for PDEs \eqref{PDE} and related IEs in $\mathbb{R}^{n}$, $\mathbb{R}^{n}_{+}$, $B_{R}(0)$ and $\mathbb{R}^{n}\setminus\overline{B_{R}(0)}$ (see also e.g. \cite{CDQ0,DQ2,DQ3,DQZ}). One should note that, being different from the above three approaches, the key idea in \emph{the method of scaling spheres} is combining the \emph{lower bound estimates on the asymptotic behavior} of positive solutions w.r.t. \emph{singularities} or \emph{$\infty$} derived through the \emph{scaling spheres procedure} and the \emph{``Bootstrap" technique} with the precise estimates on \emph{the asymptotic behavior of Green's function} in a interior cone and the a priori upper bound estimates on \emph{the integrability} or on \emph{the asymptotic behavior} of positive solutions, which can finally lead to a \emph{sharp Liouville type result} by contradiction arguments. Besides improving most of the known Liouville type results to the \emph{best exponents} in a \emph{unified and simple} way, the \emph{method of scaling spheres} in \cite{DQ0} (c.f. also \cite{CDQ0,DQ2,DQ3,DQZ}) can be applied conveniently to various problems (including PDEs, IEs and systems) \emph{with singularities} or \emph{without translation invariance} in $\mathbb{R}^{n}$, $\mathbb{R}^{n}_{+}$, $B_{R}(0)$ and $\mathbb{R}^{n}\setminus\overline{B_{R}(0)}$ for any dimensions $n$. It can also be applied to various fractional or higher order problems in the cases that the method of moving planes in conjunction with Kelvin transforms do not work, and hence can cover the gap for $p$ between $\frac{n+2s+a}{n-2s}$ and the critical exponent $p_{c}(a):=\frac{n+2s+2a}{n-2s}$ when $a>0$. When carrying out the \emph{method of scaling spheres}, we do not need to impose any boundedness assumptions on $u$.

\medskip

Nevertheless, if we want to use blowing-up method on general domains with \emph{not necessarily $C^{1}$ boundary} (such as domains with uniform exterior cone property or Lipschitz domains), then it is necessary for us to prove Liouville theorems on more general (convex or non-convex) simply connected domains including cone-like domains. In this paper, we will introduce the method of scaling spheres as \emph{a unified approach} to Liouville theorems on \emph{arbitrary unbounded or bounded MSS applicable domains} in $\mathbb{R}^{n}$ for general ($\leq n$-th order) PDEs and IEs \emph{without translation invariance or with singularities}. In order to apply the method of scaling spheres in MSS applicable domains $\Omega$, we only dilate or shrink \emph{a subset $S_{\lambda}(P)\bigcap\Omega$} of the sphere $S_{\lambda}(P)=\partial B_{\lambda}(P)=\lambda S^{n-1}+P$ w.r.t. the scale of the radius $\lambda$ and \emph{one fixed point $P$} (usually the singular point or the center of MSS applicable domain $\Omega$). MSS applicable domain is the \emph{maximal collection} of simply connected domains in $\mathbb{R}^{n}$ such that the MSS works, which guarantee that the reflection w.r.t. $S_{\lambda}(P)$ of $\Omega\setminus\overline{B_{\lambda}(P)}$ is contained in $\Omega$ if $\Omega$ is a g-radially convex domain and the reflection w.r.t. $S_{\lambda}(P)$ of $\Omega\bigcap B_{\lambda}(P)$ is contained in $\Omega$ if $\mathbb{R}^{n}\setminus\overline{\Omega}$ is a g-radially convex domain. We \emph{decrease} the scale of radius $\lambda$ if $\Omega$ is a g-radially convex domain and \emph{increase} the scale of radius $\lambda$ if $\mathbb{R}^{n}\setminus\overline{\Omega}$ is a g-radially convex domain. One should note that, \emph{MSS applicable domains is to the MSS what convex domains (at least in one direction) is to the method of moving planes}. When applying the MSS, in the critical order case $n=2s$, we compare the value of solution $u$ at $x$ with the value of $u$ at the reflection $x^{\lambda}$ w.r.t. the sphere piece $S_{\lambda}(P)\bigcap\Omega$ directly instead of the value of Kelvin transform of $u$, which is essentially different from the method of moving planes/spheres. On establishing Louville type results, the method of scaling spheres can overcome \emph{all the restrictions} in the method of moving planes/spheres and the integral estimates and feedback estimates arguments based on Pohozaev identities. We believe that the MSS can also be used to prove \emph{lower bound estimates on asymptotic behavior} or \emph{radial monotonicity} of solutions and be applied to various problems (PDEs/IEs/Systems) without translation invariance or singularities in \emph{$\mathbb{R}^{n}$, metric spaces and manifolds}.

\medskip

In practice, the method of scaling spheres mainly consists of three steps and can be carried out in several convenient ways according to the feature of different problems. In this paper, we try to give adequate demonstrations and guidance on how to apply the method of scaling spheres on MSS applicable domains in different ways. We will apply \emph{the direct method of scaling spheres} to the fractional and second order PDEs \emph{without any help} of the \emph{integral representation formulae} of solutions (see Theorems \ref{pde-unbd}, \ref{pde-unbd-a}, \ref{pde-unbd-2}, \ref{pde-bd} and \ref{pde-bd-2} in subsection 1.2). To deal with the IEs, we apply \emph{the method of scaling spheres in integral forms} to prove Liouville theorems for IEs, then derive Liouville results for PDEs by the equivalence between IEs and PDEs (see subsection 1.3). For problems on a g-radially convex domain $\Omega$, Liouville theorems can be deduced from \emph{Kelvin transforms} and the Liouville type results on domain that is the complement of the closure of a g-radially convex domain, or from \emph{the method of scaling spheres in a local way} (see subsection 1.3), i.e., shrinking \emph{the sphere piece} $S_{\lambda}(P)\bigcap\Omega$ w.r.t. the point $P$ (usually the singular point or the center of the MSS applicable domain $\Omega$).

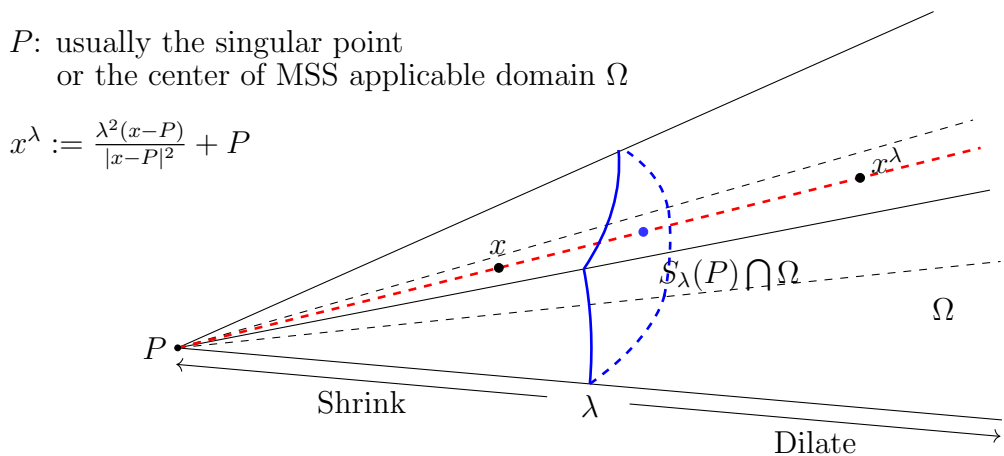
\begin{figure}[H]\label{F3}
	\centering	
	 	\begin{tikzpicture}[scale=1.1,rotate=10]
	 	
	 	\draw (0:0)--(-15:10);
	 	\draw (0:0)-- (1:10);
	 	\draw (0:0)-- (14:10);
	 	\draw[dashed] (6:0)--(6:10);
	 	\draw[dashed] (-4:0)--(-4:10);
	 	\draw[dashed, rline] (4:0)--(4:10);
	 	\filldraw[fill=blue!80,draw=blue!80 ] (4:5.8) circle(1.5pt);
	 	\filldraw[ ] (4:4) circle(1.5pt); \node[above]at(4.5:4){$x$};
	 	\filldraw[ ] (4:8.5) circle(1.5pt); \node[above]at(3:8.8){$x^\lambda$};
	 	\draw[bline]  (-15:5) arc (-15:1:5);
	 	\draw[bline] (1:5) arc (-30:-2.97:5 and 3);
	 	\draw[dashed,bline]  (-15:5) arc (-40:-10:   5 and 2);
	 	\draw[dashed,bline]  (-4:5.91) arc (-18:-1.95:   5 and 4);
	 	\draw[dashed,bline]  (6:6.16) arc (0:22:   5 and 2);
	 	\filldraw (0:0) circle(1pt);
	 	\node[left]at(0:0){$P$};
	 	\node[left] at(-3.4:7.7){$S_{\lambda}(P)\bigcap\Omega$};
	 	\node[right] at(-7:9){$\Omega$};
	 	\draw[<-, rotate=-15] (0,-0.2) -- (4.5,-0.2) node[ below] at (2.25,-0.2){Shrink};
	 	\draw[->, rotate=-15] (5.5,-0.2)--(10,-0.2) node[ below] at (7.75,-0.2){ Dilate};
	 	\node[below] at (-15:5){$\lambda$};
	 	\node[right] at (-1.7,2.8){$x^\lambda:=\frac{\lambda^2 (x-P)}{|x-P|^2}+P$};
	 	\node[right] at(-1.5,4) {$P$: usually the singular point};
	 	\node[right]at(-1,3.5) {  or the center of MSS applicable domain $\Omega$};
	 \end{tikzpicture}
	\centering
	\caption{The method of scaling spheres}
\end{figure}

\subsection{The direct method of scaling spheres on MSS applicable domains and applications}
Let $\Omega$ be a MSS applicable domain with center $P$, i.e., $\Omega$ or $\mathbb{R}^{n}\setminus\overline{\Omega}$ is a g-radially convex domain with radially convex center $P$. By establishing a \emph{small region principle} (Theorem \ref{NP-1}) with the help of the \emph{ABP estimates} (see e.g. \cite{FW,GT}), we apply the method of scaling spheres directly to the Dirichlet problem of fractional and second order PDEs \eqref{PDE} with $0<s\leq1$ (i.e., $u=0$ in $\mathbb{R}^{n}\setminus\Omega$).

\medskip

By applying the direct method of scaling spheres \emph{absolutely without the help} of \emph{integral representation formulae} of solutions, we derive the following Liouville theorem for nonnegative classical solutions in the case that $\mathbb{R}^{n}\setminus\overline{\Omega}$ is a g-radially convex domain.
\begin{thm}\label{pde-unbd}
    Assume $\mathbb{R}^{n}\setminus\overline{\Omega}$ is a g-radially convex domain, $n\geq\max\{2s,1\}$, $0<s\leq1$, $n\neq2$ if $s=1$ and $\Omega=\mathbb{R}^{n}$, $f$ is subcritical in the sense of Definition \ref{defn1} and satisfies $(\mathbf{f_{3}})$. Suppose $f$ satisfies $(\mathbf{f_{2}})$ with $q=+\infty$ when $s\in(0,1)$ and with $q=n$ when $s=1$. Then there is no classical solution to \eqref{PDE} that is positive in $\Omega$ ($\Omega\neq\mathbb{R}^{n}$ when $s=1$). Moreover, suppose $f(x,u)\geq0$ in $\Omega\times[0,+\infty)$, $f$ satisfies $(\mathbf{f_{1}})$ if $s=1$ and $\Omega=\mathbb{R}^{n}$, and $u$ is a nonnegative classical solution to the Dirichlet problem \eqref{PDE}, then $u\equiv0$ in $\Omega$.
\end{thm}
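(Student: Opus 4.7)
The plan is to prove the theorem by running the direct method of scaling spheres centered at the radially convex center $P$ of $\mathbb{R}^{n}\setminus\overline{\Omega}$, working with the PDE itself and \emph{without} invoking an integral representation formula. For $\lambda>d_{\Omega}$, set
$$x^{\lambda}:=\frac{\lambda^{2}(x-P)}{|x-P|^{2}}+P,\qquad u_{\lambda}(x):=\left(\frac{\lambda}{|x-P|}\right)^{n-2s}u(x^{\lambda}),$$
and study $w_{\lambda}:=u_{\lambda}-u$ on $\Omega_{\lambda}:=\Omega\cap B_{\lambda}(P)$ (in the critical order case $n=2s$ the Kelvin weight disappears and one simply compares $u(x)$ with $u(x^{\lambda})$, as highlighted in the introduction). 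The key geometric input supplied by Remarks \ref{rem0} and \ref{rem14} is that outer g-radial convexity makes the inversion $x\mapsto x^{\lambda}$ send $\Omega_{\lambda}$ into $\Omega$, so $u_{\lambda}$ is defined throughout $\Omega_{\lambda}$, $w_{\lambda}=0$ on $S_{\lambda}(P)\cap\Omega$, and $w_{\lambda}\ge 0$ on $\partial\Omega\cap B_{\lambda}(P)$ by the Dirichlet/Navier condition on $u$. The subcritical structure of $f$ (Definition \ref{defn1}) together with $(\mathbf{f_{1}})$ and the local $L^{q}$-Lipschitz assumption $(\mathbf{f_{2}})$ (with $q=+\infty$ for $s\in(0,1)$ and $q=n$ for $s=1$, chosen precisely so the ABP estimate applies) makes $w_{\lambda}$ satisfy a linear inequality of the form $(-\Delta)^{s}w_{\lambda}+c_{\lambda}(x)w_{\lambda}\ge 0$ on the negative set $\Omega_{\lambda}^{-}:=\{w_{\lambda}<0\}$, with a coefficient $c_{\lambda}$ in the correct Lebesgue class.

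\textbf{Start.} For $\lambda-d_{\Omega}$ small, $|\Omega_{\lambda}|$ is small; the small-region/narrow-region principle (Theorem \ref{NP-1}), built on the ABP estimate and crucially requiring \emph{no} regularity of $\partial\Omega$, forces $\Omega_{\lambda}^{-}=\emptyset$, giving $w_{\lambda}\ge 0$ in $\Omega_{\lambda}$. \textbf{Dilate to $+\infty$.} Set $\lambda_{0}:=\sup\{\lambda>d_{\Omega}:w_{\mu}\ge 0\text{ on }\Omega_{\mu}\text{ for all }d_{\Omega}<\mu\le\lambda\}$ and assume for contradiction $\lambda_{0}<+\infty$. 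A strong maximum principle/Hopf-type argument for $(-\Delta)^{s}$, combined with the strict monotonicity inherent to subcriticality, yields $w_{\lambda_{0}}>0$ in $\mathrm{int}\,\Omega_{\lambda_{0}}$; decomposing $\Omega_{\lambda_{0}+\varepsilon}$ into a compact interior piece (where $w_{\lambda_{0}+\varepsilon}>0$ by continuity) and its thin complement (on which the small-region principle reapplies because $|\Omega_{\lambda_{0}+\varepsilon}|-|\Omega_{\lambda_{0}}|\to 0$ as $\varepsilon\to 0^{+}$) produces $w_{\lambda_{0}+\varepsilon}\ge 0$ for some $\varepsilon>0$, contradicting the definition of $\lambda_{0}$. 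Hence $\lambda_{0}=+\infty$, and
$$u(x)\le\left(\frac{\lambda}{|x-P|}\right)^{n-2s}u(x^{\lambda}),\qquad\forall\,\lambda>d_{\Omega},\,x\in\Omega_{\lambda}.$$

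\textbf{Contradiction via bootstrap.} Suppose $u(\bar x)>0$ for some $\bar x\in\Omega$. Remark \ref{rem1} supplies an unbounded fan-shaped subcone $\mathcal{C}^{r,e}_{P,\Sigma}\subset\Omega$ on which $(\mathbf{f_{3}})(i)$ holds, and the Kelvin-type monotonicity above converts the positivity at $\bar x$ into an initial asymptotic lower bound $u(y)\ge C_{0}|y-P|^{-(n-2s)}$ for $|y-P|$ large inside this cone. Feeding this into $(\mathbf{f_{3}})(i)$ and re-running the scaling-sphere inequality with the improved right-hand side yields a chain of sharpened bounds $u(y)\ge C_{k}|y-P|^{\mu_{k}}$ with $\mu_{k}$ strictly increasing; the subcritical hypothesis $p<p_{c}(a)=\frac{n+2s+2a}{n-2s}$ is exactly the condition that prevents the bootstrap from stalling before $\mu_{k}$ surpasses the threshold incompatible with the upper bound obtained by fixing any $\lambda$ in the Kelvin-type comparison. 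This contradiction rules out positive solutions; the ``moreover'' part follows because, for a nonnegative $u\not\equiv 0$ with $f\ge 0$, the strong maximum principle promotes $u$ to being positive in $\Omega$, reducing to the case already treated.

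The principal technical obstacle is the dilation step, precisely because $\partial\Omega$ is only topological and may have cusps, corners, or only a uniform exterior cone: the classical Hopf lemma is unavailable, and the entire reason MSS succeeds on every MSS applicable domain is that the small-region principle Theorem \ref{NP-1}, based on the ABP estimate, replaces Hopf and bypasses boundary regularity altogether. The excluded case $s=1$, $\Omega=\mathbb{R}^{n}$, $n=2$ is where the logarithmic behavior of the Laplacian Green function obstructs the bootstrap in the final step, and the assumption $(\mathbf{f_{1}})$ in that critical case compensates for the failure of strict comparison there.
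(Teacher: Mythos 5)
Your Steps 1 and 2 match the paper's proof: center the Kelvin transform at the radially convex center $P$, use the fact that $\left(\Omega\cap B_{\lambda}(P)\right)^{\lambda}\subset\Omega$ to make $u_{\lambda}$ well-defined, start the dilation near $\lambda=d_{\Omega}$ via the ABP-based small-region principle (Theorem \ref{NP-1}), and push $\lambda_{0}$ to $+\infty$ using strict positivity of $\omega^{\lambda_{0}}$ plus a compact-set/narrow-region decomposition. These are exactly the paper's Steps 1 and 2 of the direct MSS, and your emphasis that the ABP estimate replaces the Hopf lemma on irregular $\partial\Omega$ is the right explanation for why regularity is never needed.

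However, Step 3 as you describe it has two genuine gaps. First, the initial asymptotic lower bound has the wrong exponent. From $\lambda_{0}=+\infty$ you get $u(y)\ge\left(\lambda/|y-P|\right)^{n-2s}u(y^{\lambda})$ for $|y-P|\ge\lambda$. Keeping $\lambda$ \emph{fixed} (which is what would give $|y-P|^{-(n-2s)}$) is useless: as $|y-P|\to\infty$ the argument $y^{\lambda}$ tends to $P\in\partial\Omega\cup\Omega^{c}$, where $u$ vanishes by the Dirichlet condition, so the implicit constant degenerates. The paper instead chooses $\lambda=\sqrt{(r+1)|y-P|}$ so that $y^{\lambda}$ lies on a \emph{fixed} sphere $S_{r+1}(P)$ intersected with a subcone, on which $u$ has a positive minimum; this yields the correct starting exponent $u(y)\ge C_{0}|y-P|^{-\frac{n-2s}{2}}$. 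With your claimed exponent $-(n-2s)$, the inequality $\mu_{k+1}>\mu_{k}$ in the bootstrap only holds for $p<\frac{n+a}{n-2s}$, strictly below $p_{c}(a)$, so the full subcritical range is not covered.

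Second, and more importantly, "re-running the scaling-sphere inequality with the improved right-hand side" is not a mechanism that can drive a bootstrap, and "incompatible with the upper bound obtained by fixing any $\lambda$ in the Kelvin-type comparison" is not the actual contradiction --- the Kelvin comparison gives \emph{lower} bounds at large radii, not upper bounds. The paper's Step 3 works differently: it picks a slightly smaller cone $\mathcal{C}_{Q,\Sigma_{1}}\subset\Omega$, defines $v_{\delta}(x)=C\int_{\mathcal{C}_{Q,\Sigma_{1}}\cap B_{\delta^{-1}}} G^{s}_{\mathcal{C}_{\delta}}(x,y)\,|y|^{a}u^{p}(y)\,\dd y$, and uses $(\mathbf{f_{3}})$ together with the maximum principle to conclude $u\ge v_{\delta}$; sending $\delta\to 0$ gives the integral inequality $u(x)\ge C\int_{\mathcal{C}_{Q,\Sigma_{1}}}G^{s}_{\mathcal{C}}(x,y)\,|y|^{a}u^{p}(y)\,\dd y$. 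The bootstrap $\mu_{k+1}=p\mu_{k}-(2s+a)$ comes from plugging the current lower bound into this inequality and using the two-sided Green's function estimates on cones ($(\mathbf{H_{2}})$, proved via Lemma \ref{PCG} and the classification of $s$-harmonic functions in Theorem \ref{RMI-h}). The final contradiction is an integrability clash: $u(x_{0})<+\infty$ forces $\int G^{s}_{\mathcal{C}}(x_{0},y)\,|y|^{a}u^{p}(y)\,\dd y<+\infty$, but the bootstrapped bound $u(y)\ge C_{\kappa}|y|^{\kappa}$ for any $\kappa$ combined with the lower Green estimate $G^{s}_{\mathcal{C}}(x_{0},y)\gtrsim|y|^{-\theta}$ makes the integrand diverge. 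So although the paper genuinely avoids representing $u$ itself by a Green formula, it does invoke the Green function of an auxiliary cone as a one-sided comparison tool; your plan would need that ingredient (or an equivalent comparison-function argument) to close, and the integrability-versus-growth contradiction must replace the one you propose.
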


\medskip

When $s=1$, by using a \emph{narrow region principle for weak solutions} (see Theorem \ref{mp}) instead of the small region principle (Theorem \ref{NP-1}) derived through ABP estimates, we may also weaken the locally-$L^{q}$ Lipschitz condition in $(\mathbf{f_{2}})$ with $q=n$ to $q=\frac{n}{2}$. To this end, we need the following assumption.
\begin{itemize}
	\item [$(\mathbf{A})$]  There exists an $u_{0}\in[0,+\infty)$ such that, $f(x,u_{0})\in L^{1}(\Omega\bigcap B_{R}(P))$ ($\forall \, R>0$) if $\mathbb{R}^{n}\setminus\overline{\Omega}$ is a g-radially convex domain; $f(x,u_{0})\in L^{1}(\Omega\setminus B_{\eps}(P))$ ($\forall \, \eps>0$) if $\Omega$ is a bounded g-radially convex domain.	
\end{itemize}

\begin{thm}\label{pde-unbd-a}
    Assume $\mathbb{R}^{n}\setminus\overline{\Omega}$ is a g-radially convex domain, $s=1$, $f$ is subcritical in the sense of Definition \ref{defn1} and satisfies $(\mathbf{f_{3}})$. Suppose $P\in\mathbb{R}^{n}\setminus\overline{\Omega}$, $\partial\Omega$ is locally Lipschitz, $f$ satisfies $(\mathbf{A})$ and $(\mathbf{f_{2}})$ with $q=\frac{n}{2}$ if $n\geq3$ (with $q=1+\delta$ if $n=2$, where $\delta>0$ is arbitrarily small). Then there is no classical solution to \eqref{PDE} that is positive in $\Omega$. Moreover, suppose $f(x,u)\geq0$ in $\Omega\times[0,+\infty)$ and $u$ is a nonnegative classical solution to the Dirichlet problem \eqref{PDE}, then $u\equiv0$ in $\Omega$.
\end{thm}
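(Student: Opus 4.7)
The plan is to mirror the proof of Theorem \ref{pde-unbd} in the case $s=1$, but to replace the ABP-based small region principle by the narrow region principle for weak solutions (Theorem \ref{mp}), which only requires $f$ to be locally $L^{n/2}$-Lipschitz in $u$ (rather than locally $L^{n}$-Lipschitz). The overall architecture of the method of scaling spheres is unchanged; the role of assumption $(\mathbf{A})$ and of the locally Lipschitz regularity of $\partial\Omega$ is simply to guarantee that Sobolev test functions and integration by parts are available when one works in the weak framework.

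First I would set up the MSS. Since $\mathbb{R}^{n}\setminus\overline{\Omega}$ is g-radially convex with center $P\notin\overline{\Omega}$, for each $\lambda>d_{\Omega}=\inf_{x\in\Omega}|x-P|$ the reflection in $S_{\lambda}(P)$ maps $\Omega\cap B_{\lambda}(P)$ into $\Omega$. For $s=1$ and $n\geq3$, I introduce the Kelvin-type comparison
$$u_{\lambda}(x):=\left(\frac{\lambda}{|x-P|}\right)^{n-2}u(x^{\lambda}),\qquad x^{\lambda}:=\frac{\lambda^{2}(x-P)}{|x-P|^{2}}+P,$$
and set $w_{\lambda}:=u_{\lambda}-u$ on $D_{\lambda}:=\Omega\cap B_{\lambda}(P)$. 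Using subcriticality in the sense of Definition \ref{defn1} together with $(\mathbf{f_{1}})$--$(\mathbf{f_{2}})$, a direct computation yields, in the set $D_{\lambda}^{-}:=\{x\in D_{\lambda}:w_{\lambda}(x)<0\}$, a weak linear inequality
$$-\Delta w_{\lambda}+c_{\lambda}(x)\,w_{\lambda}\geq0,$$
where $c_{\lambda}\in L^{n/2}_{\text{loc}}(\Omega)$ by $(\mathbf{f_{2}})$ with $q=n/2$ (respectively $q=1+\delta$ in dimension two). On $\partial D_{\lambda}$, $w_{\lambda}\geq 0$ holds since $u=0$ on $\partial\Omega$ and $u_{\lambda}\geq 0$, and $w_{\lambda}\equiv 0$ on $S_{\lambda}(P)\cap\Omega$.

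The core MSS steps then run as follows. (i) \emph{Start of moving}: for $\lambda$ just above $d_{\Omega}$, $D_{\lambda}$ is a narrow collar, so Theorem \ref{mp} applied with $w_{\lambda}^{-}$ as test function gives $w_{\lambda}\geq 0$ on $D_{\lambda}$. (ii) \emph{Propagation}: define $\lambda_{\star}:=\sup\{\lambda>d_{\Omega}:w_{\mu}\geq 0\text{ in }D_{\mu}\text{ for all }d_{\Omega}<\mu<\lambda\}$. If $\lambda_{\star}<+\infty$, the strong maximum principle upgrades $w_{\lambda_{\star}}$ to a strict positivity on $D_{\lambda_{\star}}$ away from $S_{\lambda_{\star}}(P)$; then applying Theorem \ref{mp} on the thin collar $D_{\lambda_{\star}+\varepsilon}\setminus\overline{D_{\lambda_{\star}-\delta}}$ (whose $L^{n/2}$-size of $c_{\lambda}$ is small) extends $w_{\lambda}\geq 0$ past $\lambda_{\star}$, a contradiction. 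Hence $\lambda_{\star}=+\infty$. (iii) Unwinding $u_{\lambda}\geq u$ for all $\lambda$ yields, on the exterior cone $\mathcal{C}^{r,e}_{P,\Sigma}$ from $(\mathbf{f_{3}})$, the asymptotic lower bound $u(x)\geq C|x-P|^{-(n-2)}$. (iv) \emph{Bootstrap}: feeding this into the Green-function representation on the cone (guaranteed to be integrable by $(\mathbf{A})$) and using $f(x,u)\geq C|x-P|^{a}u^{p}$, one produces a sequence of powers $\mu_{k}\to+\infty$ with $u(x)\geq C_{k}|x-P|^{\mu_{k}}$ on the cone. Subcriticality $p<p_{c}(a)$ makes the bootstrap never stall, contradicting any polynomial a priori upper bound obtained from the same representation; thus no positive solution exists, and a nonnegative solution must vanish identically by the strong maximum principle.

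The main obstacle is step (i)--(ii), namely verifying that Theorem \ref{mp} truly applies to $w_{\lambda}$ in the present geometric setting. Three subtleties demand care: (a) the domain $D_{\lambda}$ has mixed boundary, with $\partial\Omega\cap B_{\lambda}(P)$ only locally Lipschitz (possibly with inward or outward cusps, though $P$ itself lies off $\overline{\Omega}$ so no cusp at $P$), so one must check that $w_{\lambda}^{-}\in H_{0}^{1}(D_{\lambda})$ is an admissible test function; (b) finiteness of $\int_{D_{\lambda}}f(x,u_{0})\,dx$ furnished by $(\mathbf{A})$ is what permits passing from the pointwise inequality to the integral identity against $w_{\lambda}^{-}$; (c) the coefficient $c_{\lambda}$ must have small $L^{n/2}$-norm on narrow collars, which is precisely where the weakened Lipschitz hypothesis $(\mathbf{f_{2}})$ with $q=n/2$ enters, via Hölder and the Sobolev embedding $H^{1}\hookrightarrow L^{2n/(n-2)}$ in the test-function argument; the two-dimensional case $q=1+\delta$ uses the Trudinger--Moser embedding instead. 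Once Theorem \ref{mp} is secured, steps (iii)--(iv) can be imported essentially verbatim from the proof of Theorem \ref{pde-unbd}.
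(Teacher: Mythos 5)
Your proposal follows essentially the same route as the paper: replace the ABP-based small region principle (Theorem \ref{NP-1}) with the $H^{1}$ narrow region principle (Theorem \ref{mp}), whose hypotheses only require $L^{n/2}$ (resp. $L^{1+\delta}$ when $n=2$) control on the linearized coefficient -- precisely what $(\mathbf{f_2})$ with $q=n/2$ furnishes -- use $(\mathbf{A})$ and the locally Lipschitz boundary to place $\omega^{\lambda}=u_{\lambda}-u$ in $H^{1}(\Omega\cap B_{\lambda}(P))$, and then run the scaling-spheres scheme as in Theorem \ref{pde-unbd}.

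One point of your analysis is worth correcting because it hides a hypothesis that cannot be dropped. You ascribe the condition $P\in\mathbb{R}^{n}\setminus\overline{\Omega}$ to ``no cusp at $P$'', but a locally Lipschitz boundary already excludes cusps. What $P\notin\overline{\Omega}$ actually provides is $\lvert x-P\rvert\geq d_{\Omega}>0$ for all $x\in\overline{\Omega}$, and this is used at two points that would otherwise break down: first, it makes the Kelvin transform $u_{\lambda}(x)=(\lambda/\lvert x-P\rvert)^{n-2}u\bigl(\lambda^{2}(x-P)/\lvert x-P\rvert^{2}+P\bigr)$ uniformly bounded by $\widetilde{M}_{\lambda}:=(\lambda/d_{\Omega})^{n-2}\sup_{\Omega\cap B_{\lambda^{2}/d_{\Omega}}(P)}u$ on \emph{all} of $\Omega\cap B_{\lambda}(P)$, which gives the $L^{q}$ majorant $\widetilde{h}_{\lambda}$ for $c_{\lambda}$ on the whole Lipschitz domain, not merely on the set $\{w_{\lambda}<0\}$ (which has no boundary regularity); second, it is exactly what guarantees $u_{\lambda}\in H^{1}(\Omega\cap B_{\lambda}(P))$ so that $\omega^{\lambda}$ is an admissible function for Theorem \ref{mp} at all. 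If $P\in\partial\Omega$ one would face a genuine singularity of the Kelvin transform at $P$, independent of cusps. Two smaller slips: the initial lower bound after $\lambda_{0}=+\infty$ is $u(x)\gtrsim\lvert x-P\rvert^{-(n-2)/2}$, not $\lvert x-P\rvert^{-(n-2)}$; and in the propagation step the narrow set should be $(\Omega\cap B_{\lambda}(P))\setminus K$ for a compact $K\Subset\Omega\cap B_{\lambda_{0}}(P)$ with regular boundary, since the set where $w_{\lambda}$ could be negative includes a thin strip along $\partial\Omega$ in addition to the spherical annulus.
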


\medskip

In the case $s\in(0,1)$, the assumption $(\mathbf{f_{2}})$ requires that $f(x,u)$ is locally $L^\infty$-Lipschitz on $u$ in $\Omega\times\overline{\mathbb{R}_{+}}$ and hence rules out the singularities of $f(x,u)$ as $x$ tends to $\partial \Omega$. However, when the boundary $\partial \Omega$ satisfies certain smoothness conditions (say, uniform exterior cone condition or more weaker condition \eqref{pde-boundary}), we can prove a new Liouville theorem for Dirichlet problem of PDEs \eqref{PDE} that allows $f(x,u)$ to blow up at $\partial\Omega$ by establishing a \emph{narrow region principle without using the ABP estimates} for fractional Laplacians (see Theorem \ref{NP-2}).

\medskip
	
To this end, we need the following definitions from \cite{LZLH}.
\begin{defn}\label{qgs}
Let $\{r_{k}\}_{k=0}^{+\infty}$ be a positive sequence. We call it a quasi-geometric sequence if there exist constants $0<\tau_{1}<\tau_{2}<1$ such that
\[\tau_{1}r_{k-1}\leq r_{k}\leq \tau_{2}r_{k-1}, \qquad \forall \,\, k\geq1.\]
\end{defn}
\begin{defn}\label{gc}
Let $\Omega\subset\mathbb{R}^{n}$ be a bounded domain and $x_{0}\in\partial\Omega$. We say that $\Omega$ satisfies the geometric condition $\mathbf{G}$ at $x_{0}$ if there exist a constant $\nu\in(0,1)$ and a quasi-geometric sequence $\{r_{k}\}_{k=0}^{+\infty}$ such that
\begin{equation}\label{gc0}
  r_{k}^{-n}\left| \left(B_{r_{k}}(x_{0})\setminus B_{r_{k+1}}(x_{0})\right)\bigcap \Omega^c \right|\geq\nu, \qquad \forall \,\, k\geq0.
\end{equation}
\end{defn}

\medskip

Let $\Omega\subset \mathbb R^n$ be a MSS applicable domain with $P\in \mathbb R^n\setminus \Omega$ as the center such that $\mathbb{R}^{n}\setminus\overline{\Omega}$ is a g-radially convex domain. Moreover, we assume that, for any $R>0$, the geometric condition $\mathbf{G}$ holds uniformly on $\partial\Omega\bigcap B_R(P)$, that is,
\begin{eqnarray}\label{pde-boundary}
	&& \forall \,\, R>0, \,\, \text{there exists} \,\, \nu_R>0 \,\, \text{s.t.}, \,\, \forall \,\, x\in\partial\Omega\bigcap B_R(P), \,\, \text{there are} \\ \nonumber && \text{quasi-geometric sequence} \,\, \{r_{k}\}_{k=0}^{+\infty} \,\,\text{and} \,\, \nu\geq\nu_{R} \,\, \text{s.t.}\,\, \eqref{gc0} \,\, \text{holds}.	
\end{eqnarray}
	
\medskip

Instead of assumption $(\mathbf{f_{2}})$, we need the following modified assumption $(\mathbf{f'_{2}})$ on $f(x,u)$.
\begin{itemize}
	\item [$(\mathbf{f'_{2}})$]  In the sense of Definition \ref{defn2}, there exists a function $g:\,[0,+\infty)$ with $\lim\limits_{t\rightarrow0^{+}}g(t)=+\infty$ such that $g(\text{dist}(x, \partial\Omega))\text{dist}(x, \partial\Omega)^{2s}f(x,u)$ is locally $L^\infty$-Lipschitz on $u$ in $\Omega\times\overline{\mathbb{R}_{+}}$. 	
\end{itemize}

\medskip

By establishing a narrow region principle without using the ABP estimate (Theorem \ref{NP-2}), we apply the method of scaling spheres directly to the Dirichlet problem of PDEs \eqref{PDE} with $0<s<1$ \emph{absolutely without any help} of the \emph{integral representation formulae} of solutions and derive the following Liouville theorem when \emph{$f(x,u)$ is not locally $L^\infty$-Lipschitz}.
\begin{thm}\label{pde-unbd-2}
Assume $\mathbb{R}^{n}\setminus\overline{\Omega}$ is a g-radially convex domain, $\Omega$ satisfies \eqref{pde-boundary} and $0<s<1$. Suppose $f$ is subcritical in the sense of Definition \ref{defn1} and satisfies the assumptions $(\mathbf{f'_{2}})$ and $(\mathbf{f_{3}})$. Then there is no classical solution to \eqref{PDE} that is positive in $\Omega$. Moreover, suppose $f(x,u)\geq0$ in $\Omega\times[0,+\infty)$ and $u$ is a nonnegative classical solution to the Dirichlet problem \eqref{PDE}, then $u\equiv0$ in $\Omega$.
\end{thm}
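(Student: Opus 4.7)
The plan is to apply the \emph{direct} method of scaling spheres, following the three-step template used for Theorem \ref{pde-unbd} but invoking the narrow region principle Theorem \ref{NP-2} (which dispenses with ABP estimates) in place of the small region principle Theorem \ref{NP-1}. For each $\lambda>d_{\Omega}$, define
\[
x^{\lambda}:=P+\frac{\lambda^{2}(x-P)}{|x-P|^{2}},\qquad \bar u_{\lambda}(x):=\Bigl(\frac{\lambda}{|x-P|}\Bigr)^{n-2s}u(x^{\lambda}),\qquad w_{\lambda}:=\bar u_{\lambda}-u.
\]
Since $\mathbb{R}^{n}\setminus\overline{\Omega}$ is g-radially convex, the monotone-increasing cross sections $\Sigma^{r}_{\Omega}$ of Remark \ref{rem14} guarantee $x^{\lambda}\in\Omega$ whenever $x\in\Omega\cap B_{\lambda}(P)$, so $\bar u_{\lambda}\geq 0$ is well defined; together with $u\equiv 0$ on $\mathbb{R}^{n}\setminus\Omega$ this yields $w_{\lambda}\geq 0$ outside $\Omega\cap B_{\lambda}(P)$. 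The Kelvin-transform identity $(-\Delta)^{s}\bar u_{\lambda}(x)=(\lambda/|x-P|)^{n+2s}f(x^{\lambda},u(x^{\lambda}))$, combined with subcriticality (Definition \ref{defn1}, applied at $\mu=(\lambda/|x-P|)^{n-2s}>1$), gives the pointwise bound $(-\Delta)^{s}\bar u_{\lambda}(x)\geq f(x,\bar u_{\lambda}(x))$; subtracting the PDE for $u$ and using $(\mathbf{f_{1}})$ together with the linearization furnished by $(\mathbf{f'_{2}})$ converts the difference into
\[
(-\Delta)^{s}w_{\lambda}(x)-c_{\lambda}(x)w_{\lambda}(x)\geq 0 \quad\text{on}\quad \{w_{\lambda}<0\}\cap\Omega\cap B_{\lambda}(P),
\]
with a potential $c_{\lambda}$ whose singularity at $\partial\Omega$ is precisely absorbed by the weight $g(\operatorname{dist}(\cdot,\partial\Omega))\operatorname{dist}(\cdot,\partial\Omega)^{2s}$ built into $(\mathbf{f'_{2}})$---exactly the structural hypothesis that Theorem \ref{NP-2} is engineered to accommodate.

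\textbf{Start and scale-up.} For $\lambda$ slightly above $d_{\Omega}$, $\Omega\cap B_{\lambda}(P)$ is a thin region adjacent to $\partial\Omega$, and Theorem \ref{NP-2} immediately produces $w_{\lambda}\geq 0$ there. Set $\lambda_{0}:=\sup\{\Lambda>d_{\Omega}:w_{\lambda}\geq 0\ \text{in}\ \Omega\cap B_{\lambda}(P),\ \forall\,d_{\Omega}<\lambda<\Lambda\}$ and aim for $\lambda_{0}=+\infty$. If instead $\lambda_{0}<+\infty$, continuity gives $w_{\lambda_{0}}\geq 0$; the strong maximum principle for $(-\Delta)^{s}$ upgrades this to $w_{\lambda_{0}}>0$ on $\Omega\cap B_{\lambda_{0}}(P)$; decomposing $\Omega\cap B_{\lambda_{0}+\delta}(P)$ into (i) an interior compact core on which $w_{\lambda_{0}+\delta}$ remains positive by uniform continuity in $\lambda$, and (ii) a thin collar near $\partial(\Omega\cap B_{\lambda_{0}}(P))$ on which Theorem \ref{NP-2} applies once more, yields $w_{\lambda_{0}+\delta}\geq 0$, contradicting the definition of $\lambda_{0}$.

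\textbf{Asymptotic lower bound, bootstrap, and contradiction.} With $w_{\lambda}\geq 0$ for every $\lambda>d_{\Omega}$, fix any $x_{0}\in\Omega$ with $u(x_{0})>0$; rewriting $u(x_{0}^{\lambda})\geq(|x_{0}-P|/\lambda)^{n-2s}u(x_{0})$ with $y:=x_{0}^{\lambda}$ and $R=|y-P|=\lambda^{2}/|x_{0}-P|\to+\infty$ produces $u(y)\gtrsim|y-P|^{-(n-2s)/2}$ along the ray from $P$ through $x_{0}$; varying $x_{0}$ over the cross section $\Sigma$ of $(\mathbf{f_{3}})$(i) extends this to a uniform lower bound throughout $\mathcal{C}^{r,e}_{P,\Sigma}$. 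Inserting it into $(-\Delta)^{s}u\geq C|x-P|^{a}u^{p}$ and invoking precise lower-bound estimates on the Green kernel of $(-\Delta)^{s}$ on an interior sub-cone of $\mathcal{C}^{r,e}_{P,\Sigma}$ upgrades the exponent to $\mu_{1}=a+2s-p(n-2s)/2$, and the subcriticality $p<p_{c}(a)$ is precisely $\mu_{1}>-(n-2s)/2=\mu_{0}$. Iterating the bootstrap $\mu_{k+1}=a+2s+p\mu_{k}$, the sequence either escapes the unstable fixed point $\mu^{\ast}=-(a+2s)/(p-1)$ at geometric rate (when $p>1$) or grows arithmetically by $a+2s>0$ (when $p=1$), so $\mu_{k}\to+\infty$. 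Once $\mu_{k}\geq 2s$, the estimate $u\gtrsim|y-P|^{\mu_{k}}$ on a cone at infinity forces $\int_{\mathbb{R}^{n}}|u(x)|/(1+|x|^{n+2s})\,\dd x=+\infty$, contradicting $u\in\mathcal{L}_{s}(\mathbb{R}^{n})$; hence no positive classical solution can exist. The nonnegative case follows via \eqref{nonlocal defn}: a zero $u(x_{0})=0$ at an interior point forces $(-\Delta)^{s}u(x_{0})\leq 0$ which, together with $f(x_{0},0)\geq 0$, gives $u\equiv 0$.

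\textbf{Main obstacle.} The non-routine input is Theorem \ref{NP-2} itself: constructing a narrow region principle that simultaneously absorbs the blow-up of $f(x,\cdot)$ at $\partial\Omega$ permitted by $(\mathbf{f'_{2}})$ and the merely quasi-geometric exterior cone regularity \eqref{pde-boundary} of $\partial\Omega$, without access to the ABP machinery invoked for Theorem \ref{NP-1}. Conditional on that principle, the scale-up, bootstrap, and integrability contradiction are routine adaptations of the MSS scheme from \cite{DQ0}.
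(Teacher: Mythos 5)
Your overall strategy---apply the direct method of scaling spheres with a narrow region principle that tolerates boundary blow-up of $f$, in place of the ABP-based small region principle---is exactly the paper's, and your scale-up, bootstrap recursion $\mu_{k+1}=a+2s+p\mu_k$, and final divergence argument are sound (you use the $\mathcal{L}_s(\mathbb{R}^n)$ integrability directly rather than re-inserting into the Green kernel integral as the paper does, but that is an immaterial variant; also $(\mathbf{f_1})$ is not needed in the PDE case since only $c_\lambda^+\leq h_\lambda$ matters). The genuine gap is that you treat Theorem \ref{NP-2} as a black-box input, whereas in the paper it is not a pre-existing result: it is stated and proved \emph{inside} the proof of Theorem \ref{pde-unbd-2}, so a complete proof of this theorem must establish it.

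That missing step is the substantive content. The paper proves Theorem \ref{NP-2} by contradiction: suppose there are $\lambda_k\in I_0$, sets $D_k$ shrinking to $\partial(\Omega\cap B_{\lambda_k}(0))$, and $\omega_k$ with $\omega_k(x_k)=\min_{D_k}\omega_k<0$ and $d_k:=\text{dist}(x_k,\partial(\Omega\cap B_{\lambda_k}(0)))\to0$. One splits the singular integral $(-\Delta)^s\omega_k(x_k)$ using the spherical anti-symmetry $\omega_k=-(\omega_k)_{\lambda_k}$ and the pointwise kernel comparison, discarding all nonnegative pieces, to land on
$(-\Delta)^s\omega_k(x_k)\leq C_{s,n}\omega_k(x_k)\int_{\mathbb{R}^n\setminus(\Omega\cap B_{\lambda_k}(0))}|x_k-y|^{-n-2s}\,\dd y.$
Restricting to $(\mathbb{R}^n\setminus(\Omega\cap B_{\lambda_k}(0)))\cap B_{d_k}(\tilde x_k)$ near a foot point $\tilde x_k$ and invoking the measure lower bound furnished by \eqref{pde-boundary} gives $(-\Delta)^s\omega_k(x_k)\leq C\,\omega_k(x_k)/d_k^{2s}$, which combined with $(-\Delta)^s\omega_k(x_k)\geq c^+_{\lambda_k}(x_k)\omega_k(x_k)$ forces $d_k^{2s}c^+_{\lambda_k}(x_k)\geq C>0$, contradicting the vanishing hypothesis \eqref{pde-32} (which your assumption $(\mathbf{f'_2})$ supplies) as $d_k\to0$. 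Supplying this argument would close your proposal; everything else you wrote matches the paper's proof.
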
	
	
\smallskip

Now we are to establish Liouville theorem for Dirichlet problem of PDEs \eqref{PDE} with $0<s\leq1$ via the \emph{direct method of scaling spheres} on g-radially convex domains.

\medskip

Suppose $\Omega\subset \mathbb R^n$ is a g-radially convex domain with $P\in\overline{\Omega}$ as the center. We consider the following fractional or second order PDEs of type \eqref{PDE} which may possibly be singular at the center $P$:
\begin{equation}\label{pdeb-1}
	\begin{cases}
		(-\Delta)^s u(x)=f(x,u(x)),\ \ \ & x\in\Omega,\\
		u(x)=0, \ \ \ & x\in\mathbb{R}^n\setminus (\Omega\cup \{P\}).
	\end{cases}
\end{equation}
where $n\geq \max\{2s,1\}$,  $s\in(0,1]$ and $u\in \mathcal{L}_{s}(\mathbb{R}^{n})\cap C^{[2s],\{2s\}+\eps}_{loc}(\Omega\setminus\{P\})\cap C(\overline{\Omega}\setminus\{P\})$ with arbitrarily small $\eps>0$ if $0<s<1$, $u\in C^{2}(\Omega\setminus\{P\})\cap C(\overline{\Omega}\setminus\{P\})$ if $s=1$. One should note that, the solution $u$ \emph{may have singularity} at $P$.

\medskip

Instead of assumption $(\mathbf{f_{2}})$, we need the following modified assumption $(\mathbf{\widehat{f}_{2}})$ on $f(x,u)$.
\begin{itemize}
	\item[$(\mathbf{\widehat{f}_{2}})$]  In the sense of Definition \ref{defn2}, $f(x,u)$ is locally $L^{q}$-Lipschitz on $u$ in $\left(\Omega\setminus B_\varepsilon(P)\right)\times\overline{\mathbb{R}_{+}}$ for any $\varepsilon>0$.
\end{itemize}

\medskip

One can easily check that if $f$ is supercritical in the sense of Definition \ref{defn1} and satisfies $(\mathbf{\widehat{f}_{2}})$ and $(\mathbf{f_{3}})$, then by Kelvin type transforms, we can transfer the supercritical Dirichlet problem \eqref{pdeb-1} in g-radially convex domain to a subcritical Dirichlet problem in MSS applicable domain (such that $\mathbb{R}^{n}\setminus\overline{\Omega}$ is a g-radially convex domain) with nonlinear term satisfying the assumptions $(\mathbf{f_{2}})$ and $(\mathbf{f_{3}})$. Thus we deduce immediately from Theorems \ref{pde-unbd} and \ref{pde-unbd-a} the following Liouville theorem on the supercritical Dirichlet problem \eqref{pdeb-1} for solutions that may possibly be singular at $P$ (including \emph{all classical solutions}).
\begin{thm}\label{pde-bd}
Assume $\Omega\subset \mathbb R^n$ is a g-radially convex domain with radially convex center $P$, $0<s\leq1$, $f$ is supercritical in the sense of Definition \ref{defn1} and satisfies $(\mathbf{f_{3}})$. Suppose $f$ satisfies $(\mathbf{\widehat{f}_{2}})$ with $q=+\infty$ when $s\in(0,1)$, and with $q=\frac{n}{2}$ if $n\geq3$ (with $q=1+\delta$ if $n=2$, where $\delta>0$ is arbitrarily small) provided that $\Omega$ is bounded, $\partial\Omega\setminus\{P\}$ is locally Lipschitz and $f$ satisfies $(\mathbf{A})$, or else with $q=n$ when $s=1$. If $\Omega$ is unbounded, assume that $u=o\left(\frac{1}{|x|^{n-2s}}\right)$ as $|x|\rightarrow+\infty$ with $x\in\Omega$. Then there is no positive solution to \eqref{pdeb-1} in $\Omega$. Moreover, suppose $f(x,u)\geq0$ in $\Omega\times[0,+\infty)$ and $u$ is a nonnegative solution to the Dirichlet problem \eqref{pdeb-1} that may be singular at $P$, then $u\equiv0$ in $\Omega$.
\end{thm}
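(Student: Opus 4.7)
The plan is to reduce the supercritical Dirichlet problem on the g-radially convex domain $\Omega$ to the subcritical Dirichlet problem already solved in Theorems~\ref{pde-unbd} and~\ref{pde-unbd-a} by means of the Kelvin transform centered at the radially convex center $P$. Concretely, I would set
\[
\widetilde{x}:=\frac{x-P}{|x-P|^{2}}+P,\qquad \widetilde{u}(\widetilde{x}):=|\widetilde{x}-P|^{-(n-2s)}u(x),\qquad x\in\Omega\setminus\{P\},
\]
and denote by $\widetilde{\Omega}$ the Kelvin image of $\Omega$. By Remark~\ref{rem0}, $\mathbb{R}^{n}\setminus\overline{\widetilde{\Omega}}$ is a g-radially convex domain with radially convex center $P$, so $\widetilde{\Omega}$ falls in the scope of Theorem~\ref{pde-unbd} (respectively Theorem~\ref{pde-unbd-a} when $s=1$). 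The standard conformal identity $(-\Delta)^{s}\widetilde{u}(\widetilde{x})=|\widetilde{x}-P|^{-(n+2s)}(-\Delta)^{s}u(x)$, valid for both $0<s<1$ and $s=1$, turns \eqref{pdeb-1} into
\[
(-\Delta)^{s}\widetilde{u}(\widetilde{x})=\widetilde{f}(\widetilde{x},\widetilde{u}(\widetilde{x})),\qquad \widetilde{x}\in\widetilde{\Omega},
\]
with $\widetilde{f}(\widetilde{x},v):=|\widetilde{x}-P|^{-(n+2s)}f\!\left(\tfrac{\widetilde{x}-P}{|\widetilde{x}-P|^{2}}+P,\;|\widetilde{x}-P|^{n-2s}v\right)$.

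The second step is to verify that $\widetilde{f}$ satisfies the hypotheses of Theorem~\ref{pde-unbd} or Theorem~\ref{pde-unbd-a}. Monotonicity $(\mathbf{f_{1}})$ is inherited because the inversion only rescales $v$ by a positive factor. The scaling expression $\mu^{(n+2s)/(n-2s)}\widetilde{f}(\mu^{2/(n-2s)}(\widetilde{x}-P)+P,\mu^{-1}v)$, after unraveling the inversion, equals $\nu^{(n+2s)/(n-2s)}f(\nu^{2/(n-2s)}(x-P)+P,\nu^{-1}u)$ with $\nu=\mu^{-1}$, so the supercritical strict decrease of the latter in $\mu\geq 1$ becomes the subcritical strict increase of the former, which is exactly Definition~\ref{defn1} for $\widetilde{f}$. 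The assumption $(\mathbf{\widehat{f}_{2}})$, which only requires local $L^{q}$-Lipschitz estimates on $\Omega\setminus B_{\eps}(P)$, transfers by the chain rule and $|\widetilde{x}-P|=1/|x-P|$ into the local $L^{q}$-Lipschitz assumption $(\mathbf{f_{2}})$ for $\widetilde{f}$ on bounded pieces of $\widetilde{\Omega}$. Finally, the lower bound $f(x,u)\geq C|x-P|^{a}u^{p}$ of $(\mathbf{f_{3}})$ on an interior cone $\mathcal{C}^{r}_{P,\Sigma}\subset\Omega$ becomes, via $u(x)=|\widetilde{x}-P|^{n-2s}\widetilde{u}(\widetilde{x})$, a lower bound $\widetilde{f}(\widetilde{x},v)\geq C|\widetilde{x}-P|^{\widetilde{a}}v^{p}$ with $\widetilde{a}=-a-(n+2s)+p(n-2s)$ on the corresponding exterior fan $\mathcal{C}^{1/r,e}_{P,\Sigma}\subset\widetilde{\Omega}$; a direct computation shows $p_{c}(\widetilde{a})=2p-p_{c}(a)$, so $\widetilde{a}>-2s$ and $1\leq p<p_{c}(\widetilde{a})$ precisely when $p>p_{c}(a)$.

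The third step is to track the global regularity of $\widetilde{u}$. The decay hypothesis $u(x)=o\!\left(|x|^{-(n-2s)}\right)$ at infinity (when $\Omega$ is unbounded) guarantees $\widetilde{u}(\widetilde{x})\to 0$ as $\widetilde{x}\to P$, so $\widetilde{u}$ extends continuously by zero to $P$; conversely, any allowed singularity of $u$ at $P$ is pushed to infinity in $\widetilde{x}$-space, where it is controlled in $\mathcal{L}_{s}(\mathbb{R}^{n})$ by the weight $(1+|\widetilde{x}|^{n+2s})^{-1}$. The Dirichlet condition $u\equiv 0$ on $\mathbb{R}^{n}\setminus(\Omega\cup\{P\})$ transports to $\widetilde{u}\equiv 0$ on $\mathbb{R}^{n}\setminus\widetilde{\Omega}$, and $\widetilde{u}$ lies in the function class required in Theorem~\ref{pde-unbd}. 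I then apply Theorem~\ref{pde-unbd} with $q=+\infty$ if $s\in(0,1)$, Theorem~\ref{pde-unbd} with $q=n$ if $s=1$ in the general case, or Theorem~\ref{pde-unbd-a} with $q=n/2$ (respectively $q=1+\delta$ if $n=2$) under the Lipschitz-plus-$(\mathbf{A})$ variant, to conclude $\widetilde{u}\equiv 0$ on $\widetilde{\Omega}$, and hence $u\equiv 0$ on $\Omega$.

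The main technical obstacle is the bookkeeping in the second paragraph: making sure that the exponents and cones match, namely that the lower bound $(\mathbf{f_{3}})$ for $f$ on an \emph{interior} cone attached to $P$ produces a lower bound for $\widetilde{f}$ on the corresponding \emph{exterior} fan-shaped region of $\widetilde{\Omega}$ with admissible parameters $(\widetilde{a},p)$, and that the decay control plus the singularity control at $P$ yield $\widetilde{u}\in\mathcal{L}_{s}(\mathbb{R}^{n})\cap C^{[2s],\{2s\}+\eps}_{loc}(\widetilde{\Omega})\cap C(\overline{\widetilde{\Omega}})$ uniformly in the two cases ($\Omega$ bounded or unbounded, $P$ interior or on $\partial\Omega$). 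Once this is done, the reduction is automatic and the Liouville conclusion is immediate.
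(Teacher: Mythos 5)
Your proposal is correct and follows essentially the same route as the paper: perform the Kelvin transform centered at the radially convex center $P$ (with $\hat u(x)=|x|^{-(n-2s)}u(x/|x|^2)$ after normalizing $P=0$), use Remark~\ref{rem0} to see that $\widehat\Omega$ has g-radially convex complement, verify that the transformed nonlinearity is subcritical and satisfies $(\mathbf{f_{2}})$ and $(\mathbf{f_{3}})$ with the appropriate Hardy--H\'{e}non parameters, and invoke Theorems~\ref{pde-unbd} and~\ref{pde-unbd-a}. Your computations of $\widetilde{a}$ and $p_c(\widetilde a)=2p-p_c(a)$ and your tracking of the regularity/decay of $\widetilde u$ near $P$ and at infinity fill in exactly the details the paper summarizes with ``one can verify,'' so no gap.
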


For $0<s<1$, if the g-radially convex domain $\Omega$ satisfies the geometric property that the geometric condition $\mathbf{G}$ holds uniformly on $\partial\Omega\bigcap\left(B_R(P)\right)^{c}$ for any $R>0$, that is,
\begin{eqnarray}\label{pdeb-boundary}
	&& \forall \,\, R>0, \,\, \text{there exists} \,\, \nu_R>0 \,\, \text{s.t.}, \,\, \forall \,\, x\in\partial\Omega\bigcap\left(B_R(P)\right)^{c}, \,\, \text{there are} \\ \nonumber && \text{quasi-geometric sequence} \,\, \{r_{k}\}_{k=0}^{+\infty} \,\,\text{and} \,\, \nu\geq\nu_{R} \,\, \text{s.t.}\,\, \eqref{gc0} \,\, \text{holds},	
\end{eqnarray}
and suppose that $f$ is supercritical in the sense of Definition \ref{defn1}, and satisfies $(\mathbf{f_{3}})$ and
\begin{itemize}
	\item [$(\mathbf{\widehat{f}'_{2}})$]  In the sense of Definition \ref{defn2}, there exists a function $g:\,[0,+\infty)$ with $\lim\limits_{t\rightarrow0^{+}}g(t)=+\infty$ such that $g(\text{dist}(x, \partial\Omega))\text{dist}(x, \partial\Omega)^{2s}f(x,u)$ is locally $L^\infty$-Lipschitz on $u$  in $\Omega\setminus B_\varepsilon(P)\times\overline{\mathbb{R}_{+}}$ for any $\varepsilon>0$,
\end{itemize}	
then we can apply Kelvin type transforms and obtain from Theorem \ref{pde-unbd-2} the following Liouville theorem (when $f(x,u)$ itself is not locally $L^\infty$-Lipschitz) on the supercritical Dirichlet problem \eqref{pdeb-1} for solutions that \emph{may possibly be singular} at $P$.
 \begin{thm}\label{pde-bd-2}
Assume $\Omega\subset \mathbb R^n$ is a g-radially convex domain with radially convex center $P$ satisfying \eqref{pdeb-boundary} and $0<s<1$. If $\Omega$ is unbounded, assume that $u=o\left(\frac{1}{|x|^{n-2s}}\right)$ as $|x|\rightarrow+\infty$ with $x\in\Omega$. Suppose $f$ is supercritical in the sense of Definition \ref{defn1} and satisfies the assumptions $(\mathbf{\widehat{f}'_{2}})$ and $(\mathbf{f_{3}})$. Then there is no positive solution to \eqref{pdeb-1} in $\Omega$. Moreover, suppose $f(x,u)\geq0$ in $\Omega\times[0,+\infty)$ and $u$ is a nonnegative solution to the Dirichlet problem \eqref{pdeb-1} that may be singular at $P$, then $u\equiv0$ in $\Omega$.
\end{thm}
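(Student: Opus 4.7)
The plan is to deduce Theorem~\ref{pde-bd-2} from Theorem~\ref{pde-unbd-2} by means of a Kelvin transform centered at the radially convex center $P$, mirroring the way Theorem~\ref{pde-bd} was obtained from Theorems~\ref{pde-unbd} and~\ref{pde-unbd-a}. First I would set
\[
  y:=\frac{x-P}{|x-P|^{2}}+P, \qquad \widetilde{u}(y):=|y-P|^{-(n-2s)}\,u(x),
\]
and let $\widetilde{\Omega}$ denote the image of $\Omega$ under this inversion. By Remark~\ref{rem0}, $\mathbb{R}^{n}\setminus\overline{\widetilde{\Omega}}$ is a g-radially convex domain with the same center $P$, which matches the geometric hypothesis of Theorem~\ref{pde-unbd-2}.

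Next I would verify that $\widetilde{u}$ satisfies a Dirichlet problem of the same form,
\[
  (-\Delta)^{s}\widetilde{u}(y)=\widetilde{f}(y,\widetilde{u}(y)) \quad \text{in } \widetilde{\Omega}, \qquad \widetilde{u}=0 \quad \text{in } \mathbb{R}^{n}\setminus\widetilde{\Omega},
\]
where the transformed nonlinearity is
\[
  \widetilde{f}(y,v):=|y-P|^{-(n+2s)}\,f\!\left(\tfrac{y-P}{|y-P|^{2}}+P,\,|y-P|^{n-2s}v\right).
\]
The key algebraic observation is that the \emph{supercritical} rescaling condition on $f$ from Definition~\ref{defn1} translates exactly into the \emph{subcritical} rescaling condition for $\widetilde{f}$; likewise, the growth bound $f(x,u)\geq C|x-P|^{a}u^{p}$ on an interior cone $\mathcal{C}^{r}_{P,\Sigma}\subset\Omega$ in $(\mathbf{f_{3}})$(ii) transforms into a bound of the same Hardy--H\'enon form for $\widetilde{f}$ on the fan-shaped exterior domain $\mathcal{C}^{1/r,e}_{P,\Sigma}\subset\widetilde{\Omega}$ with exponents $(\widetilde{a},p)$ still subcritical relative to $p_{c}(\widetilde{a})$, yielding $(\mathbf{f_{3}})$(i) for $\widetilde{f}$. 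Since the Kelvin map is bi-Lipschitz on any region staying at positive distance from $P$, the locally $L^{\infty}$-Lipschitz weighted hypothesis $(\mathbf{\widehat{f}'_{2}})$ on $\Omega\setminus B_{\varepsilon}(P)$ pulls back to $(\mathbf{f'_{2}})$ for $\widetilde{f}$ on bounded subsets of $\widetilde{\Omega}$, because $\operatorname{dist}(x,\partial\Omega)$ and $\operatorname{dist}(y,\partial\widetilde{\Omega})$ are comparable up to factors of $|x-P|^{2}$ bounded on such regions.

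I would then transfer the boundary regularity: for each $R>0$ the set $\partial\Omega\cap\bigl(B_{R}(P)\bigr)^{c}$ is mapped onto $\partial\widetilde{\Omega}\cap B_{1/R}(P)$, and the bi-Lipschitz character of the inversion away from $P$ sends quasi-geometric sequences (Definition~\ref{qgs}) of annuli to quasi-geometric sequences, preserving the density bound \eqref{gc0} up to a constant depending only on $R$. Thus \eqref{pdeb-boundary} for $\Omega$ implies \eqref{pde-boundary} for $\widetilde{\Omega}$. When $\Omega$ is unbounded, the assumed decay $u(x)=o\bigl(|x-P|^{-(n-2s)}\bigr)$ as $|x-P|\to+\infty$ gives $\widetilde{u}(y)\to 0$ as $y\to P$, so $\widetilde{u}$ extends continuously across $P\in\mathbb{R}^{n}\setminus\widetilde{\Omega}$ and enjoys the regularity demanded in Theorem~\ref{pde-unbd-2}; when $\Omega$ is bounded, $P$ is separated from $\widetilde{\Omega}$ by a positive distance and no decay is required. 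Applying Theorem~\ref{pde-unbd-2} to $\widetilde{u}$ on $\widetilde{\Omega}$ forces $\widetilde{u}\equiv 0$, whence $u\equiv 0$ on $\Omega$.

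The main obstacle I anticipate is the careful bookkeeping of the scaling exponents and of the geometric condition under inversion. One has to verify that the supercritical condition together with the growth bound $(\mathbf{f_{3}})$(ii) really produce a $\widetilde{f}$ whose exponent pair lies strictly below the critical threshold $p_{c}(\widetilde{a})$ adapted to $\widetilde{\Omega}$, and that the quasi-geometric annulus condition \eqref{gc0} — which is imposed on $\partial\Omega$ at points \emph{far from} $P$ but used on $\partial\widetilde{\Omega}$ at points \emph{near} $P$ — transfers with uniform constants on $B_{R}(P)$ for each $R>0$, despite the fact that the Kelvin differential distorts radii by the factor $|x-P|^{-2}$ which degenerates as $|x-P|\to\infty$.
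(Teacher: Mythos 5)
Your proposal follows essentially the same route as the paper's proof: both reduce Theorem~\ref{pde-bd-2} to Theorem~\ref{pde-unbd-2} via the Kelvin transform centered at $P$, checking that the transformed domain $\widehat\Omega$ has g-radially convex complement, that the supercritical hypothesis on $f$ becomes subcritical for $\hat f$, that $(\mathbf{\widehat{f}'_2})$ and $(\mathbf{f_3})$ pass to $(\mathbf{f'_2})$ and $(\mathbf{f_3})$, that \eqref{pdeb-boundary} becomes \eqref{pde-boundary}, and (in the unbounded case) that the decay hypothesis yields vanishing of $\hat u$ at $P$. Your proposal in fact spells out the exponent arithmetic and the transfer of the geometric condition in more detail than the paper, which simply states that these verifications can be done; the one point to tidy is your remark that $|x-P|^{2}$ is ``bounded'' on the relevant regions—for a bounded subset of $\widetilde\Omega$ meeting a neighborhood of $P$ (unbounded $\Omega$ case) it is only bounded \emph{below}, but the scale-invariance of \eqref{gc0} and the cancellation of $|y-P|$ powers in the Lipschitz-constant computation are what actually save the argument there.
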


\begin{rem}\label{rem6}
One can observe that, by applying the method of scaling spheres directly to PDEs, we do not need the assumption $(\mathbf{f_{1}})$ except the case that $s=1$ and $\Omega=\mathbb{R}^{n}$ is regarded as the complement of g-radially convex set $\emptyset$. Thus it follows from Theorems \ref{pde-unbd}, \ref{pde-unbd-a}, \ref{pde-unbd-2}, \ref{pde-bd} and \ref{pde-bd-2} that PDEs \eqref{PDE} with $f(x,u)=|x-P|^{a}u^{-p}$, $0<s<1$ and $\Omega=\mathbb{R}^{n}$ possess no bounded positive solutions, where $p>0$ and $a>-2s$.
\end{rem}

\subsection{The method of scaling spheres in integral forms and in local way on MSS applicable domains and applications}

Consider the following integral equations (IEs):
\begin{equation}\label{IE}
  u(x)=\int_{\Omega}K^{s}_{\Omega}(x,y)f(y,u(y))\mathrm{d}y,
\end{equation}
where $\Omega$ is an arbitrary MSS applicable domain in $\mathbb{R}^{n}$ with center $P$ (i.e., $\Omega$ or $\mathbb{R}^{n}\setminus\overline{\Omega}$ is a g-radially convex domain), $u\in C(\overline{\Omega})$ if $\mathbb{R}^{n}\setminus\overline{\Omega}$ is a g-radially convex domain, $u\in C(\overline{\Omega}\setminus\{P\})$ if $\Omega$ is a g-radially convex domain, $f(x,u)\geq0$, $s>0$, $n\geq\max\{2s,1\}$ and the integral kernel $K^{s}_{\Omega}(x,y)=0$ if $x$ or $y\notin\Omega$.

\medskip

If $\mathbb{R}^{n}\setminus\overline{\Omega}$ is a g-radially convex domain with radially convex center $P$, we assume the integral kernel $K^{s}_{\Omega}(x,y)$ satisfies the following assumptions:
\begin{itemize}
  \item[$(\mathbf{H_{1}})$] There exist constants $C_{0}, C_1, C_2>0$ such that
  $$0<K^{s}_{\Omega}(x,y)\leq C_1\ln\left[\frac{C_0\left(1+|x-P|\right)\left(1+|y-P|\right)}{|x-y|}\right], \ \quad \forall \ x, y \in \Omega, \qquad \text{if} \,\, 2s=n,$$
  $$0<K^{s}_{\Omega}(x,y)\leq \frac{C_{2}}{|x-y|^{n-2s}}, \ \quad \forall \ x, y \in \Omega, \qquad \text{if} \,\, 2s<n.$$
  \item[$(\mathbf{H_{2}})$] There exist $d_{\Omega}<r<+\infty$, a smaller cross-section $\Sigma\subseteq\Sigma^{r}_{\Omega}$ satisfying $\mathcal{C}^{r,e}_{P,\Sigma}\subset\Omega$ and a constant $\theta\in\mathbb{R}$ such that for some $x_0\in \Omega$, it holds
  $$\liminf_{y\in \mathcal{C}_{P,\Sigma}, \, |y-P|\to +\infty} K^{s}_{\Omega}(x_0,y)|y-P|^{\theta}>0.$$
  Moreover, there exist constants $C_3>0, R_0>d_{\Omega}, 0<\sigma_1<\sigma_2$ such that $$K^{s}_{\Omega}(x, y)\geq \frac{C_{3}}{|x-y|^{n-2s}}>0$$
  for any $x, y  \in \mathcal{C}_{P,\Sigma}$ with $|x-P|>R_0$, $|y-P|>R_{0}$ and $\sigma_1|x-P|\leq |x-y|\leq \sigma_2|x-P|$.
  \item[$(\mathbf{H_{3}})$] For any $\lambda>d_\Omega$ and $x \in \Omega$ with $x^\lambda\in \Omega\cap B_\lambda(P)$, there hold $$K^{s}_{\Omega}(x, y)-\left(\frac{\lambda}{|x-P|}\right)^{n-2s}K^{s}_{\Omega}(x^\lambda, y)>0, \qquad \forall \,\, y\in\Omega\setminus \overline{B_{\lambda}(P)},$$
      and
      \begin{eqnarray*}
         && K^{s}_{\Omega}(x, y)-\left(\frac{\lambda}{|x-P|}\right)^{n-2s}K^{s}_{\Omega}(x^\lambda, y)\geq \left(\frac{\lambda^{2}}{|x-P|\cdot|y-P|}\right)^{n-2s}K^{s}_{\Omega}(x^\lambda, y^\lambda) \\
        && \qquad\qquad\qquad\qquad\qquad\qquad\qquad\qquad\qquad -\left(\frac{\lambda}{|y-P|}\right)^{n-2s}K^{s}_{\Omega}(x, y^\lambda)
      \end{eqnarray*}
      for any $y\in\Omega$ with $y^\lambda\in\Omega\bigcap B_{\lambda}(P)$, where $x^{\lambda}:=\frac{\lambda^{2}(x-P)}{|x-P|^{2}}+P$.
 \end{itemize}

\medskip

We will prove later in Theorem \ref{pG} and Remark \ref{rem8} that Green's functions $G^{s}_{\Omega}$ for Dirichlet problem of $(-\Delta)^{s}$ with $0<s\leq1$ on MSS applicable domain $\Omega$ such that $\mathbb{R}^{n}\setminus\overline{\Omega}$ is g-radially convex and for Navier problem of $(-\Delta)^{s}$ with $s\in\mathbb{Z}^{+}$ on $\mathcal{C}_{P,S^{n-1}_{2^{-k}}}$ and $\mathcal{C}^{R,e}_{P,S^{n-1}_{2^{-k}}}$ satisfy all the assumptions $(\mathbf{H_{1}})$, $(\mathbf{H_{2}})$ and $(\mathbf{H_{3}})$.

\medskip

By applying \emph{the method of scaling spheres in integral forms}, we can derive the following Liouville property for IEs \eqref{IE} with integral kernel $K^{s}_{\Omega}$ on MSS applicable domain $\Omega$ such that $\mathbb{R}^{n}\setminus\overline{\Omega}$ is a g-radially convex domain.
\begin{thm}\label{ie-unbd}
Assume $\Omega\subset \mathbb R^n$ is a MSS applicable domain with center $P$ such that $\mathbb{R}^{n}\setminus\overline{\Omega}$ is a g-radially convex domain, $s>0$, $n\geq\max\{2s,1\}$ and $u\in C(\overline{\Omega})$ is a nonnegative solution to IEs \eqref{IE}. Suppose the integral kernel $K^{s}_{\Omega}$ satisfies the assumptions $(\mathbf{H_{1}})$, $(\mathbf{H_{2}})$ and $(\mathbf{H_{3}})$, $f$ is subcritical in the sense of Definition \ref{defn1} and satisfies the assumptions $(\mathbf{f_{1}})$, $(\mathbf{f_{2}})$ with $q=\frac{n}{2s}$ if $n>2s$ and $q=1+\delta$ ($\delta>0$ arbitrarily small) if $n=2s$, and $(\mathbf{f_{3}})$ with the same cross-section $\Sigma$ as $(\mathbf{H_{2}})$. Then $u\equiv0$ in $\Omega$.
\end{thm}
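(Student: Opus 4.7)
The plan is to implement the method of scaling spheres (MSS) in integral form, increasing the scaling radius $\lambda$ from $d_{\Omega}$ toward $+\infty$, as is geometrically natural when $\mathbb{R}^{n}\setminus\overline{\Omega}$ is g-radially convex with center $P$. For each $\lambda>d_{\Omega}$, set $x^{\lambda}:=P+\lambda^{2}(x-P)/|x-P|^{2}$ and define the Kelvin-type comparison
\[ u_{\lambda}(x):=\left(\frac{\lambda}{|x-P|}\right)^{n-2s}u(x^{\lambda}), \qquad x\in D_{\lambda}:=\Omega\setminus\overline{B_{\lambda}(P)}, \]
extended by $0$ where $x^{\lambda}\notin\Omega$. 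The goal of the first half of the proof is to show that $u\geq u_{\lambda}$ on $D_{\lambda}$ for every $\lambda>d_{\Omega}$.

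I would first use the strict positivity of $K^{s}_{\Omega}$ from $(\mathbf{H_{1}})$ together with $(\mathbf{f_{1}})$ and $(\mathbf{f_{3}})$ to conclude $u>0$ throughout $\Omega$ whenever $u\not\equiv 0$. Writing the integral equation for both $u(x)$ and $u(x^{\lambda})$, performing the change of variables $y\mapsto y^{\lambda}$ (with Jacobian $(\lambda/|y-P|)^{2n}$) in the latter, and invoking $(\mathbf{H_{3}})$ yields the key estimate
\[ [u_{\lambda}-u]_{+}(x)\leq\int_{D_{\lambda}\cap\{u_{\lambda}>u\}}K^{s}_{\Omega}(x,y)\bigl[f(y,u_{\lambda}(y))-f(y,u(y))\bigr]_{+}\,\mathrm{d}y. \]
Combining $(\mathbf{f_{2}})$ with $q=n/(2s)$ (or $q=1+\delta$ when $n=2s$) and the Hardy-Littlewood-Sobolev inequality, a standard contraction estimate forces $[u_{\lambda}-u]_{+}\equiv 0$ whenever the measure of $D_{\lambda}\cap\{u_{\lambda}>u\}$ is sufficiently small, which holds for $\lambda$ close to $d_{\Omega}$ by continuity. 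This initializes the MSS.

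For the continuation step, set $\bar{\lambda}:=\sup\{\lambda>d_{\Omega}:u\geq u_{\mu}$ on $D_{\mu}$ for all $d_{\Omega}<\mu\leq\lambda\}$ and assume $\bar{\lambda}<+\infty$ for contradiction. The strict inequality in $(\mathbf{H_{3}})$, together with $(\mathbf{f_{1}})$, $(\mathbf{f_{3}})$ and a strong-maximum-principle argument, forces $u>u_{\bar{\lambda}}$ strictly in the interior of $D_{\bar{\lambda}}$; the narrow-region contraction estimate applied at $\bar{\lambda}+\varepsilon$ then yields $u\geq u_{\bar{\lambda}+\varepsilon}$ for some small $\varepsilon>0$, contradicting maximality. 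Hence $\bar{\lambda}=+\infty$. Choosing $\lambda=\sqrt{r_{\ast}|x-P|}$ with $r_{\ast}>d_{\Omega}$ fixed, so that $x^{\lambda}$ lies in a compact subset of $\Omega\cap\mathcal{C}_{P,\Sigma}$ on which $u$ is bounded below by a positive constant, yields the initial decay
\[ u(x)\geq C|x-P|^{-(n-2s)/2} \qquad \text{for } x\in\mathcal{C}^{R,e}_{P,\Sigma} \text{ with } |x-P| \text{ sufficiently large.} \]

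The final ingredient is the bootstrap. Given $u(y)\geq C|y-P|^{-\mu_{k}}$ on a tail of $\mathcal{C}^{R,e}_{P,\Sigma}$, substituting into the IE, invoking $(\mathbf{f_{3}})$, and restricting the integration to the annular region in $(\mathbf{H_{2}})$ (where $K^{s}_{\Omega}(x,y)\gtrsim|x-y|^{-(n-2s)}$ and $|y-P|\asymp|x-P|$) gives $u(x)\geq C|x-P|^{-\mu_{k+1}}$ with $\mu_{k+1}:=p\mu_{k}-(a+2s)$, starting from $\mu_{0}=(n-2s)/2$. The subcriticality $p<p_{c}(a)=(n+2s+2a)/(n-2s)$ is exactly the condition $\mu_{0}<\mu^{\ast}:=(a+2s)/(p-1)$ (with the case $p=1$ reducing to a linear arithmetic descent since $a+2s>0$), which makes the iteration strictly push $\mu_{k}\to-\infty$; after finitely many steps, $p\mu_{k}\leq a+2s$, and then the tail integral $\int_{\mathcal{C}_{P,\Sigma}}K^{s}_{\Omega}(x_{0},y)|y-P|^{a-p\mu_{k}}\,\mathrm{d}y$ diverges at infinity by $(\mathbf{H_{2}})$, contradicting the finiteness of $u(x_{0})$ and forcing $u\equiv 0$. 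The main technical obstacle I anticipate is the transfer of the pointwise kernel hypothesis $(\mathbf{H_{3}})$ into the integral inequality above in a form that admits the small-measure contraction argument; this requires carefully pairing the two inequalities of $(\mathbf{H_{3}})$, matching positive and negative parts of $u_{\lambda}-u$ so that the cross-terms have the correct sign, and then ensuring that the HLS constant remains strictly less than $1$ uniformly as $\lambda$ varies near $d_{\Omega}$ and near $\bar{\lambda}$.
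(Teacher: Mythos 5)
Your plan follows the same broad MSS-in-integral-form strategy as the paper's proof of Theorem~\ref{ie-unbd} (the paper proves the lower-bound estimate as Theorem~\ref{lowerIE-1} and then derives the contradiction from $(\mathbf{H_{2}})$), but there are a few genuine gaps in the way you have set it up. The most serious is the initialization step. You compare $u$ with $u_{\lambda}$ on $D_{\lambda}=\Omega\setminus\overline{B_{\lambda}(P)}$ and claim that $\lvert D_{\lambda}\cap\{u_{\lambda}>u\}\rvert$ is small for $\lambda$ close to $d_{\Omega}$; but $D_{\lambda}$ is the \emph{large} side of the sphere (it exhausts $\Omega$ as $\lambda\to d_{\Omega}^{+}$), so this measure can be large or even infinite and the HLS contraction does not close as stated. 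The paper instead formulates the inequality $\omega^{\lambda}:=u_{\lambda}-u$ on the bounded side $\Omega\cap B_{\lambda}(P)$, where $\Omega_{\lambda}^{-}:=\{\omega^{\lambda}<0\}\subset\Omega\cap B_{\lambda}(P)$ has measure tending to $0$ as $\lambda\to d_{\Omega}^{+}$, so $\|h_{\lambda}\|_{L^{n/(2s)}(\Omega_{\lambda}^{-})}\to 0$ and the contraction is immediate. Your version is the Kelvin image of this, so it is not wrong in principle, but the Kelvin map blows up volumes near $P$ (precisely when $d_{\Omega}=0$) and carries a conformal weight; you would have to track that weight explicitly in the HLS estimate rather than invoke a naive smallness of measure. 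Working on the bounded side as the paper does is the clean way to avoid this.

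Two further points. First, your derivation of the key integral inequality credits $(\mathbf{H_{3}})$ alone, but $(\mathbf{H_{3}})$ only organizes the kernel differences into the form $\alpha(N_{1}-N_{2})$; the crucial strict inequality that replaces the rescaled nonlinearity $N_{1}=(\lambda/|y-P|)^{n+2s}f(y^{\lambda},u(y^{\lambda}))$ by $f(y,u_{\lambda}(y))$ and produces the strict sign that powers the continuation step (and rules out $\omega^{\lambda_{0}}\equiv 0$) is the \emph{strict} subcriticality of Definition~\ref{defn1}, not a ``strong maximum principle,'' which has no analogue in this pure integral-equation setting. Second, your write-up only treats $n>2s$: the theorem also covers $n=2s$, where the Kelvin weight degenerates to $1$, the kernel bound in $(\mathbf{H_{1}})$ is logarithmic, and the paper runs a separate argument splitting the HLS estimate according to $|x-y|\lessgtr C_{0}(\lambda+1)^{2}/2$ and using $(\mathbf{f_{2}})$ with $q=1+\delta$. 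Finally, your bootstrap exit condition ``$p\mu_{k}\leq a+2s$'' is not the right threshold for divergence; the correct condition involves the decay exponent $\theta$ from $(\mathbf{H_{2}})$ and the dimension $n$ (namely $p\mu_{k}\leq n+a-\theta$ in your sign convention), though the conclusion is the same since $\mu_{k}\to-\infty$.
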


\smallskip

Now assume that $\Omega$ is a g-radially convex domain with radially convex center $P$, $n\geq1$ and $0<2s<n$. Besides the assumption $(\mathbf{H_{1}})$ with $2s<n$, we assume the integral kernel $K^{s}_{\Omega}(x,y)$ satisfies the following assumptions:
\begin{itemize}
  \item[$(\mathbf{\widetilde{H}_{2}})$] There exist $0<r<\rho_{\Omega}$, a smaller cross-section $\Sigma\subseteq\Sigma^{r}_{\Omega}$ satisfying $\mathcal{C}^{r}_{P,\Sigma}\subset\Omega$ and a constant $\theta\in\mathbb{R}$ such that for some $x_0\in \Omega$, it holds
  $$\liminf_{y\in \mathcal{C}^{r}_{P,\Sigma}, \, |y-P|\to 0} K^{s}_{\Omega}(x_0,y)|y-P|^{-\theta}>0.$$
  Moreover, there exist constants $C_3>0, r_0<\rho_{\Omega}, 0<\sigma_1<\sigma_2$ such that $$K^{s}_{\Omega}(x, y)\geq \frac{C_{3}}{|x-y|^{n-2s}}>0$$
  for any $x, y\in \mathcal{C}_{P,\Sigma}$ with $|x-P|<r_0$, $|y-P|<r_{0}$ and $\sigma_1|y-P|\leq |x-y|\leq \sigma_2|y-P|$.
  \item[$(\mathbf{\widetilde{H}_{3}})$] For any $0<\lambda<\rho_\Omega$ and $x \in \Omega\cap B_\lambda(P)\setminus\{P\}$, there hold $$K^{s}_{\Omega}(x, y)-\left(\frac{\lambda}{|x-P|}\right)^{n-2s}K^{s}_{\Omega}(x^\lambda, y)>0, \qquad \forall \,\, y\in\Omega \,\,\,  \text{with} \,\, y^{\lambda}\in\Omega\setminus \overline{B_{\lambda}(P)},$$
      and
      \begin{eqnarray*}
         && K^{s}_{\Omega}(x, y)-\left(\frac{\lambda}{|x-P|}\right)^{n-2s}K^{s}_{\Omega}(x^\lambda, y)\geq \left(\frac{\lambda^{2}}{|x-P|\cdot|y-P|}\right)^{n-2s}K^{s}_{\Omega}(x^\lambda, y^\lambda) \\
        && \qquad\qquad\qquad\qquad\qquad\qquad\qquad\qquad\qquad -\left(\frac{\lambda}{|y-P|}\right)^{n-2s}K^{s}_{\Omega}(x, y^\lambda)
      \end{eqnarray*}
      for any $y\in\Omega\bigcap B_{\lambda}(P)\setminus\{P\}$, where $x^{\lambda}:=\frac{\lambda^{2}(x-P)}{|x-P|^{2}}+P$.
 \end{itemize}

\medskip

Instead of assumption $(\mathbf{f_{2}})$ with $q=\frac{n}{2s}$, we need the following modified assumption $(\mathbf{\widetilde{f}_{2}})$ on $f(x,u)$.
\begin{itemize}
	\item[$(\mathbf{\widetilde{f}_{2}})$]  In the sense of Definition \ref{defn2}, $f(x,u)$ is locally $L^{\frac{n}{2s}}$-Lipschitz on $u$ in $\Omega\setminus B_\varepsilon(P)\times\overline{\mathbb{R}_{+}}$ for any $\varepsilon>0$.
\end{itemize}

\medskip

By Theorem \ref{ie-unbd}, Kelvin transforms and \emph{the method of scaling spheres in local way}, we can derive the following Liouville property for nonnegative solutions (that may be singular at $P$) to IEs \eqref{IE} with $2s<n$ and integral kernel $K^{s}_{\Omega}$ on g-radially convex domain $\Omega$.
\begin{thm}\label{ie-bd}
Assume $\Omega\subset \mathbb R^n$ is a g-radially convex domain with radially convex center $P$, $0<2s<n$, $n\geq1$ and $u\in C(\overline{\Omega}\setminus\{P\})$ is a nonnegative solution to IEs \eqref{IE}. Suppose the integral kernel $K^{s}_{\Omega}$ satisfies the assumptions $(\mathbf{H_{1}})$, $(\mathbf{\widetilde{H}_{2}})$ and $(\mathbf{\widetilde{H}_{3}})$, $f$ satisfies the assumptions $(\mathbf{f_{1}})$, $(\mathbf{\widetilde{f}_{2}})$ and $(\mathbf{f_{3}})$ with the same cross-section $\Sigma$ as $(\mathbf{\widetilde{H}_{2}})$. Assume that either \\
(i) \, $f$ is supercritical in the sense of Definition \ref{defn1}, and $u=o\left(\frac{1}{|x|^{n-2s}}\right)$ as $|x|\rightarrow+\infty$ with $x\in\Omega$ if $\Omega$ is unbounded, \\
or\\
(ii) \, $\Omega$ is bounded, $f$ is critical or supercritical in the sense that $\mu^{\frac{n+2s}{n-2s}}f(\mu^{\frac{2}{n-2s}}(x-P)+P,\mu^{-1}u)$ is non-increasing w.r.t. $\mu\geq1$ or $\mu\leq1$ for all $(x,u)\in(\Omega\setminus\{P\})\times\mathbb{R}_{+}$, $f(x,u)>0$ provided that $u>0$ for a.e. $x\in\Omega\setminus\{P\}$ if $f$ is not supercritical in the sense of Definition \ref{defn1}; and $u(x)=o\left(\frac{1}{|x-P|^{\frac{n-2s}{2}}}\right)$ as $x\to P$ if $(\mathbf{f_{3}})$ is only fulfilled by $f$ in the case $p=p_{c}(a)$. \\
Then $u\equiv0$ in $\Omega$.
\end{thm}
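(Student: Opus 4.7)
The plan is to split into the two stated cases, reducing case (i) to Theorem \ref{ie-unbd} via a Kelvin transform and treating case (ii) directly by the method of scaling spheres applied locally around the center $P$.

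For case (i), I would apply the Kelvin transform $\widetilde{x}:=P+(x-P)/|x-P|^{2}$. By Remark \ref{rem0}, $\widetilde{\Omega}$ is then a MSS applicable domain whose complement is g-radially convex with center $P$ — exactly the geometric setting of Theorem \ref{ie-unbd}. Setting $\widetilde{u}(x):=|x-P|^{-(n-2s)}u(\widetilde{x})$ and letting $\widetilde{f}$, $\widetilde{K}^{s}_{\widetilde{\Omega}}$ denote the natural Kelvin-transformed nonlinearity and kernel, the task is to verify that $\widetilde{u}$ solves an integral equation of the form \eqref{IE} on $\widetilde{\Omega}$. The hypotheses $(\mathbf{H_{1}})$, $(\mathbf{\widetilde{H}_{2}})$, $(\mathbf{\widetilde{H}_{3}})$ are precisely calibrated so that $\widetilde{K}^{s}_{\widetilde{\Omega}}$ satisfies $(\mathbf{H_{1}})$, $(\mathbf{H_{2}})$, $(\mathbf{H_{3}})$; the supercriticality of $f$ in the sense of Definition \ref{defn1} turns into subcriticality of $\widetilde{f}$; and $(\mathbf{f_{1}})$, $(\mathbf{\widetilde{f}_{2}})$, $(\mathbf{f_{3}})$ become the corresponding conditions on $\widetilde{f}$ (the cross-section $\Sigma$ being preserved since the Kelvin map is radial). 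Finally, the hypothesis $u(x)=o(|x|^{-(n-2s)})$ as $|x|\to+\infty$ in the unbounded case is exactly what gives $\widetilde{u}\in C(\overline{\widetilde{\Omega}})$ with $\widetilde{u}(P)=0$, so that Theorem \ref{ie-unbd} applies to $\widetilde{u}$ and yields $\widetilde{u}\equiv 0$, hence $u\equiv 0$.

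For case (ii), Theorem \ref{ie-unbd} is unavailable because $f$ is only critical or weakly supercritical. Instead I would run the MSS locally around $P$: for $0<\lambda<\rho_{\Omega}$ and $x\in(\Omega\cap B_{\lambda}(P))\setminus\{P\}$ define $x^{\lambda}:=P+\lambda^{2}(x-P)/|x-P|^{2}$, $u_{\lambda}(x):=(\lambda/|x-P|)^{n-2s}u(x^{\lambda})$ and $w_{\lambda}:=u_{\lambda}-u$. Using $(\mathbf{\widetilde{H}_{3}})$ to split the kernel, the monotonicity $(\mathbf{f_{1}})$ to control the sign of $f(y,u_{\lambda})-f(y,u)$, and the stated critical invariance (or non-increasing rescaling property) of $\mu^{(n+2s)/(n-2s)}f(\mu^{2/(n-2s)}(x-P)+P,\mu^{-1}u)$, I would derive the scaling inequality
\begin{equation*}
w_{\lambda}(x)\geq \int_{(\Omega\cap B_{\lambda}(P))\setminus\{P\}}\bigl[K^{s}_{\Omega}(x,y)-(\lambda/|x-P|)^{n-2s}K^{s}_{\Omega}(x^{\lambda},y)\bigr]\bigl[f(y,u_{\lambda})-f(y,u)\bigr]\,\dd y.
\end{equation*}
A Hardy--Littlewood--Sobolev estimate combined with $(\mathbf{\widetilde{f}_{2}})$ would then yield $w_{\lambda}\geq 0$ on $(\Omega\cap B_{\lambda}(P))\setminus\{P\}$ for all sufficiently small $\lambda>0$; the decay hypothesis $u(x)=o(|x-P|^{-(n-2s)/2})$ as $x\to P$ is exactly what makes the $L^{2n/(n-2s)}$-absorption legitimate at the critical exponent $p=p_{c}(a)$.

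Setting $\lambda_{0}:=\sup\{\lambda\in(0,\rho_{\Omega}):w_{\mu}\geq 0\text{ on }(\Omega\cap B_{\mu}(P))\setminus\{P\}\text{ for all }0<\mu\leq\lambda\}$, the heart of the proof — and the main obstacle — is to rule out $\lambda_{0}<\rho_{\Omega}$ and, in the remaining case $\lambda_{0}=\rho_{\Omega}$, to deduce $u\equiv 0$. If $\lambda_{0}<\rho_{\Omega}$, a strict-improvement argument in a narrow region past $\lambda_{0}$ — using the lower bound in $(\mathbf{\widetilde{H}_{2}})$ together with the positivity $f(x,u)>0$ whenever $u>0$ (assumed precisely for the weakly supercritical subcase) — contradicts the maximality of $\lambda_{0}$. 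If $\lambda_{0}=\rho_{\Omega}$, then $u_{\lambda}\geq u$ for all $0<\lambda<\rho_{\Omega}$ gives that $u$ is radially non-increasing from $P$; iterating the resulting asymptotic lower bound via the MSS bootstrap, in conjunction with the lower bound on $K^{s}_{\Omega}$ in $(\mathbf{\widetilde{H}_{2}})$ and the H\'enon--Hardy estimate $(\mathbf{f_{3}})$ on $\mathcal{C}^{r}_{P,\Sigma}$, forces a singular lower bound on $u$ near $P$ that eventually contradicts the finiteness of $u$ at an interior point unless $u\equiv 0$. The most delicate technical point throughout is the control of the ``boundary'' contribution generated at the singular center $P$, which is precisely what the decay hypothesis $u=o(|x-P|^{-(n-2s)/2})$ is tailored to absorb.
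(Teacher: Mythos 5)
Your treatment of case (i) is correct and agrees with the paper: a Kelvin transform centered at $P$ sends the g-radially convex domain $\Omega$ to a MSS applicable domain whose complement is g-radially convex, converts the supercritical $f$ into a subcritical one satisfying $(\mathbf{f_1})$, $(\mathbf{f_2})$, $(\mathbf{f_3})$, and converts the kernel hypotheses $(\mathbf{\widetilde{H}_2})$, $(\mathbf{\widetilde{H}_3})$ into $(\mathbf{H_2})$, $(\mathbf{H_3})$; the decay $u=o(|x|^{-(n-2s)})$ at infinity makes the transformed solution continuous up to the image of infinity, so Theorem \ref{ie-unbd} applies.

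Your case (ii), however, contains a genuine gap: the direction of the sphere motion and the sign of the comparison inequality are both reversed, and the reflecting set you choose is not even geometrically admissible for a g-radially convex $\Omega$. You compare $u_{\lambda}$ and $u$ on $(\Omega\cap B_{\lambda}(P))\setminus\{P\}$ and dilate $\lambda$ from small to large, aiming at $u_{\lambda}\geq u$. But for $x\in\Omega\cap B_{\lambda}(P)$ the reflected point $x^{\lambda}$ lies \emph{outside} $B_{\lambda}(P)$, and when $\Omega$ itself is g-radially convex about $P$ there is no reason for $x^{\lambda}$ to remain in $\Omega$ (g-radial convexity guarantees that points move \emph{toward} $P$ stay in $\overline{\Omega}$, not away); thus $u(x^{\lambda})$ need not be controlled. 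Moreover $(\mathbf{\widetilde{f}_2})$ only provides a local $L^{n/2s}$-Lipschitz bound on $\Omega\setminus B_{\varepsilon}(P)$, so the HLS/narrow-region absorption you invoke is unavailable near $P$, exactly where your starting region sits. Finally, even if one granted $u_{\lambda}\geq u$ on $\Omega\cap B_{\lambda}(P)$ for all $\lambda\in(0,\rho_{\Omega})$, taking $|y-P|=R$ fixed and $x$ on the same ray with $|x-P|=r\to 0$ gives $\lambda^{2}=rR$ and
\begin{equation*}
u(x)\leq\Big(\frac{\lambda}{r}\Big)^{n-2s}u(y)=\Big(\frac{R}{r}\Big)^{\frac{n-2s}{2}}u(y),
\end{equation*}
i.e.\ a singular \emph{upper} bound $u(x)\leq C|x-P|^{-\frac{n-2s}{2}}$ near $P$, whereas you claim it "forces a singular lower bound." The paper instead \emph{shrinks} $\lambda$ from $\rho_{\Omega}$ towards $0$, compares $u$ and $u_{\lambda}$ on the set $(\Omega\setminus B_{\lambda}(P))^{\lambda}\subset\Omega\cap B_{\lambda}(P)$ (which does remain inside $\Omega$ precisely because $\Omega$ is g-radially convex, and which is bounded away from $P$ for $\lambda$ near $\rho_{\Omega}$ so $(\mathbf{\widetilde{f}_2})$ is usable), proves $\omega^{\lambda}:=u_{\lambda}-u\leq 0$ there, pushes $\lambda_{0}=\inf\{\lambda\}$ down to $0$, and thereby obtains the singular \emph{lower} bound $u(x)\geq C|x-P|^{-\frac{n-2s}{2}}$ (Theorem \ref{lowerIE-2}); it is this lower bound, improved by the bootstrap when $p>p_{c}(a)$ or contradicting the hypothesis $u=o(|x-P|^{-\frac{n-2s}{2}})$ when $p=p_{c}(a)$, that yields $u\equiv 0$.
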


\medskip

When $0<s\leq1$, for any $x\in\Omega$, the Green function $G^{s}_{\Omega}(x,y)$ for Dirichlet problem of $(-\Delta)^{s}$ on $\Omega$ solves
\begin{equation}
\begin{cases}(-\Delta)^{s} G^{s}_{\Omega}(x, y)=\delta(x-y), & y \in \Omega, \\ G^{s}_{\Omega}(x, y)=0, & y \in \mathbb{R}^{n}\setminus\Omega.\end{cases}
\end{equation}
When $s\geq1$ is an integer, for any $x\in\Omega$, the Green function $G^{s}_{\Omega}(x,y)$ for Navier problem of $(-\Delta)^{s}$ on $\Omega$ solves
\begin{equation}
\begin{cases}(-\Delta)^{s} G^{s}_{\Omega}(x, y)=\delta(x-y), & y \in \Omega, \\ G^{s}_{\Omega}(x, y)=(-\Delta) G^{s}_{\Omega}(x, y)=\cdots=(-\Delta)^{s-1} G^{s}_{\Omega}(x, y)=0, & y \in \partial \Omega.\end{cases}
\end{equation}
If $\Omega$ is unbounded, we assume naturally $\lim\limits_{|y|\to+\infty}G^{s}_{\Omega}(x,y)=0$ if $2s<n$ or $2s=n$ and $\mathbb{R}^{n}\setminus\overline{\Omega}$ is unbounded, and $\lim\limits_{|y|\to+\infty}G^{s}_{\Omega}(x,y)=c>0$ if $2s=n$ and $\mathbb{R}^{n}\setminus\overline{\Omega}$ is bounded.

\medskip

When $0<s\leq1$, we can show that the Green function $G^{s}_{\Omega}(x,y)$ for Dirichlet problem of $(-\Delta)^{s}$ satisfies all the assumptions $(\mathbf{H_{1}})$, $(\mathbf{H_{2}})$ and $(\mathbf{H_{3}})$, or $(\mathbf{\widetilde{H}_{2}})$ and $(\mathbf{\widetilde{H}_{3}})$ and deduce from Theorems \ref{ie-unbd} and \ref{ie-bd} the following theorem.
\begin{thm}\label{pG}
Assume $n\geq1$, $0<s\leq1$, $2s\leq n$ and there exists Green's function $G^{s}_{\Omega}(x,y)$ for Dirichlet problem of $(-\Delta)^{s}$ on MSS applicable domain $\Omega$, $\overline{\Omega}\neq\mathbb{R}^{n}$ if $n=2s$, then $G^{s}_{\Omega}(x,y)$ satisfies all the assumptions $(\mathbf{H_{1}})$, $(\mathbf{H_{2}})$ with arbitrary $r\in(d_{\Omega},+\infty)$, any cross-section $\Sigma$ such that $\overline{\Sigma}\subsetneq\Sigma^{r}_{\Omega}$ ($\Sigma=S^{n-1}$ if $\mathbb{R}^{n}\setminus\overline{\Omega}$ is bounded and nonempty), $\theta=n-2s+\gamma_{s}(\mathcal{C}_{P,\Sigma})\geq n-2s$ (see Theorem \ref{RMI-h} for definition of $\gamma_{s}(\mathcal{C})$) and $(\mathbf{H_{3}})$, or $(\mathbf{\widetilde{H}_{2}})$ with arbitrary $r\in(0,\rho_{\Omega})$, any cross-section $\Sigma$ such that $\overline{\Sigma}\subsetneq\Sigma^{r}_{\Omega}$, $\theta=\gamma_{s}(\mathcal{C}_{P,\Sigma})\geq 0$ and $(\mathbf{\widetilde{H}_{3}})$. Consequently, the Liouville results in Theorems \ref{ie-unbd} and \ref{ie-bd} hold true for IEs \eqref{IE} with $K^{s}_{\Omega}=G^{s}_{\Omega}$, i.e.,
\begin{equation}\label{IE1}
  u(x)=\int_{\Omega}G^{s}_{\Omega}(x,y)f(y,u(y))\mathrm{d}y.
\end{equation}
\end{thm}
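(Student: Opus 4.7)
The plan is to verify the hypotheses $(\mathbf{H_1})$, $(\mathbf{H_2})$, $(\mathbf{H_3})$ (respectively $(\mathbf{H_1})$, $(\mathbf{\widetilde{H}_2})$, $(\mathbf{\widetilde{H}_3})$) for the Green function $G^s_\Omega$, after which the Liouville conclusions for \eqref{IE1} follow verbatim from Theorems \ref{ie-unbd} and \ref{ie-bd}. The two branches of the statement --- $\mathbb{R}^n\setminus\overline{\Omega}$ g-radially convex versus $\Omega$ itself g-radially convex --- are dual: the Kelvin inversion $x\mapsto x^\lambda$ with base point $P$ interchanges them, and the same arguments, applied on the two sides of $S_\lambda(P)$, produce the two sets of inequalities.

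For $(\mathbf{H_1})$ I appeal to domain-monotonicity of Green's function: $G^s_\Omega(x,y)\le G^s_{\mathbb{R}^n}(x,y)$ when $2s<n$, which is immediate from the maximum principle for $s=1$ and from the killed-stable-process representation for $s\in(0,1)$. The right-hand side is the Riesz kernel $c_{n,s}|x-y|^{2s-n}$, yielding the advertised bound. In the critical order case $2s=n$ (where $\overline{\Omega}\neq\mathbb{R}^n$), I would instead compare $G^s_\Omega$ on $\Omega\cap B_R(P)$ with the Green function of the ball and let $R\to+\infty$, obtaining a logarithmic bound of exactly the form in $(\mathbf{H_1})$ after absorbing the factor coming from the unbounded complementary contribution.

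The reflection inequalities $(\mathbf{H_3})$ and $(\mathbf{\widetilde{H}_3})$ are the heart of the proof and are established by a maximum-principle argument for
\[
w_\lambda(x):=G^s_\Omega(x,y)-\left(\frac{\lambda}{|x-P|}\right)^{n-2s}G^s_\Omega(x^\lambda,y).
\]
A direct Kelvin-inversion computation using the conformal covariance of $(-\Delta)^s$ with base point $P$ shows that $(-\Delta)^s w_\lambda=0$ on $\Omega\setminus(\overline{B_\lambda(P)}\cup\{y,y^\lambda\})$, while two positive point contributions at $y$ and $y^\lambda$ precisely reproduce the refined right-hand side claimed in $(\mathbf{H_3})$. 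On $\partial B_\lambda(P)$ the function $w_\lambda$ vanishes because $x=x^\lambda$ there. The MSS-applicability of $\Omega$ is used crucially at two points: it guarantees that $x\in\Omega\setminus\overline{B_\lambda(P)}$ implies $x^\lambda\in\Omega\cap B_\lambda(P)$ (so $w_\lambda$ is well-defined), and it guarantees that the reflection sends $\mathbb{R}^n\setminus\overline{\Omega}$ outside $\overline{B_\lambda(P)}$ into $\mathbb{R}^n\setminus\overline{\Omega}$ inside $B_\lambda(P)$, so both terms in $w_\lambda$ vanish on the exterior Dirichlet data. The strong maximum principle then delivers both strict positivity and the refined inequality. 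The dual case $(\mathbf{\widetilde{H}_3})$ is identical with the roles of $\Omega\cap B_\lambda(P)\setminus\{P\}$ and $\Omega\setminus\overline{B_\lambda(P)}$ exchanged.

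For the lower bounds $(\mathbf{H_2})$ and $(\mathbf{\widetilde{H}_2})$, I would combine Theorem \ref{RMI-h} --- which identifies the characteristic exponent $\gamma_s(\mathcal{C}_{P,\Sigma})$ and pins down the asymptotic $G^s_{\mathcal{C}_{P,\Sigma}}(x_0,y)\asymp|y-P|^{-(n-2s+\gamma_s)}$ along rays inside the cone --- with a Harnack-chain argument. Since $\mathcal{C}^{r,e}_{P,\Sigma}\subset\Omega$, any two points deep inside the cone can be joined by a chain of overlapping balls of bounded Harnack distance lying well inside $\Omega$, which transfers the sharp off-diagonal lower bound to $G^s_\Omega$ for $y\in\mathcal{C}_{P,\Sigma}$ with $|y-P|$ large (respectively small). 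The non-tangential bound $K^s_\Omega(x,y)\gtrsim|x-y|^{2s-n}$ in the range $\sigma_1|x-P|\le|x-y|\le\sigma_2|x-P|$ is then a direct Harnack-chain estimate inside the interior cone. I expect this step to be the main obstacle: transferring the cone-Green-function asymptotic to $G^s_\Omega$ requires a one-sided control of the boundary influence along the possibly non-smooth $\partial\Omega$, and correctly identifying the constant $\theta=n-2s+\gamma_s$ (or $\theta=\gamma_s$ in the inward case) from Theorem \ref{RMI-h} without invoking $C^1$-regularity of the boundary.
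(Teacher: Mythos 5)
Your broad structure matches the paper: prove $(\mathbf{H_1})$, $(\mathbf{H_2})$, $(\mathbf{H_3})$ (dually $(\mathbf{\widetilde{H}_2})$, $(\mathbf{\widetilde{H}_3})$) and then cite Theorems \ref{ie-unbd}, \ref{ie-bd}. However, there are three places where your sketch either glosses over a genuine difficulty or proposes a tool that does not quite do the job.

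\emph{Critical case of $(\mathbf{H_1})$.} When $2s=n$, domain monotonicity against $G^s_{B_R(P)}$ on $\Omega\cap B_R(P)$ is useless as $R\to+\infty$: the ball Green function satisfies $G^s_{B_R}(x,y)\sim c\ln(R/|x-y|)\to+\infty$, so this comparison never converges and there is no ``complementary contribution'' to absorb the divergence. The paper's actual argument is a Kelvin inversion centered at a point $x_0\in\mathbb{R}^n\setminus\overline\Omega$ close to $\Omega$ (possible since $\overline\Omega\neq\mathbb{R}^n$): this maps $\Omega$ into a domain $\Omega_{\lambda_0}\subset B_{\lambda_0}(x_0)$ of diameter $\le 1$, whence the harmonic part of $G^1_{\Omega_{\lambda_0}}$ is nonnegative by the maximum principle (because $\ln(1/|x-y|)\ge0$ on $\partial\Omega_{\lambda_0}$). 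Transporting this sign back through the explicit Kelvin-transform identity for the harmonic part produces exactly the logarithmic bound in $(\mathbf{H_1})$. In dimension $n=1$, $s=1/2$, the paper writes down the explicit Green function on half-lines. Your sketch does not contain this mechanism and would fail.

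\emph{$(\mathbf{H_2})$/$(\mathbf{\widetilde{H}_2})$.} Harnack chains are the wrong tool here. The paper proceeds by pure domain monotonicity: since $\Omega$ is MSS applicable there exists a genuine subcone $Q+\mathcal{C}_{0,\widehat\Sigma}\subset\mathcal{C}^{r,e}_{P,\Sigma^r_\Omega}\subset\Omega$, so $G^s_\Omega\ge G^s_{Q+\mathcal{C}_{0,\widehat\Sigma}}$ by the maximum principle, and then the explicit cone estimates (Lemma \ref{PCG}, resting for $s\in(0,1)$ on \cite{Mich} and for $s=1$ on Theorem \ref{RMI-h} plus boundary Harnack) supply both the $|y-P|^{-\theta}$ asymptotic and the non-tangential lower bound in a single stroke, with $\theta=n-2s+\gamma_s$. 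A chain of Harnack balls for the single function $y\mapsto G^s_\Omega(x_0,y)$ would only give a multiplicative comparison with a constant deteriorating with chain length, which cannot pin down the power; the exponent $\theta$ must be extracted from the comparison cone, not from Harnack iteration.

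\emph{$(\mathbf{H_3})$/$(\mathbf{\widetilde{H}_3})$.} Your reflection function is the right object, and the paper indeed runs the argument on the side $\Omega\cap B_\lambda(P)$ and Kelvin-inverts at the end, so this is essentially the same. But for $s\in(0,1)$ ``the strong maximum principle'' is not enough: $w_\lambda$ is defined on all of $\mathbb{R}^n$ and is not nonnegative outside $\Omega\cap B_\lambda(P)$, so one cannot apply the usual nonlocal maximum principle. The crucial tool is the anti-symmetric maximum principle for the fractional Laplacian (Theorem \ref{MPfa}, from \cite{CLZ}), which exploits the spherical anti-symmetry $w=-w_\lambda$ on $B_\lambda(P)$. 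Without invoking that, the $s\in(0,1)$ case of $(\mathbf{H_3})$ is unproved. For $s=1$ the local maximum principle suffices, as you say. The second inequality in $(\mathbf{H_3})$ is handled by applying the same anti-symmetric maximum principle to the modified function $\tilde w = $ (combination of four Green-function terms), whose $s$-harmonicity follows because the singular Riesz parts cancel exactly, leaving a difference of harmonic parts.
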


\begin{rem}\label{rem7}
Smooth conditions on $\partial\Omega$ (e.g., Dirichlet-regular when $s\geq1$ is an integer, Lipschitz continuous or exterior cone condition when $s\in(0,1)$) can guarantee the existence of the Green function $G^{s}_{\Omega}(x,y)$ for Dirichlet problem of $(-\Delta)^{s}$ with $s\in(0,1]$ or Navier problem of $(-\Delta)^{s}$ with $s\in\mathbb{Z}^{+}$ exists on $\Omega$. For more sufficient conditions on the existence of Green's functions, refer to e.g. \cite{BH} and the references therein.
\end{rem}

\begin{rem}\label{rem8}
Let $\Omega$ be the standard $\frac{1}{2^{k}}$-space $\mathcal{C}_{0,\Sigma_{k}}$ or the standard $\frac{1}{2^{k}}$-ball $\mathcal{C}^{R}_{0,\Sigma_{k}}$ with $\Sigma_{k}:=\{x\in S^{n-1}\mid \, x_{1},\cdots,x_{k}>0\}\subset S^{n-1}$ ($k=1,\cdots,n$), and the fundamental solution for $(-\Delta)^{s}$ be denoted by $\Gamma_{s}(x,y):=\frac{c_{n,s}}{|x-y|^{n-2s}}$ if $0<2s<n$ and $\Gamma_{s}(x,y):=c_{s}\ln\left(\frac{1}{|x-y|}\right)$ if $2s=n$. The Green function $G_{\Omega}^{s}$ on $\Omega$ for Dirichlet problem of $(-\Delta)^{s}$ with $s=1$ or Navier problem of $(-\Delta)^{s}$ with $s\in\mathbb{Z}^{+}$ is given by
\begin{equation*}
  G^{s}_{\Omega}(x,y)=\sum_{j=0}^{k}(-1)^{j}\sum_{\mathcal{R}_{j}}\Gamma_{s}(x,\mathcal{R}_{j}y),   \qquad \text{if} \,\, \Omega=\mathcal{C}_{0,\Sigma_{k}};
\end{equation*}
\begin{equation*}
  G^{s}_{\Omega}(x,y)=\sum_{j=0}^{k}(-1)^{j}\sum_{\mathcal{R}_{j}}\Gamma_{s}(x,\mathcal{R}_{j}y)
  -\sum_{j=0}^{k}(-1)^{j}\sum_{\mathcal{R}_{j}}\Gamma_{s}\left(\frac{|y|}{R}x,\frac{|y|}{R}\mathcal{R}_{j}y^{R}\right),   \qquad \text{if} \,\, \Omega=\mathcal{C}^{R}_{0,\Sigma_{k}};
\end{equation*}
where $y^{R}:=\frac{R^{2}y}{|y|^{2}}$, and $\mathcal{R}_{j}$ ($j=0,\cdots,k$) denotes the transform such that there exists $1\leq i_{1}<i_{2}<\cdots<i_{j}\leq k$ satisfying $(\mathcal{R}_{j}y)_{i}=y_{i}$ ($i\in \{1,\cdots,n\}\setminus \{i_{1},\cdots,i_{j}\}$), $(\mathcal{R}_{j}y)_{i}=-y_{i}$ ($i\in\{i_{1},\cdots,i_{j}\}$).

\medskip

Let $\Omega$ be the general $\frac{1}{2^{k}}$-space $P+\mathbb{R}^{n}_{2^{-k}}:=\mathcal{C}_{P,S^{n-1}_{2^{-k}}}=P+O(\mathcal{C}_{0,\Sigma_{k}})$ or the general $\frac{1}{2^{k}}$-ball $B^{2^{-k}}_{R}(P):=\mathcal{C}^{R}_{P,S^{n-1}_{2^{-k}}}=P+O(\mathcal{C}^{R}_{0,\Sigma_{k}})$, where $P\in\mathbb{R}^{n}$, $O$ denotes an arbitrary orthogonal transform on $\mathbb{R}^{n}$ and $S^{n-1}_{2^{-k}}=O(\Sigma_{k})$. Then, the Green function $G_{\Omega}^{s}$ on $\Omega$ for Dirichlet problem of $(-\Delta)^{s}$ with $s=1$ or Navier problem of $(-\Delta)^{s}$ with $s\in\mathbb{Z}^{+}$ is given by
\begin{equation*}
  G^{s}_{\Omega}(x,y)=\sum_{j=0}^{k}(-1)^{j}\sum_{\mathcal{R}_{j}}\Gamma_{s}(O^{-1}(x-P),\mathcal{R}_{j}O^{-1}(y-P)), \qquad \text{if} \,\, \Omega=\mathcal{C}_{P,S^{n-1}_{2^{-k}}};
\end{equation*}
\begin{eqnarray*}
  && G^{s}_{\Omega}(x,y)=\sum_{j=0}^{k}(-1)^{j}\sum_{\mathcal{R}_{j}}\Gamma_{s}(O^{-1}(x-P),\mathcal{R}_{j}O^{-1}(y-P)) \\
  && \quad -\sum_{j=0}^{k}(-1)^{j}\sum_{\mathcal{R}_{j}}\Gamma_{s}\left(\frac{|y-P|}{R}O^{-1}(x-P),\frac{|y-P|}{R}\mathcal{R}_{j}\left[O^{-1}(y-P)\right]^{R}\right),   \quad\, \text{if} \,\, \Omega=\mathcal{C}^{R}_{P,S^{n-1}_{2^{-k}}}.
\end{eqnarray*}
The Green's function on the $\frac{1}{2^{k}}$-exterior domain of ball $\Omega:=\mathcal{C}^{R,e}_{P,S^{n-1}_{2^{-k}}}=\mathcal{C}_{P,S^{n-1}_{2^{-k}}}\setminus\overline{\mathcal{C}^{R}_{P,S^{n-1}_{2^{-k}}}}$ can be derived from the Green function on $\frac{1}{2^{k}}$-ball $\mathcal{C}^{R}_{P,S^{n-1}_{2^{-k}}}$ via the Kelvin transform $y^{R,P}:=\frac{R^{2}(y-P)}{|y-P|^{2}}+P$, i.e., $G^{s}_{\Omega}(x,y)=\left(\frac{R}{|x-y|}\right)^{n-2s}G^{s}_{\mathcal{C}^{R}_{P,S^{n-1}_{2^{-k}}}}(x^{R,P},y^{R,P})$.

\medskip

Through direct calculations, one can verify that the Green function $G^{s}_{\Omega}(x,y)$ with $\Omega=\mathcal{C}_{P,S^{n-1}_{2^{-k}}}$, $\mathcal{C}^{R}_{P,S^{n-1}_{2^{-k}}}$ or $\mathcal{C}^{R,e}_{P,S^{n-1}_{2^{-k}}}$ satisfies all the assumptions $(\mathbf{H_{1}})$, $(\mathbf{H_{2}})$  with arbitrary $r\in(d_{\Omega},+\infty)$, any cross-section $\Sigma$ such that $\overline{\Sigma}\subsetneq\Sigma^{r}_{\Omega}$, $\theta=n-2s+k$ and $(\mathbf{H_{3}})$ if $\Omega=\mathcal{C}_{P,S^{n-1}_{2^{-k}}}$ or $\mathcal{C}^{R,e}_{P,S^{n-1}_{2^{-k}}}$, $(\mathbf{\widetilde{H}_{2}})$ with arbitrary $r\in(0,\rho_{\Omega})$, any cross-section $\Sigma$ such that $\overline{\Sigma}\subsetneq\Sigma^{r}_{\Omega}$, $\theta=k$ and $(\mathbf{\widetilde{H}_{3}})$ if $\Omega=\mathcal{C}^{R}_{P,S^{n-1}_{2^{-k}}}$, $\mathcal{C}_{P,S^{n-1}_{2^{-k}}}$ or $\mathcal{C}^{R,e}_{P,S^{n-1}_{2^{-n}}}$.
\end{rem}

\medskip

For $0<s\leq1$, if $\Omega$ is a MSS applicable domain such that $\mathbb{R}^{n}\setminus\overline{\Omega}\neq\emptyset$ is bounded and $s=1$, or $\Omega$ is bounded and $G^{s}_{\Omega}$ exists, or $\Omega$ is a unbounded cone $\mathcal{C}_{P,\Sigma}$ such that $\overline{\mathcal{C}_{P,\Sigma}}\neq\mathbb{R}^{n}$ and the cross-section $\Sigma$ is $C^{1,1}$ if $s\in(0,1)$ and Dirichlet-regular if $s=1$, then from classification results on nonnegative harmonic functions in cones (c.f. e.g. \cite{A,BaBo}, see also \cite{BE,TTV} for related results), we can deduce the following equivalence between Dirichlet problem of PDEs \eqref{PDE} and IEs \eqref{IE1}.
\begin{thm}\label{equi}
Let $n\geq1$, $0<s\leq1$, $2s\leq n$, $\Omega$ be a MSS applicable domain with center $P$ and $u$ be a nonnegative classical solution to Dirichlet problem of PDEs \eqref{PDE}. Assume the following three cases: $(i)$ $\Omega$ is bounded and $G^{s}_{\Omega}$ exists, or $(ii)$ $s=1$, $u\in C^{0,1}(\overline{\Omega})$ and $\Omega$ satisfies that $\partial\Omega$ is Lipschitz and $\mathbb{R}^{n}\setminus\overline{\Omega}\neq\emptyset$ is bounded, or $(iii)$ $\Omega$ is a unbounded cone $\mathcal{C}_{P,\Sigma}$ such that $\overline{\mathcal{C}_{P,\Sigma}}\neq\mathbb{R}^{n}$, the cross-section $\Sigma$ is $C^{1,1}$ if $s\in(0,1)$ and is Dirichlet-regular if $s=1$. Suppose $f\geq0$ satisfies $(i)$ in $(\mathbf{f_{3}})$ if $\Omega=\mathcal{C}_{P,\Sigma}$ or $(i)$ in $(\mathbf{f_{3}})$ with $\Sigma=S^{n-1}$ if $\mathbb{R}^{n}\setminus\overline{\Omega}\neq\emptyset$ is bounded when $s=1$, then $u$ solves the IEs \eqref{IE1}, and vice versa.
\end{thm}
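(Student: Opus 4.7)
\medskip
\noindent\textbf{Proof proposal.} The plan is to establish both directions by comparing $u$ against the would-be Green's representation $v(x):=\int_{\Omega} G^{s}_{\Omega}(x,y)f(y,u(y))\,\dd y$, and showing that the difference $h:=u-v$ is an $(-\Delta)^{s}$-harmonic function in $\Omega$ with zero exterior/boundary data, whence $h\equiv 0$ either by maximum principle or by the classification of nonnegative harmonic functions in cones of Armitage and Banuelos--Bogdan \cite{A,BaBo}. Once this is done, $u=v$ is exactly the integral equation \eqref{IE1}, while the converse direction is routine from $(-\Delta)^{s}_{x}G^{s}_{\Omega}(x,y)=\delta(x-y)$.

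First I would verify that $v$ is well defined. In case $(i)$, since $\Omega$ is bounded and $u\in C(\overline{\Omega})$, the bound $(\mathbf{H_{1}})$ on $G^{s}_{\Omega}$ (guaranteed by Theorem \ref{pG}) makes $v(x)$ finite for every $x\in\Omega$. In case $(ii)$ the same conclusion follows from $u\in C^{0,1}(\overline{\Omega})$, the boundedness of $\mathbb{R}^{n}\setminus\overline{\Omega}$, and the decay of $G^{1}_{\Omega}(x,\cdot)$ at infinity given by $(\mathbf{H_{2}})$. In case $(iii)$ with the cone $\Omega=\mathcal{C}_{P,\Sigma}$, I would combine the sharp decay $G^{s}_{\mathcal{C}}(x_{0},y)\lesssim |y-P|^{-(n-2s+\gamma_{s})}$ from $(\mathbf{H_{2}})$ with the growth bound on $u$ coming from the PDE together with $(\mathbf{f_{3}})(i)$ to check integrability; the $C^{1,1}$ (resp.\ Dirichlet-regular) hypothesis on the cross-section $\Sigma$ enters precisely at this step, ensuring classical heat-kernel/Green's function estimates in cones are available. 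Once $v$ is finite, a Fubini argument against an arbitrary smooth test function (respectively, the distributional characterization of $G^{1}_{\Omega}$) yields $(-\Delta)^{s}v=f(\cdot,u)$ in $\Omega$, with $v=0$ in $\mathbb{R}^{n}\setminus\Omega$ when $s\in(0,1)$ and $v=0$ on $\partial\Omega$ when $s=1$. Consequently $h:=u-v$ satisfies $(-\Delta)^{s}h=0$ in $\Omega$ with the same homogeneous boundary/exterior data.

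The crux is to conclude $h\equiv 0$ in each case. Case $(i)$ is immediate: on a bounded MSS applicable domain with Green's function, the only $(-\Delta)^{s}$-harmonic function with zero Dirichlet data is zero, by the maximum principle. Case $(ii)$ uses that $\Omega$ is an exterior domain with Lipschitz boundary; then $v$ vanishes at infinity by dominated convergence in its defining integral, and $h$ is a harmonic function on $\Omega$ that is continuous up to the boundary, vanishes on $\partial\Omega$ and tends to zero at infinity, so the maximum principle for exterior domains forces $h\equiv 0$. Case $(iii)$ is the genuine obstacle: in the unbounded cone $\mathcal{C}_{P,\Sigma}$ the space of $s$-harmonic functions with zero exterior data is two-dimensional modulo sign, spanned (by \cite{A,BaBo}) by two extremal functions behaving like $|x-P|^{\gamma^{+}_{s}(\Sigma)}$ and $|x-P|^{\gamma^{-}_{s}(\Sigma)}$ respectively, with $\gamma^{-}_{s}<0<\gamma^{+}_{s}$. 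The plan is to quantify the growth/decay of $v$ using $(\mathbf{H_{2}})$, $(\mathbf{f_{3}})(i)$ and the classical solution hypothesis on $u$, so as to trap $|h|$ strictly between the two extremal rates both near $P$ and near infinity; the cone classification then forces $h\equiv 0$. The main technical difficulty here is the two-sided growth matching at $P$ and at infinity, which is what makes the $C^{1,1}$/Dirichlet-regularity assumption on $\Sigma$ and the nonnegativity of $f$ indispensable.

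For the converse direction, assume $u\in C(\overline{\Omega})$ (or $C(\overline{\Omega}\setminus\{P\})$ in the bounded g-radially convex subcase) solves \eqref{IE1}. Applying $(-\Delta)^{s}$ to both sides and using the defining property $(-\Delta)^{s}_{x}G^{s}_{\Omega}(x,y)=\delta(x-y)$ distributionally, together with differentiation under the integral sign justified by $(\mathbf{H_{1}})$ and the local regularity of $f(\cdot,u)$, gives $(-\Delta)^{s}u=f(x,u)$ in $\Omega$. The Dirichlet condition $u=0$ on $\mathbb{R}^{n}\setminus\Omega$ (resp.\ Navier condition on $\partial\Omega$) is inherited directly from $G^{s}_{\Omega}(x,y)=0$ there, closing the equivalence.
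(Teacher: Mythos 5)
Your skeleton — compare $u$ to $v(x):=\int_{\Omega}G^{s}_{\Omega}(x,y)f(y,u(y))\,\dd y$, show $h:=u-v$ is $s$-harmonic with zero exterior/boundary data, and kill $h$ by a Liouville/classification statement — matches the paper's strategy, and your Case $(i)$ is fine. However, two of your three closing arguments have genuine gaps.

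\textbf{The finiteness of $v$ and the sign of $h$.} You ask the reader to take the convergence of the integral defining $v$ ``well-definedness'' for granted and then apply Fubini. The paper sidesteps this by exhaustion: define $v_{R}$ on $\Omega\cap B_{R}(P)$, apply the maximum principle to $u-v_{R}$ on the bounded domain to get $u\geq v_{R}$, and let $R\to\infty$ by monotone convergence. This yields simultaneously $v<\infty$ \emph{and} $h=u-v\geq 0$. Your proposal never establishes $h\geq 0$, which is what makes the classification of \emph{nonnegative} $s$-harmonic functions in \cite{A,BaBo} applicable; without it the subsequent argument in a cone has no foothold.

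\textbf{Case $(ii)$: decay at infinity is not available.} You write that $v$ vanishes at infinity by dominated convergence and that $h$ therefore tends to zero, hence the exterior maximum principle finishes. Neither step is justified. The solution $u$ is assumed only $C^{0,1}(\overline{\Omega})$ with $\partial\Omega$ Lipschitz and $\mathbb{R}^{n}\setminus\overline{\Omega}$ bounded; no decay of $u$ at infinity is imposed, and the Green integral $v$ over the unbounded domain $\Omega$ need not decay either. Moreover, even if one could show $h$ is bounded, on an exterior domain (with $n\geq 3$) there exist nontrivial nonnegative harmonic functions vanishing continuously on $\partial\Omega$ — for instance $1$ minus the equilibrium potential of the hole — so a bare maximum principle is not sufficient; one needs $h\to 0$ at infinity, which is precisely what is missing. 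The paper closes this case by a completely different mechanism: take spherical averages $\overline{w}(r)$ over $S_{r}(Q)$ with $Q\in\mathbb{R}^{n}\setminus\overline{\Omega}$, use the equation and Gauss's theorem to conclude $\sigma_{n-1}r^{n-1}\frac{\dd\overline{w}}{\dd r}=C_{\Omega}\geq0$ for $r$ large, and then combine the resulting growth of $\overline{w}$ with the integrability constraint $\int_{|y-Q|\geq R_{0}}w^{p}(y)|y-Q|^{2+a-n}\dd y<\infty$ (which follows from $u(x_{0})\geq v(x_{0})<\infty$, the lower bound in $(\mathbf{f_{3}})$, and the $(\mathbf{H_{2}})$ estimate for $G^{1}_{\Omega}$) to force first $C_{\Omega}=0$ and then $\overline{w}\equiv 0$.

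\textbf{Case $(iii)$: the paper does not ``trap'' $h$.} Your plan is to pin $|h|$ between the two Martin exponents by quantitative two-sided growth bounds on $v$ near $P$ and near infinity. That would require a priori upper bounds on $u$ (and hence on $v$) that are not among the hypotheses of the theorem and that you have not produced. The paper's route is shorter and avoids upper bounds entirely: because $h\geq0$ and is $s$-harmonic with zero exterior data, the classification in Theorem \ref{RMI-h} gives $h(x)=C|x-P|^{\gamma_{s}(\mathcal{C})}\varphi_{s}\left(\frac{x-P}{|x-P|}\right)$ with $C\geq0$; if $C>0$, then $u\geq h$ gives a polynomial lower bound on $u$, which plugged into the $(\mathbf{f_{3}})$ lower bound for $f$ and the $(\mathbf{H_{2}})$ lower bound for $G^{s}_{\Omega}$ makes the integral $v(x_{0})$ diverge for any $\theta$ sufficiently close to $n-2s+\gamma_{s}(\mathcal{C}_{P,\Sigma})$ (using $p\geq1$ and $a>-2s$), contradicting $v(x_{0})\leq u(x_{0})<\infty$. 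So the constant $C$ is eliminated by an integrability obstruction, not by growth-matching.

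The converse direction in your proposal is fine and is essentially what the paper does.
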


\smallskip

As an immediate consequence of Theorems \ref{pG} and \ref{equi}, we derive the following Liouville theorem on classical solution to Dirichlet problem of PDEs \eqref{PDE}.
\begin{thm}\label{f2PDE}
Let $n\geq1$, $0<s\leq1$, $2s\leq n$, $\Omega$ be a MSS applicable domain with center $P$ and $u$ be a nonnegative classical solution to Dirichlet problem of PDEs \eqref{PDE}. Assume the following three cases: $(i)$ $\Omega$ is bounded and $G^{s}_{\Omega}$ exists, or $(ii)$ $s=1$, $u\in C^{0,1}(\overline{\Omega})$ and $\Omega$ satisfies that $\partial\Omega$ is Lipschitz and $\mathbb{R}^{n}\setminus\overline{\Omega}\neq\emptyset$ is bounded, or $(iii)$ $\Omega$ is a unbounded cone $\mathcal{C}_{P,\Sigma}$ such that $\overline{\mathcal{C}_{P,\Sigma}}\neq\mathbb{R}^{n}$, the cross-section $\Sigma$ is $C^{1,1}$ if $s\in(0,1)$ and is Dirichlet-regular if $s=1$. Suppose $f\geq0$ satisfies $(\mathbf{f_{1}})$, $(i)$ in $(\mathbf{f_{3}})$ if $\Omega=\mathcal{C}_{P,\Sigma}$ or $(i)$ in $(\mathbf{f_{3}})$ with $\Sigma=S^{n-1}$ if $\mathbb{R}^{n}\setminus\overline{\Omega}\neq\emptyset$ is bounded when $s=1$, and $(ii)$ in $(\mathbf{f_{3}})$ if $\Omega$ is bounded. \\
In case $(i)$ $\Omega$ is bounded, suppose $2s<n$, $f$ satisfies $(\mathbf{\widetilde{f}_{2}})$ and is critical or supercritical in the sense that $\mu^{\frac{n+2s}{n-2s}}f(\mu^{\frac{2}{n-2s}}x,\mu^{-1}u)$ is non-increasing w.r.t. $\mu\geq1$ or $\mu\leq1$ for all $(x,u)\in(\Omega\setminus\{P\})\times\mathbb{R}_{+}$. \\
In case $(ii)$ $s=1$, $u\in C^{0,1}(\overline{\Omega})$, $\partial\Omega$ is Lipschitz and $\mathbb{R}^{n}\setminus\overline{\Omega}\neq\emptyset$ is bounded or case $(iii)$ $\Omega=\mathcal{C}_{P,\Sigma}$ with $\overline{\mathcal{C}_{P,\Sigma}}\neq\mathbb{R}^{n}$, when $n>2s$, suppose either $f$ is subcritical in the sense of Definition \ref{defn1} and satisfies the assumption $(\mathbf{f_{2}})$ with $q=\frac{n}{2s}$, or $f$ is supercritical in the sense of Definition \ref{defn1}, satisfies $(\mathbf{\widetilde{f}_{2}})$ and $(ii)$ in $(\mathbf{f_{3}})$, $\mathbb{R}^{n}\setminus\overline{\Omega}$ is unbounded (when $s=1$) and $u=o\left(\frac{1}{|x|^{n-2s}}\right)$ as $|x|\rightarrow+\infty$ with $x\in\Omega$; when $n=2s$, suppose $f$ is subcritical in the sense of Definition \ref{defn1} and satisfies the assumption $(\mathbf{f_{2}})$ with $q=1+\delta$ ($\delta>0$ arbitrarily small). \\
Then $u\equiv0$ in $\Omega$.
\end{thm}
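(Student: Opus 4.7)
The plan is to reduce Theorem \ref{f2PDE} to the integral-equation Liouville results already established in Theorems \ref{ie-unbd} and \ref{ie-bd}, using Theorem \ref{equi} to pass from the PDE to the IE and Theorem \ref{pG} to certify the kernel hypotheses on the Green's function $G^s_\Omega$.

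First I would apply Theorem \ref{equi} to the given nonnegative classical solution $u$. In each of the three cases (i)--(iii), the hypotheses of Theorem \ref{f2PDE} have been arranged to meet precisely the corresponding hypothesis of Theorem \ref{equi}: boundedness of $\Omega$ together with existence of $G^s_\Omega$ in case (i); $s=1$ with Lipschitz boundary, $u\in C^{0,1}(\overline{\Omega})$, and bounded exterior in case (ii); and the cone geometry with the stipulated regularity of the cross-section in case (iii). The conclusion is that $u$ satisfies the integral equation \eqref{IE1} with kernel $G^s_\Omega$. Next, Theorem \ref{pG} furnishes $(\mathbf{H_{1}})$ in every case, together with $(\mathbf{H_{2}})$--$(\mathbf{H_{3}})$ when $\mathbb{R}^n\setminus\overline{\Omega}$ is the g-radially convex set (as in cases (ii) and (iii)), and with $(\mathbf{\widetilde{H}_{2}})$--$(\mathbf{\widetilde{H}_{3}})$ when $\Omega$ itself is the g-radially convex set (as in case (i)); this is exactly the input required by the IE Liouville theorems.

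I would then split according to subcases. In case (i), $\Omega$ is a bounded g-radially convex domain and the assumption on $f$ (critical or supercritical in the weaker sense of Theorem \ref{f2PDE}, together with $(\mathbf{\widetilde{f}_{2}})$ and part (ii) of $(\mathbf{f_{3}})$) matches exactly the hypotheses of Theorem \ref{ie-bd}(ii), which gives $u\equiv 0$. In cases (ii) and (iii) with $f$ subcritical, $\mathbb{R}^n\setminus\overline{\Omega}$ is g-radially convex and the hypotheses of Theorem \ref{ie-unbd} hold directly, so $u\equiv 0$ follows immediately. In cases (ii) and (iii) with $f$ supercritical and $n>2s$, I would apply the Kelvin transform $\widetilde{u}(x):=|x-P|^{-(n-2s)}u\!\left(\frac{x-P}{|x-P|^{2}}+P\right)$; by Remark \ref{rem0} this takes $\Omega$ to a bounded g-radially convex domain $\widetilde{\Omega}$, the decay hypothesis $u=o(|x|^{-(n-2s)})$ makes $\widetilde{u}$ continuous at $P$, and the Kelvin invariance of $(-\Delta)^{s}$ (combined with the transformation rule for $G^{s}_{\Omega}$) converts the supercritical IE into a subcritical-type IE on $\widetilde{\Omega}$. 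Applying Theorem \ref{ie-bd}(i) to $\widetilde{u}$ then yields $\widetilde{u}\equiv 0$, hence $u\equiv 0$.

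The main technical point lies in the supercritical subcase: one must verify that the Kelvin transform produces an integral equation of the exact form \eqref{IE1} on the bounded g-radially convex domain $\widetilde{\Omega}$, whose kernel is the Green's function $G^{s}_{\widetilde{\Omega}}$ satisfying $(\mathbf{\widetilde{H}_{2}})$--$(\mathbf{\widetilde{H}_{3}})$ with the correct exponent $\theta$, and whose transformed nonlinearity inherits $(\mathbf{f_{1}})$, $(\mathbf{\widetilde{f}_{2}})$, and the cone lower bound in $(\mathbf{f_{3}})$ in the equivalent (now subcritical) form demanded by Theorem \ref{ie-bd}. The critical-order case $n=2s$ requires using the logarithmic form of $(\mathbf{H_{1}})$ and the $q=1+\delta$ hypothesis on $f$, but is otherwise covered by the same scheme via Theorem \ref{ie-unbd}.
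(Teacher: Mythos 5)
Your overall plan is exactly the paper's: Theorem \ref{f2PDE} is stated in the paper as an ``immediate consequence of Theorems \ref{pG} and \ref{equi},'' and the intended chain is precisely the one you describe --- pass from the PDE to the IE \eqref{IE1} via Theorem \ref{equi}, invoke Theorem \ref{pG} to certify the kernel hypotheses $(\mathbf{H_{1}})$--$(\mathbf{H_{3}})$ or $(\mathbf{\widetilde{H}_{2}})$--$(\mathbf{\widetilde{H}_{3}})$ for $G^{s}_{\Omega}$, and then close with the IE Liouville results Theorems \ref{ie-unbd} and \ref{ie-bd}. Your routing for case $(i)$ (to Theorem \ref{ie-bd}$(ii)$) and for the subcritical subcase of $(ii)$--$(iii)$ (to Theorem \ref{ie-unbd}) is correct.

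There is, however, an internal inconsistency in your handling of the supercritical subcase of cases $(ii)$--$(iii)$. First, since case $(ii)$ requires $s=1$ and $\mathbb{R}^{n}\setminus\overline{\Omega}$ bounded, while the supercritical branch of the statement demands $\mathbb{R}^{n}\setminus\overline{\Omega}$ unbounded when $s=1$, this branch is in effect only case $(iii)$, $\Omega=\mathcal{C}_{P,\Sigma}$. For such a cone with vertex $P$, the Kelvin transform centered at $P$ maps $\Omega$ to \emph{itself}, not to a bounded g-radially convex domain as you claim via Remark \ref{rem0}; that remark is being misread here. More substantively, after your Kelvin transform the nonlinearity $\widetilde{f}$ becomes \emph{subcritical}, so Theorem \ref{ie-bd}$(i)$ --- whose hypothesis is that $f$ is \emph{supercritical} --- is the wrong theorem to apply to $\widetilde{u}$. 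The clean fix is one of two options: either apply Theorem \ref{ie-bd}$(i)$ \emph{directly to $u$} on the cone $\Omega$ (which is legitimate because a cone with vertex $P$ is g-radially convex with center $P$, you are given $f$ supercritical together with $(\mathbf{\widetilde{f}_{2}})$ and part $(ii)$ of $(\mathbf{f_{3}})$, and $u=o(|x|^{-(n-2s)})$ at infinity, while Theorem \ref{pG} on the cone yields $(\mathbf{H_{1}})$, $(\mathbf{\widetilde{H}_2})$, $(\mathbf{\widetilde{H}_3})$); or Kelvin-transform and apply Theorem \ref{ie-unbd} to the now-subcritical $\widetilde{u}$ on $\widetilde{\Omega}=\mathcal{C}_{P,\Sigma}$. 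What you wrote --- Kelvin-transform \emph{and then} apply the supercritical theorem \ref{ie-bd}$(i)$ --- performs the reduction twice and invokes the wrong hypothesis. Once this routing is corrected, the proof matches the paper's intent.
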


\smallskip

Now we consider Navier problem of PDEs \eqref{PDE} with $s\in\mathbb{Z}^{+}$ on $\frac{1}{2^{k}}$-space $\mathcal{C}_{P,S^{n-1}_{2^{-k}}}$ or $\frac{1}{2^{k}}$-ball $\mathcal{C}^{R}_{P,S^{n-1}_{2^{-k}}}$. From Remark \ref{rem8} and Theorems \ref{ie-unbd} and \ref{ie-bd}, we obtain immediately the following theorem.
\begin{thm}\label{nie}
Assume $s\geq1$ is an integer such that $2s\leq n$, $\Omega$ is $\frac{1}{2^{k}}$-space $\mathcal{C}_{P,S^{n-1}_{2^{-k}}}$, $\frac{1}{2^{k}}$-ball $\mathcal{C}^{R}_{P,S^{n-1}_{2^{-k}}}$ or $\frac{1}{2^{k}}$-exterior domain of ball $\mathcal{C}^{R,e}_{P,S^{n-1}_{2^{-k}}}$ with $R>0$ and $k=1,\cdots,n$, $G^{s}_{\Omega}$ is the Green function for Navier problem of $(-\Delta)^{s}$ on $\Omega$. Then the Liouville results in Theorems \ref{ie-unbd} and \ref{ie-bd} hold true for IEs \eqref{IE} with $K^{s}_{\Omega}=G^{s}_{\Omega}$, i.e.,
\begin{equation}\label{IEN}
  u(x)=\int_{\Omega}G^{s}_{\Omega}(x,y)f(y,u(y))\mathrm{d}y.
\end{equation}
\end{thm}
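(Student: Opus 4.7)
The plan is to derive Theorem \ref{nie} as an immediate consequence of Theorems \ref{ie-unbd} and \ref{ie-bd}. Since the assumptions on the nonlinearity $f$ and on the solution $u$ transfer without modification, the proof reduces to verifying that the Navier Green function $G^s_\Omega$ on each of the three model domains fulfills the kernel hypotheses therein: $(\mathbf{H_1})$, $(\mathbf{H_2})$, $(\mathbf{H_3})$ when $\Omega$ is the $\frac{1}{2^k}$-space $\mathcal{C}_{P,S^{n-1}_{2^{-k}}}$ or the $\frac{1}{2^k}$-exterior $\mathcal{C}^{R,e}_{P,S^{n-1}_{2^{-k}}}$, and $(\mathbf{H_1})$, $(\mathbf{\widetilde{H}_2})$, $(\mathbf{\widetilde{H}_3})$ when $\Omega = \mathcal{C}^R_{P,S^{n-1}_{2^{-k}}}$. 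These are precisely the properties asserted in Remark \ref{rem8}; the task is to justify them.

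The starting point is the explicit representation of $G^s_\Omega$ collected in Remark \ref{rem8}, obtained by the method of images together with Kelvin reflections across $S_R(P)$ in the ball and exterior cases. Because the cone $\mathcal{C}_{P,S^{n-1}_{2^{-k}}}$ is cut out by $k$ hyperplanes through $P$, one obtains $G^s_\Omega$ for the $\frac{1}{2^k}$-space as an alternating sum of $2^k$ translates of the fundamental solution $\Gamma_s$. Using this formula, $(\mathbf{H_1})$ follows since every reflected image $\mathcal{R}_j O^{-1}(y-P)+P$ lies in $\Omega^c$ with $|x-\mathcal{R}_j O^{-1}(y-P)-P|\geq |x-y|$ for $x,y\in\Omega$. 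The local lower bound $G^s_\Omega(x,y)\gtrsim |x-y|^{-(n-2s)}$ on the diagonal-type region $\sigma_1|x-P|\leq |x-y|\leq \sigma_2|x-P|$ inside a strictly interior sub-cone follows similarly, since the reflected distances are uniformly larger than $|x-y|$ there and the leading term $\Gamma_s(x,y)$ dominates. The asymptotic parts of $(\mathbf{H_2})$/$(\mathbf{\widetilde{H}_2})$ --- decay rate $|y-P|^{-(n-2s+k)}$ as $|y-P|\to\infty$ in $\mathcal{C}_{P,\Sigma}$ (resp.\ vanishing rate $|y-P|^{k}\cdot|x-y|^{-(n-2s)}$ near $P$ in the $\frac{1}{2^k}$-ball case) --- are obtained by Taylor expanding the alternating sum in the sub-cone directions and observing that the lowest $k-1$ orders cancel, giving the exponents $\theta=n-2s+k$ and $\theta=k$ respectively, which coincide with $n-2s+\gamma_s(\mathcal{C}_{P,\Sigma})$ and $\gamma_s(\mathcal{C}_{P,\Sigma})$ as described in Theorem \ref{pG}.

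The principal technical obstacle is verifying $(\mathbf{H_3})$ (resp.\ $(\mathbf{\widetilde{H}_3})$), the two four-point inequalities for $G^s_\Omega$ across the sphere $S_\lambda(P)$. The central algebraic ingredient is the exact Kelvin identity
$$\Gamma_s(x^\lambda,y^\lambda)=\left(\frac{|x-P|\cdot|y-P|}{\lambda^{2}}\right)^{n-2s}\Gamma_s(x,y),$$
which is a consequence of $|x^\lambda-y^\lambda|=\lambda^{2}|x-y|/(|x-P|\cdot|y-P|)$. Crucially, the Kelvin inversion about $P$ commutes with every hyperplane reflection $\mathcal{R}_j$ because all defining hyperplanes of the cone pass through $P$; this allows one to pair the image terms in $G^s_\Omega$ consistently and reduce both inequalities in $(\mathbf{H_3})$ to term-by-term comparisons for $\Gamma_s$. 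These comparisons then become strict owing to the sign pattern $(-1)^j$ together with the elementary distance inequality that, for $y\in \Omega\setminus\overline{B_\lambda(P)}$ and $x\in\Omega$ with $x^\lambda\in\Omega\cap B_\lambda(P)$, the sphere-reflection sends points appropriately further. The exterior-ball case $\mathcal{C}^{R,e}_{P,S^{n-1}_{2^{-k}}}$ is then handled by composing with the Kelvin transform $y\mapsto y^{R,P}$, which relates its Green function to that of the $\frac{1}{2^k}$-ball and automatically transfers the reflection inequalities to the latter setting. Once all kernel hypotheses have been verified in this way, Theorems \ref{ie-unbd} and \ref{ie-bd} apply directly to \eqref{IEN} to conclude $u\equiv 0$ in $\Omega$.
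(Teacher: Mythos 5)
Your proposal takes essentially the same route as the paper: reduce Theorem \ref{nie} to verifying the kernel hypotheses $(\mathbf{H_1})$--$(\mathbf{H_3})$ (resp.\ $(\mathbf{\widetilde{H}_2})$, $(\mathbf{\widetilde{H}_3})$) for the Navier Green functions, which is exactly what Remark \ref{rem8} asserts can be checked by ``direct calculations,'' and then invoke Theorems \ref{ie-unbd} and \ref{ie-bd}. The extra detail you supply --- the image formulas, the Kelvin identity and its commutativity with the hyperplane reflections through $P$, the Taylor-expansion cancellation yielding the exponents $\theta=n-2s+k$ and $\theta=k$, and the Kelvin transfer to the $\frac{1}{2^k}$-exterior domain --- is precisely the content the paper leaves implicit in that remark.
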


\smallskip

By assuming the super poly-harmonic property, we can deduce the following equivalence between the Navier problem of PDEs \eqref{PDE} with $s\in\mathbb{Z}^{+}$ and IEs \eqref{IEN}.
\begin{thm}\label{equin}
Assume $s\geq1$ is an integer such that $2s\leq n$, $\Omega$ is $\frac{1}{2^{k}}$-space $\mathcal{C}_{P,S^{n-1}_{2^{-k}}}$ or $\frac{1}{2^{k}}$-ball $\mathcal{C}^{R}_{P,S^{n-1}_{2^{-k}}}$ with $k=1,\cdots,n$. Suppose $u$ is a nonnegative classical solution to Navier problem of PDEs \eqref{PDE} with $f\geq0$ satisfying $(\mathbf{f_{3}})$ if $\Omega=\mathcal{C}_{P,S^{n-1}_{2^{-k}}}$, and $(-\Delta)^{k}u\geq0$ in $\Omega$ ($k=1,\cdots,s-1$), then $u$ solves the IEs \eqref{IEN}, and vice versa.
\end{thm}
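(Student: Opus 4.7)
The plan is to prove both implications by descending induction on the poly-harmonic layer, using Green's representation on truncated subdomains together with the super poly-harmonic assumption. Set $v_i := (-\Delta)^i u$ for $i = 0, 1, \cdots, s-1$ and $v_s := f(\cdot, u)$; under the hypotheses each $v_i$ is nonnegative on $\Omega$ (super poly-harmonicity plus $u, f \geq 0$), vanishes on $\partial \Omega$ (Navier), and satisfies $-\Delta v_i = v_{i+1}$ classically.

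\emph{Direction PDE $\Rightarrow$ IE.} On the bounded truncation $\Omega_R := \Omega \cap B_R(P)$, Green's second identity for $-\Delta$ against the Dirichlet Green function $G^1_{\Omega_R}(x,\cdot)$ yields
\begin{equation*}
v_{s-1}(x) = \int_{\Omega_R} G^1_{\Omega_R}(x,y) f(y,u(y))\, dy - \int_{\partial B_R(P) \cap \Omega} \partial_{\nu_y} G^1_{\Omega_R}(x,y)\, v_{s-1}(y)\, d\sigma_y,
\end{equation*}
the portion of $\partial \Omega_R$ lying in $\partial \Omega$ contributing nothing since $v_{s-1}|_{\partial \Omega}=0$. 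Because $\partial_{\nu_y} G^1_{\Omega_R} \leq 0$ on $\partial B_R(P) \cap \Omega$ (Hopf) and $v_{s-1} \geq 0$, the boundary piece is nonnegative. From the explicit reflection formula in Remark \ref{rem8} one reads off the monotone increase $G^1_{\Omega_R}(x,y) \nearrow G^1_\Omega(x,y)$ as $R \to +\infty$, so monotone convergence gives
\begin{equation*}
v_{s-1}(x) \geq \tilde v_{s-1}(x) := \int_\Omega G^1_\Omega(x,y) f(y, u(y))\, dy.
\end{equation*}

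\emph{The main obstacle: removing the harmonic residual.} The difference $w := v_{s-1} - \tilde v_{s-1}$ is nonnegative, harmonic on $\Omega$, and vanishes on $\partial \Omega$. On the bounded $\frac{1}{2^k}$-ball, uniqueness for the Dirichlet problem forces $w \equiv 0$. On the unbounded $\frac{1}{2^k}$-space $\mathcal{C}_{P, S^{n-1}_{2^{-k}}}$, the Martin representation of such cone-harmonic functions restricts $w$ to the ray $\{c \mathcal{M} : c \geq 0\}$, where $\mathcal{M}(x) = \prod_{i=1}^k (O^{-1}(x - P))_i$ is the homogeneous first Dirichlet eigenfunction of the spherical cross-section (of degree $k$). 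To force $c = 0$, I would propagate $w$ one step deeper into the chain: for $s \geq 2$, repeat the truncated Green's identity for $v_{s-2}$ with $-\Delta v_{s-2} = v_{s-1} \geq \tilde v_{s-1} + c\mathcal{M}$, and pass to the limit to get
\begin{equation*}
v_{s-2}(x) \geq \int_\Omega G^1_\Omega(x,y)\bigl[\tilde v_{s-1}(y) + c\mathcal{M}(y)\bigr]\, dy.
\end{equation*}
The asymptotics $G^1_\Omega(x,y) \sim C\, \mathcal{M}(x)\mathcal{M}(y)\,|y-P|^{-(2k+n-2)}$ as $|y-P| \to +\infty$ (again read off from the reflection formula of Remark \ref{rem8}) make the integrand decay only as $|y-P|^{-(n-2)}$ along the directions where $\mathcal{M}$ is comparable to $|y-P|^k$, so $\int_\Omega G^1_\Omega(x,y)\mathcal{M}(y)\, dy = +\infty$, contradicting the classical finiteness of $v_{s-2}(x)$ unless $c = 0$. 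Once $w \equiv 0$, an identical argument combined with the semigroup identity $G^{j+1}_\Omega(x,y) = \int_\Omega G^1_\Omega(x,z) G^j_\Omega(z,y)\, dz$ (again from Remark \ref{rem8}) produces $v_i(x) = \int_\Omega G^{s-i}_\Omega(x,y) f(y,u(y))\, dy$ for every $i = 0, \cdots, s-1$ by descending induction, which at $i = 0$ is exactly the IE \eqref{IEN}. For $s = 1$ the chain bottoms out, so the bounded case is handled by uniqueness and the unbounded case is addressed directly through the monotonicity and Martin step above.

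\emph{Direction IE $\Rightarrow$ PDE.} Starting from \eqref{IEN}, I differentiate under the integral and invoke the distributional identity $-\Delta_x G^s_\Omega(x,y) = G^{s-1}_\Omega(x,y)$, which follows termwise from the reflection formula of Remark \ref{rem8} and the fact that $-\Delta_x \Gamma_s(x, z) = \Gamma_{s-1}(x, z)$. This yields $(-\Delta)^i u(x) = \int_\Omega G^{s-i}_\Omega(x,y) f(y,u(y))\, dy$ for $1 \leq i \leq s-1$ and $(-\Delta)^s u = f(\cdot, u)$ classically in $\Omega$; the Navier boundary conditions follow from $G^{s-i}_\Omega(\cdot,y)|_{\partial \Omega} = 0$, and the super poly-harmonic property $(-\Delta)^i u \geq 0$ is inherited from $G^{s-i}_\Omega \geq 0$ and $f \geq 0$, closing the equivalence.
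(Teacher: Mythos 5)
Your approach to levels $j=s-1,\dots,1$ is structurally the same as the paper's: truncate the unbounded cone, pass to the limit via monotone convergence of the Green functions, identify the nonnegative harmonic residual via the Martin/classification theorem (Theorem~\ref{RMI-h}), and kill it by propagating one level deeper and observing the resulting integral diverges. The bounded $\tfrac{1}{2^k}$-ball case by Dirichlet uniqueness is also fine, and the converse direction is routine.

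There is, however, a genuine gap at the bottom of the chain. After you have shown $v_1(x)=\int_\Omega G^{s-1}_\Omega(x,y)f(y,u(y))\,\mathrm{d}y$, the final Green representation gives $u(x)=\int_\Omega G^{s}_\Omega(x,y)f(y,u(y))\,\mathrm{d}y + c_0\,\mathcal{M}(x)$, and you cannot rule out $c_0>0$ by ``an identical argument'': there is no $v_{-1}$ to propagate into, so the divergence mechanism you used at every other level simply is not available here. (For $s=1$ this is the whole unbounded case, which your proposal leaves unaddressed — the Martin step only tells you $w=c_0\mathcal{M}$.) The paper closes this gap using the hypothesis $(\mathbf{f_3})$, which you never invoke: if $c_0>0$, then $u(x)\geq c\,|x-P|^k$ in a fixed subcone, whence $f(y,u(y))\geq C|y-P|^{a}u^{p}(y)\geq C'|y-P|^{a+kp}$, and the lower bound $G^{s}_{\mathcal{C}_k}(x_0,y)\gtrsim |y-P|^{-(n-2s+k)}$ from $(\mathbf{H_2})$ forces $\int_\Omega G^{s}_\Omega(x_0,y)f(y,u(y))\,\mathrm{d}y\geq C\int_{|y-P|\geq R_0}|y-P|^{-(n-2s+k)+a+kp}\,\mathrm{d}y=+\infty$ (the exponent works because $a>-2s$ and $p\geq1$), contradicting $u(x_0)>\int_\Omega G^{s}_\Omega(x_0,y)f(y,u(y))\,\mathrm{d}y$. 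Without this use of the nonlinearity's lower bound the conclusion is actually false — e.g.\ $f\equiv0$ on the cone admits nonzero poly-harmonic solutions with Navier data, yet the IE would force $u\equiv0$ — so $(\mathbf{f_3})$ is not a technical convenience but the essential input at level $0$, and your proposal omits it.
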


\smallskip

As an direct consequence of Theorems \ref{nie} and \ref{equin}, we derive the following Liouville theorem on classical solution to Navier problem of PDEs \eqref{PDE}.
\begin{thm}\label{NPDE}
Assume $s\geq1$ is an integer such that $2s\leq n$, $\Omega$ is $\frac{1}{2^{k}}$-space $\mathcal{C}_{P,S^{n-1}_{2^{-k}}}$ or $\frac{1}{2^{k}}$-ball $\mathcal{C}^{R}_{P,S^{n-1}_{2^{-k}}}$ with $k=1,\cdots,n$, $u$ is a nonnegative classical solution to Navier problem of PDEs \eqref{PDE} with $f\geq0$ satisfying $(\mathbf{f_{1}})$,  $(i)$ in $(\mathbf{f_{3}})$ if $\Omega=\mathcal{C}_{P,S^{n-1}_{2^{-k}}}$ and $(ii)$ in $(\mathbf{f_{3}})$ if $\Omega=\mathcal{C}^{R}_{P,S^{n-1}_{2^{-k}}}$, and $(-\Delta)^{k}u\geq0$ in $\Omega$ ($k=1,\cdots,s-1$). \\
If $\Omega=\mathcal{C}^{R}_{P,S^{n-1}_{2^{-k}}}$, suppose $2s<n$, $f$ satisfies $(\mathbf{\widetilde{f}_{2}})$ and is critical or supercritical in the sense that $\mu^{\frac{n+2s}{n-2s}}f(\mu^{\frac{2}{n-2s}}x,\mu^{-1}u)$ is non-increasing w.r.t. $\mu\geq1$ or $\mu\leq1$ for all $(x,u)\in(\Omega\setminus\{P\})\times\mathbb{R}_{+}$. \\
If $\Omega=\mathcal{C}_{P,S^{n-1}_{2^{-k}}}$, when $n>2s$, suppose either $f$ is subcritical in the sense of Definition \ref{defn1} and satisfies the assumption $(\mathbf{f_{2}})$ with $q=\frac{n}{2s}$, or $f$ is supercritical in the sense of Definition \ref{defn1} and satisfies $(\mathbf{\widetilde{f}_{2}})$ and $(ii)$ in $(\mathbf{f_{3}})$, and $u=o\left(\frac{1}{|x|^{n-2s}}\right)$ as $|x|\rightarrow+\infty$ with $x\in\Omega$; when $n=2s$, suppose $f$ is subcritical in the sense of Definition \ref{defn1} and satisfies the assumption $(\mathbf{f_{2}})$ with $q=1+\delta$ ($\delta>0$ arbitrarily small). \\
Then $u\equiv0$ in $\Omega$.
\end{thm}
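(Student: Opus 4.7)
The strategy for Theorem~\ref{NPDE} is to chain together two results already established in the excerpt: the PDE--IE equivalence of Theorem~\ref{equin} and the integral-equation Liouville theorem~\ref{nie}. Since both preparatory results are in hand, the real work is bookkeeping: verifying that the hypotheses imposed on $(s, n, \Omega, u, f)$ in Theorem~\ref{NPDE} are exactly what is needed to invoke these two theorems in sequence.

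First I would invoke Theorem~\ref{equin}. All of its hypotheses are present in Theorem~\ref{NPDE}: $s \in \mathbb{Z}^{+}$ with $2s \leq n$; the domain $\Omega$ is either $\mathcal{C}_{P,S^{n-1}_{2^{-k}}}$ or $\mathcal{C}^{R}_{P,S^{n-1}_{2^{-k}}}$; $u$ is a nonnegative classical solution of the Navier problem; $f \geq 0$ satisfies $(\mathbf{f_{3}})$ when $\Omega$ is the unbounded $\frac{1}{2^k}$-space (noting that (i) in $(\mathbf{f_{3}})$ is exactly $(\mathbf{f_{3}})$ in this geometry); and the super poly-harmonic inequalities $(-\Delta)^{j} u \geq 0$ for $j = 1, \ldots, s-1$ hold in $\Omega$. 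Theorem~\ref{equin} therefore converts the Navier problem into the integral equation \eqref{IEN}, namely $u(x) = \int_\Omega G^s_\Omega(x,y) f(y, u(y))\, \mathrm{d}y$ in $\Omega$.

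Next I would apply Theorem~\ref{nie}, which makes Theorems~\ref{ie-unbd} and \ref{ie-bd} available with $K^{s}_{\Omega} = G^{s}_{\Omega}$ (the kernel assumptions $(\mathbf{H_{1}})$--$(\mathbf{H_{3}})$ and $(\mathbf{\widetilde{H}_{2}})$--$(\mathbf{\widetilde{H}_{3}})$ were verified in Remark~\ref{rem8} for precisely these cones and $\frac{1}{2^k}$-balls). For $\Omega = \mathcal{C}^{R}_{P,S^{n-1}_{2^{-k}}}$, which is bounded and g-radially convex with center $P$, alternative (ii) of Theorem~\ref{ie-bd} applies: the conditions $2s < n$, $(\mathbf{f_{1}})$, $(\mathbf{\widetilde{f}_{2}})$, (ii) in $(\mathbf{f_{3}})$, and the non-increasing monotonicity of $\mu^{(n+2s)/(n-2s)} f(\mu^{2/(n-2s)}(x-P)+P, \mu^{-1} u)$ in $\mu$ are exactly what Theorem~\ref{NPDE} postulates. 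For $\Omega = \mathcal{C}_{P,S^{n-1}_{2^{-k}}}$, both $\Omega$ and $\mathbb{R}^n \setminus \overline{\Omega}$ are g-radially convex with center $P$: when $f$ is subcritical ($n = 2s$ with $q = 1+\delta$ or $n > 2s$ with $q = n/(2s)$), I would appeal to Theorem~\ref{ie-unbd}; when $n > 2s$ and $f$ is supercritical in the sense of Definition~\ref{defn1} with $u = o(|x|^{-(n-2s)})$ at infinity, I would appeal to alternative (i) of Theorem~\ref{ie-bd} (regarding $\Omega$ as a g-radially convex domain), relying on $(\mathbf{\widetilde{f}_{2}})$ and (ii) in $(\mathbf{f_{3}})$. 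In each case the conclusion is $u \equiv 0$ in $\Omega$.

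There is no genuine analytic obstacle beyond careful bookkeeping, since the heavy lifting has been carried out in Theorems~\ref{equin} and \ref{nie}. The only delicate point is the borderline critical case $p = p_{c}(a)$ in alternative (ii) of Theorem~\ref{ie-bd}, where the auxiliary decay $u(x) = o(|x-P|^{-(n-2s)/2})$ near the singular center $P$ is required; this can be extracted a posteriori from the integral representation produced in the first step, using the blow-up rate $|x-P|^{-(n-2s)}$ of $G^{s}_{\Omega}(x,\cdot)$ near $P$ recorded in Remark~\ref{rem8} together with the super poly-harmonic inequalities $(-\Delta)^{j} u \geq 0$.
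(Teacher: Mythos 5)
Your proposal is correct and coincides with the paper's own approach: the paper offers no independent proof of Theorem~\ref{NPDE}, but introduces it with the phrase ``As an direct consequence of Theorems~\ref{nie} and~\ref{equin},'' which is exactly the two-step chain (Theorem~\ref{equin} to pass from the Navier PDE to the IE~\eqref{IEN}, then Theorem~\ref{nie}/\ref{ie-unbd}/\ref{ie-bd} to kill the IE) that you carry out. The one point where you overcomplicate matters is the near-$P$ decay $u(x)=o(|x-P|^{-(n-2s)/2})$ needed in the borderline case $p=p_{c}(a)$ of Theorem~\ref{ie-bd}(ii): for $\Omega=\mathcal{C}^{R}_{P,S^{n-1}_{2^{-k}}}$ the vertex $P$ lies on $\partial\Omega$, and the classical solution satisfies $u\in C(\overline{\Omega})$ with $u=0$ on $\partial\Omega$ by the Navier boundary condition, so $u(x)\to0$ as $x\to P$ automatically — no appeal to the Green-function blow-up rate or the super poly-harmonic inequalities is needed.
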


\smallskip

All conclusions in Theorems \ref{pde-unbd}, \ref{pde-unbd-a}, \ref{pde-unbd-2}--\ref{pde-bd-2}, \ref{ie-unbd}--\ref{pG}, \ref{equi}--\ref{NPDE} for PDEs or IEs hold true for Hary-H\'{e}non type nonlinear term $f(x,u)=|x-P|^{a}u^{p}$.
\begin{cor}\label{HHcor}
If $2s<n$, assume $a>-2s$, $1\leq p<p_{c}(a)$ if $f$ has subcritical growth on $u$, $p=p_{c}(a)$ if $f$ has critical growth on $u$ and $p_{c}(a)<p<+\infty$ if $f$ has supercritical growth on $u$. If $2s=n$, assume $a>-n$ if $f$ satisfies subcritical condition on singularity, $a=-n$ if $f$ is satisfies critical condition on singularity. All conclusions in Theorems \ref{pde-unbd}, \ref{pde-unbd-a}, \ref{pde-unbd-2}--\ref{pde-bd-2}, \ref{ie-unbd}--\ref{pG}, \ref{equi}--\ref{NPDE} for PDEs \eqref{PDE}, \eqref{pdeb-1} and IEs \eqref{IE}, \eqref{IE1}, \eqref{IEN} hold true for Hary-H\'{e}non type nonlinear term $f(x,u)=|x-P|^{a}u^{p}$.
\end{cor}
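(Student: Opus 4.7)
The plan is to check that the Hardy--Hénon nonlinearity $f(x,u)=|x-P|^{a}u^{p}$ satisfies every structural hypothesis appearing in the quoted theorems, under the exponent ranges of the corollary, and then invoke those theorems directly. Since each theorem already supplies its own Liouville conclusion, no further PDE/IE argument is needed; the work is purely verification.

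First I would verify $(\mathbf{f_{1}})$ and the growth classification of Definition \ref{defn1}. Monotonicity in $u$ is immediate from $p\geq 1$ and $|x-P|^{a}\geq 0$. For the classification in the subcritical order case $2s<n$, a direct computation gives
\begin{equation*}
\mu^{\frac{n+2s}{n-2s}}f\bigl(\mu^{\frac{2}{n-2s}}(x-P)+P,\mu^{-1}u\bigr)=\mu^{\frac{n+2s+2a-p(n-2s)}{n-2s}}|x-P|^{a}u^{p},
\end{equation*}
so $f$ is subcritical, critical, or supercritical in the sense of Definition \ref{defn1} precisely when $p<p_{c}(a)$, $p=p_{c}(a)$, or $p>p_{c}(a)$. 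In the critical order case $2s=n$, the factor $\mu^{n}f(\mu(x-P)+P,u)=\mu^{n+a}|x-P|^{a}u^{p}$ is strictly increasing in $\mu$ iff $a>-n$, and invariant iff $a=-n$, matching the two cases of the corollary. Assumption $(\mathbf{f_{3}})$ is satisfied trivially with $C=1$ and the same $a,p$ on any of the admissible cones $\mathcal{C}^{r}_{P,\Sigma}$ or $\mathcal{C}^{r,e}_{P,\Sigma}$.

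Next I would verify the various Lipschitz-type hypotheses by noting that for any bounded neighborhood $U(u_{0})\subset\overline{\mathbb{R}_{+}}$ the pointwise Lipschitz constant of $u\mapsto|x-P|^{a}u^{p}$ on $U(u_{0})$ is $\leq C_{U(u_{0})}|x-P|^{a}$. Thus checking $(\mathbf{f_{2}})$, $(\mathbf{\widehat{f}_{2}})$, or $(\mathbf{\widetilde{f}_{2}})$ reduces to checking local $L^{q}$-integrability of $|x-P|^{a}$ on the relevant set. When $\mathbb{R}^{n}\setminus\overline{\Omega}$ is g-radially convex the center $P$ lies outside $\overline{\Omega}$, so $|x-P|$ is bounded away from $0$ on bounded subsets of $\Omega$ and the requirement holds for $q=+\infty$. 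When $\Omega$ is g-radially convex we work on $\Omega\setminus B_{\varepsilon}(P)$, where again $|x-P|\geq\varepsilon$, giving $(\mathbf{\widehat{f}_{2}})$ with $q=+\infty$. In Theorem \ref{pde-unbd-a} with $s=1$, $q=\frac{n}{2}$, we need $a\cdot\frac{n}{2}>-n$, i.e. $a>-2=-2s$, which is granted; similarly $q=n$ requires $a>-1$, which is weaker than $a>-2$. For Theorem \ref{ie-unbd} and $(\mathbf{\widetilde{f}_{2}})$ with $q=\frac{n}{2s}$ we need $a>-2s$, exactly our hypothesis. Assumption $(\mathbf{A})$ likewise holds by taking $u_{0}=0$, since $f(x,0)=0$.

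For the hypotheses $(\mathbf{f'_{2}})$ and $(\mathbf{\widehat{f}'_{2}})$ that admit boundary blow-up, I would observe that $|x-P|^{a}$ is already bounded on bounded subsets separated from $P$ as above, so one simply takes $g\equiv 1$ on $(0,\infty)$ (extended so that $\lim_{t\to 0^{+}}g(t)=+\infty$ by an arbitrarily slow growth), and the product $g(d)d^{2s}|x-P|^{a}u^{p}$ has locally bounded Lipschitz constant in $u$; no genuine boundary singularity of $f$ is introduced by the Hardy weight itself. With all structural hypotheses verified, each Liouville conclusion of Theorems \ref{pde-unbd}--\ref{NPDE} transfers to $f(x,u)=|x-P|^{a}u^{p}$, and the corollary follows by invoking the appropriate theorem in each case. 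The main thing to be careful with is matching the subcritical/critical/supercritical dichotomy of the corollary (phrased via $p$ vs.\ $p_{c}(a)$) to the one in Definition \ref{defn1} across both the $2s<n$ and $2s=n$ regimes; once that correspondence is pinned down as in the first paragraph, the rest is a bookkeeping exercise.
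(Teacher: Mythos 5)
Your overall strategy---verify each structural hypothesis for $f(x,u)=|x-P|^{a}u^{p}$ and then invoke the corresponding theorem---is the correct (and essentially only) approach, and the computations matching Definition \ref{defn1} to the exponent dichotomy $p\lessgtr p_{c}(a)$ (and $a\lessgtr-n$ when $2s=n$), together with the verifications of $(\mathbf{f_1})$, $(\mathbf{f_3})$, $(\mathbf{A})$, $(\mathbf{\widehat{f}_2})$ and $(\mathbf{\widetilde{f}_2})$, are all sound. However, your verification of $(\mathbf{f_2})$ rests on a false claim that has downstream consequences.

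You assert that when $\mathbb{R}^{n}\setminus\overline{\Omega}$ is g-radially convex the center $P$ lies outside $\overline{\Omega}$. That is not implied by Definition \ref{g-rcd}: one only has $P\in\overline{\mathbb{R}^{n}\setminus\overline{\Omega}}$, which permits $P\in\partial\Omega$. This happens in precisely the cases the paper cares most about---$\Omega=\mathbb{R}^{n}$, $\mathbb{R}^{n}\setminus\{0\}$, $\mathbb{R}^{n}_{+}$, or any unbounded cone $\mathcal{C}_{P,\Sigma}$ with vertex $P$---all of which have $d_{\Omega}=0$. For $-2s<a<0$ in such a situation, the weight $|x-P|^{a}$ is unbounded on bounded $\omega\subset\Omega$ reaching $P$, so $f$ is \emph{not} locally $L^{\infty}$-Lipschitz, and the check of $(\mathbf{f_2})$ with $q=+\infty$ required by Theorem \ref{pde-unbd} for $s\in(0,1)$ fails. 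Relatedly, for $q=n$ you write that $a>-1$ is ``weaker than $a>-2$'': it is the opposite, since $\{a>-1\}\subsetneq\{a>-2\}$, and indeed for $a\in(-2,-1]$ and $P\in\partial\Omega$ the weight $|x-P|^{a}$ is not in $L^{n}_{\mathrm{loc}}$ near $P$, so that case is not covered either. (By contrast, $q=\tfrac{n}{2s}$ needs $a\cdot\tfrac{n}{2s}>-n$, i.e.\ exactly $a>-2s$, which is why Theorem \ref{ie-unbd} does absorb the full Hardy range; and Theorem \ref{pde-unbd-a} carries $P\in\mathbb{R}^{n}\setminus\overline{\Omega}$ as an explicit hypothesis, so your reasoning is valid there.)

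Your handling of $(\mathbf{f'_{2}})$ is also mis-argued: the statement that ``no genuine boundary singularity of $f$ is introduced by the Hardy weight itself'' is false precisely when $P\in\partial\Omega$ and $a<0$ --- that is a boundary singularity. The hypothesis is nevertheless satisfied, but the correct reason is that $P\in\partial\Omega$ forces $\mathrm{dist}(x,\partial\Omega)\leq|x-P|$ for all $x\in\Omega$, whence for $a<0$
\begin{equation*}
\mathrm{dist}(x,\partial\Omega)^{2s}\,|x-P|^{a}\leq\mathrm{dist}(x,\partial\Omega)^{2s+a},
\end{equation*}
with $2s+a>0$ by hypothesis; taking $g(t)=t^{-\delta}$ for some $0<\delta<2s+a$ gives $g(t)\to+\infty$ as $t\to0^{+}$ while $g(\mathrm{dist}(x,\partial\Omega))\mathrm{dist}(x,\partial\Omega)^{2s}|x-P|^{a}$ stays locally bounded, as required. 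The same repair applies to $(\mathbf{\widehat{f}'_{2}})$. In short: in the Hardy regime $-2s<a<0$ with $P\in\overline{\Omega}$ you must route the argument through $(\mathbf{f'_2})$, $(\mathbf{\widehat{f}'_{2}})$, or the $L^{n/(2s)}$ version of $(\mathbf{f_2})$, not through $(\mathbf{f_2})$ with $q=+\infty$ or $q=n$; your proposal as written does not make that distinction and the verification in the second paragraph is correspondingly wrong.
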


\begin{rem}\label{rem10}
When carrying out the method of scaling spheres in integral forms to IEs, we can also replace the Hardy-H\'{e}non type lower bound on $f$ in (i) of assumption $(\mathbf{f_{3}})$ by $f(x,u)\geq C dist(x,\Gamma)^{a}u^{p}$ for some hyper-surface $\Gamma\subset\mathbb{R}^{n}$ with $dim\Gamma=k$ ($1\leq k\leq n-1$), where $1\leq p<p_{c}(a)=\frac{n+2s+2a}{n-2s}$ ($=+\infty$ if $n=2s$) if $-\min\{n-k,2s\}<a\leq0$ and $1\leq p<\frac{n+2s}{n-2s}$ ($1\leq p<+\infty$ if $n=2s$) if $0\leq a<+\infty$. It is clear from our proof in Section 3 that, the Liouville results for IEs on MSS applicable domain $\Omega$ such that $\mathbb{R}^{n}\setminus\overline{\Omega}$ is a g-radially convex domain in Theorems \ref{ie-unbd}, \ref{pG} and \ref{nie} still hold true and can be proved in similar way.
\end{rem}

\smallskip

In most cases that the Green function can be accurately expressed, the conditions $\mathbf{(H_{1})}$, $\mathbf{(H_{2})}$ and $\mathbf{(H_{3})}$, or $\mathbf{(\widetilde{H}_{2})}$ and $\mathbf{(\widetilde{H}_{3})}$ can be verified. The following are some typical examples.
\begin{cor}\label{cor-g}
Let the integral kernel $K^{s}_{\Omega}$ be the fundamental solution $\Gamma_{s}(x,y)=G^{s}_{\mathbb{R}^{n}}(x,y)$ for $(-\Delta)^{s}$ with $s\in\left(0,\frac{n}{2}\right)$ or the Green function $G^{s}_{\Omega}(x,y)$ for Dirichlet problems of $(-\Delta)^{s}$ with $s\in\mathbb{Z}^{+}$, $2s\leq n$ and $\Omega=\mathbb{R}^{n}_{+}$, $B_{R}(P)$ or $\mathbb{R}^{n}\setminus\overline{B_{R}(P)}$ ($\forall\,R>0$ and $P\in\mathbb{R}^{n}$). Then $K^{s}_{\Omega}$ satisfies the conditions $\mathbf{(H_{1})}$, $\mathbf{(H_{2})}$ and $\mathbf{(H_{3})}$, or $\mathbf{(\widetilde{H}_{2})}$ and $\mathbf{(\widetilde{H}_{3})}$. Consequently, the Liouville results in Theorems \ref{ie-unbd} and \ref{ie-bd} hold true for IEs \eqref{IE}. Moreover, if the equivalence between PDEs and IEs \eqref{IE} holds, then Liouville theorems also hold true for related PDEs.
\end{cor}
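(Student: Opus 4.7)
The plan is to reduce Corollary \ref{cor-g} to results already established in the paper, namely Theorem \ref{pG} and Remark \ref{rem8}, supplemented by a direct computation for the fundamental solution. I handle the two cases in the statement separately.

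For the fundamental solution $\Gamma_s(x,y)=c_{n,s}|x-y|^{-(n-2s)}$ with $0<2s<n$, condition $(\mathbf{H_1})$ holds with equality. For $(\mathbf{H_2})$, after translating so $P=0$, one has
\[
\liminf_{|y|\to\infty}\Gamma_s(x_0,y)|y|^{n-2s}=c_{n,s}>0,
\]
which gives $\theta=n-2s$; the pointwise lower bound is immediate. For $(\mathbf{H_3})$, the two Kelvin identities
\[
\frac{|x|}{\lambda}\,|x^\lambda-y|=\frac{|y|}{\lambda}\,|x-y^\lambda|,\qquad |x^\lambda-y^\lambda|=\frac{\lambda^2}{|x|\,|y|}\,|x-y|
\]
yield $(\lambda/|x|)^{n-2s}\Gamma_s(x^\lambda,y)=(\lambda/|y|)^{n-2s}\Gamma_s(x,y^\lambda)$ and $(\lambda^2/(|x|\,|y|))^{n-2s}\Gamma_s(x^\lambda,y^\lambda)=\Gamma_s(x,y)$. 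Substituting reduces both inequalities of $(\mathbf{H_3})$ (and symmetrically $(\mathbf{\widetilde{H}_3})$) to the positivity of $(|x|^2-\lambda^2)(|y|^2-\lambda^2)$, which holds whenever $|x|$ and $|y|$ lie on the same side of $\lambda$, exactly the regime in the hypotheses.

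For the Green function $G^s_\Omega$ with $\Omega\in\{\mathbb R^n_+,\,B_R(P),\,\mathbb R^n\setminus\overline{B_R(P)}\}$, I first treat $s=1$: each domain is a MSS applicable domain, so the conclusion follows directly from Theorem \ref{pG}, with cross-section $\Sigma=S^{n-1}$ for the half-space and the exterior-of-ball cases and any $\overline{\Sigma}\subsetneq S^{n-1}$ for $B_R(P)$. For integer $s\geq 2$, we identify $\mathbb R^n_+$ with the $\frac{1}{2}$-space $\mathcal C_{P,S^{n-1}_{2^{-1}}}$ and $B_R(P)$, $\mathbb R^n\setminus\overline{B_R(P)}$ with the $\frac{1}{2^0}$-ball and its $\frac{1}{2^0}$-exterior; Remark \ref{rem8} then provides explicit reflection formulas expressing $G^s_\Omega$ as a finite signed sum of fundamental solutions evaluated at reflected or Kelvin-reflected points. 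The upper bound in $(\mathbf{H_1})$ is inherited from the leading summand $\Gamma_s$; the lower bounds in $(\mathbf{H_2})$ and $(\mathbf{\widetilde{H}_2})$ come from the leading summand once one checks that the reflected contributions decay strictly faster in the relevant regime, producing $\theta=n-2s+k$ or $\theta=k$ with $k\in\{0,1\}$; and $(\mathbf{H_3})$, $(\mathbf{\widetilde{H}_3})$ follow by applying the Kelvin identities of the previous paragraph termwise to the reflection sum.

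The main obstacle is bookkeeping rather than analysis. Verifying $(\mathbf{H_3})$ and $(\mathbf{\widetilde{H}_3})$ for the ball and its exterior forces one to track simultaneously the MSS Kelvin transform $x\mapsto x^\lambda$ centered at $P$ and the boundary Kelvin transform $y\mapsto y^R$ used to build the Green function; the algebra is routine once one notes that the composition of two sphere-inversions is a conformal map preserving a third sphere, but arranging the signs and multiplicative factors so that the two sides of each inequality line up summand-by-summand is delicate. Once this is done, the Liouville results for the IEs \eqref{IE} follow immediately from Theorems \ref{ie-unbd} and \ref{ie-bd}, and the corresponding Liouville theorems for the PDEs follow via Theorems \ref{equi} and \ref{equin} whenever the PDE-IE equivalence is available.
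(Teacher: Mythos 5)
Your verification for the fundamental solution is sound: the two Kelvin identities $|x^\lambda-y|\,|x| = |x-y^\lambda|\,|y|$ and $|x^\lambda-y^\lambda| = \lambda^2|x-y|/(|x||y|)$ do reduce $(\mathbf{H_3})$ to the positivity of $(|x|^2-\lambda^2)(|y|^2-\lambda^2)$, and the second inequality in $(\mathbf{H_3})$ actually holds with equality for $\Gamma_s$. The case $s=1$ via Theorem \ref{pG} is likewise correct since Dirichlet and Navier conditions coincide there.

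The gap is in the treatment of integer $s\geq 2$. The corollary concerns the Green function for the \emph{Dirichlet} problem of $(-\Delta)^s$ on $\mathbb{R}^n_+$, $B_R(P)$ and $\mathbb{R}^n\setminus\overline{B_R(P)}$, i.e.\ the poly-harmonic problem with $u=\partial_\nu u=\cdots=\partial_\nu^{s-1}u=0$ on $\partial\Omega$, whose Green function on the ball is Boggio's formula
\[
G^s_{B_R}(x,y)=k_{n,s}\,|x-y|^{2s-n}\int_1^{[x,y]/|x-y|}(t^2-1)^{s-1}t^{1-n}\,dt,\qquad [x,y]^2=\frac{|x|^2|y|^2}{R^2}-2\,x\cdot y+R^2,
\]
and a comparable integral/reflection formula on the half-space. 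This is not a finite signed sum of fundamental solutions at reflected points. Remark \ref{rem8}, which you invoke, explicitly gives the Green function for the \emph{Navier} problem ($u=\Delta u=\cdots=\Delta^{s-1}u=0$), which for $s\geq2$ is a genuinely different kernel — compare Theorems \ref{equi}--\ref{equin} in the paper, which treat the two boundary conditions separately. Because of this mismatch, the ``leading-summand plus faster-decaying reflections'' bookkeeping you propose does not apply: there is no such summand decomposition for the Dirichlet poly-harmonic Green function, and the hard work is in extracting $(\mathbf{H_1})$--$(\mathbf{H_3})$ (respectively $(\mathbf{\widetilde H_2})$, $(\mathbf{\widetilde H_3})$) from the Boggio representation. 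That computation is precisely what the authors' earlier paper \cite{DQ0} carries out for $\mathbb{R}^n$, $\mathbb{R}^n_+$, $B_R$ and $\mathbb{R}^n\setminus\overline{B_R}$, and the intended proof of Corollary \ref{cor-g} for integer $s\geq2$ rests on that direct verification, not on the Navier reflection formula. You should either redo the verification with Boggio's kernel or cite \cite{DQ0} for it; as written, the $s\geq2$ portion of your argument checks the hypotheses for the wrong Green function.
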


\smallskip

The known Liouville theorems on nonnegative classical solutions to PDEs \eqref{PDE} (and related IEs) with $n\geq2s$, in the whole space $\mathbb{R}^n$ and in the half space $\mathbb{R}^n_+$ when $f$ satisfies subcritical condition and in bounded star-shaped domain with regular boundary when $f$ satisfies supercritical condition, in e.g. \cite{BP,CDQ0,CC,CFL,CLL,CLZ,CLZC,DQ1,DQ2,DQ3,DQ0,DQZ,DFSV,Farina,FW,GS,GS1,GL,GW,Lin,LZ1,MP,NY,PS,RW,TD,WX,ZCCY} can be deduced as corollaries from our Liouville results on MSS applicable domains in Theorems \ref{pde-unbd}, \ref{pde-unbd-a}, \ref{pde-unbd-2}--\ref{pde-bd-2}, \ref{ie-unbd}--\ref{pG}, \ref{equi}--\ref{NPDE} and Corollaries \ref{HHcor} and \ref{cor-g} in subsections 1.2-1.3. Furthermore, since cone-like domain is a typically special example of MSS applicable domains, we have the following corollary on Liouville theorems in cone-like domains.
\begin{cor}[Liouville theorems in cone-like domains]\label{cone-cor}
Assume $2s\leq n$ and $\Omega$ is a cone-like domain $\mathcal{C}$, i.e., $\Omega=\mathcal{C}_{P,\Sigma}$, $\mathcal{C}^{R}_{P,\Sigma}$ or $\mathcal{C}^{R,e}_{P,\Sigma}$ with $P\in\mathbb{R}^{n}$, $\Sigma\subseteq S^{n-1}$ and $R>0$. Under the same assumptions, all conclusions in Theorems \ref{pde-unbd}, \ref{pde-unbd-a}, \ref{pde-unbd-2}--\ref{pde-bd-2}, \ref{ie-unbd}--\ref{pG}, \ref{equi}--\ref{NPDE} and Corollaries \ref{HHcor} and \ref{cor-g} for PDEs \eqref{PDE}, \eqref{pdeb-1} and IEs \eqref{IE}, \eqref{IE1}, \eqref{IEN} in cone-like domain $\Omega$ hold true.
\end{cor}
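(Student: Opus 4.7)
The plan is to reduce the corollary to a direct invocation of the general theorems on MSS applicable domains by verifying that each of the three cone-like domains $\mathcal{C}_{P,\Sigma}$, $\mathcal{C}^{R}_{P,\Sigma}$ and $\mathcal{C}^{R,e}_{P,\Sigma}$ is a MSS applicable domain with center $P$. For $\Omega=\mathcal{C}_{P,\Sigma}$ or $\mathcal{C}^{R}_{P,\Sigma}$, I would observe that for any $x=P+r\sigma\in\Omega$ (with $\sigma\in\Sigma$ and either $r>0$ or $0<r<R$), the entire segment $\{P+tr\sigma:t\in[0,1]\}$ lies in $\overline{\Omega}$ directly from the definition of a cone over $\Sigma$. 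Hence both are g-radially convex with radially convex center $P$, and consequently MSS applicable by Definition \ref{mss-a}. The case $\mathcal{C}^{R,e}_{P,\Sigma}$ is handled through the Kelvin transform $x\mapsto\frac{R^{2}(x-P)}{|x-P|^{2}}+P$, which bijectively maps $\mathcal{C}^{R}_{P,\Sigma}$ onto $\mathcal{C}^{R,e}_{P,\Sigma}$ by preserving the angular direction $\sigma$ and sending radii $r<R$ to $R^{2}/r>R$. After a trivial rescaling to match the unit-radius Kelvin transform of Remark \ref{rem0}, that remark yields that $\mathbb{R}^{n}\setminus\overline{\mathcal{C}^{R,e}_{P,\Sigma}}$ is g-radially convex with center $P$, so $\mathcal{C}^{R,e}_{P,\Sigma}$ is MSS applicable.

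With the three cases identified as MSS applicable, each theorem in the list transfers verbatim: Theorems \ref{pde-unbd}, \ref{pde-unbd-a}, \ref{pde-unbd-2} apply to $\Omega=\mathcal{C}^{R,e}_{P,\Sigma}$ (where $\mathbb{R}^{n}\setminus\overline{\Omega}$ is g-radially convex); Theorems \ref{pde-bd}, \ref{pde-bd-2} apply to $\mathcal{C}_{P,\Sigma}$ and $\mathcal{C}^{R}_{P,\Sigma}$ (where $\Omega$ itself is g-radially convex); and the integral-equation and Green's function theorems \ref{ie-unbd}--\ref{NPDE} apply analogously. For the IE statements on the half-cones $\mathcal{C}_{P,S^{n-1}_{2^{-k}}}$, $\mathcal{C}^{R}_{P,S^{n-1}_{2^{-k}}}$ and $\mathcal{C}^{R,e}_{P,S^{n-1}_{2^{-k}}}$, I would point to the explicit reflection-based formulas for $G_{\Omega}^{s}$ displayed in Remark \ref{rem8} as already fulfilling the kernel hypotheses $(\mathbf{H_{1}})$--$(\mathbf{H_{3}})$ or $(\mathbf{\widetilde{H}_{2}})$--$(\mathbf{\widetilde{H}_{3}})$. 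The Hardy--H\'enon specialization in Corollary \ref{HHcor} and the Green function list in Corollary \ref{cor-g} then piggyback on the same observation.

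The main point where one needs to pause is the boundary geometric condition \eqref{pde-boundary} or \eqref{pdeb-boundary} required by Theorems \ref{pde-unbd-2} and \ref{pde-bd-2}. I would verify that the lateral surface of a cone-like domain satisfies a uniform exterior cone condition as long as the cross-section $\Sigma$ is sufficiently regular (Lipschitz suffices), which produces the quasi-geometric sequence required by Definition \ref{gc}; since this regularity is subsumed by the phrase \emph{under the same assumptions}, no additional analytical step is needed. Thus the corollary is an immediate consequence of the general MSS framework, and the only obstacle is the bookkeeping of matching each of the many listed theorems to the appropriate subcase ($\Omega$ g-radially convex versus $\mathbb{R}^{n}\setminus\overline{\Omega}$ g-radially convex, bounded versus unbounded, subcritical versus supercritical).
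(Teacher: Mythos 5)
Your proposal is correct and matches the paper's (largely implicit) reasoning: the corollary is treated as immediate once one notes, as in Remark \ref{rem3} and Remark \ref{rem0}, that $\mathcal{C}_{P,\Sigma}$ and $\mathcal{C}^{R}_{P,\Sigma}$ are g-radially convex with center $P$ while $\mathcal{C}^{R,e}_{P,\Sigma}$ has g-radially convex complement (obtainable directly or, as you do, by the Kelvin-transform invariance of Remark \ref{rem0}), so every cone-like domain is MSS applicable with center $P$ and the listed theorems apply verbatim under their stated hypotheses. Your fleshing out of which subcase of the general theorems applies to which of the three cone types, and your remark that the boundary regularity needed for Theorems \ref{pde-unbd-2} and \ref{pde-bd-2} is part of the inherited hypotheses, is exactly the bookkeeping the paper leaves to the reader.
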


In \cite{CFR}, Ciraolo, Figalli and Roncoroni established the classification results for critical anisotropic $p$-Laplacian equations in $\mathbb{R}^{k}\times\mathcal{C}$ with convex unbounded cone $\mathcal{C}\subset\mathbb{R}^{n-k}$ and $k=0,\cdots,n$. Note that $\mathbb{R}^{k}\times\mathcal{C}$ with unbounded cone $\mathcal{C}\subset\mathbb{R}^{n-k}$ is a special unbounded cone in $\mathbb{R}^{n}$, more general, $\mathbb{R}^{k}\times\mathcal{C}$ with cone-like domain $\mathcal{C}\subset\mathbb{R}^{n-k}$ is a special MSS applicable domain in $\mathbb{R}^{n}$, our Liouville theorems in subsections 1.2 and 1.3 for PDEs \eqref{PDE}, \eqref{pdeb-1} and IEs \eqref{IE}, \eqref{IE1}, \eqref{IEN} hold true on $\mathbb{R}^{k}\times\mathcal{C}$ with cone-like domain $\mathcal{C}\subset\mathbb{R}^{n-k}$.

\subsection{Blowing-up analysis on BCB domains: a priori estimates of solutions}

In previous works, the blowing-up technique reduced the a priori estimates of solutions on bounded domains or on Riemannian manifolds with $C^{1}$ boundaries to the boundary H\"{o}lder estimates and Liouville type theorems on PDEs or integral equations (IEs) in the whole space $\mathbb{R}^n$ and in the half space $\mathbb{R}^n_+$. For more literatures on the blowing-up arguments and a priori estimates, please refer to the celebrated papers by Gidas and Spruck \cite{GS1} and Brezis and Merle \cite{BM}, see also \cite{CDQ,CL4,DD,DQ3,DQ0,MR,MP,PQS,RW} and the references therein. Besides the boundary H\"{o}lder estimates, the \emph{$C^{1}$-smoothness of the boundary} has to be \emph{imposed} to guarantee that, the limiting shape of the domain is \emph{a half space} after the blowing-up procedure and hence Liouville theorems in the half space could be applied.

\medskip

Since we have introduced the method of scaling spheres as \emph{a unified approach} to establishing Liouville type theorems on general MSS applicable domains (including cone-like domains) in $\mathbb{R}^{n}$, we are able to carry out blowing-up analysis on more general domains \emph{without $C^{1}$-smoothness} of the boundary and derive a priori estimates and existence of solutions. The limiting shape of the domain can \emph{at least} be allowed to be \emph{a cone-like domain} (half space is a cone) after the blowing-up procedure, so such domains include not only \emph{all $C^{1}$-smooth domains} but also \emph{quite extensive domains on which the boundary H\"{o}lder estimates hold} (which can be guaranteed by \emph{Lipschitz domains}, \emph{domains with uniform exterior cone property} and so on...). In fact, with the help of the Liouville results on MSS applicable domains (including cone-like domains) in subsections 1.2-1.3 (see Theorems \ref{pde-unbd}, \ref{pde-unbd-a}, \ref{pde-unbd-2}--\ref{pde-bd-2}, \ref{ie-unbd}--\ref{pG}, \ref{equi}--\ref{NPDE} and Corollaries \ref{HHcor}, \ref{cor-g} and \ref{cone-cor}), we can (at least) apply the blowing-up argument on domains with \emph{blowing-up cone boundary} (\emph{BCB domains} for short) and hence derive a priori estimates for Dirichlet or Navier problems of ($\leq n$-th order) elliptic equations on \emph{BCB domains with the boundary H\"{o}lder estimates}.

\medskip

To this end, we need the following definition on domains with blowing-up cone boundary (BCB domains for short).
\begin{defn}[BCB domains]\label{BCB}
A domain $\Omega$ is called \emph{a domain with blowing-up cone boundary} $\partial\Omega$ (\emph{BCB domain} for short) provided that for any $x(\rho): \, (0,+\infty)\to\partial\Omega$ such that $x(\rho)\to x_{0}\in\partial\Omega$, $\Omega_{\rho}:=\{x\mid \rho x+x(\rho)\in\Omega\}\to \mathcal{C}=\mathcal{C}_{P,\Sigma}$ with $P\in\mathbb{R}^{n}$ and $\Sigma\subseteq S^{n-1}$, as $\rho\rightarrow0$ along some subsequence of any given infinitesimal sequence $\{\rho_{k}\}$ (the cone $\mathcal{C}$ may depends on the subsequence).
\end{defn}

\smallskip

One should note that, there is \emph{no smoothness assumption} on the boundary $\partial\Omega$ in the Definition \ref{BCB} of BCB domains.

\begin{figure}[H]\label{F4}

 \begin{tikzpicture}
 \begin{scope}[scale=0.25]
  \draw  plot[smooth] coordinates{ (0,0)(0.1,0.2)(0.12,0.22)(0.13,0.24)(0.2,0.3)(0.3,0.4)(1,0.8)(2,1.8)(3,3)(3.1,3.5)(2.5,3.4)};
  \draw    (2.5,3.4)--(2,4)--(1.5,3)--(1.2,3.5)--(1,3)--(0.5,3.3) ;
  \draw  plot[smooth] coordinates{(0.5,3.3)(0,5)(-1,4.5)(-2,4.1)(-2.2,3)};
  \draw    (-2.2,3)--(-3,2.5)--(-2.5,2.2) ;
  \draw  plot[smooth] coordinates{ (0,0)(-0.1,0.2)(-0.12,0.22)(-0.13,0.24)(-0.2,0.3)(-0.3,0.4)(-1, 0.6)(-1.6, 1)(-2,1.5)(-2.5,2.2)};
  \filldraw (0,0) circle(3pt) node[below]  {$x_0$};
  \filldraw  (1,0.8) circle(3pt);
  \node[right]at(1,0.8) {$x(\rho)$};
  \draw[->](1.4,0.4)--(0.6,-0.2);
  \node[] at(-0.5,2){$\Omega$};
 \end{scope}
 \draw[->](1.5,1)--(2.5,1);
 \node[above]at(2,1){$\rho=0.5$};
 \begin{scope}[scale=0.5, xshift=250]
  \draw  plot[smooth] coordinates{ (0,0)(0.1,0.2)(0.12,0.22)(0.13,0.24)(0.2,0.3)(0.3,0.4)(1,0.8)(2,1.8)(3,3)(3.1,3.5)(2.5,3.4)};
  \draw    (2.5,3.4)--(2,4)--(1.5,3)--(1.2,3.5)--(1,3)--(0.5,3.3) ;
  \draw  plot[smooth] coordinates{(0.5,3.3)(0,5)(-1,4.5)(-2,4.1)(-2.2,3)};
  \draw    (-2.2,3)--(-3,2.5)--(-2.5,2.2) ;
  \draw  plot[smooth] coordinates{ (0,0)(-0.1,0.2)(-0.12,0.22)(-0.13,0.24)(-0.2,0.3)(-0.3,0.4)(-1, 0.6)(-1.6, 1)(-2,1.5)(-2.5,2.2)};
  \filldraw (0,0) circle(1.5pt);
  \node[below] at(0,0) {$ 2(x_0-x(0.5)) $};
  \filldraw  (0.2,0.3) circle(1.5pt) node[above]  {$0$};
  \node[] at(-0.5,2){$\Omega_{0.5}$};
 \end{scope}
 \draw[->](6.5,1)--(7.5,1);
 \node[above]at(7,1){$\rho=0.1$};
 \begin{scope}[scale=5, xshift=50,yshift=0]
  \filldraw[draw=red!40, fill=red!40]  (0.051,0.11) arc (65:241:0.03)--cycle  ;
  \filldraw[draw=blue!50, fill=blue!50]  (0,0)--(0.02,0.04) arc (60:120:0.04) (-0.02,0.04)--(0,0)--cycle  ;
  \draw  plot[smooth] coordinates{ (0,0)(0.1,0.2)(0.12,0.22)(0.13,0.24)(0.2,0.3)};
  \draw[dashed] (0.2,0.3)--(0.3,0.4);
  \draw  plot[smooth] coordinates{(0,0)(-0.1,0.2)(-0.12,0.22)(-0.13,0.24)(-0.2,0.3)};
  \draw[dashed] (-0.2,0.3)--(-0.3,0.4);
  \filldraw (0,0) circle(0.1pt) node[below]  {$ 10(x_0-x(0.1)) $};

  \filldraw  (0.04,0.08) circle(0.1pt) node[right]  {$0$};

  \node[] at(0,0.3){$\Omega_{0.1}$};
 \end{scope}
 \draw[->](10.8,2)--(12.5,3.2);
 \node[above,rotate=33]at(11.8,1.9){$\rho\to0$};
 \node[]at(11.1,3.5){if $\frac{x_0-x(\rho)}{\rho}\to P$};
 \begin{scope}[scale=0.4, xshift=1000,yshift=200]
  \filldraw[draw=blue!50, fill=blue!50]  (0,0)--(2,4) arc (60:120:4) (-2,4)--(0,0)--cycle  ;
  \draw   (0,0)--(2,4);
  \draw[dashed] (2,4)--(3,6);
  \draw   (0,0)--(-2,4);
  \draw[dashed] (-2,4)--(-3,6);
  \filldraw  (0,0) circle(2.5pt);
  \filldraw  (1,2) circle(2.5pt) node[right]  {$0$};
  \node[below] at(0,0){$P$};
  \node[] at(0,5){$\mathcal C_{P,\Sigma}$};
 \end{scope}
 \draw[->](10.8,-0.3)--(12.5,-1.3);
 \node[above,rotate=-33]at(11.6,-0.8){$\rho\to0$};
 \node[]at(11.1,-1.9){if $\frac{x_0-x(\rho)}{\rho}\to \infty$};
 \begin{scope}[scale=0.6, xshift=680,yshift=-90]
  \filldraw[draw=red!40, fill=red!40]  (-1,-2)--(-3,-1)--(-1,3)--(1,2)--(-1,0-2)--cycle  ;
  \draw   (-1,-2)--(1,2);
  \draw[dashed] (-1,-2)--(-2,-4);
  \draw[dashed] (1,2)--(2,4);
  \node[left] at(-0.5,0){$H$};
  \filldraw  (0,0) circle(1.7pt) node[right]  {$0$};
 \end{scope}
\end{tikzpicture}

 \caption{Blowing-up analysis on BCB domains}
\end{figure}
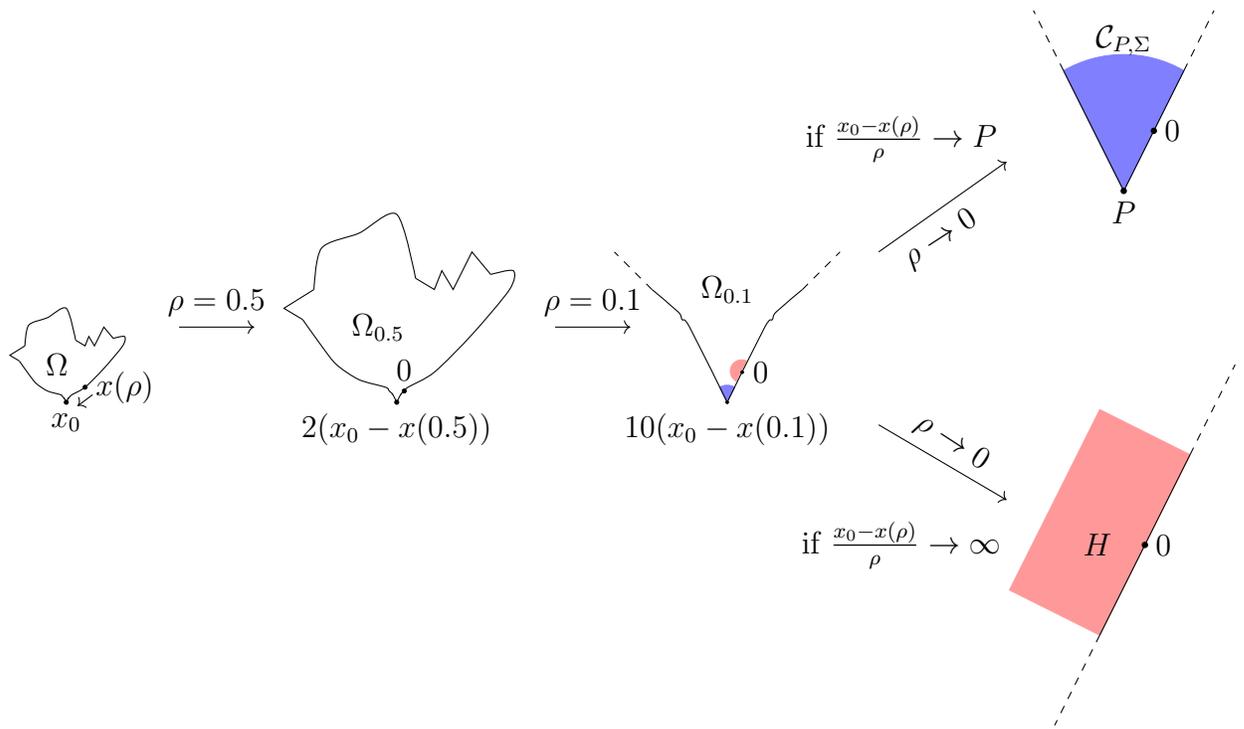

\begin{figure}[H]\label{F5}
	\centering	
	\subfigure[Polygon]{
		\begin{minipage}[b]{0.24\linewidth}
			\centering
			\begin{tikzpicture}[scale=0.8]
					\draw (0,0)--(2,1)--(3,4)--(0,5)--(-2,4)--(-1,3)--(-1.5,2)--(0,0);
			\end{tikzpicture}
		\end{minipage}
	}\hspace{5pt}
\subfigure[Dumb-bell-shaped domain]{
	\begin{minipage}[b]{0.25\linewidth}
		\centering
		\begin{tikzpicture}[scale=0.8]
				\draw (0,0)--(0.4, 0.8)--(1,1)--(1,1.5)--(0,2)--(-2,2)--(-1,0)--(-1.5,-1)--(0,-0.1)--(3,-0.1)--(3.5, -0.8)--(4.5,0)--(4.5,1)--(3.3, 1.5)--(3, 0)--(0,0);

		\end{tikzpicture}
	\end{minipage}
}\hspace{25pt}
\subfigure[Circular cone]{
	\begin{minipage}[b]{0.24\linewidth}
		\centering
		\begin{tikzpicture}[scale=1]
			\draw (-2,0)--(0,3)--(2,0)   arc (0:-180: 2 and 0.4);
			\draw[dashed] (2,0)   arc (0:180: 2 and 0.4);
		\end{tikzpicture}
	\end{minipage}
}\hspace{15pt}
	 \subfigure [Polyhedron] {
			\centering
				\includegraphics[width=1.5in,height=1.5in]{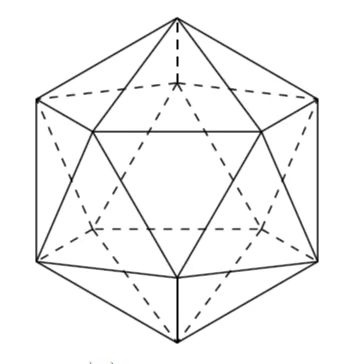}
			\label{F5-2}
	}\hspace{15pt}
  \subfigure[Cuboid with  a hole] {
  	\begin{minipage}[b]{0.24\linewidth}
  		\centering	\begin{tikzpicture}[scale=0.7]
  	\draw (-2,0)--(2,0)--(2,3)--(-2,3)--(-2, 0) (-2,3)--(-0.5,4.5)--(3.5,4.5)--(2,3) (2,0)--(3.5,1.5)--(3.5,4.5) (0.75,3.75) ellipse (0.6 and 0.3);
  	\draw[dashed] (-2,0)--(-0.5,1.5)--(-0.5,4.5) (-0.5,1.5)--(3.5,1.5)  (0.75,0.75) ellipse (1 and 0.5)  (-0.25,0.75)--(0.15, 3.75) (1.75, 0.75)--(1.35, 3.75);
  \end{tikzpicture} 	\end{minipage}
 }\hspace{15pt}
	\subfigure[Piecewise smooth domain] {
		\begin{minipage}[b]{0.24\linewidth}
			\centering
				\begin{tikzpicture}[scale=0.6]
				\draw  plot[smooth] coordinates{ (0,0)(0.1,0.7)(0.2,1)(0, 2)(-0.1,3)(-0.2,4)(-0.1,5)};
				\draw[dashed] plot[smooth] coordinates{ (0,0)(0.6,0.7)(1,1)(1.5, 1.2)  };
				\draw[dashed] plot[smooth] coordinates{ (1.5, 1.2)  (2,1)(3,0.9)(4,0.7)};
				\draw  plot[smooth] coordinates{ (0,0)(0.6,0.1)(1,0)(1.5, 0.1)(2,0.1)(3,0.5)(4,0.7)};
				\draw  plot[smooth] coordinates{(-0.1,5)(1,4.8)(1.5, 4)(3,3.7)(4,3.8)(5,3.4)(6,3.3)};
				\draw  plot[smooth] coordinates{(4,0.7)(5,2.8)(6,3.3)};
				\draw  plot[smooth] coordinates{(-0.1,5)(1,5.5)(2,5.7)(3,6.1)};
				\draw  plot[smooth] coordinates{(3,6.1)(4,5.5)(5,4)(6,3.3)};
				\draw[dashed] plot[smooth] coordinates{ (1.5, 1.2)(1.8,2)(1.85,2.1)(1.9,3)(2.5,5)(3,6.1)  };
				
			\end{tikzpicture}
			\label{F5-3}
		\end{minipage}
	}%
	\centering
	\caption{BCB domains (examples)}
\end{figure}

\begin{figure}[H]\label{F6}
	\centering	
	\subfigure {	
		\includegraphics[width=1.1in,height=1.1in]{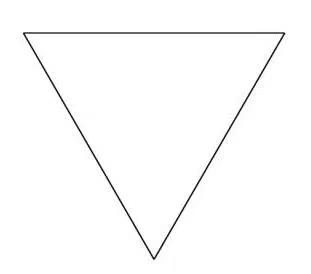}			
			\label{F6-1}
	}\hspace{10pt}
	\subfigure {
			\centering
				\includegraphics[width=1.1in,height=1.1in]{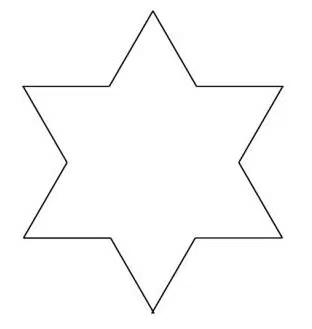}
			\label{F6-2}
	}\hspace{10pt}
	\subfigure {
			\centering
				\includegraphics[width=1.1in,height=1.1in]{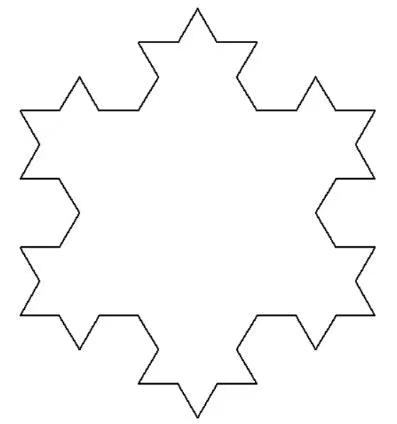}
			\label{F6-3}
	}\hspace{10pt}
	\subfigure {
		\centering
		\includegraphics[width=1.1in,height=1.1in]{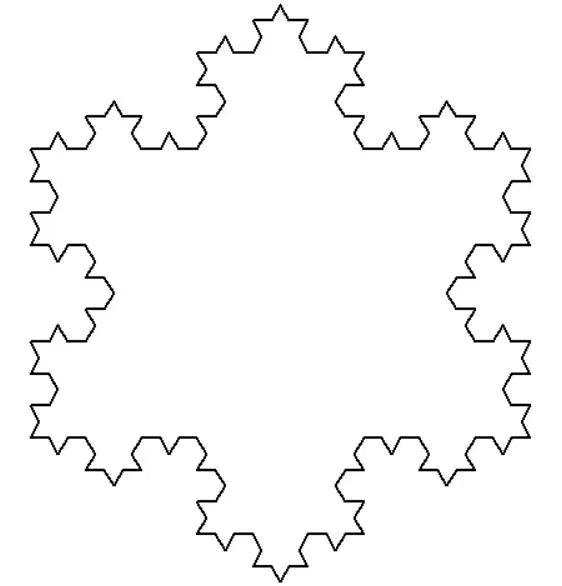}
		\label{F6-4}
}\hspace{-30pt}
\subfigure{
	\begin{minipage}[t]{0.24\linewidth}
		\centering
		\begin{tikzpicture}[scale=2]
			\node at(0,0.6) {\Large{......}};
			\filldraw[draw=white, fill=white] (0,0) circle(1pt);
		\end{tikzpicture}
		
		\label{F6-5}
	\end{minipage}
}
	\centering
	\caption{A sequence of BCB domains}
\end{figure}

We consider the following fractional order Dirichlet problem
\begin{equation}\label{f-1}
	\begin{cases}
		(-\Delta)^{s}u(x)=f(x,u(x)),  & x \in \Omega,\\
		u(x)\equiv0, &x \not\in \Omega,
	\end{cases}
\end{equation}
where $n\geq\max\{2s,1\}$, $0<s<1$, $\Omega\subset\mathbb{R}^{n}$ is a bounded BCB domain, the classical solution $u\in C^{[2s],\{2s\}+\eps}_{loc}(\Omega)\bigcap\mathcal{L}_{s}(\mathbb{R}^{n})\bigcap C(\overline{\Omega})$ with $\eps>0$ arbitrarily small.

\medskip

As an application of the Liouville theorems (Theorems \ref{pde-unbd}, \ref{pde-unbd-a}, \ref{pde-unbd-2}--\ref{pde-bd-2}) in subsection 1.2 and Theorems \ref{pG} and \ref{f2PDE}, we establish the following a priori estimates for all positive classical solutions $u$ to the fractional order Dirichlet problem \eqref{f-1} via the blowing-up arguments.
\begin{thm}\label{prid}
	Assume there exists a positive, continuous functions $h(x)$: $\overline{\Omega}\rightarrow(0,+\infty)$ such that
	\begin{equation}\label{f-2}
		\lim_{s\rightarrow+\infty}\frac{f(x,s)}{s^{p}}=h(x)
	\end{equation}
	for some $1<p<\frac{n+2s}{n-2s}$ if $n>2s$ and $1<p<+\infty$ if $n=2s$ uniformly with respect to $x\in\overline{\Omega}$. Moreover, assume there exist $\sigma_1,\sigma_2>0$ such that
	\begin{equation}\label{f-2-1}
		|f(x,r)-f(y,r)|\leq C(1+r)^{p+\frac{(p-1)\sigma_1}{2s}} |x-y|^{\sigma_1}, \ \ \ \forall\, x,y \in \Omega\,\,\text{and}\,\,r\in[0,+\infty),
	\end{equation}
	and
	\begin{equation}\label{f-2-2}
		|f(x,r)-f(x,t)|\leq C (1+ r + t )^{p-\sigma_2} |r-t|^{\sigma_2}, \ \ \ \forall\, x\in\Omega\,\,\text{and}\,\, r,t\in [0,+\infty).
	\end{equation}
Suppose that the boundary H\"{o}lder estimates for $(-\Delta)^{s}$ holds uniformly on the scaling domain $\Omega_{\rho}$ w.r.t. the scale $\rho\in(0,1]$, i.e., $[v]_{C^{0,\alpha}(\overline{B_{1}(x_{0})\bigcap\Omega_{\rho}})}$ ($\forall \, x_{0}\in\partial\Omega_{\rho}$) can be controlled in terms of $\|v\|_{L^{\infty}(\overline{\Omega_{\rho}})}$ and $\|(-\Delta)^{s}v\|_{L^{p}(B_{2}(x_{0})\bigcap\Omega_{\rho})}$ for some $p>\frac{n}{2s}$ if $v=0$ in $\Omega_{\rho}^{c}$ up to a constant independent of $\rho$. Then there exists a constant $C>0$ depending only on $\Omega$, $n$, $s$, $p$, $h(x)$ such that
	\begin{equation}\label{f-3}
		\|u\|_{L^\infty(\overline{\Omega})}\leq C
	\end{equation}
	for any nonnegative classical solution $u$ of \eqref{f-1}.
\end{thm}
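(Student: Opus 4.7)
\medskip

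\noindent\textbf{Proof proposal for Theorem \ref{prid}.}
The plan is a contradiction/blow-up argument in the spirit of Gidas--Spruck \cite{GS1}, adapted to BCB domains by invoking the Liouville results established on MSS applicable domains in subsections 1.2--1.3. Assume, for the sake of contradiction, that there is a sequence of nonnegative classical solutions $u_k$ of \eqref{f-1} with $M_k:=\|u_k\|_{L^{\infty}(\overline{\Omega})}=u_k(x_k)\to+\infty$ for some $x_k\in\overline{\Omega}$. Let $\lambda_k:=M_k^{-\frac{p-1}{2s}}\to0$, which is the canonical scaling that makes the critical nonlinearity $u^p$ invariant, and set $v_k(y):=M_k^{-1}u_k(\lambda_k y+x_k)$ on the rescaled domain $\Omega_k:=\lambda_k^{-1}(\Omega-x_k)$. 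Then $\|v_k\|_{L^{\infty}(\overline{\Omega_k})}=v_k(0)=1$, and an elementary calculation together with $M_k^{p-1}\lambda_k^{2s}=1$ shows
\begin{equation*}
(-\Delta)^{s}v_k(y)=\frac{f(\lambda_k y+x_k,M_kv_k(y))}{(M_kv_k(y))^{p}}\,v_k(y)^{p}=:F_k(y,v_k(y))
\end{equation*}
in $\Omega_k$, with $v_k=0$ outside $\Omega_k$. Hypothesis \eqref{f-2} gives $F_k(y,v_k(y))\to h(x_\infty)\,v(y)^{p}$ locally uniformly (where $x_\infty:=\lim x_k$ after extraction), and \eqref{f-2-1}--\eqref{f-2-2} provide the quantitative continuity needed to pass to the limit.

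\medskip

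Next I would set $d_k:=\mathrm{dist}(x_k,\partial\Omega)/\lambda_k\in[0,+\infty]$ and split into two cases. If $d_k\to+\infty$, the rescaled domains $\Omega_k$ exhaust $\mathbb{R}^{n}$; the uniform bound $\|v_k\|_{L^\infty}\le 1$ together with the fractional interior Schauder/regularity theory yields local $C^{\alpha}$ bounds (and tail control since $v_k\equiv 0$ outside $\Omega_k$ implies the nonlocal tails are uniformly estimable on compact sets through $\|v_k\|_{L^\infty}\le1$ and rough decay). Along a subsequence $v_k\to v$ in $C_{\mathrm{loc}}$ with $v(0)=1$, and $v$ solves $(-\Delta)^{s}v=h(x_\infty)v^{p}$ in $\mathbb{R}^{n}$ with $0\le v\le 1$. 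Since $1<p<\frac{n+2s}{n-2s}$ (or $p<\infty$ if $n=2s$) and $h(x_\infty)>0$, this contradicts the subcritical Liouville theorem on $\mathbb{R}^{n}$ (Theorem~\ref{pde-unbd} or Theorem~\ref{f2PDE}, applied with the Hardy--H\'enon type lower bound $a=0$ of Remark~\ref{rem5} and Corollary~\ref{HHcor}).

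\medskip

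If $d_k$ stays bounded, let $y_k\in\partial\Omega$ be a point realizing $\mathrm{dist}(x_k,\partial\Omega)$. Re-centering the blow-up at $y_k$ rather than $x_k$ (i.e.\ $\tilde v_k(y):=M_k^{-1}u_k(\lambda_k y+y_k)$) produces the same normalized equation with $\tilde v_k((x_k-y_k)/\lambda_k)=1$ and $(x_k-y_k)/\lambda_k$ bounded. After extracting a subsequence, $y_k\to x_\infty\in\partial\Omega$, and the BCB property (Definition~\ref{BCB}) applied with the infinitesimal sequence $\rho_k=\lambda_k$ yields $\Omega_k=\lambda_k^{-1}(\Omega-y_k)\to\mathcal{C}_{P,\Sigma}$ for some cone $\mathcal{C}_{P,\Sigma}$. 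The assumed uniform boundary H\"older estimates for $(-\Delta)^{s}$ on the scaled domains $\Omega_\rho$ (plus interior regularity) provide the compactness required to take the limit up to $\partial\mathcal{C}_{P,\Sigma}$, so $\tilde v_k\to v\in C(\overline{\mathcal{C}_{P,\Sigma}})$ locally uniformly, with $0\le v\le 1$, $v\equiv 0$ in $\mathbb{R}^{n}\setminus\mathcal{C}_{P,\Sigma}$, $v\not\equiv 0$ (since $v$ equals $1$ at the limit of $(x_k-y_k)/\lambda_k$), and $v$ solves the fractional Dirichlet problem $(-\Delta)^{s}v=h(x_\infty)v^{p}$ on the unbounded cone $\mathcal{C}_{P,\Sigma}$. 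Since $\mathcal{C}_{P,\Sigma}$ is a MSS applicable domain (indeed a cone-like domain, see Remark~\ref{rem3} and Corollary~\ref{cone-cor}) and $p$ is subcritical, this contradicts the Liouville theorem in cones (Theorem~\ref{pde-unbd} combined with Corollary~\ref{cone-cor}, or Theorem~\ref{f2PDE} with $\Sigma$ possibly being $S^{n-1}$ when $P\in\partial\mathcal{C}$ degenerates). Either way we reach a contradiction, so $M_k$ must be bounded, which proves \eqref{f-3}.

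\medskip

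The main obstacle will be the passage-to-the-limit step in the cone case: one must justify the local uniform convergence \emph{up to the conical boundary} and simultaneously control the nonlocal tails of $(-\Delta)^{s}v_k$ on compact subsets of $\overline{\mathcal{C}_{P,\Sigma}}$. This is precisely why the hypothesis in the theorem requires the boundary H\"older estimates to hold \emph{uniformly in the scale $\rho$} on the rescaled domains $\Omega_\rho$; without that uniformity the limit function $v$ need not be continuous on $\partial\mathcal{C}_{P,\Sigma}$ and $v\not\equiv 0$ could fail. The conditions \eqref{f-2-1} and \eqref{f-2-2} enter to convert the pointwise asymptotics \eqref{f-2} into a locally uniform convergence of $F_k$, while the BCB property is exactly the ingredient needed to replace the usual $C^{1}$-smoothness assumption that forces the limiting domain to be a half-space.
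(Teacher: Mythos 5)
Your overall strategy is the same as the paper's: assume a blowing-up sequence $u_k$, rescale by $\lambda_k=M_k^{-(p-1)/(2s)}$, and derive a contradiction from the Liouville theorems on $\mathbb{R}^n$ and on the limiting cone. The architecture is right, but there are two nontrivial gaps in the details.

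First, and most importantly, the claim that hypothesis \eqref{f-2} makes $F_k(y,v_k(y))\to h(x_\infty)\,v(y)^p$ locally uniformly is too strong as stated. The hypothesis \eqref{f-2} controls $f(x,s)/s^p$ only in the regime $s\to+\infty$; at a point $y$ where $v(y)=0$, the quantity $m_kv_k(y)$ need not tend to infinity, so you cannot use \eqref{f-2} there and you only know $0\le F_k(y,\cdot)\le C$. The convergence argument identifies the limit equation only on the open set $\{v>0\}$. The paper closes this by separately proving that $v>0$ everywhere: if $\tilde x\in\partial\{v>0\}$, then continuity forces $f_0(\tilde x)=0$, yet the singular-integral formula gives $(-\Delta)^s v(\tilde x)=C_{s,n}\,P.V.\int_{\mathbb{R}^n}\frac{-v(y)}{|\tilde x-y|^{n+2s}}\,\dd y<0$ since $v\ge 0$ and $v\not\equiv 0$, a contradiction. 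Your proposal silently assumes the limiting equation holds on all of $\mathbb{R}^n$ (respectively on the whole cone) without this step, so as written the argument does not apply the Liouville theorem to a legitimate solution.

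Second, your two-case split merges the paper's Cases (ii) and (iii). In the sub-case $d_k/\lambda_k\to 0$ (so $z_\infty:=\lim(x_k-y_k)/\lambda_k=0\in\partial\mathcal{C}$), the sentence ``$v\not\equiv 0$ (since $v$ equals $1$ at the limit of $(x_k-y_k)/\lambda_k$), ... this contradicts the Liouville theorem in cones'' is logically muddled: you would then have $v(z_\infty)=1$ from the normalization and $v(z_\infty)=0$ from the Dirichlet condition on $\partial\mathcal{C}$, which is an immediate contradiction that has nothing to do with the Liouville theorem. The paper handles this sub-case directly, without passing to the limit at all, by noting that the uniform boundary H\"older estimate gives $|v_j(\tilde x_j)-v_j(0)|\le C|\tilde x_j|^\alpha=o_j(1)$ whereas $v_j(0)-v_j(\tilde x_j)=1$ by construction. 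That terse argument is the whole content of Case (iii); folding it into the cone-Liouville argument obscures what is actually producing the contradiction there and, if read literally, presents a flawed deduction (``$v\not\equiv 0$'') in that regime. Clarify the split: the Liouville theorem on cones is only used when $d_k/\lambda_k$ is bounded away from $0$ and $+\infty$.

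Your remarks on tail control and on why the \emph{uniform-in-$\rho$} boundary H\"older estimate is required for compactness up to $\partial\mathcal{C}$ are correct and match the role these hypotheses play in the paper's proof.
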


\smallskip

For $s=1$, consider the following general second order elliptic Dirichlet problem:
\begin{equation}\label{PDE-D}\\\begin{cases}
Lu(x):=-\sum\limits_{i,j=1}^{n}a_{ij}(x)\frac{\partial^{2}u}{\partial x_{i}\partial x_{j}}(x)+\sum\limits_{k=1}^{n}b_{k}(x)\frac{\partial u}{\partial x_{k}}(x)+c(x)u(x)=f(x,u(x)), \,\,\,\, x\in\Omega, \\
u(x)=0, \,\,\,\,\,\,\,\, x\in\partial\Omega,
\end{cases}\end{equation}
where $n\geq2$, $\Omega\subset\mathbb{R}^{n}$ is a bounded BCB domain, the classical solution $u\in C^{2}(\Omega)\bigcap C(\overline{\Omega})$, the coefficients $b_{k},\,c\in L^{\infty}(\Omega)$ ($k=1,\cdots,n$) and $a_{ij}\in C(\overline{\Omega})$ ($i,j=1,\cdots,n$) such that there exists constant $\tau>0$ with
\begin{equation}\label{0-2}
  \tau|\xi|^{2}\leq\sum_{i,j=1}^{n}a_{ij}(x)\xi_{i}\xi_{j}\leq\tau^{-1}|\xi|^{2}, \,\,\,\,\,\,\,\,\,\, \forall \,\, \xi\in\mathbb{R}^{n}, \,\, x\in\Omega.
\end{equation}

\medskip

By virtue of the Liouville theorems (Theorems \ref{pde-unbd}, \ref{pde-unbd-a} and \ref{pde-bd}) in subsection 1.2 and Theorems \ref{pG} and \ref{f2PDE}, we establish the following a priori estimates for all nonnegative classical solutions $u$ to the second order Dirichlet problem \eqref{PDE-D} via the blowing-up arguments.
\begin{thm}\label{prid2}
Assume $n\geq2$, $1<p<\frac{n+2}{n-2}$ if $n>2$ and $1<p<+\infty$ if $n=2$, $\Omega$ is a bounded BCB domain such that the boundary H\"{o}lder estimates for the general 2nd order elliptic operator $L$ holds uniformly on the scaling domain $\Omega_{\rho}$ w.r.t. the scale $\rho\in(0,1]$, i.e., $[v]_{C^{0,\alpha}(\overline{\Omega_{\rho}\bigcap B_{1}(x_{0})})}$ ($\forall \, x_{0}\in\partial\Omega_{\rho}$) can be controlled in terms of $\|v\|_{L^{\infty}(\overline{\Omega_{\rho}})}$ and $\|Lv\|_{L^{q}(\Omega_{\rho}\bigcap B_{2}(x_{0}))}$ for some $q>\frac{n}{2}$ if $v=0$ on $\partial\Omega_{\rho}$ up to a constant depending on $\tau$, $\|b\|_{L^{\infty}(\Omega_{\rho})}$ and $\|c\|_{L^{\infty}(\Omega_{\rho})}$ but independent of $\rho$. Suppose there exists positive, continuous function $h(x)$: $\overline{\Omega}\rightarrow(0,+\infty)$ such that
\begin{equation}\label{0-3-2}
  \lim_{s\rightarrow+\infty}\frac{f(x,s)}{s^{p}}=h(x)
\end{equation}
uniformly with respect to $x\in\overline{\Omega}$, and
\begin{equation}\label{0-6-2}
  \sup_{x\in \Omega, u\in [0, M]} |f(x,u)|\leq C_M, \qquad \forall \, M>0.
\end{equation}
Then there exists a constant $C>0$ depending only on $\Omega$, $n$, $p$ and $h(x)$, such that
\begin{equation}\label{0-4-2}
  \|u\|_{L^{\infty}(\overline{\Omega})}\leq C
\end{equation}
for every nonnegative classical solution $u$ of problem \eqref{PDE-D}.
\end{thm}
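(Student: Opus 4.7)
The plan is a Gidas--Spruck style blow-up argument, now made feasible on BCB domains thanks to the Liouville theorems for MSS applicable domains from subsection 1.2 together with Theorems \ref{pG} and \ref{f2PDE}. I argue by contradiction: suppose there is a sequence $\{u_m\}$ of nonnegative classical solutions to \eqref{PDE-D} with $M_m := \|u_m\|_{L^{\infty}(\overline{\Omega})} = u_m(x_m) \to +\infty$ for some $x_m\in\overline{\Omega}$. Setting $\lambda_m := M_m^{-(p-1)/2}\to 0^+$ and $v_m(y) := M_m^{-1} u_m(\lambda_m y + x_m)$ on $\Omega_m := \{y:\lambda_m y+x_m\in\Omega\}$, a direct calculation gives $0\le v_m\le 1$, $v_m(0)=1$, $v_m=0$ on $\partial\Omega_m$, and
\begin{equation*}
-\sum_{i,j=1}^n a_{ij}(\lambda_m y+x_m)\,\partial_{ij}v_m + \lambda_m\!\sum_{i=1}^n b_i(\lambda_m y+x_m)\,\partial_i v_m + \lambda_m^2\, c(\lambda_m y+x_m)\,v_m = M_m^{-p}f(\lambda_m y+x_m, M_m v_m),
\end{equation*}
whose right-hand side is uniformly bounded in $m$ by \eqref{0-3-2} and \eqref{0-6-2}, and whose first-- and zeroth--order coefficients decay uniformly to $0$.

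Passing to subsequences so that $x_m\to x_\infty\in\overline{\Omega}$ and $a_{ij}(x_m)\to a_{ij}^\infty$ (symmetric, positive definite with ellipticity constant $\tau$), set $d_m:=\mathrm{dist}(x_m,\partial\Omega)$. If $d_m/\lambda_m\to+\infty$, then $\Omega_m$ exhausts $\mathbb{R}^n$ and interior $W^{2,q}$/Schauder estimates yield $v_m\to v_\infty$ in $C^{1,\alpha}_{loc}(\mathbb{R}^n)$ along a subsequence. Otherwise $d_m/\lambda_m$ stays bounded; letting $\tilde x_m\in\partial\Omega$ be a nearest-point projection of $x_m$, the shifted rescaling $\tilde v_m(z):=M_m^{-1}u_m(\lambda_m z+\tilde x_m)$ attains the value $1$ at the bounded point $y_m:=(x_m-\tilde x_m)/\lambda_m$, and Definition \ref{BCB} of BCB domain (with $\rho=\lambda_m$, $x(\rho)=\tilde x_m$) gives $\tilde\Omega_m:=\{z:\lambda_m z+\tilde x_m\in\Omega\}\to \mathcal{C}_{P,\Sigma}$ along a further subsequence. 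The uniform-in-$\rho$ boundary H\"{o}lder estimate assumed in the statement, combined with interior elliptic regularity, yields equicontinuity of $\{\tilde v_m\}$ up to $\partial\tilde\Omega_m$, hence a subsequential locally uniform limit $v_\infty\in C(\overline{\mathcal{C}_{P,\Sigma}})$ with $v_\infty=0$ on $\partial\mathcal{C}_{P,\Sigma}$ and $v_\infty(y_\infty)=1$ for some $y_\infty\in\mathcal{C}_{P,\Sigma}$.

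After a linear change of variables diagonalizing $(a_{ij}^\infty)$, $v_\infty$ is a bounded nonnegative classical solution of $-\Delta v_\infty = h(x_\infty)\,v_\infty^p$ either on $\mathbb{R}^n$ (interior case) or on a transformed cone $\widehat{\mathcal{C}}$ with vertex $\widehat{P}$ and $v_\infty=0$ on $\partial\widehat{\mathcal{C}}$ (boundary case). Since $h(x_\infty)>0$ and $1<p<\tfrac{n+2}{n-2}$ ($p<+\infty$ if $n=2$), the nonlinearity $h(x_\infty)\,v_\infty^p$ is nondecreasing in $v_\infty$, subcritical in the sense of Definition \ref{defn1}, and satisfies $(\mathbf{f_{3}})$ with $a=0$, $P=\widehat{P}$. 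The interior case is then ruled out by Theorem \ref{pde-unbd} applied to $\Omega=\mathbb{R}^n$; the boundary case is ruled out by Corollary \ref{cone-cor} applied to the unbounded cone $\widehat{\mathcal{C}}$, whose complement $\mathbb{R}^n\setminus\overline{\widehat{\mathcal{C}}}$ is g-radially convex with radially convex center $\widehat{P}$. Both force $v_\infty\equiv 0$, contradicting $v_\infty(y_\infty)=1$, so no such blow-up sequence exists and \eqref{0-4-2} holds.

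The main obstacle I anticipate is the compactness of $\{\tilde v_m\}$ up to the boundary in the boundary--blow-up case: the rescaled domains $\tilde\Omega_m$ vary irregularly and the limit cone's cross-section $\Sigma$ may be non-smooth, so interior elliptic estimates alone are insufficient near $\partial\tilde\Omega_m$. This is precisely where the hypothesis of a uniform (in the scale $\rho$) boundary H\"{o}lder estimate on $\Omega_\rho$ is indispensable; combined with interior regularity for the rescaled equation (whose leading coefficients are uniformly continuous and whose lower-order coefficients vanish) and the Liouville theorems on cone-like domains from subsections 1.2--1.3, it closes the argument.
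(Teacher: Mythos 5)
Your proposal follows the same blow-up strategy as the paper's proof: rescale by $\lambda_m=M_m^{-(p-1)/2}$, form a dichotomy based on $d_m/\lambda_m$, extract limits using interior $W^{2,q}$ regularity together with the assumed uniform-in-$\rho$ boundary H\"older estimate, pass to a frozen-coefficient constant-coefficient equation, diagonalize, and invoke the Liouville theorem in $\mathbb{R}^n$ (interior case) or in cone-like domains (boundary case). The route, the choice of scale, the use of BCB structure to identify the limiting cone, and the appeal to the paper's Liouville theorems are all essentially the same as the paper's argument.

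One imprecision to flag: the paper splits the boundary alternative into two cases, $d_m/\lambda_m\to C>0$ and $d_m/\lambda_m\to 0$, while you merge them into ``$d_m/\lambda_m$ bounded'' and then claim the peak limit $y_\infty\in\mathcal{C}_{P,\Sigma}$ (the \emph{open} cone). That inclusion holds only when $d_m/\lambda_m$ stays bounded away from $0$; if along a subsequence $d_m/\lambda_m\to 0$, then $y_\infty=0\in\partial\mathcal{C}_{P,\Sigma}$, where the boundary values force $v_\infty=0$, so the claim ``$v_\infty(y_\infty)=1$ with $y_\infty$ interior'' fails. In that subcase the Liouville theorem is not what closes the argument; instead, as the paper does in its Case~(iii), the uniform boundary H\"older estimate gives directly $1=|\tilde v_m(y_m)-\tilde v_m(0)|\le C\,|y_m|^\alpha\to 0$, a contradiction. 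Your equicontinuity observation already contains this mechanism, so the fix is to state the two subcases separately and route $d_m/\lambda_m\to 0$ to the H\"older estimate rather than to the Liouville theorem. With that clarification the proof is complete and matches the paper's.
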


\smallskip

As a corollary of Theorems \ref{prid} and \ref{prid2}, we can deduce the a priori estimates for fractional and 2nd order Dirichlet problem
\begin{equation}\label{tD}\\\begin{cases}
(-\Delta)^{s}u(x)=u^{p}(x)+t \,\,\,\,\,\,\,\,\,\, \text{in} \,\,\, \Omega, \\
u(x)=0 \,\,\,\,\,\,\,\, \text{in} \,\,\, \mathbb{R}^{n}\setminus\Omega,
\end{cases}\end{equation}
where $0<s\leq1$ and $t$ is any nonnegative real number.
\begin{cor}\label{cor-prid}
Assume $n\geq\max\{2s,1\}$, $0<s\leq1$, $t\geq0$, $\Omega\subset\mathbb{R}^{n}$ is a bounded BCB domain such that the boundary H\"{o}lder estimates for $(-\Delta)^{s}$ holds uniformly on the scaling domain $\Omega_{\rho}$ w.r.t. the scale $\rho\in(0,1]$, $1<p<\frac{n+2s}{n-2s}$ if $n>2s$ and $1<p<+\infty$ if $n=2s$. Then, for any nonnegative solution $u$ to the Dirichlet problem \eqref{tD}, we have
\begin{equation*}
  \|u\|_{L^{\infty}(\overline{\Omega})}\leq C(n,s,p,t,\Omega).
\end{equation*}
\end{cor}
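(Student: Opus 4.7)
\textbf{Proof plan for Corollary \ref{cor-prid}.}
The strategy is simply to verify that the specific nonlinearity $f(x,u):=u^{p}+t$ satisfies every hypothesis of Theorem \ref{prid} (when $0<s<1$) and of Theorem \ref{prid2} (when $s=1$, with $L=-\Delta$), and then to quote those theorems directly. Since $t$ is fixed, all constants below are allowed to depend on $t$.

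First I check the asymptotic growth: $\lim_{u\to+\infty}(u^{p}+t)/u^{p}=1$ uniformly in $x\in\overline{\Omega}$, so both \eqref{f-2} and \eqref{0-3-2} hold with the positive continuous function $h(x)\equiv 1$. Since $f$ does not depend on $x$, the spatial H\"older condition \eqref{f-2-1} is trivial (for any $\sigma_{1}\in(0,1]$). For the pointwise H\"older condition \eqref{f-2-2}, the additive constant $t$ cancels, leaving only $|r^{p}-{r'}^{p}|$; the mean value theorem gives $|r^{p}-{r'}^{p}|\leq p(r+r')^{p-1}|r-r'|\leq p(1+r+r')^{p-1}|r-r'|$, so \eqref{f-2-2} holds with $\sigma_{2}=1$ and a constant depending only on $p$. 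Finally, the local boundedness \eqref{0-6-2} is immediate, as $\sup_{u\in[0,M]}(u^{p}+t)=M^{p}+t<+\infty$ for every $M>0$.

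In the case $0<s<1$, the uniform boundary H\"older estimate on the scaling domains $\Omega_{\rho}$ is built into the hypotheses of the corollary, so every assumption of Theorem \ref{prid} is fulfilled, and that theorem yields the desired bound. In the case $s=1$, take $a_{ij}=\delta_{ij}$, $b_{k}\equiv 0$, $c\equiv 0$, so that the uniform ellipticity condition \eqref{0-2} holds trivially and $L=-\Delta$; the uniform boundary H\"older estimate on $\Omega_{\rho}$ is, again, exactly what is assumed, and Theorem \ref{prid2} delivers the same bound. In either case the bounding constant is of the form $C(n,s,p,t,\Omega)$, completing the proof.

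There is no genuine obstacle in the argument: the corollary is essentially a specialization of the previously established a priori estimates to the prototypical nonlinearity $u^{p}+t$. The only non-mechanical ingredient, namely the uniform boundary H\"older regularity along the blow-up sequence of domains $\Omega_{\rho}$, is imposed in the statement precisely so that the blow-up scheme of Theorems \ref{prid} and \ref{prid2} can be applied on a BCB domain whose boundary may fail to be $C^{1}$.
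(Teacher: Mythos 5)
Your proof is correct and follows exactly the route the paper intends: Corollary \ref{cor-prid} is stated without a separate proof, just after Theorem \ref{prid2}, under the explicit rubric ``As a corollary of Theorems \ref{prid} and \ref{prid2},'' and the verification you carry out (that $f(x,u)=u^{p}+t$ yields $h\equiv 1$ in \eqref{f-2}/\eqref{0-3-2}, satisfies \eqref{f-2-1} trivially since $f$ is $x$-independent, satisfies \eqref{f-2-2} with $\sigma_{2}=1$ via the mean value theorem, and meets \eqref{0-6-2}, with $L=-\Delta$ obtained from the identity coefficients) is precisely the routine check the authors leave to the reader. No genuine gap; the dependence of the constant on $t$ is inherited through the compactness argument, and the paper itself records the constant as $C(n,s,p,t,\Omega)$ when invoking the corollary in the proof of Theorem \ref{exisd}.
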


\smallskip

In order to derive the boundary H\"{o}lder estimates, we need the following definition from \cite{LZLH}.
\begin{defn}\label{gc2}
Let $\Omega\subset\mathbb{R}^{n}$ be a bounded domain and $x_{0}\in\partial\Omega$. We say that $\Omega$ satisfies the geometric condition $\mathbf{\widehat{G}}$ at $x_{0}$ if there exist a constant $\nu\in(0,1)$ and a quasi-geometric sequence $\{r_{k}\}_{k=0}^{+\infty}$ such that
\begin{equation}\label{gc0-2}
  r_{k}^{-(n-1)}\mathcal{H}^{n-1}\left(\partial B_{r_{k}}(x_{0})\bigcap \Omega^c \right)\geq\nu, \qquad \forall \,\, k\geq0,
\end{equation}
where $\mathcal{H}^{n-1}$ denotes the $n-1$ dimensional Hausdorff measure.
\end{defn}

\medskip

Let $\Omega\subset \mathbb R^n$ be a domain and $P\in \mathbb R^n$ be a point. Moreover, we assume that, for any $R>0$, the geometric condition $\mathbf{\widehat{G}}$ holds uniformly on $\partial\Omega\bigcap B_R(P)$, that is,
\begin{eqnarray}\label{pde-boundary2}
	&& \forall \,\, R>0, \,\, \text{there exists} \,\, \nu_R>0 \,\, \text{s.t.}, \,\, \forall \,\, x\in\partial\Omega\bigcap B_R(P), \,\, \text{there are} \\ \nonumber && \text{quasi-geometric sequence} \,\, \{r_{k}\}_{k=0}^{+\infty} \,\,\text{and} \,\, \nu\geq\nu_{R} \,\, \text{s.t.}\,\, \eqref{gc0-2} \,\, \text{holds}.	
\end{eqnarray}

\medskip

One should note that the geometric conditions \eqref{pde-boundary} and \eqref{pde-boundary2}, the uniform exterior cone condition and Lipschitz continuity of $\partial\Omega$ are \emph{invariant under scaling} (with constants unchanged). The uniform boundary H\"{o}lder estimates for $(-\Delta)^{s}$ ($0<s\leq1$) w.r.t. $\rho$ on $\Omega_{\rho}$ can be guaranteed by \emph{the geometric conditions} \eqref{pde-boundary} if $s\in(0,1)$ and \eqref{pde-boundary2} if $s=1$ on $\partial\Omega$, which can also be deduced from \emph{the uniform exterior cone condition} or \emph{Lipschitz continuity} of $\partial\Omega$; the uniform boundary H\"{o}lder estimates for general 2nd ordr elliptic operator $L$ w.r.t. $\rho$ on $\Omega_{\rho}$ can be guaranteed by \emph{the uniform exterior cone condition} or \emph{Lipschitz continuity} of $\partial\Omega$ (c.f. Theorems 3.3 and 3.11 in \cite{LZLH} and Corollaries 9.28 and 9.29 in \cite{GT}). Thus we have the following corollary.
\begin{cor}\label{cor-prid0}
Assume $n\geq\max\{2s,1\}$, $0<s\leq1$, $1<p<\frac{n+2s}{n-2s}$ if $n>2s$ and $1<p<+\infty$ if $n=2s$, and $\Omega$ is a bounded BCB domain. Suppose $\partial\Omega$ satisfies \eqref{pde-boundary} if $s\in(0,1)$ and \eqref{pde-boundary2} if $s=1$, or the uniform exterior cone condition, or Lipschitz continuity. Then the a priori estimates in Theorem \ref{prid} and Corollary \ref{cor-prid} hold true. Suppose $\partial\Omega$ satisfies the uniform exterior cone condition or Lipschitz continuity. Then the a priori estimates in Theorem \ref{prid2} holds true.
\end{cor}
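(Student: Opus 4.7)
The plan is to reduce the corollary to the statements already proved in Theorem \ref{prid}, Theorem \ref{prid2} and Corollary \ref{cor-prid}. All three results have only one hypothesis that is not yet visibly available under the regularity conditions listed in Corollary \ref{cor-prid0}, namely the \emph{uniform boundary H\"older estimate on the rescaled domain $\Omega_{\rho}$ with constants independent of} $\rho\in(0,1]$. Once this uniform estimate is produced, each of Theorems \ref{prid}, \ref{prid2} and Corollary \ref{cor-prid} applies verbatim and delivers the desired $L^{\infty}$ bound. So the whole proof collapses to verifying this uniform H\"older estimate.

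The first step is a scale-invariance check for the various regularity hypotheses imposed on $\partial\Omega$. Given $x_{0}\in\partial\Omega_{\rho}$, this corresponds to a boundary point $x(\rho)\in\partial\Omega$ via $x=\rho\,x_{0}+x(\rho)$, and the map $x\mapsto \rho^{-1}(x-x(\rho))$ sends $\Omega\cap B_{\rho r_{k}}(x(\rho))$ to $\Omega_{\rho}\cap B_{r_{k}}(x_{0})$. Because this transformation is a similarity, the quasi-geometric sequences $\{r_{k}\}$, the parameter $\nu$ in the geometric conditions \eqref{gc0} and \eqref{gc0-2}, the aperture and height of any exterior cone at $x(\rho)$, and the Lipschitz constants of local graph representations of $\partial\Omega$ near $x(\rho)$ are all preserved with the \emph{same} numerical constants. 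Together with the fact that the $\nu_{R}$ in \eqref{pde-boundary}, \eqref{pde-boundary2} and the Lipschitz/exterior-cone constants of $\Omega$ can be taken uniform in $x_{0}\in\partial\Omega$ (since $\Omega$ is bounded), one concludes that each $\Omega_{\rho}$ satisfies the same geometric condition at every boundary point with constants independent of $\rho\in(0,1]$. In particular, uniform exterior cone and Lipschitz regularity of $\partial\Omega$ trivially imply \eqref{pde-boundary} and \eqref{pde-boundary2} by a classical measure-theoretic comparison of the exterior cone with a ball, so it is enough to treat \eqref{pde-boundary}, \eqref{pde-boundary2} directly for $(-\Delta)^{s}$ and the exterior cone (or Lipschitz) condition directly for the general operator $L$.

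The second step is to invoke the available boundary H\"older estimates. For $s\in(0,1)$, under the uniform geometric condition \eqref{pde-boundary} on $\Omega_{\rho}$, Theorems 3.3 and 3.11 in \cite{LZLH} supply an estimate of the form
\begin{equation*}
[v]_{C^{0,\alpha}(\overline{B_{1}(x_{0})\cap\Omega_{\rho}})}\leq C\bigl(\|v\|_{L^{\infty}(\Omega_{\rho})}+\|(-\Delta)^{s}v\|_{L^{p}(B_{2}(x_{0})\cap\Omega_{\rho})}\bigr)
\end{equation*}
for any $v$ vanishing outside $\Omega_{\rho}$ and any $p>\frac{n}{2s}$, where $C$ depends only on $n,s,p$ and on the uniform constants $\nu_{R},\tau_{1},\tau_{2}$ from \eqref{pde-boundary} — all independent of $\rho$. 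For $s=1$, Corollaries 9.28 and 9.29 in \cite{GT} give the analogous uniform H\"older estimate for $(-\Delta)$ (and for the general second order elliptic operator $L$) under \eqref{pde-boundary2}, the uniform exterior cone condition, or Lipschitz continuity; here one additionally notes that rescaling transforms the coefficients as $a_{ij}(\rho y+x(\rho))$, $\rho\,b_{k}(\rho y+x(\rho))$, $\rho^{2}c(\rho y+x(\rho))$, so that ellipticity is preserved with the same constant $\tau$ and the lower order coefficients only get smaller in $L^{\infty}$-norm for $\rho\in(0,1]$. This yields exactly the uniform boundary H\"older hypothesis required in Theorems \ref{prid} and \ref{prid2}.

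Inserting these uniform estimates into Theorem \ref{prid} proves the a priori bound for the fractional problem \eqref{f-1}, into Theorem \ref{prid2} proves the bound for the second order Dirichlet problem \eqref{PDE-D}, and into Corollary \ref{cor-prid} proves the bound for the model problem \eqref{tD}. The main (and essentially only) obstacle is the careful tracking of the constants under rescaling to ensure they remain uniform in $\rho$; once scale invariance of the geometric/Lipschitz/cone hypotheses is observed, this reduces to inspecting the dependence of the constants in the cited theorems of \cite{LZLH} and \cite{GT}, which indeed only enter through the very invariants just discussed.
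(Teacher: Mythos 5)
Your proposal is correct and follows essentially the same route as the paper: the paper's entire argument is the one-paragraph remark preceding the corollary, which observes (i) that the geometric conditions \eqref{pde-boundary}, \eqref{pde-boundary2}, the uniform exterior cone condition and Lipschitz continuity are scale-invariant with unchanged constants, and (ii) that they imply the uniform boundary H\"older estimates on $\Omega_\rho$ via Theorems 3.3 and 3.11 in \cite{LZLH} and Corollaries 9.28 and 9.29 in \cite{GT}, so that Theorems \ref{prid}, \ref{prid2} and Corollary \ref{cor-prid} apply. You simply spell out these two observations in more detail (in particular the coefficient rescaling for $L$ and the measure-theoretic comparison showing exterior cone $\Rightarrow$ \eqref{gc0}/\eqref{gc0-2}), which is faithful to the paper's intent.
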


\smallskip

For $s\in\mathbb{Z}^{+}$, as an immediate application of the Liouville theorems for higher order Navier problem on $\frac{1}{2^{k}}$-space $\mathcal{C}_{P,S^{n-1}_{2^{-k}}}$ (Theorems \ref{nie} and \ref{NPDE}) in subsection 1.3, we can derive a priori estimates of positive solutions to higher order elliptic Navier problems in bounded BCB domains with the limiting cone $\mathcal{C}=\mathcal{C}_{P,S^{n-1}_{2^{-k}}}$ ($k=1,\cdots,n$) in Definition \ref{BCB} for all $1<p<\frac{n+2s}{n-2s}$ if $n>2s$ and $1<p<+\infty$ if $n=2s$.

\medskip

For $s\in\mathbb{Z}^{+}$, consider the following general higher order Navier boundary value problem:
\begin{equation}\label{PDE-N}\\\begin{cases}
(-\Delta)^{s}u(x)=f(x,u(x)), \,\,\,\,\,\,\,\,\,\,\,\,\,\,\,\, x\in\Omega, \\
u(x)=(-\Delta)u(x)=\cdots=(-\Delta)^{s-1}u(x)=0, \,\,\,\,\,\,\,\, x\in\partial\Omega,
\end{cases}\end{equation}
where $n\geq2$, $s\in\mathbb{Z}^{+}$, $2s\leq n$, nonnegative solution $u\in C^{2s}(\Omega)$ such that $(-\Delta)^{i}u\in C(\overline{\Omega})$ ($i=0,\cdots,s-1$), $f(x,u(x))\geq0$, and $\Omega$ is a bounded BCB domain such that the limiting cone in Definition \ref{BCB} is $\frac{1}{2^{k}}$-space $P+\frac{1}{2^{k}}\mathbb{R}^{n}:=\mathcal{C}_{P,S^{n-1}_{2^{-k}}}$ with $P\in\mathbb{R}^{n}$ and $k=1,\cdots,n$, i.e., the limiting shape of the domain can be an arbitrary $\frac{1}{2^{k}}$-space after the blowing-up procedure.

\medskip

By virtue of the Liouville theorems for higher order Navier problem on $\frac{1}{2^{k}}$-space $\mathcal{C}_{P,S^{n-1}_{2^{-k}}}$ (Theorems \ref{nie} and \ref{NPDE}) in subsection 1.3, using the blowing-up methods, we can derive the following a priori estimate for nonnegative classical solutions to the higher order Navier problem \eqref{PDE-N} in the full range $1<p<\frac{n+2s}{n-2s}$ if $n>2s$ and $1<p<+\infty$ if $n=2s$.
\begin{thm}\label{prin}
Assume $n\geq2$, $s\in\mathbb{Z}^{+}$, $2s\leq n$, $1<p<\frac{n+2s}{n-2s}$ if $n>2s$ and $1<p<+\infty$ if $n=2s$, $\Omega$ is a bounded BCB domain such that the limiting cone $\mathcal{C}$ is $\frac{1}{2^{k}}$-space ($k=1,\cdots,n$) and the boundary H\"{o}lder estimates for $-\Delta$ holds uniformly on the scaling domain $\Omega_{\rho}$ w.r.t. the scale $\rho\in(0,1]$, i.e., $[v]_{C^{0,\alpha}(x_{0})}$ ($\forall \, x_{0}\in\partial\Omega_{\rho}$) can be controlled in terms of $\|v\|_{L^{\infty}(\Omega_{\rho}\bigcap B_{1}(x_{0}))}$ and $\|\Delta v\|_{L^{q}(\Omega_{\rho}\bigcap B_{1}(x_{0}))}$ for some $q>\frac{n}{2}$ if $v=0$ in $\partial\Omega_{\rho}$ up to a constant independent of $\rho$. Suppose there exists positive, continuous function $h(x)$: $\overline{\Omega}\rightarrow(0,+\infty)$ such that
\begin{equation}\label{0-3-n}
  \lim_{s\rightarrow+\infty}\frac{f(x,s)}{s^{p}}=h(x)
\end{equation}
uniformly with respect to $x\in\overline{\Omega}$, and
\begin{equation}\label{0-6-n}
  \sup_{x\in \Omega, u\in [0, M]} |f(x,u)|\leq C_M, \qquad \forall \, M>0.
\end{equation}
Then there exists a constant $C>0$ depending only on $\Omega$, $n$, $s$, $p$ and $h(x)$, such that
\begin{equation}\label{0-4-n}
  \|u\|_{L^{\infty}(\overline{\Omega})}\leq C
\end{equation}
for every nonnegative classical solution $u$ of problem \eqref{PDE-N}.
\end{thm}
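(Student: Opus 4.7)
\medskip

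\noindent\textbf{Proof proposal for Theorem \ref{prin}.}

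\smallskip

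\noindent The plan is to argue by contradiction via blowing-up analysis, reducing to the Liouville theorems for the Navier problem on $\frac{1}{2^{k}}$-spaces (Theorem \ref{NPDE}) and on $\mathbb{R}^{n}$ (the interior limit). Suppose \eqref{0-4-n} fails. Then there is a sequence $\{u_{j}\}$ of nonnegative classical solutions with $M_{j}:=\|u_{j}\|_{L^{\infty}(\overline{\Omega})}=u_{j}(x_{j})\to +\infty$ for some $x_{j}\in\overline{\Omega}$. Set the scaling factor $\lambda_{j}:=M_{j}^{-\frac{p-1}{2s}}\to 0^{+}$ and define
\[
v_{j}(y):=M_{j}^{-1}u_{j}(\lambda_{j}y+x_{j}), \qquad y\in\Omega_{j}:=\{y\in\mathbb{R}^{n}\mid \lambda_{j}y+x_{j}\in\Omega\}.
\]
Then $v_{j}\geq 0$, $v_{j}(0)=\|v_{j}\|_{L^{\infty}(\overline{\Omega_{j}})}=1$, $v_{j}$ satisfies Navier data on $\partial\Omega_{j}$, and
\[
(-\Delta)^{s}v_{j}(y)=M_{j}^{-p}\,f\!\left(\lambda_{j}y+x_{j},\, M_{j}v_{j}(y)\right) \qquad \text{in}\,\, \Omega_{j};
\]
by \eqref{0-3-n} the right-hand side converges to $h(x_{\infty})v_{j}(y)^{p}$ uniformly on compact subsets, where $x_{\infty}:=\lim_{j\to\infty}x_{j}\in\overline{\Omega}$ (after subsequence).

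\smallskip

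\noindent The second step is to identify the limiting domain. Pass to a subsequence so that $d_{j}:=\lambda_{j}^{-1}\,\mathrm{dist}(x_{j},\partial\Omega)$ has a limit in $[0,+\infty]$. If $d_{j}\to +\infty$, the domains $\Omega_{j}$ exhaust $\mathbb{R}^{n}$. If $d_{j}\to d<+\infty$, pick $y_{j}\in\partial\Omega$ with $|x_{j}-y_{j}|=\mathrm{dist}(x_{j},\partial\Omega)$; then $y_{j}\to x_{\infty}\in\partial\Omega$, and by the BCB assumption together with the hypothesis that the limiting cone is a $\frac{1}{2^{k}}$-space (Definition \ref{BCB}), along a further subsequence the translated rescaling $\{z : \lambda_{j}z+y_{j}\in\Omega\}$ tends to $\mathcal{C}_{P,S^{n-1}_{2^{-k}}}$ for some $P\in\mathbb{R}^{n}$ and some $1\leq k\leq n$. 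A bounded further translation (absorbing the shift $(x_{j}-y_{j})/\lambda_{j}$, whose subsequential limit lies in the closed cone) shows $\Omega_{j}\to\mathcal{C}_{Q,S^{n-1}_{2^{-k}}}$ for some $Q$, still a $\frac{1}{2^{k}}$-space.

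\smallskip

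\noindent Next one passes to the limit. Introduce the auxiliary functions $w_{j,i}:=(-\Delta)^{i}v_{j}$ for $i=0,\ldots,s$, so that $-\Delta w_{j,i-1}=w_{j,i}$ in $\Omega_{j}$ with $w_{j,i-1}=0$ on $\partial\Omega_{j}$, and $w_{j,s}=M_{j}^{-p}f(\lambda_{j}\cdot+x_{j},M_{j}v_{j})$ is uniformly bounded on compacts. Since the geometric condition entering the uniform boundary H\"older estimate for $-\Delta$ is invariant under scaling with constants preserved, the hypothesis furnishes a uniform $C^{0,\alpha}$ control for $w_{j,s-1}$ up to $\partial\Omega_{j}$ on each compact set; iterating this together with interior Schauder/Calder\'on--Zygmund theory yields subsequential locally uniform convergence $w_{j,i}\to w_{\infty,i}$ on the limiting domain, and the boundary values $w_{j,i}|_{\partial\Omega_{j}}=0$ pass to $w_{\infty,i}|_{\partial\mathcal{C}}=0$. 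Hence $v_{\infty}:=w_{\infty,0}$ is a nonnegative classical solution of
\[
(-\Delta)^{s}v_{\infty}=h(x_{\infty})v_{\infty}^{p} \qquad \text{in}\,\,\mathcal{C},
\]
with Navier data on $\partial\mathcal{C}$ (vacuous if $\mathcal{C}=\mathbb{R}^{n}$), satisfying $v_{\infty}(y_{0})=1$ for some $y_{0}$ (the subsequential limit of $0\in\overline{\Omega_{j}}$). The super poly-harmonic property $(-\Delta)^{i}v_{\infty}\geq 0$ for $i=1,\ldots,s-1$ is preserved in the limit because each $v_{j}$ enjoys it: $w_{j,s}\geq 0$ and the iterated maximum principle applied with Navier data gives $w_{j,i}\geq 0$ for all $i$. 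The Liouville theorem (Theorem \ref{NPDE} for the boundary case with the cone being a $\frac{1}{2^{k}}$-space, or Theorem \ref{pde-unbd}/Corollary \ref{HHcor} in the interior case $\mathcal{C}=\mathbb{R}^{n}$), applied to the subcritical Hardy--H\'enon nonlinearity $h(x_{\infty})v^{p}$ with constant $h(x_{\infty})>0$, forces $v_{\infty}\equiv 0$, contradicting $v_{\infty}(y_{0})=1$.

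\smallskip

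\noindent The main obstacle is the regularity step: because $\partial\Omega_{j}$ is not smooth and only satisfies geometric conditions invariant under rescaling, one cannot invoke standard Schauder boundary estimates. The key is to exploit the hypothesized uniform boundary H\"older estimates for $-\Delta$ on $\Omega_{\rho}$, to iterate them through the tower $w_{j,i}$, and to ensure the limit retains both the Navier boundary conditions and the super poly-harmonic property. A subsidiary technical point is to verify that the shifts $(x_{j}-y_{j})/\lambda_{j}$ remain bounded (they are, since $|x_{j}-y_{j}|/\lambda_{j}=d_{j}\to d<+\infty$), so that the base point $0$ in the rescaled picture maps to an interior point of the limiting cone, preventing the nontrivial limit from collapsing.
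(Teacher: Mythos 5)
Your blow-up strategy is the right one and the reduction to the Liouville theorems (Theorem \ref{NPDE} in the boundary cases, the whole-space Liouville theorem in the interior case) is exactly what the paper does, including the BCB limit identification and the case split on $d_j/\lambda_j$. But there is a genuine gap in the rescaling step, and it is precisely the point the paper's proof is designed to avoid.

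You choose $\lambda_j:=M_j^{-(p-1)/(2s)}$ and normalize only $v_j=M_j^{-1}u_j(\lambda_j\cdot+x_j)$, then define the tower $w_{j,i}:=(-\Delta)^i v_j$ and claim you can iterate interior/boundary estimates through the chain $-\Delta w_{j,i-1}=w_{j,i}$. The problem is that knowing $\|v_j\|_\infty=1$ and $\|w_{j,s}\|_\infty\lesssim 1$ gives you \emph{no} global $L^\infty$ control on the intermediate levels $w_{j,1},\dots,w_{j,s-1}$: solving $-\Delta w_{j,s-1}=w_{j,s}$ with zero Dirichlet data on the expanding domain $\Omega_j$ (whose diameter $\sim\lambda_j^{-1}\to\infty$) only yields $\max w_{j,s-1}\lesssim(\mathrm{diam}\,\Omega_j)^2\to\infty$, and representation via the Green function gives the same divergent bound $w_{j,s-1}\lesssim\lambda_j^{-2}$. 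Without such a bound the uniform boundary H\"older estimate cannot even be invoked for $w_{j,s-2}$ (its right-hand side $w_{j,s-1}$ is not $L^q$-bounded uniformly), so the iterated convergence $w_{j,i}\to w_{\infty,i}$ is not justified.

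The paper resolves this by normalizing against the whole tower at once. Setting $\beta_l:=2l+\tfrac{2s}{p-1}$, it chooses $x_j$ to maximize $\sum_{l=0}^{s-1} v_{j,l}^{1/\beta_l}$ (with $v_{j,l}=(-\Delta)^l u_j$), takes $\lambda_j:=\bigl(\sum_l v_{j,l}^{1/\beta_l}(x_j)\bigr)^{-1}$, and rescales each level by its own exponent $\tilde v_{j,l}:=\lambda_j^{\beta_l}v_{j,l}(\lambda_j\cdot+x_j)$. Inequality \eqref{N-6} then gives $\max_{\Omega_j}\tilde v_{j,l}\le 1$ simultaneously for all $l$, which is exactly the uniform $L^\infty$ control your argument is missing, and the non-degeneracy $\sum_l\tilde v_{j,l}^{1/\beta_l}(0)=1$ replaces your $v_j(0)=1$ to rule out a trivial limit. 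You would need to incorporate this tower-weighted normalization (or an equivalent device such as a doubling-lemma argument applied to the vector $(\,v_{j,l}^{1/\beta_l}\,)_l$) for the proof to go through.
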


\begin{figure}[H]\label{F7}
	\centering	
	\subfigure [Planar domain with vertical corners]{
		\begin{minipage}[b]{0.24\linewidth}
			\centering
			\begin{tikzpicture} [scale=1.8]
				\draw (0,0.5)--(0,0)--(0.5,0) (2.2,-0.2)--(2.2,1) (0.5,1)--(0.2,1.2);
				\draw plot[smooth] coordinates{(0.5,0) (0.57,-0.4) (1,-0.5) (1.5,0) (2,-0.2) (2.2,-0.2)};
				\draw plot[smooth] coordinates{(2.2,1) (1.5, 1.07) (1,1.5) (0.5,1)};
				\draw plot[smooth] coordinates{(0.2,1.2) (0,0.9) (0,0.5)};
			\end{tikzpicture}			
		\end{minipage}
	}\hspace{15pt}
	\subfigure [Cuboid]{
		\begin{minipage}[b]{0.24\linewidth}
			\centering
			\begin{tikzpicture} [xscale=0.9, yscale=0.8]
				\draw (0,0)--(4,0)--(4,3)--(0,3)--(0, 0) (0,3)--(1.5,4.5)--(5.5,4.5)--(4,3) (4,0)--(5.5,1.5)--(5.5,4.5);
				\draw[dashed] (0,0)--(1.5,1.5)--(1.5,4.5) (1.5,1.5)--(5.5,1.5);
			\end{tikzpicture}
		\end{minipage}
	}\hspace{15pt}
\subfigure [Symmetric bounded cone]{
	\begin{minipage}[b]{0.24\linewidth}
		\centering
			\begin{tikzpicture}[scale=1.1, rotate=-90]
			\draw  (2,-2)--(0,0)--(2,2) arc (90:-90: 0.3 and 2) (2,2) arc (90:-90: 0.9 and 2);
			\draw[dashed]  (2,2) arc (90:270:  0.3 and 2) (0,0)--(2,0)--(2,2);
			\filldraw[ ] (2,0) circle(1pt);
			\node[below] at (1, -0.2) {1};
			\node[below] at (1.9, 0.8) {1};
		\end{tikzpicture}
	\end{minipage}
}\hspace{65pt}
	\subfigure [$\frac{1}{8}$-Ball] {
		\begin{minipage}[b]{0.24\linewidth}
			\centering
			\begin{tikzpicture}[yscale=0.9]
				\draw [dashed] (4,0)--(0,0);
				\draw (0.2,-0.2)--(0.6,-0.2)--(0.4,0)  (0,0)--(0.1,0) (0,0)--(0,4) (0,0)--(1.4,-1.4) arc (-71.5:-14.2: 4 and 2)   (1.4,-1.4) arc (-7:72.5: 2 and 5)   (4,0) arc (0:90:4);
			\end{tikzpicture}
		\end{minipage}
	} \hspace{25pt}
	\subfigure [ Cylinder] {
		\begin{minipage}[b]{0.24\linewidth}
			\centering
			\begin{tikzpicture}[yscale=0.7]
				\draw [dashed]  (2,0) arc(0: 180: 2 and 1) ;
				\draw   (0,4) ellipse (2 and 1) (2, 4)--(2,0) arc(0:-180: 2 and 1)--(-2,4);
			\end{tikzpicture}
			\label{F7-3}
		\end{minipage}
	}\hspace{15pt}  \subfigure[Cuboid with  a cylinder-shaped hole] {
	\begin{minipage}[b]{0.24\linewidth}
		\centering	\begin{tikzpicture}[scale=0.9]
			\draw (-2,0)--(2,0)--(2,3)--(-2,3)--(-2, 0) (-2,3)--(-0.5,4.5)--(3.5,4.5)--(2,3) (2,0)--(3.5,1.5)--(3.5,4.5) (0.75,3.75) ellipse (1 and 0.5);
			\draw[dashed] (-2,0)--(-0.5,1.5)--(-0.5,4.5) (-0.5,1.5)--(3.5,1.5)  (0.75,0.75) ellipse (1 and 0.5)  (-0.25,0.75)--(-0.25, 3.75) (1.75, 0.75)--(1.75, 3.75);
	\end{tikzpicture} 	\end{minipage}
}\hspace{15pt}
	
	\centering
	\caption{BCB domains such that the limiting cone $\mathcal{C}$ is $\frac{1}{2^{k}}$-space (examples)}
\end{figure}
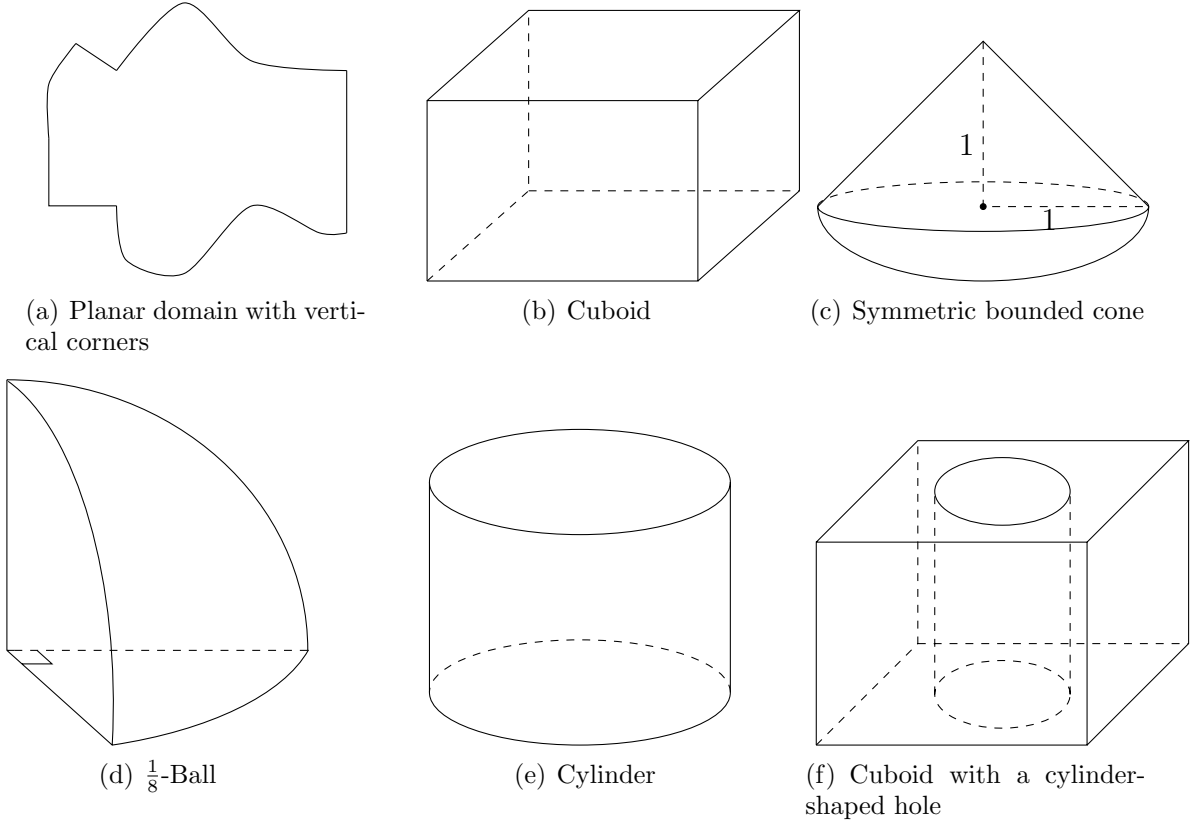

One can immediately apply Theorem \ref{prin} to the following higher order Navier problem
\begin{equation}\label{tNavier}\\\begin{cases}
(-\Delta)^{s}u(x)=u^{p}(x)+t \,\,\,\,\,\,\,\,\,\, \text{in} \,\,\, \Omega, \\
u(x)=(-\Delta) u(x)=\cdots=(-\Delta)^{s-1}u(x)=0 \,\,\,\,\,\,\,\, \text{on} \,\,\, \partial\Omega,
\end{cases}\end{equation}
where $n\geq2$, $s\in\mathbb{Z}^{+}$, $2s\leq n$, $1<p<\frac{n+2s}{n-2s}$ if $2s<n$ and $1<p<+\infty$ if $2s=n$, nonnegative solution $u\in C^{2s}(\Omega)$ such that $(-\Delta)^{i}u\in C(\overline{\Omega})$ ($i=0,\cdots,s-1$), $\Omega$ is a bounded BCB domain such that the limiting cone in Definition \ref{BCB} is $\frac{1}{2^{k}}$-space $\mathcal{C}_{P,S^{n-1}_{2^{-k}}}$ and $t$ is an arbitrary nonnegative real number.

\medskip

We can deduce the following corollary from Theorem \ref{prin}.
\begin{cor}\label{cor-n}
Assume $n\geq2$, $s\in\mathbb{Z}^{+}$, $2s\leq n$, $1<p<\frac{n+2s}{n-2s}$ if $n>2s$ and $1<p<+\infty$ if $n=2s$, $\Omega$ is a bounded BCB domain such that the limiting cone $\mathcal{C}$ is $\frac{1}{2^{k}}$-space ($k=1,\cdots,n$) and the boundary H\"{o}lder estimates for $-\Delta$ holds uniformly on the scaling domain $\Omega_{\rho}$ w.r.t. the scale $\rho\in(0,1]$, and $t\geq0$. Then, for any nonnegative solution $u$ to the higher order Navier problem \eqref{tNavier}, we have
\begin{equation}\label{0-5}
  \|u\|_{L^{\infty}(\overline{\Omega})}\leq C(n,s,p,t,\Omega).
\end{equation}
\end{cor}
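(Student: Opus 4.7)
The plan is to apply Theorem \ref{prin} directly to the nonlinearity $f(x,u)=u^{p}+t$, since all the geometric and analytic hypotheses on $\Omega$ (bounded BCB structure with limiting $\frac{1}{2^{k}}$-space cone, uniform boundary H\"older estimates for $-\Delta$ on the rescaled domains $\Omega_{\rho}$) and on the exponent $p$ are already part of the hypotheses of the corollary. Thus my only task is to verify that this particular $f$ satisfies the nonlinearity conditions \eqref{0-3-n} and \eqref{0-6-n} required by Theorem \ref{prin}.

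First, since $t\geq 0$, we have $f(x,u)=u^{p}+t\geq 0$ for all $u\geq 0$, and $f$ is continuous on $\overline{\Omega}\times[0,+\infty)$. For \eqref{0-3-n} I take $h(x)\equiv 1$, which is a positive continuous function on $\overline{\Omega}$, and observe that
\[
\left|\frac{f(x,s)}{s^{p}}-1\right|=\frac{t}{s^{p}}\longrightarrow 0 \qquad \text{as } s\to+\infty,
\]
uniformly in $x\in\overline{\Omega}$ since the error $t/s^{p}$ is independent of $x$. For \eqref{0-6-n}, for every $M>0$ we have
\[
\sup_{x\in\Omega,\,u\in[0,M]}|f(x,u)|=M^{p}+t=:C_{M},
\]
which depends only on $M$, $p$ and $t$.

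With these verifications in place, Theorem \ref{prin} furnishes a constant $C>0$, depending only on $n$, $s$, $p$, $\Omega$ and $h\equiv 1$, such that every nonnegative classical solution $u$ of \eqref{tNavier} obeys $\|u\|_{L^{\infty}(\overline{\Omega})}\leq C$. Tracing how $t$ enters through $C_{M}=M^{p}+t$ in the blow-up argument underlying Theorem \ref{prin}, the constant can be written as $C=C(n,s,p,t,\Omega)$, which is exactly the desired estimate \eqref{0-5}. There is no genuine obstacle at this step: all the substantive analysis, namely the blow-up contradiction scheme, the Liouville theorems on $\frac{1}{2^{k}}$-spaces (Theorems \ref{nie} and \ref{NPDE}), and the PDE/IE equivalence (Theorem \ref{equin}), has already been absorbed into the statement of Theorem \ref{prin}.
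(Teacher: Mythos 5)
Your proposal is correct and follows essentially the same route the paper intends: the paper presents Corollary~\ref{cor-n} as an immediate specialization of Theorem~\ref{prin} to $f(x,u)=u^{p}+t$, and your verification that this nonlinearity satisfies \eqref{0-3-n} (with $h\equiv1$, uniformly since $t/s^{p}$ does not depend on $x$) and \eqref{0-6-n} (with $C_{M}=M^{p}+t$) is precisely the content that needs checking. Your remark tracing the $t$-dependence of the final constant through $C_{M}$ is a sensible clarification of the way the paper's constant $C(n,s,p,t,\Omega)$ arises from Theorem~\ref{prin}, whose stated constant lists only $h$.
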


\smallskip

Note that the geometric conditions \eqref{pde-boundary} and \eqref{pde-boundary2}, the uniform exterior cone condition and Lipschitz continuity of $\partial\Omega$ are invariant under scaling (with constants unchanged). The uniform boundary H\"{o}lder estimates for $-\Delta$ on the scaling domain $\Omega_{\rho}$ w.r.t. the scale $\rho$ can be deduced from the geometric condition \eqref{pde-boundary2} on $\partial\Omega$, which can also be guaranteed by the uniform exterior cone condition or Lipschitz continuity of $\partial\Omega$ (c.f. Theorem 3.3 in \cite{LZLH} and Corollaries 9.28 and 9.29 in \cite{GT}). Thus we have the following corollary.
\begin{cor}\label{cor-prin}
Assume $n\geq2$, $s\in\mathbb{Z}^{+}$, $2s\leq n$, $1<p<\frac{n+2s}{n-2s}$ if $n>2s$ and $1<p<+\infty$ if $n=2s$. Suppose $\Omega$ is a bounded BCB domain such that the limiting cone $\mathcal{C}$ is $\frac{1}{2^{k}}$-space ($k=1,\cdots,n$) and $\partial\Omega$ satisfies \eqref{pde-boundary2}, or the uniform exterior cone condition, or Lipschitz continuity. Then the a priori estimates in Theorem \ref{prin} and Corollary \ref{cor-n} hold true.
\end{cor}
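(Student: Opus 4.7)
The plan is to reduce Corollary \ref{cor-prin} to a direct application of Theorem \ref{prin} (which then yields Corollary \ref{cor-n}) by verifying its only non-structural hypothesis: that the boundary H\"older estimate for $-\Delta$ holds uniformly on the rescaled family $\{\Omega_\rho\}_{\rho\in(0,1]}$. All other hypotheses (boundedness of $\Omega$, the BCB property, and the limiting cone being a $\frac{1}{2^{k}}$-space) are built into the statement of the corollary.

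The first step is to check that each of the three boundary hypotheses is preserved by the rescaling $y\mapsto\rho y+x(\rho)$ defining $\Omega_\rho$, with constants that do not degrade as $\rho\to 0^+$. For a point $y_0\in\partial\Omega_\rho$ corresponding to $x_0=\rho y_0+x(\rho)\in\partial\Omega$, this similarity sends $B_r(y_0)$ to $B_{\rho r}(x_0)$ and multiplies $(n-1)$-Hausdorff measure by $\rho^{n-1}$. Therefore a quasi-geometric sequence $\{r_k\}$ at $x_0$ transfers to the quasi-geometric sequence $\{r_k/\rho\}$ at $y_0$, and the dimensionless ratio
\[
r_k^{-(n-1)}\mathcal{H}^{n-1}\!\bigl(\partial B_{r_k}(x_0)\cap\Omega^c\bigr)
=(r_k/\rho)^{-(n-1)}\mathcal{H}^{n-1}\!\bigl(\partial B_{r_k/\rho}(y_0)\cap\Omega_\rho^c\bigr)
\]
is left unchanged, so \eqref{pde-boundary2} transfers to $\Omega_\rho$ with the same constant $\nu$. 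An analogous argument shows that local Lipschitz graphs of $\partial\Omega$ rescale to Lipschitz graphs of $\partial\Omega_\rho$ with identical Lipschitz constant and enlarged chart radius $r_0/\rho\geq r_0$, while exterior cones at points of $\partial\Omega$ rescale to exterior cones at points of $\partial\Omega_\rho$ with the same opening aperture and enlarged height $h_0/\rho\geq h_0$ for $\rho\in(0,1]$.

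The second step is to invoke standard boundary H\"older regularity: by Theorem~3.3 of \cite{LZLH} under the geometric condition \eqref{pde-boundary2}, or by Corollaries~9.28 and 9.29 of \cite{GT} under the uniform exterior cone condition or Lipschitz continuity of $\partial\Omega$, there exist $\alpha\in(0,1)$ and $C>0$, depending only on $n$, $q$, and the (invariant) geometric parameters, such that
\[
[v]_{C^{0,\alpha}(\overline{\Omega_\rho\cap B_1(x_0)})}\leq C\Bigl(\|v\|_{L^{\infty}(\Omega_\rho)}+\|\Delta v\|_{L^{q}(\Omega_\rho\cap B_{2}(x_0))}\Bigr)
\]
for every $v$ vanishing on $\partial\Omega_\rho$, some $q>n/2$, and every $x_0\in\partial\Omega_\rho$. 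By Step~1 the constants $\alpha$ and $C$ may be chosen independently of $\rho\in(0,1]$, which is precisely the uniform boundary H\"older hypothesis required in Theorem~\ref{prin}.

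The main obstacle is conceptual rather than computational: one must formulate each of the three geometric hypotheses in a scale-invariant way, so that the blow-up procedure $\rho\to 0^+$ does not spoil the constants entering the H\"older estimate. Once this scale-invariance is made explicit and combined with the boundary regularity theory of \cite{GT,LZLH}, Theorem \ref{prin} and Corollary \ref{cor-n} apply verbatim and yield the bounds \eqref{0-4-n} and \eqref{0-5}, completing the proof.
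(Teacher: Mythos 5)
Your proposal is correct and takes essentially the same route as the paper: you observe that the geometric conditions (\eqref{pde-boundary2}, uniform exterior cone, Lipschitz continuity) are scale-invariant with unchanged constants under $y\mapsto\rho y+x(\rho)$, so that the boundary H\"older estimates of Theorem~3.3 in \cite{LZLH} and Corollaries~9.28--9.29 in \cite{GT} hold on each $\Omega_\rho$ with constants independent of $\rho\in(0,1]$, after which Theorem~\ref{prin} and Corollary~\ref{cor-n} apply directly. The explicit calculation of the invariance of the dimensionless Hausdorff-measure ratio in \eqref{gc0-2} and the transfer of quasi-geometric sequences are spelled out more carefully than in the paper, but the underlying argument is the same.
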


\begin{rem}\label{rem11}
For literature on Dirichlet or Neumann problems involving higher order elliptic operators, poly-harmonic boundary value problems and higher order equations on compact Riemannian manifolds arising from conformal geometry, please refer to Barton, Hofmann and Mayboroda \cite{BHM1}, Hebey, Robert and Wen \cite{HRW}, Mayboroda and Maz$'$ya \cite{MM1,MM2} and the references therein.
\end{rem}

\subsection{Applcations: existence of solutions on bounded BCB domains}

As a consequence of the a priori estimates in subsection 1.4 (Corollaries \ref{cor-prid}, \ref{cor-prid0}, \ref{cor-n} and \ref{cor-prin}), by using \emph{the Leray-Schauder fixed point theorem}, we can derive existence of positive solutions to Dirichlet or Navier problems of $\leq n$-th order Lane-Emden equations in \emph{bounded BCB domains}.

\medskip

For $0<s\leq1$, consider the following Dirichlet problem for fractional and second order Lane-Emden equations
\begin{equation}\label{Dirichlet}\\\begin{cases}
(-\Delta)^{s}u(x)=u^{p}(x) \,\,\,\,\,\,\,\,\,\, \text{in} \,\,\, \Omega, \\
u(x)=0 \,\,\,\,\,\,\,\, \text{on} \,\,\, \mathbb{R}^{n}\setminus\Omega,
\end{cases}\end{equation}
where $n\geq\max\{2s,1\}$, $\Omega\subset\mathbb{R}^{n}$ is a bounded BCB domain, $1<p<\frac{n+2s}{n-2s}$ if $2s<n$ and $1<p<+\infty$ if $2s=n$.

\medskip

From the a priori estimates in Corollary \ref{cor-prid} and the Leray-Schauder fixed point theorem, we derive the existence of positive solutions to Dirichlet problem \eqref{Dirichlet}.
\begin{thm}\label{exisd}
Assume $0<s\leq1$, $n\geq\max\{2s,1\}$, $\Omega\subset\mathbb{R}^{n}$ is a bounded BCB domain such that the boundary H\"{o}lder estimates for $(-\Delta)^{s}$ holds on the scaling domain $\Omega_{\rho}$ uniformly w.r.t. the scale $\rho\in(0,1]$, $1<p<\frac{n+2s}{n-2s}$ if $2s<n$ and $1<p<+\infty$ if $2s=n$. Then, the Dirichlet problem \eqref{Dirichlet} possesses at least one positive solution $u\in C^{[2s],\{2s\}+\eps}_{loc}(\Omega)\bigcap\mathcal{L}_{s}(\mathbb{R}^{n})\bigcap C(\overline{\Omega})$ with $\eps>0$ arbitrarily small if $s\in(0,1)$ and $u\in C^{2}(\Omega)\bigcap C(\overline{\Omega})$ if $s=1$. Moreover, the positive solution $u$ satisfies
\begin{equation}\label{lower-bound-d}
  \|u\|_{L^{\infty}(\overline{\Omega})}\geq\left(\frac{1}{\sqrt{C_{n,s}}diam\,\Omega}\right)^{\frac{2s}{p-1}},
\end{equation}
where $C_{n,s}$ is a positive constant depending only on $n,s$ and $C_{n,1}=\frac{1}{2n}$.
\end{thm}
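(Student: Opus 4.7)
The plan is to apply the Leray--Schauder continuation principle to an integral operator whose fixed points correspond to nontrivial nonnegative solutions of \eqref{Dirichlet}, using Corollary \ref{cor-prid} as the uniform a priori bound along a one-parameter family, and then to extract the lower bound \eqref{lower-bound-d} directly from the integral representation.

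First I would introduce the operator
\[
  \mathcal{T}(v)(x) := \int_\Omega G^s_\Omega(x,y)\,\bigl(v_+(y)\bigr)^p\,\dd y, \qquad v \in C(\overline{\Omega}),
\]
where $G^s_\Omega$ denotes the Dirichlet Green function for $(-\Delta)^s$ on $\Omega$; its existence and basic estimates are guaranteed by the boundary H\"older hypothesis (cf.\ Remark \ref{rem7} and Theorem \ref{pG}). Standard kernel bounds together with Arzel\`a--Ascoli, combined with the subcriticality $p<\frac{n+2s}{n-2s}$ (or $p<+\infty$ if $n=2s$), imply that $\mathcal{T}$ is completely continuous and maps the nonnegative cone into itself. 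By Theorem \ref{equi}(i), any nonnegative fixed point $u=\mathcal{T}(u)$ is a classical solution of \eqref{Dirichlet} with the regularity stated in the theorem, and conversely; note that $u\equiv 0$ is the trivial fixed point, and our aim is to produce a nontrivial one.

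Next I would embed $\mathcal{T}$ into the homotopy
\[
  \mathcal{T}_t(v)(x) := \int_\Omega G^s_\Omega(x,y)\bigl[(v_+(y))^p+t\bigr]\,\dd y, \qquad t\in[0,+\infty),
\]
whose fixed points solve $(-\Delta)^s u=u^p+t$ in $\Omega$ with $u=0$ in $\mathbb{R}^n\setminus\Omega$. Corollary \ref{cor-prid} supplies a uniform a priori bound $\|u\|_{L^\infty(\overline{\Omega})}\leq R(t_1)$ for \emph{all} nonnegative fixed points of $\mathcal{T}_t$ with $t\in[0,t_1]$. To close the Leray--Schauder argument two further ingredients are needed: (a) for $t$ sufficiently large, no nonnegative fixed point exists, which I would verify by pairing the equation with the first positive Dirichlet eigenfunction $\varphi_1>0$ of $(-\Delta)^s$ on $\Omega$, eigenvalue $\lambda_1>0$, obtaining $\lambda_1\int u\varphi_1=\int(u^p+t)\varphi_1$, and then combining $\int u\varphi_1\geq (t/\lambda_1)\int\varphi_1$ with the uniform apriori $L^\infty$-bound (which fixes the ratio between $u^p$ and $u$) to produce a contradiction once $t$ exceeds an explicit threshold; (b) the Leray--Schauder degree of $I-\mathcal{T}_0$ on a small ball at the origin equals $1$, which follows since $p>1$ makes $\|\mathcal{T}_0 v\|_{C(\overline{\Omega})}=O(\|v\|^p)=o(\|v\|)$ as $\|v\|\to 0$. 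Combining (a), (b) with homotopy invariance along $t\in[0,t_1]$ and the uniform bound $R(t_1)$ forces the existence of a nontrivial fixed point of $\mathcal{T}=\mathcal{T}_0$ in an annular region, i.e., a positive classical solution of \eqref{Dirichlet}.

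Finally, for the lower bound I would simply substitute a positive solution $u$ into the identity $u=\mathcal{T}(u)$ to obtain
\[
  \|u\|_{L^\infty(\overline{\Omega})} \;\leq\; \|u\|_{L^\infty(\overline{\Omega})}^{\,p}\cdot \sup_{x\in\Omega}\int_\Omega G^s_\Omega(x,y)\,\dd y,
\]
and then apply the torsion-type bound $\sup_{x\in\Omega}\int_\Omega G^s_\Omega(x,y)\,\dd y\leq C_{n,s}(\mathrm{diam}\,\Omega)^{2s}$, which for $s=1$ reduces via comparison with the torsion function in an enclosing ball to the explicit constant $C_{n,1}=\tfrac{1}{2n}$. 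Rearranging yields $\|u\|_{L^\infty(\overline{\Omega})}^{p-1}\geq [C_{n,s}(\mathrm{diam}\,\Omega)^{2s}]^{-1}$, which is \eqref{lower-bound-d} up to the way the constant is absorbed. The main technical obstacle is not the abstract degree argument but rather verifying the compactness of $\mathcal{T}$ and the positivity/Hopf-type boundary behavior on general BCB domains whose boundary may be quite rough; both rely crucially on the uniform boundary H\"older hypothesis of the theorem, which feeds into the equicontinuity of $\mathcal{T}v$ up to $\partial\Omega$ and ensures that the strong maximum principle upgrades a nontrivial nonnegative fixed point to a strictly positive one in $\Omega$.
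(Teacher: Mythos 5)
Your proposal follows essentially the same strategy as the paper's: set up the integral operator $K_s(u)(x)=\int_\Omega G^s_\Omega(x,y)u^p(y)\,\dd y$, feed the family $(-\Delta)^s u=u^p+t$ into a Leray--Schauder continuation, supply nonexistence for large $t$ plus a uniform a priori bound, and extract the lower bound \eqref{lower-bound-d} from the torsion-function comparison $\sup_{x\in\Omega}\int_\Omega G^s_\Omega(x,y)\,\dd y\leq C_{n,s}(\mathrm{diam}\,\Omega)^{2s}$. The paper packages the degree count through the cone fixed point theorem (Theorem \ref{L-S}) rather than bare homotopy invariance, but these are presentational variants of the same idea, and your a posteriori extraction of the lower bound from the fixed-point identity is a minor simplification of the paper's argument (which encodes it in the choice of inner radius $\rho$ in \eqref{fe-11}).

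However, there is a genuine logical gap in your step (a). As written, the threshold $t_1$ beyond which no fixed point exists is produced by combining the eigenfunction pairing $\int u\varphi_1\geq (t/\lambda_1)\int\varphi_1$ with the uniform bound $R(t_1)$; this yields only $t\leq\lambda_1 R(t_1)$, so the threshold depends on $R(t_1)$, which in turn depends on $t_1$. This is circular, and there is no reason to expect the implicit inequality $t_1>\lambda_1 R(t_1)$ to hold for any $t_1$, since typically $R(t)\to\infty$ as $t\to\infty$. The nonexistence result must be obtained \emph{before} any a priori bound is invoked. The paper's fix is the pointwise convexity estimate $u^p\geq C_1u-C_2$ (valid for all $u\geq0$ once $C_2=C_1^{p/(p-1)}$): choosing $C_1>\lambda_1$ and $t\geq C_2$, one gets $(-\Delta)^s u\geq C_1 u$, contradicting the variational characterization of the principal Dirichlet eigenvalue $\lambda_1$, using only $p>1$ and no information on $\|u\|_\infty$. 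Relatedly, Corollary \ref{cor-prid} does \emph{not} directly supply a bound uniform over a compact $t$-interval, since the constant there depends on $t$; the paper reruns the blowing-up argument along a sequence $t_k\in[0,C_2)$, absorbing the rescaled source $t_k/m_k^p\to 0$ in the limit, to secure uniformity (see the discussion around \eqref{fe-23}--\eqref{fe-25}). With the threshold fixed as $t_1=C_2$ by the convexity argument above, this is precisely the uniformity you need to close the degree count.
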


\smallskip

Now consider the following Navier problem for higher order Lane-Emden equations
\begin{equation}\label{Navier}\\\begin{cases}
(-\Delta)^{s}u(x)=u^{p}(x) \,\,\,\,\,\,\,\,\,\, \text{in} \,\,\, \Omega, \\
u(x)=(-\Delta) u(x)=\cdots=(-\Delta)^{s-1}u(x)=0 \,\,\,\,\,\,\,\, \text{on} \,\,\, \partial\Omega,
\end{cases}\end{equation}
where $n\geq2$, $s\in\mathbb{Z}^{+}$, $2s\leq n$, $1<p<\frac{n+2s}{n-2s}$ if $n>2s$ and $1<p<+\infty$ if $n=2s$, and $\Omega$ is a bounded BCB domain such that the limiting cone $\mathcal{C}$ in Definition \ref{BCB} is $\frac{1}{2^{k}}$-space $\mathcal{C}_{P,S^{n-1}_{2^{-k}}}$ ($k=1,\cdots,n$).

\medskip

By virtue of the a priori estimates in Corollary \ref{cor-n}, applying the Leray-Schauder fixed point theorem, we can derive existence result for positive solution to the Navier problem for higher order Lane-Emden equations \eqref{Navier} on BCB domains such that the limiting cone $\mathcal{C}$ is $\frac{1}{2^{k}}$-space.
\begin{thm}\label{exisn}
Assume $n\geq2$, $s\in\mathbb{Z}^{+}$, $2s\leq n$, $1<p<\frac{n+2s}{n-2s}$ if $n>2s$ and $1<p<+\infty$ if $n=2s$, and $\Omega$ is a bounded BCB domain such that the limiting cone $\mathcal{C}$ is $\frac{1}{2^{k}}$-space ($k=1,\cdots,n$) and the boundary H\"{o}lder estimates for $-\Delta$ holds on the scaling domain $\Omega_{\rho}$ uniformly w.r.t. the scale $\rho\in(0,1]$. Then, the higher order Navier problem \eqref{Navier} possesses at least one positive solution $u\in C^{2s}(\Omega)$ such that $(-\Delta)^{i}u\in C(\overline{\Omega})$ ($i=0,\cdots,s-1$). Moreover, the positive solution $u$ satisfies
\begin{equation}\label{lower-bound-n}
  \|u\|_{L^{\infty}(\overline{\Omega})}\geq\left(\frac{\sqrt{2n}}{diam\,\Omega}\right)^{\frac{2s}{p-1}}.
\end{equation}
\end{thm}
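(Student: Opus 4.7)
The plan is to recast \eqref{Navier} as a fixed-point problem for a completely continuous operator on $C(\overline{\Omega})$, and then extract a nontrivial positive fixed point by combining the a priori estimate of Corollary \ref{cor-n} with a Leray--Schauder degree computation. Define
$$ T: C(\overline{\Omega}) \to C(\overline{\Omega}), \qquad T(u)(x) := \int_{\Omega} G^{s}_{\Omega}(x,y)\, \bigl(u_{+}(y)\bigr)^{p}\, \dd y, $$
where $G^{s}_{\Omega}$ is the Navier Green function for $(-\Delta)^{s}$ on $\Omega$, expressible as the $s$-fold convolution of the Dirichlet Green function of $-\Delta$ and hence nonnegative. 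Kernel bounds together with Arz\'ela--Ascoli yield complete continuity of $T$, and by Theorem \ref{equin} (whose super-poly-harmonic hypothesis is automatic because any fixed point $u=T(u)$ satisfies $(-\Delta)^{k}u = \int G^{s-k}_{\Omega}(\cdot,y)u_{+}^{p}\,\dd y \geq 0$ for $1\leq k\leq s-1$), fixed points of $T$ are exactly the nonnegative classical solutions of \eqref{Navier}.

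Introduce the homotopy $T_{\sigma}(u) := T(u) + \sigma\varphi$ for $\sigma\in[0,\sigma_{0}]$, where $\varphi>0$ solves $(-\Delta)^{s}\varphi=1$ with the Navier condition; fixed points of $T_{\sigma}$ correspond to solutions of \eqref{tNavier} with $t=\sigma$. Testing $(-\Delta)^{s}u = u^{p}+\sigma$ against the first Navier eigenfunction $\psi_{1}>0$ of $(-\Delta)^{s}$ and using Jensen's inequality forces $\sigma \leq \lambda_{1}^{p/(p-1)}$ for any such $u$; pick $\sigma_{0}$ strictly larger, so that $T_{\sigma_{0}}$ has no fixed point at all. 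By Corollary \ref{cor-n}, inspected to confirm that its constant can be taken uniform as $t$ ranges over the bounded set $[0,\sigma_{0}]$ (the perturbation parameter scales away under the blow-up), there exists $M>0$ with $\|u\|_{L^{\infty}(\overline{\Omega})}\leq M$ for every fixed point of every $T_{\sigma}$, $\sigma\in[0,\sigma_{0}]$. Pick $R>M$; homotopy invariance of the Leray--Schauder degree then gives
$$ \deg\bigl(I-T,\, B_{R},\, 0\bigr) \;=\; \deg\bigl(I-T_{\sigma_{0}},\, B_{R},\, 0\bigr) \;=\; 0. $$
On the other hand, the kernel estimate $\|T(u)\|_{\infty}\leq C\|u\|_{\infty}^{p}$ with $p>1$ shows the straight-line homotopy $\tau T$, $\tau\in[0,1]$, has no fixed point on $\partial B_{r}$ for $r>0$ sufficiently small, so $\deg(I-T, B_{r}, 0) = \deg(I, B_{r}, 0) = 1$. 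Excision produces
$$ \deg\bigl(I-T,\, B_{R}\setminus\overline{B_{r}},\, 0\bigr) \;=\; 0 - 1 \;=\; -1, $$
yielding a fixed point $u^{\ast}$ with $r<\|u^{\ast}\|_{\infty}<R$; the integral formula gives $u^{\ast}\geq 0$, and the strong maximum principle applied iteratively to each $-\Delta$-factor of the Navier decomposition yields $u^{\ast}>0$ in $\Omega$.

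For the quantitative lower bound \eqref{lower-bound-n}, let $x_{\ast}\in\overline{\Omega}$ be a point where $u^{\ast}$ attains its maximum $M^{\ast}$. Let $D:=\mathrm{diam}\,\Omega$ and define iteratively $v_{0}\equiv 1$, $v_{k}(x):=\int_{\Omega}G^{1}_{\Omega}(x,y)v_{k-1}(y)\,\dd y$, so that $v_{s}(x)=\int_{\Omega}G^{s}_{\Omega}(x,y)\,\dd y$ and $-\Delta v_{k}=v_{k-1}$ with zero boundary data. Comparison with the explicit solution of $-\Delta w = 1$ on the circumscribed ball gives $v_{1}\leq D^{2}/(2n)$, and induction on $k$ then yields $v_{k}\leq (D^{2}/(2n))^{k}$. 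Evaluating the equation at $x_{\ast}$,
$$ M^{\ast} \;=\; \int_{\Omega}G^{s}_{\Omega}(x_{\ast},y)(u^{\ast})^{p}(y)\,\dd y \;\leq\; (M^{\ast})^{p}\Bigl(\tfrac{D^{2}}{2n}\Bigr)^{s}, $$
which rearranges to $M^{\ast}\geq(\sqrt{2n}/D)^{2s/(p-1)}$.

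The main obstacle I anticipate is establishing that the a priori bound $M$ is truly uniform over $\sigma\in[0,\sigma_{0}]$: strictly speaking Corollary \ref{cor-n} produces a constant that depends on $t$, and one must trace through the blow-up proof of Theorem \ref{prin} to verify that a diverging sequence of solutions on a bounded $\sigma$-interval rescales to a solution of the same Liouville-type problem on the $\frac{1}{2^{k}}$-space $\mathcal{C}_{P,S^{n-1}_{2^{-k}}}$ whose nonexistence is forbidden by Theorem \ref{NPDE}, independently of the parameter $\sigma$. The remaining ingredients---compactness of $T$, absence of small fixed points, and the Green-function $L^{1}$ bound---are comparatively routine given the estimates assembled in Remark \ref{rem8} and Theorems \ref{nie}, \ref{equin}, \ref{NPDE}.
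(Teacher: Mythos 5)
Your argument is essentially correct and proves the statement, but it routes the topological step through raw Leray--Schauder degree theory (homotopy invariance plus excision) rather than through the cone fixed-point theorem (Theorem \ref{L-S}) that the paper invokes. The two are near-mirror images: the paper works directly in the positive cone $P\subset C^{0}(\overline{\Omega})$ and verifies Krasnoselskii-type conditions (i) and (ii); condition (i) is implemented by the very same Green-function $L^{1}$ comparison that you use to show $\deg(I-T,B_{r},0)=1$ for small $r$, and condition (ii) is implemented by the very same homotopy to $(-\Delta)^{s}u=u^{p}+t$ that your family $T_{\sigma}$ realizes (the paper takes $U=B_{R}(0)\cap P$ with $R=C_{0}+\rho$ and shows $u-K_{s}(u)\neq t\eta_{s}$, which is exactly the statement that the degree on the large ball vanishes). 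Working on all of $C^{0}(\overline{\Omega})$ costs you the $u_{+}$ truncation and the subsequent sign-recovery step; working in the cone as the paper does avoids the truncation at the price of citing the more specialized theorem. Your upper bound on the perturbation parameter $\sigma$ is obtained by testing against the Navier eigenfunction plus Jensen, versus the paper's linearization $u^{p}\geq C_{1}u-C_{2}$ and testing — both yield boundedness of the parameter, which is all that is needed. The obstacle you flag is exactly right: Corollary \ref{cor-n} delivers a constant $C(n,s,p,t,\Omega)$, and the uniformity over $t$ in a bounded interval must be re-verified by re-running the blow-up of Theorem \ref{prin} on the perturbed equation, noting that $t_{k}/m_{k}^{p}\to 0$ so the limiting problem on $\mathbb{R}^{n}$ or on the $\frac{1}{2^{k}}$-space is unchanged. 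This is precisely the step the paper compresses into the claim \eqref{4-23} with the remark ``through entirely similar blowing-up methods as in the proof of Theorem \ref{prin}.'' Finally, your lower-bound derivation via $v_{k}\leq\bigl(D^{2}/(2n)\bigr)^{k}$ and the maximum-point inequality $M^{\ast}\leq (M^{\ast})^{p}(D^{2}/(2n))^{s}$ matches the paper's estimate $0\leq h(x)<\frac{(\mathrm{diam}\,\Omega)^{2}}{2n}$ for the torsion function, iterated $s$ times in \eqref{4-5}--\eqref{4-11}, and produces the identical threshold $\rho=\bigl(\sqrt{2n}/\mathrm{diam}\,\Omega\bigr)^{2s/(p-1)}$.
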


\begin{rem}\label{rem9}
The lower bounds \eqref{lower-bound-d} and \eqref{lower-bound-n} on the $L^{\infty}$ norm of positive solutions $u$ indicate that, if $diam\,\Omega<\frac{1}{\sqrt{C_{n,s}}}$ when $s\in(0,1)$ and $diam\,\Omega<\sqrt{2n}$ when $s=1$, then a uniform priori estimate does not exist and blow-up may occur when $p\rightarrow1+$.
\end{rem}

\smallskip

As consequence of Theorems \ref{exisd} and \ref{exisn}, we have the following corollary.
\begin{cor}\label{cor-exis}
Assume $n\geq\max\{2s,1\}$, $0<s\leq1$ or $s\in\mathbb{Z}^{+}$, $1<p<\frac{n+2s}{n-2s}$ if $n>2s$ and $1<p<+\infty$ if $n=2s$. Suppose $\Omega$ is a bounded BCB domain with $\partial\Omega$ satisfying \eqref{pde-boundary} if $s\in(0,1)$ and \eqref{pde-boundary2} if $s=1$ or $s\in\mathbb{Z}^{+}$, or the uniform exterior cone condition, or Lipschitz continuity. Then the existence of solutions for Dirichlet problem \eqref{Dirichlet} and Navier problem \eqref{Navier} in Theorems \ref{exisd} and \ref{exisn} hold true.
\end{cor}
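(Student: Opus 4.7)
The plan is to reduce Corollary \ref{cor-exis} directly to the existence Theorems \ref{exisd} and \ref{exisn} by verifying that, under any of the listed boundary regularity hypotheses on $\partial\Omega$, the required \emph{uniform boundary H\"older estimates} on the scaled domains $\Omega_\rho$ hold with constants independent of $\rho\in(0,1]$. Once this uniform estimate is established, invoking Theorem \ref{exisd} (for $0<s\leq 1$) or Theorem \ref{exisn} (for $s\in\mathbb{Z}^{+}$) finishes the proof, since the diameter lower bounds on $\|u\|_{L^\infty(\overline{\Omega})}$ will also come for free from those theorems.

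The first step is the scale-invariance observation: if $x\in\partial\Omega$, $\rho\in(0,1]$ and $y\in\partial\Omega_\rho$ corresponds to $x=\rho y+x(\rho)$ with $x(\rho)\in\partial\Omega$, then each of the conditions under consideration (the geometric condition $\mathbf{G}$ in \eqref{pde-boundary}, the geometric condition $\mathbf{\widehat{G}}$ in \eqref{pde-boundary2}, the uniform exterior cone property, and Lipschitz continuity) transfers to $\partial\Omega_\rho$ with the same constants $\nu_R$, cone opening, or Lipschitz constant, since all of them are defined in terms of ratios of radii, solid angles, or gradients that are preserved by the dilation $x\mapsto \rho^{-1}(x-x(\rho))$. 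Hence these conditions hold uniformly on $\bigcup_{\rho\in(0,1]}\partial\Omega_\rho$.

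The second step is to feed this uniform boundary regularity into the appropriate boundary H\"older estimate. For $s\in(0,1)$, the condition \eqref{pde-boundary} is exactly the hypothesis used in Theorem 3.11 of \cite{LZLH} to bound $[v]_{C^{0,\alpha}(\overline{B_1(x_0)\cap\Omega_\rho})}$ in terms of $\|v\|_{L^\infty}$ and $\|(-\Delta)^s v\|_{L^p}$ for some $p>\frac{n}{2s}$; since the constants in that theorem depend only on $n,s,\nu_R$ and the ratios in the quasi-geometric sequence, they do not depend on $\rho$. For $s=1$ (and for the Navier setting $s\in\mathbb{Z}^+$, where the a priori estimate in Theorem \ref{prin} requires only boundary H\"older control of $-\Delta$), condition \eqref{pde-boundary2} combined with Theorem 3.3 of \cite{LZLH} (or the uniform exterior cone/Lipschitz case via Corollaries 9.28 and 9.29 of \cite{GT}) delivers the analogous uniform estimate. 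In particular, the uniform exterior cone and Lipschitz hypotheses imply both \eqref{pde-boundary} and \eqref{pde-boundary2} with constants depending only on the cone aperture or Lipschitz constant, so they also yield the required uniform bound.

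Finally, with the uniform H\"older estimate in hand, the hypotheses of Theorem \ref{exisd} are met for $0<s\leq 1$ and those of Theorem \ref{exisn} are met for $s\in\mathbb{Z}^+$ (noting that the assumption in the Navier case that the limiting cone is a $\frac{1}{2^k}$-space is already built into $\Omega$ being a BCB domain of the type considered). Applying these theorems produces the desired positive solutions to \eqref{Dirichlet} and \eqref{Navier}, together with the lower bounds \eqref{lower-bound-d} and \eqref{lower-bound-n}. The only mildly delicate point is confirming that the various weak boundary regularity conditions really are scale invariant with \emph{uniform} constants over $\rho\in(0,1]$; this is where the quasi-geometric formulation of $\mathbf{G}$ and $\mathbf{\widehat{G}}$ in Definitions \ref{qgs}, \ref{gc} and \ref{gc2} is essential, since it encodes the boundary thickness purely through scale-invariant ratios, so no new input beyond the cited boundary H\"older theorems is needed.
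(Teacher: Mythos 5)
Your proposal is correct and follows essentially the same route as the paper: the paper derives the corollary by observing that the geometric conditions \eqref{pde-boundary}, \eqref{pde-boundary2}, the uniform exterior cone condition, and Lipschitz continuity are all scale invariant (with constants unchanged), so the cited boundary H\"older estimates from Theorems~3.3/3.11 of \cite{LZLH} and Corollaries~9.28/9.29 of \cite{GT} give the required $\rho$-uniform bounds, after which Theorems~\ref{exisd} and \ref{exisn} apply directly. The one caveat you implicitly acknowledge is worth stating cleanly: for the Navier case, Theorem~\ref{exisn} additionally requires the BCB limiting cones to be $\frac{1}{2^{k}}$-spaces, so that hypothesis remains in force when invoking it.
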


\smallskip

\begin{rem}\label{rem15}
For other existence results and related properties of solutions to fractional, second and higher order H\'{e}non-Hardy equations on bounded smooth domains, refer to \cite{CPY,CDQ,CLM,DQ3,DQ0,GGN,N} and the references therein.
\end{rem}

\medskip

Intuitively, for a variety of bounded domains $\Omega$, the limiting domain of the scaling domain $\Omega_{\rho}$ could probably be a MSS applicable domain. We may generalize the definition of BCB domains and define \emph{domains with blowing-up MSS applicable domain boundary}.
\begin{defn}[Domains with blowing-up MSS applicable domain boundary]\label{BMSSAB}
A domain $\Omega$ is called \emph{a domain with blowing-up MSS applicable domain boundary} $\partial\Omega$ provided that for any $x(\rho): \, (0,+\infty)\to\partial\Omega$ such that $x(\rho)\to x_{0}\in\partial\Omega$, $\Omega_{\rho}:=\{x\mid \rho x+x(\rho)\in\Omega\}\to \mathcal{M}$, as $\rho\to0$ along some subsequence of any given infinitesimal sequence $\{\rho_{k}\}$, where $\mathcal{M}$ is an \emph{arbitrary MSS applicable domain} that may depends on the subsequence.
\end{defn}

For $0<s\leq1$, by applying blowing-up analysis on domains with blowing-up MSS applicable domain boundary instead of BCB domains and Liouville theorems for MSS applicable domains in subsections 1.2-1.3, the a priori estimates and existence of solutions in Theorems \ref{prid}, \ref{prid2} and \ref{exisd} and Corollaries \ref{cor-prid}, \ref{cor-prid0} and \ref{cor-exis} can also be established on domains with blowing-up MSS applicable domain boundary.
\begin{thm}\label{pri-exi}
Assume $0<s\leq1$, $n\geq\max\{2s,1\}$ and $\Omega$ is a domain with blowing-up MSS applicable domain boundary $\partial\Omega$ such that the limiting domain $\mathcal{M}$ in Definition \ref{BMSSAB} satisfies $\mathbb{R}^{n}\setminus\overline{\mathcal{M}}$ is a g-radially convex domain. Suppose the boundary H\"{o}lder estimates for $(-\Delta)^{s}$ holds on the scaling domain $\Omega_{\rho}$ uniformly w.r.t. the scale $\rho\in(0,1]$, then the a priori estimates and existence of solutions in Theorems \ref{prid} and \ref{exisd} and Corollary \ref{cor-prid} hold true. Suppose the boundary H\"{o}lder estimates for $L$ holds on $\Omega_{\rho}$ uniformly w.r.t. $\rho\in(0,1]$, then the a priori estimates of solutions in Theorem \ref{prid2} holds true. Consequently, suppose $\partial\Omega$ satisfies \eqref{pde-boundary} if $s\in(0,1)$ and \eqref{pde-boundary2} if $s=1$, or the uniform exterior cone condition, or Lipschitz continuity, then the a priori estimates and existence of solutions in Theorem \ref{prid} and Corollaries \ref{cor-prid} and \ref{cor-exis} hold true. Suppose $\partial\Omega$ satisfies the uniform exterior cone condition or Lipschitz continuity, then the a priori estimates in Theorem \ref{prid2} holds true.
\end{thm}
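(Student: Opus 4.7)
The plan is to follow verbatim the blowing-up argument used to establish Theorems \ref{prid}, \ref{prid2} and \ref{exisd} (together with Corollary \ref{cor-prid}), replacing the Liouville step on half-spaces or cones by the corresponding Liouville results on MSS applicable domains from subsections 1.2--1.3, namely Theorems \ref{pde-unbd}, \ref{pde-unbd-a} and \ref{pde-unbd-2}. Concretely, for the a priori bound I would argue by contradiction: given a sequence $\{u_k\}$ of nonnegative classical solutions with $M_k := \|u_k\|_{L^\infty(\overline\Omega)}\to +\infty$ attained at points $x_k \in \overline\Omega$, I rescale
\[
v_k(y) := M_k^{-1}\, u_k(\rho_k y + x_k), \qquad \rho_k := M_k^{-\frac{p-1}{2s}},
\]
on the scaled domain $\Omega_k := \{y : \rho_k y + x_k \in \Omega\}$. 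The normalisation gives $0 \le v_k \le 1$ and $v_k(0) = 1$, and hypothesis \eqref{f-2} (respectively \eqref{0-3-2}) forces the nonlinearity in the rescaled equation to converge to $h(x_0)\, v^p$ with $x_0 = \lim_k x_k$.

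Next, I would split into two cases. If $\rho_k^{-1}\,\mathrm{dist}(x_k,\partial\Omega)\to +\infty$, the limiting domain is $\mathbb{R}^n$; otherwise, replacing $x_k$ by a nearest point of $\partial\Omega$, Definition \ref{BMSSAB} provides a subsequence along which $\Omega_k \to \mathcal{M}$ for some MSS applicable domain $\mathcal{M}$ with $\mathbb{R}^n\setminus\overline{\mathcal{M}}$ g-radially convex. Combining interior regularity in $\mathcal{M}$ with the assumed uniform-in-$\rho$ boundary H\"older estimate on $\Omega_\rho$, I would extract via Arzel\`a--Ascoli a nonnegative limit $v\in C(\mathbb{R}^n)$ with $v(y_0)=1$ for some $y_0$, solving $(-\Delta)^s v = h(x_0) v^p$ in $\mathcal{M}$ and $v\equiv 0$ on $\mathbb{R}^n\setminus \mathcal{M}$ (or the analogous second-order problem, where the coefficients of $L$ freeze at $x_0$ and uniform ellipticity is preserved by \eqref{0-2}). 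Since $1 < p < \frac{n+2s}{n-2s}$ (respectively $1<p<+\infty$ when $n=2s$), the limiting nonlinearity is subcritical in the sense of Definition \ref{defn1}, and the assumption that $\mathbb{R}^n\setminus\overline{\mathcal{M}}$ is g-radially convex places the limiting problem exactly in the scope of Theorems \ref{pde-unbd}, \ref{pde-unbd-a}, \ref{pde-unbd-2}. These force $v\equiv 0$, contradicting $v(y_0)=1$; the interior case ($\mathcal{M}=\mathbb{R}^n$) is handled by the same Liouville theorems applied with $\Omega=\mathbb{R}^n$.

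Once the a priori bound is in hand, the existence statement in Corollary \ref{cor-exis}'s extension follows verbatim from the Leray--Schauder fixed point argument of Theorem \ref{exisd}, since that argument depends on the solutions only through their uniform $L^\infty$ bound. For the concluding consequential clauses, I would invoke the scaling-invariance (with constants unchanged) of the geometric conditions \eqref{pde-boundary} and \eqref{pde-boundary2}, of the uniform exterior cone condition and of Lipschitz continuity: these pass from $\Omega$ to every $\Omega_\rho$ and supply the uniform boundary H\"older estimates through the regularity theorems cited in \cite{LZLH,GT}.

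The hard part will be ensuring the uniform-up-to-the-boundary convergence $v_k \to v$ on $\overline{\mathcal{M}}$ when $\partial\mathcal{M}$ is allowed to be very irregular (cusps, corners and non-Lipschitz pieces are all admissible in MSS applicable domains). This is precisely what the \emph{uniform-in-$\rho$} boundary H\"older estimate hypothesis buys: without such uniformity the compactness step simply fails and no limit equation can be identified. A secondary delicate point, which is built into the hypothesis and so does not demand extra work, is that the blow-up limit $\mathcal{M}$ must lie in the \emph{outer} g-radially convex subclass of MSS applicable domains, so that Theorems \ref{pde-unbd}--\ref{pde-unbd-2} (rather than the g-radially convex versions) apply to kill $v$; if instead the limit were only an inner g-radially convex domain with singularity at an interior center, one would have to rely on Theorems \ref{pde-bd} and \ref{pde-bd-2} and impose a decay assumption on $v$, which cannot be deduced from $0 \le v \le 1$ alone.
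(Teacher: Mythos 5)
Your proposal is essentially correct and reproduces the approach the paper itself uses: blowing-up analysis from the proofs of Theorems \ref{prid}, \ref{prid2}, \ref{exisd}, with the key Liouville step on cones replaced by the Liouville theorems on outer g-radially convex MSS applicable domains (Theorems \ref{pde-unbd}, \ref{pde-unbd-a}, \ref{pde-unbd-2}), followed by Leray--Schauder for existence. Two small remarks, neither of which is a genuine gap. First, the paper's proofs of Theorems \ref{prid} and \ref{prid2} split into three cases depending on whether $\rho_k^{-1}\mathrm{dist}(x_k,\partial\Omega)$ tends to infinity, to a positive finite limit, or to zero; you merge the last two into a single ``otherwise'' case and rule out the degenerate subcase by passing to the limit and observing that $v$ must vanish at the boundary point $0\in\partial\mathcal{M}$, whereas the paper kills this subcase without a limiting equation, directly from the uniform boundary H\"{o}lder bound giving $|v_k(\tilde x_k)-v_k(0)|\leq C|\tilde x_k|^\alpha\to 0$ against $v_k(0)-v_k(\tilde x_k)=1$ --- both are valid but the paper's version is more elementary. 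Second, the interior case $\mathcal{M}=\mathbb{R}^n$ with $s=1$ and $n=2$ falls outside the stated scope of Theorem \ref{pde-unbd} (which explicitly excludes $n=2$, $s=1$, $\Omega=\mathbb{R}^n$); the paper invokes the classical Gidas--Spruck result here rather than its own theorem, and you should do likewise rather than appealing to ``the same Liouville theorems with $\Omega=\mathbb{R}^n$''.
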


The results in our paper clearly reveal that \emph{the existence and nonexistence of solutions mainly rely on topology (not smoothness) of the domain}. We believe that \emph{the blowing-up analysis on BCB domains} (or even domains with blowing-up MSS applicable domain boundary) can be applied to prove the a priori estimates and hence existence of solutions for various problems (PDEs/IEs/Systems) including \emph{free boundary problems}.

\medskip

The rest of this paper is organized as follows. In Section 2, we introduce the \emph{direct method of scaling spheres} on MSS applicable domains and apply it to prove Liouville theorems (Theorems \ref{pde-unbd}, \ref{pde-unbd-a}, \ref{pde-unbd-2}--\ref{pde-bd-2}) for fractional and second order PDEs \emph{absolutely without any help} of the \emph{integral representation formulae} of solutions. In Section 3, we introduce the \emph{method of scaling spheres in integral forms} and \emph{the method of scaling spheres in a local way} on MSS applicable domains and apply it to prove Liouville theorems for $\leq n$-th order IEs, and then derive Liouville theorems for $\leq n$-th order PDEs by establishing the equivalence between PDEs and IEs (Theorems \ref{ie-unbd}--\ref{pG}, \ref{equi}--\ref{NPDE} and Corollaries \ref{HHcor}, \ref{cor-g} and \ref{cone-cor}). In Section 4, as applications of Liouville type results (Theorems \ref{pde-unbd}, \ref{pde-unbd-a}, \ref{pde-unbd-2}--\ref{pde-bd-2}, \ref{ie-unbd}--\ref{pG}, \ref{equi}--\ref{NPDE} and Corollaries \ref{HHcor}, \ref{cor-g} and \ref{cone-cor}), we derive the a priori estimates of nonnegative solutions for $\leq n$-th order Dirichlet or Navier problems in BCB domains. Section 5 is devoted to establishing the existence of positive solutions for Dirichlet or Navier problems of $\leq n$-th order Lane-Emden equations in BCB domains, by using the a priori estimates derived in Section 4 and the Leray-Schauder fixed point theorem.

\medskip

In the following, we will use $C$ to denote a general positive constant that may depend on $n$, $s$, $a$, $p$, $\Omega$ and $u$, and whose value may differ from line to line.

\section{The direct method of scaling spheres on MSS applicable domains and proofs of Theorems \ref{pde-unbd}, \ref{pde-unbd-a}, \ref{pde-unbd-2}--\ref{pde-bd-2}}

\begin{proof}[Proof of Theorems \ref{pde-unbd} and \ref{pde-unbd-a}]

We prove Theorems \ref{pde-unbd} and \ref{pde-unbd-a} together. Let $\Omega\subset \mathbb R^n$ be a MSS applicable domain such that $\mathbb R^n\setminus \overline\Omega$ is g-radially convex with $P$ as the radially convex center. For any $d_{\Omega}:=\min\limits_{x\in\Omega}|x-P|<r<+\infty$, the exterior fan-shaped domain $\mathcal{C}^{r,e}_{P,\Sigma^{r}_{\Omega}}:=\mathcal{C}_{P,\Sigma^{r}_{\Omega}}\setminus\overline{\mathcal{C}^{r}_{P,\Sigma^{r}_{\Omega}}}\subset\Omega$ (see Remark \ref{rem1}).

\medskip
	
If $f(x,u)\geq0$ on $\Omega\times[0,+\infty)$, $u(x)\geq0$ but $u\not\equiv0$ is a solution to the Dirichlet problem of PDEs \eqref{PDE} in $\Omega$, then it follows from maximum principle that $u>0$ in $\Omega$. \emph{Without any help} of the \emph{integral representation formulae} of solutions, we will apply the \emph{direct method of scaling spheres} to show the following lower bound estimates for positive solution $u$, which contradicts the integrability of $u$ provided that $f$ is subcritical in the sense of Definition \ref{defn1}.
\begin{thm}\label{lower0}
Assume $\mathbb{R}^{n}\setminus\overline{\Omega}$ is a g-radially convex domain with radially convex center $P$, $n\geq\max\{2s,1\}$, $0<s\leq1$, $f\geq0$ satisfies $(\mathbf{f_{1}})$ and $n\neq2$ if $s=1$ and $\Omega=\mathbb{R}^{n}$, $f$ is subcritical in the sense of Definition \ref{defn1} and satisfies $(\mathbf{f_{3}})$. Suppose $f$ satisfies $(\mathbf{f_{2}})$ with $q=+\infty$ when $s\in(0,1)$, and with $q=\frac{n}{2}$ if $n\geq3$ (with $q=1+\delta$ if $n=2$, where $\delta>0$ is arbitrarily small) provided that $P\in\mathbb{R}^{n}\setminus\overline{\Omega}$, $\partial\Omega$ is locally Lipschitz and $f$ satisfies $(\mathbf{A})$, or else with $q=n$ when $s=1$. If $u$ is a positive classical solution to the Dirichlet problem \eqref{PDE}, then it satisfies the following lower bound estimates: for $|x-P|\geq R_{0}+\frac{4\sigma_{2}}{\sigma_{2}-\sigma_{1}}r+1$ with $x\in\mathcal{C}_{Q,\Sigma_{2}}$,
\begin{equation}\label{lb2}
  u(x)\geq C_{\kappa}|x-P|^{\kappa}, \quad\quad \forall \, \kappa<+\infty,
\end{equation}
where $r>d_{\Omega}$ and $\Sigma\subset\Sigma^{r}_{\Omega}$ are given in $(\mathbf{f_{3}})$, $\Sigma_{2}$ is any cross-section such that $\overline{\Sigma_{2}}\subsetneq\Sigma_{1}$, $R_{0}>0$ and $0<\sigma_{1}<\sigma_{2}$ are given by assumption $(\mathbf{H_2})$ which, as proved in Theorem \ref{pG}, is satisfied by Green's function $G^{s}_{\mathcal{C}}$ for Dirichlet problem of $(-\Delta)^{s}$ on $\mathcal{C}_{Q,\Sigma_{1}}$, $\Sigma_{1}$ is any smooth cross-section such that $\overline{\Sigma_{1}}\subsetneq\Sigma$ and $Q$ is some point in $\mathcal{C}^{r,e}_{P,\Sigma}$ such that $\mathcal{C}_{Q,\Sigma_{1}}\subset\mathcal{C}_{P,\Sigma}\bigcap\Omega$.
\end{thm}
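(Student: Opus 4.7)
The plan is to execute the direct method of scaling spheres centered at the point $P$. For each $\lambda\in(d_\Omega,+\infty)$ set $\Sigma_\lambda:=\{x\in\Omega:|x-P|>\lambda\}$, $x^\lambda:=P+\frac{\lambda^{2}(x-P)}{|x-P|^{2}}$, and define the Kelvin-type auxiliary functions
\[
u_\lambda(x):=\left(\frac{\lambda}{|x-P|}\right)^{n-2s}u(x^\lambda),\qquad w_\lambda(x):=u_\lambda(x)-u(x).
\]
Because $\mathbb{R}^{n}\setminus\overline{\Omega}$ is g-radially convex with center $P$, the reflection of $\Sigma_\lambda$ across $S_\lambda(P)$ sits inside $\Omega$, so $u_\lambda$ is well-defined on $\Sigma_\lambda$; moreover $w_\lambda\equiv 0$ on $S_\lambda(P)$ and $w_\lambda\leq 0$ on $\partial\Omega\cap\partial\Sigma_\lambda$ by the Dirichlet data. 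Using the equivariance of $(-\Delta)^{s}$ under the Kelvin-type transform about $P$ together with $(\mathbf{f_{1}})$ and the strict subcritical monotonicity in Definition \ref{defn1}, $w_\lambda$ satisfies a linear integro-differential inequality whose coefficient is controlled via $(\mathbf{f_{2}})$ or $(\mathbf{f'_{2}})$.

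The MSS is started and continued in two steps. For $\lambda$ sufficiently close to $d_\Omega$, the region $\Sigma_\lambda$ lies in a thin collar just outside $B_{d_\Omega}(P)$, and the small region principle (Theorem \ref{NP-1}) when $s\in(0,1)$ with $(\mathbf{f_{2}})$, the narrow region principle for weak solutions (Theorem \ref{mp}) when $s=1$ with the $L^{n/2}$-Lipschitz hypothesis and $(\mathbf{A})$, or the narrow region principle without ABP (Theorem \ref{NP-2}) when only $(\mathbf{f'_{2}})$ and \eqref{pde-boundary} are available, delivers $w_\lambda\leq 0$ in $\Sigma_\lambda$ once $\lambda-d_\Omega$ is small. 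Next, set $\lambda^{*}:=\sup\{\mu>d_\Omega:w_\lambda\leq 0\text{ in }\Sigma_\lambda\ \forall\,\lambda\in(d_\Omega,\mu)\}$; assuming $\lambda^{*}<+\infty$, continuity forces $w_{\lambda^{*}}\leq 0$ in $\Sigma_{\lambda^{*}}$, and the strong maximum principle then rules out an interior zero (the alternative $w_{\lambda^{*}}\equiv 0$ is excluded by the \emph{strict} subcritical monotonicity in $\mu$ from Definition \ref{defn1}, which would force an incompatible identity between the equations satisfied by $u_{\lambda^{*}}$ and $u$). A further application of the small/narrow region principle on a thin tube near $S_{\lambda^{*}}(P)$ pushes $\lambda^{*}$ slightly further, contradicting maximality; hence $\lambda^{*}=+\infty$ and the monotonicity $u_\lambda\leq u$ holds on $\Sigma_\lambda$ for every $\lambda>d_\Omega$.

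It remains to turn the monotonicity into an initial polynomial lower bound and bootstrap. Fix $y_{0}\in\mathcal{C}^{r,e}_{P,\Sigma}$ with $u(y_{0})=:c_{0}>0$, set $R_{0}:=|y_{0}-P|$, and for $x\in\mathcal{C}_{Q,\Sigma_{2}}$ on the ray from $P$ through $y_{0}$ with $|x-P|$ large, take $\lambda=\sqrt{R_{0}|x-P|}$, so that $x^\lambda=y_{0}$; the inequality $u_\lambda(x)\leq u(x)$ yields $u(x)\geq c_{0}(R_{0}/|x-P|)^{(n-2s)/2}$. By continuity and positivity of $u$ on $\mathcal{C}^{r,e}_{P,\Sigma}$ this spreads (varying $y_{0}$ over a cross-section) to a uniform estimate $u(y)\geq C_{0}|y-P|^{-(n-2s)/2}$ on $\mathcal{C}_{Q,\Sigma_{1}}\cap\{|y-P|\geq R'\}$ for some large $R'$. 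Since $u\geq 0$ on $\partial\mathcal{C}_{Q,\Sigma_{1}}$ and $(-\Delta)^{s}u\geq C|y-P|^{a}u^{p}$ on this sub-cone by $(\mathbf{f_{3}})$, Green's representation on $\mathcal{C}_{Q,\Sigma_{1}}$ combined with the lower bound on $G^{s}_{\mathcal{C}_{Q,\Sigma_{1}}}$ provided by $(\mathbf{H_{2}})$ (verified in Theorem \ref{pG}) gives
\[
u(x)\geq\frac{C}{|x-P|^{n-2s}}\int_{\{y\in\mathcal{C}_{Q,\Sigma_{1}}\,:\,\sigma_{1}|x-P|\leq|x-y|\leq\sigma_{2}|x-P|,\,|y-P|\geq R_{0}\}}|y-P|^{a}u(y)^{p}\,\dd y.
\]
Inserting a current lower bound $u(y)\geq C_{k}|y-P|^{\mu_{k}}$ on the integration range (where $|y-P|\sim|x-P|$) produces the recurrence $\mu_{k+1}=2s+a+p\mu_{k}$ starting from $\mu_{0}=-(n-2s)/2$. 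A direct computation shows the subcritical condition $p<p_c(a)=\frac{n+2s+2a}{n-2s}$ is equivalent to $\mu_{0}>\mu^{*}:=-(2s+a)/(p-1)$, the unique fixed point of this affine map; since the slope is $p\geq 1$ and $\mu_{0}>\mu^{*}$, iteration drives $\mu_{k}\to+\infty$ and after finitely many steps one obtains \eqref{lb2} for any prescribed $\kappa<+\infty$.

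The hardest part will be the execution of the two-step MSS above, namely tailoring the narrow/small region principle to control the nonlocal cross-sphere interactions when $s\in(0,1)$, accommodating the three boundary regularity regimes ($(\mathbf{A})$, \eqref{pde-boundary}, $(\mathbf{f'_{2}})$), and rigorously excluding the degenerate case $w_{\lambda^{*}}\equiv 0$ using the strict subcriticality clause of Definition \ref{defn1}. Once the monotonicity from scaling spheres is in hand, the bootstrap is routine, with the subcritical exponent $p<p_c(a)$ used exactly once in the comparison $\mu_{0}>\mu^{*}$.
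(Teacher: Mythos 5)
Your proposal follows the paper's three-step direct method of scaling spheres and is essentially the same argument: start dilation near $\lambda=d_\Omega$ via a small/narrow region principle (Theorem \ref{NP-1} for $s\in(0,1)$, Theorem \ref{mp} for $s=1$ under $(\mathbf{A})$), continue to $\lambda=+\infty$ using the strict inequality from the subcritical condition, then bootstrap through the Green representation on a sub-cone with $(\mathbf{H_2})$. The only differences are bookkeeping: you track $w_\lambda\leq 0$ on $\Sigma_\lambda=\Omega\setminus\overline{B_\lambda(P)}$, which equals the paper's $\omega^\lambda\geq 0$ on $\Omega\cap B_\lambda(P)$ by the Kelvin anti-symmetry $w_\lambda(x)=-(\lambda/|x-P|)^{n-2s}\,w_\lambda(x^\lambda)$; and your criterion $\mu_0>\mu^*=-(2s+a)/(p-1)$ for the fixed point of the affine recursion is equivalent to the paper's observation that $\mu_{k+1}=p\mu_k-(2s+a)$ (with $\mu_k:=-\text{your }\mu_k$) diverges monotonically when $p<p_c(a)$.

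One geometric claim in your setup is, however, stated incorrectly: ``the reflection of $\Sigma_\lambda$ across $S_\lambda(P)$ sits inside $\Omega$'' is false in general. For instance, with $\Omega=\mathbb{R}^n\setminus\overline{B_1(0)}$, $P=0$, $\lambda=2$, the reflection of $\Sigma_2=\{|x|>2\}$ is $\{0<|x|<2\}$, not contained in $\{|x|>1\}$. The g-radial convexity of $\mathbb{R}^n\setminus\overline{\Omega}$ yields the opposite inclusion $\left(\Omega\cap B_\lambda(P)\right)^\lambda\subset\Omega\setminus\overline{B_\lambda(P)}$, equivalently that $\Sigma^r_\Omega$ is monotone increasing in $r$ (Remark \ref{rem14}). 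This does not invalidate the proof, but the justification has to be replaced: $u_\lambda$ is defined on $\Sigma_\lambda$ because $u\equiv 0$ off $\Omega$, not because of the (false) containment; on $\partial\Omega\cap\{|x-P|\geq\lambda\}$ one has $x^\lambda\notin\Omega$ by the monotonicity of $\Sigma^r_\Omega$, hence $u_\lambda=0=u$; and the differential inequality derived from subcriticality is used only on $\{w_\lambda>0\}$, which necessarily lies in $\left(\Omega\cap B_\lambda(P)\right)^\lambda$ (where $u_\lambda$ is smooth and $x^\lambda\in\Omega$), so nothing is needed on the ``bad'' set where $x^\lambda\notin\Omega$ (there $w_\lambda=-u\leq 0$ automatically).
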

\begin{proof}
Without loss of generalities, we may assume that $P=0$. For $\lambda>0$ and $x\in \mathbb R^n\setminus\{0\}$, let $x^\lambda=\frac{\lambda^{2}x}{|x|^{2}}$ be the reflection of $x$ with respect to the sphere $S_\lambda:=\{x\mid\,|x|=\lambda\}$. From Definitions \ref{g-rcd} and \ref{mss-a}, one can infer that $\Omega\subset \mathbb R^n$ is a MSS applicable domain with center $0$ such that $\mathbb R^n\setminus \overline\Omega$ is g-radially convex implies that $x^\lambda\in \Omega$ for any $x\in \Omega\bigcap\left(B_\lambda(0)\setminus\{0\}\right)$, that is, $\left(\Omega\bigcap\left(B_\lambda(0)\setminus\{0\}\right)\right)^{\lambda}\subseteq\Omega\setminus\overline{B_{\lambda}(0)}$, where $\left(\Omega\bigcap\left(B_\lambda(0)\setminus\{0\}\right)\right)^{\lambda}$ denotes the reflection of $\Omega\bigcap\left(B_\lambda(0)\setminus\{0\}\right)$ w.r.t. the sphere $S_{\lambda}$.

\medskip

For any $\lambda>d_{\Omega}$, we define the Kelvin type transform of $u$ centered at $0$ by
	\begin{equation}\label{pde-2}
		u_{\lambda}(x)=\left(\frac{\lambda}{|x|}\right)^{n-2s}u\left(\frac{\lambda^{2}x}{|x|^{2}}\right).
	\end{equation}

\medskip
	
We will carry out the process of scaling spheres in $\Omega$ with respect to the center $P=0\in\mathbb{R}^{n}$.
	
\medskip

	To this end, let $\omega^{\lambda}(x):=u_{\lambda}(x)-u(x)$ for $\lambda>d_{\Omega}$. By the definition of $u_{\lambda}$ and $\omega^{\lambda}$, we have
	\begin{equation}\label{pde-3}
		\omega^{\lambda}(x)=u_{\lambda}(x)-u(x)=-\left(\frac{\lambda}{|x|}\right)^{n-2s}\omega^{\lambda}(x^{\lambda})=-\big(\omega^{\lambda}\big)_{\lambda}(x),
	\end{equation}
	which indicates that $\omega^{\lambda}$ is an anti-symmetric function with respect to the sphere $S_\lambda$.
	
\medskip

	We will first show that, for $\lambda>d_{\Omega}$ sufficiently close to $d_{\Omega}$,
	\begin{equation}\label{pde-4}
		\omega^{\lambda}(x)\geq0, \qquad \forall \,\, x\in\Omega\bigcap\left(B_\lambda(0)\setminus\{0\}\right).
	\end{equation}
	Then, we start dilating the sphere piece $S_{\lambda}\bigcap\Omega$ from near $\lambda=d_{\Omega}$ toward the interior of $\Omega$ as long as \eqref{pde-4} holds, until its limiting position $\lambda=+\infty$ and then derive lower bound estimates on asymptotic behavior of $u$ as $|x|\rightarrow+\infty$ in a smaller cone via Bootstrap technique. Therefore, the direct method of scaling spheres can be divided into three steps.
	
	\medskip
	
	\emph{Step 1. Start dilating the sphere piece $S_{\lambda}\bigcap\Omega$ from near $\lambda=d_{\Omega}$.} Define
	\begin{equation}\label{pde-5}
		\Omega_\lambda ^{-}:=\{x\in \Omega\bigcap\left(B_\lambda(0)\setminus\{0\}\right) \, | \, \omega^{\lambda}(x)<0\}.
	\end{equation}
	We will show that, there exists an $\eps>0$ sufficiently small such that, for any $\lambda\in(d_{\Omega},d_{\Omega}+\eps)$,
	\begin{equation}\label{pde-6}
			\Omega_\lambda ^{-}=\emptyset.
	\end{equation}

\medskip
	
	For arbitrary $x\in \Omega\bigcap\left(B_\lambda(0)\setminus\{0\}\right)$, we infer from \eqref{PDE} and subcritical condition on $f$ that
	\begin{eqnarray}\label{pde-7}
		&& (-\Delta)^{s}\omega^{\lambda}(x)=\left(\frac{\lambda}{|x|}\right)^{n+2s}f(x^\lambda, u(x^\lambda))-f(x,u(x)) \\
\nonumber &&\qquad\qquad\quad\quad >f(x, u_\lambda(x))-f(x, u(x)) = c_\lambda(x)\omega^{\lambda}(x),
	\end{eqnarray}
	where $c_\lambda(x):=\frac{f(x, u_\lambda(x))- f(x, u(x))}{u_\lambda(x)-u(x)}$. Since $0<u_\lambda(x)<u(x)\leq M_\lambda $ for any $x\in 	\Omega_\lambda ^{-}$ with $M_{\lambda}:=\sup\limits_{\Omega\cap B_{\lambda}(0)}u<+\infty$, we derive from the assumption $(\mathbf{f_{2}})$ that
	\begin{equation}\label{pde-8}
		\left|c_\lambda(x)\right|=\left|\frac{f(x, u_\lambda(x))- f(x, u(x))}{u_\lambda(x)-u(x)}\right|\leq \sup_{\alpha,\beta\in [0, M_\lambda] } \frac{|f(x, \alpha)- f(x, \beta)|}{|\alpha-\beta|}\leq h_\lambda(x),
	\end{equation}
where $h_\lambda$ can be chosen increasingly depending on $\lambda$ since $M_\lambda$ is increasing as $\lambda$ increases.
	
\medskip

Next we carry out our proof by discussing two different cases.

\medskip

\emph{Case (i).} $f$ satisfies $(\mathbf{f_{2}})$ with $q=+\infty$ when $s\in(0,1)$ and with $q=n$ when $s=1$. In such case, we have $h_\lambda \in L^q (\Omega\cap B_R(0))$ ($\forall\, R>0$) with $q=+\infty$ if $s\in(0,1)$ and with $q=n$ if $s=1$. We first establish the following \emph{small region principle} by using the ABP estimate. The smallness of the region and the upper bound of positive part of the coefficient $c(x)$ are characterized by an integral condition. When $s\in(0,1)$, the narrow region principle in Theorem 2.2 of \cite{CLZ} can be deduced from the following theorem as a corollary.
\begin{thm}\label{NP-1}(Small region principle)
	Assume  $0<s\leq1$, $\lambda>0$ and $D\subset B_\lambda(0)\setminus\{0\}$. Suppose that $\omega \in\mathcal{L}_{s}(\mathbb{R}^{n})\cap C^{[2s],\{2s\}+\eps}_{loc}(D)$ with arbitrarily small $\eps>0$ if $0<s<1$ and $\omega \in C^{2}(D)$ if $s=1$. There is a constant $C_{n,s}$ depending merely on $n,s$ such that, if $\omega$ is lower semi-continuous on $\overline{D}$ and satisfies
		\begin{equation}\label{pde-9}\\\begin{cases}
				(-\Delta)^{s}\omega (x)-c(x)\omega (x)\geq0, \,\,\,\,\, &x\in D,\\
				\omega \geq 0 &\text{a.e. in}\,  \, B_{\lambda}(0)\setminus \overline{D},\\
				\omega =-\omega_{\lambda} & \text{a.e. in}\,  \, B_{\lambda}(0)
		\end{cases}\end{equation}
		for some function $c$ satisfying $ \| c^+\|_{L^q(D)}\leq C_{n,s}(\text{diam}(D))^{-s}| D|^{\frac{s}{q}-\frac{s}{n}}$ with $q=+\infty$ if $s\in(0,1)$ and $q=n$ if $s=1$, and $\liminf\limits_{x\in D,\,x\rightarrow\partial D}\omega(x)\geq0$ when $s=1$, then $$\omega(x)\geq 0, \quad  \  \ \forall\, x\in D.$$
		
		In particular, suppose that $c$ is bounded from above by a constant $M\geq0$ in $D$, then
 $$|D|\leq \begin{cases}
				\left[\frac{C_{n,s}}{M (2\lambda)^s}\right]^{\frac{n}{s}},\,\,\,\,\, \text{if} \,\, M>0,\\
				+\infty, \quad \text{if} \,\, M=0
		\end{cases}  \  \   \ \text{implies}\  \ \   \omega(x)\geq 0, \  \  \ \forall\, x\in D.$$
	\end{thm}
	\begin{proof}
		Set $D^-:=\{x\in D\mid\, \omega<0\}$. For  $s=1$, the spherical anti-symmetry of $\omega$ (i.e. the third equation in \eqref{pde-9}) is  redundant. Indeed, by the first two equations in \eqref{pde-9} we find that $\omega$ solves
		\begin{equation*}\begin{cases}
				\Delta(- \omega )(x)\geq  c^+(x) \omega(x),\,\,\,\,\, x\in D^-,\\
				\omega(x)= 0,\quad x\in \partial D^-,
		\end{cases}\end{equation*}
		where $c^+(x):=\max\{0, c(x)\}$.

\medskip

The well-known 2nd order Alexandrov-Bakelman-Pucci maximum principle (see e.g. Theorem 9.1 in \cite{GT}) yields that 	
		\begin{equation}\label{pde-10}
			\sup_{D^-} (-\omega) \leq\hat C_n \text{diam}(D^{-})\|  c^+\omega \|_{L^{n}\left(D^-\right)}
			\leq  \hat C_n \left(\sup_{D^-} (-\omega)\right)\text{diam}(D)\|  c^+  \|_{L^{n}\left(D^-\right)},
		\end{equation}
		where $\hat C_n$ is a constant depending merely on $n$. Then we deduce from the above inequality that $|D^-|=0$ once $$\|  c^+  \|_{L^{n}\left(D^-\right)} \leq (2\hat C_n)^{-1}\text{diam}(D)^{-1},$$
which proves Theorem \ref{NP-1} in the case $s=1$.

\medskip
		
		Now we consider more delicate non-local cases $s\in(0,1)$, where the spherical anti-symmetry property of $\omega$ seems necessary. We split $\omega$ into two parts $\omega=\omega_1+\omega_2$, where
$$\omega_1(x)=\begin{cases} \omega(x),\ \ \ &\text{if}\, x\in D^-,\\ 0, &\text{if}\, x\in\mathbb R^n\setminus D^-\end{cases} \qquad  \text{and} \qquad  \omega_2(x)=\begin{cases} 0, \qquad &\text{if}\, x\in D^-,\\ \omega(x), \qquad &\text{if}\, x\in\mathbb R^n\setminus D^-\end{cases}.$$
The key step is to show that
\begin{equation}\label{pde-11}
			(-\Delta)^s \omega_2\leq 0  \qquad  \text{on}\,\, D^-.
\end{equation}
		Using the third equation in \eqref{pde-9}, by direct calculations, we get that, for any $x\in D^-$,
		\begin{align*}
			(-\Delta)^s \omega_2(x)&=C_{s,n}P.V. \int_{\mathbb R^n} \frac{0-\omega_2(y)}{|x-y|^{n+2s}}   \mathrm{d}y\\
			&=C_{s,n}P.V. \int_{B_\lambda(0)\setminus D^-} \frac{ -\omega(y)}{|x-y|^{n+2s}}   \mathrm{d}y+C_{s,n}P.V. \int_{\mathbb R^n\setminus  B_\lambda(0)} \frac{ -\omega(y)}{|x-y|^{n+2s}}   \mathrm{d}y\\
			&=C_{s,n}P.V. \int_{B_\lambda(0)\setminus D^-} \frac{ -\omega (y)}{|x-y|^{n+2s}}   \mathrm{d}y+C_{s,n}P.V. \int_{B_\lambda(0)} \frac{ -\omega (y^\lambda)}{|x-y^\lambda|^{n+2s}}   \left(\frac{\lambda}{|y|}\right)^{2n}  \mathrm{d}y\\
			&=C_{s,n}P.V. \int_{B_\lambda(0)\setminus D^-} \frac{ -\omega(y)}{|x-y|^{n+2s}}   \mathrm{d}y+C_{s,n}P.V. \int_{B_\lambda(0)} \frac{ \omega(y)}{|x-y^\lambda|^{n+2s}}   \left(\frac{\lambda}{|y|}\right)^{n+2s}  \mathrm{d}y	\\
			&=C_{s,n}P.V. \int_{B_\lambda(0)\setminus D^-} \left( \frac{ 1}{|x-y^\lambda|^{n+2s}}   \left(\frac{\lambda}{|y|}\right)^{n+2s} -\frac{ 1}{|x-y|^{n+2s}}  \right) \omega(y) \mathrm{d}y\\
			&\quad +C_{s,n}P.V. \int_{D^-} \frac{ \omega(y)}{|x-y^\lambda|^{n+2s}}   \left(\frac{\lambda}{|y|}\right)^{n+2s}  \mathrm{d}y \leq 0.
		\end{align*}
		Here we have used the point-wise inequality
$$|x-y^\lambda|\cdot|y|\geq \lambda|x-y|,\  \ \quad  \forall\, x,y \in B_\lambda(0).$$
		Hence we obtain \eqref{pde-11} and consequently we find that $\omega_1$ satisfies
		\begin{equation*}
			\begin{cases}
				-(-\Delta)^{s} \omega_1  (x)\leq -c^+(x) \omega_1  (x) \,\,\,\,\, \text{in} \,\,\, D^-,\\
				-\omega_1(x)=0,\quad x\in \mathbb R^n\setminus D^-.\\
			\end{cases}
		\end{equation*}

\medskip

Now we need the Alexandrov-Bakelman-Pucci type estimate for fractional Laplacians (see Corollary 2.1 in \cite{FW}).
\begin{thm}[Corollary 2.1 in \cite{FW}]\label{ABP}
Let $\Omega$ be a bounded open subset of $\mathbb{R}^{n}$. Suppose that $h:\,\Omega\to\mathbb{R}$ is in $L^{\infty}(\Omega)$ and $w\in L^{\infty}(\mathbb{R}^{n})$ is a classical solution of
\begin{equation*}
			\begin{cases}
				-(-\Delta)^{s}w(x)\leq h(x), \qquad\,\, x\in\Omega,\\
				w(x)\geq0,\qquad x\in \mathbb R^n\setminus\Omega.\\
			\end{cases}
		\end{equation*}
Then there exists a positive constant $C$, depending on $n$ and $s$, such that
\begin{equation*}
  -\inf_{\Omega}w\leq Cd^{s}\|h^{+}\|_{L^{\infty}(\Omega)}|\Omega|^{\frac{s}{n}},
\end{equation*}
where $d=diam(\Omega)$ is the diameter of $\Omega$ and $h^{+}(x):=\max\{h(x),0\}$.
\end{thm}

Theorem \ref{ABP} implies that
		\begin{equation}\label{pde-12}
			\sup_{D^-} (-\omega_1) \leq C_{n,s}\left(\sup_{ D} c^+\right)\left(\sup_{D^-}(-\omega_1)\right)\text{diam}(D^-)^s |D^-|^{\frac{s}{n}},
		\end{equation}
		where $C_{n,s}$ is a constant depending on $n,s$. Then one can deduce from \eqref{pde-12} that $|D^-|=0$ once $$\sup_{ D} c^+ \leq (2 C_{n,s})^{-1}\text{diam}(D)^{-s} |D|^{-\frac{s}{n}},$$
which proves Theorem \ref{NP-1} for the cases $s\in(0,1)$.	
	\end{proof}
	
\medskip

\emph{Case (ii).} $s=1$, $P\in\mathbb{R}^{n}\setminus\overline{\Omega}$ (i.e., $d_\Omega>0$), $\partial\Omega$ is locally Lipschitz, $f$ satisfies $(\mathbf{A})$ and $(\mathbf{f_{2}})$ with $q=\frac{n}{2}$ if $n\geq3$ and with $q=1+\delta$ if $n=2$, where $\delta>0$ is arbitrarily small. First, from $P=0\in\mathbb{R}^{n}\setminus\overline{\Omega}$, one gets $u_{\lambda}(x)\leq \left(\frac{\lambda}{d_{\Omega}}\right)^{n-2s}\sup\limits_{\Omega\cap B_{\lambda^{2}/d_{\Omega}}(0)}u=:\widetilde{M}_{\lambda}<+\infty$ for any $x\in\Omega\bigcap B_{\lambda}(0)$, and hence
\begin{equation}\label{pde-36}
  \left|c_\lambda(x)\right|=\left|\frac{f(x, u_\lambda(x))- f(x, u(x))}{u_\lambda(x)-u(x)}\right|\leq \sup_{\alpha,\beta\in [0, \widetilde{M}_{\lambda}] } \frac{|f(x, \alpha)- f(x, \beta)|}{|\alpha-\beta|}\leq \widetilde{h}_\lambda(x),
\end{equation}
where $\widetilde{h}_\lambda$ could be chosen larger than $h_{\lambda}$ and increasingly depending on $\lambda$. Moreover, one can infer that $\widetilde{h}_\lambda \in L^q (\Omega\cap B_R(0))$ ($\forall\, R>0$) with $q=\frac{n}{2}$ if $n\geq3$ and $q=1+\delta$ with arbitrarily small $\delta>0$ if $n=2$. Second, by \eqref{PDE} and assumption $\mathbf{(A)}$ on $f$, one can infer that $\omega^{\lambda}=u_{\lambda}-u\in H^{1}(\Omega\bigcap B_{\lambda}(0))$. Indeed, for any $\phi\in C_{c}^{\infty}(\mathbb{R}^{n})$, by multiplying equation \eqref{PDE} by test function $\phi^{2}u$ and integrating, one has
\begin{eqnarray*}
  && \int_{\Omega\cap supp(\phi)}\phi^{2}|\nabla u|^{2}+2(\phi\nabla u)\cdot(u\nabla\phi)\mathrm{d} x=\int_{\Omega\cap supp(\phi)}(-\Delta u)(\phi^{2}u)\mathrm{d} x \\
 \nonumber && \qquad\qquad\qquad\qquad\qquad\qquad\qquad\qquad\quad=\int_{\Omega\cap supp(\phi)}f(x,u(x))\phi^{2}(x)u(x)\mathrm{d} x,
\end{eqnarray*}
combining this with assumption $\mathbf{(A)}$ on $f$ and H\"{o}lder inequality implies that
\begin{equation*}
  \int_{\Omega\cap supp(\phi)}\phi^{2}|\nabla u|^{2}\mathrm{d}x\leq 2\left|\int_{\Omega\cap supp(\phi)}f(x,u(x))\phi^{2}(x)u(x)\mathrm{d} x\right|+16\int_{\Omega\cap supp(\phi)}u^{2}|\nabla\phi|^{2}\mathrm{d}x<+\infty.
\end{equation*}
Thus $u\in H^{1}(\Omega\bigcap B_R(0))$ ($\forall\, R>0$). It follows from $P=0\in\mathbb{R}^{n}\setminus\overline{\Omega}$ that $0<d_{\Omega}\leq|x|<\lambda$ for any $x\in\Omega\bigcap B_{\lambda}(0)$, and hence $u_{\lambda}\in H^{1}(\Omega\bigcap B_{\lambda}(0))$. Therefore, we arrive at $\omega^{\lambda}=u_{\lambda}-u\in H^{1}(\Omega\bigcap B_{\lambda}(0))$.

\medskip

We can use the following maximum principle instead of the small region principle in Theorem \ref{NP-1} to deal with Case (ii).
\begin{thm}[Theorem 1.2 in \cite{LL}]\label{mp}
Assume that $c\in L^{\frac{n}{2}}(\Omega)$ if $n\geq3$ and $c\in L^{1+\delta}(\Omega)$ with arbitrarily small $\delta>0$ if $n=2$, and $u\in H^{1}(\Omega)$ is a weak solution of
\begin{equation*}
			\begin{cases}
				-\Delta u(x)+c(x)u(x)\geq 0 \qquad\,\, \text{in} \,\, \Omega,\\
				u(x)\geq0 \qquad \text{on} \,\, \partial\Omega.\\
			\end{cases}
		\end{equation*}
There exists a positive constant $c_{n}$ such that, if $\|c^{-}\|_{L^{\frac{n}{2}}(\Omega)}\leq c_{n}$ ($\|c^{-}\|_{L^{1+\delta}(\Omega)}\leq c_{n}$), then $u\geq 0$ in $\Omega$, where $c^{-}(x):=-\min\{c(x),0\}$.
\end{thm}

\begin{rem}\label{rem12}
Although Theorem 1.2 in \cite{LL} was proved for the unit ball $B_1$ and $n\geq3$, it is clear from the proof that the conclusion also holds true for arbitrary domain $\Omega$ such that the formula of integration by parts holds (which can be guaranteed by locally Lipschitz property of $\partial\Omega$), and for $n=2$ if $c\in L^{1+\delta}(\Omega)$ with arbitrarily small $\delta>0$. When $n=2$, Theorem \ref{mp} can also be deduced from Theorem 8.16 in \cite{GT}.
\end{rem}

\medskip

Since $\omega^\lambda$ solves \eqref{pde-7} on $\Omega\bigcap\left(B_{\lambda}(0)\setminus\{0\}\right)$ with $c_\lambda$ satisfying \eqref{pde-8} on $\Omega_\lambda^-$ and \eqref{pde-36} on $\Omega\bigcap B_{\lambda}(0)$ if $P=0\in\mathbb{R}^{n}\setminus\overline{\Omega}$, by applying the small region principle in Theorem \ref{NP-1} on $\Omega_{\lambda}^{-}$ to $\omega^\lambda$ for Case (i) and Theorem \ref{mp} on $\Omega\bigcap B_{\lambda}(0)$ to $\omega^\lambda$ for Case (ii), we can choose an $\eps>0$ small enough and derive that
$$\Omega_\lambda^-=\emptyset, \qquad \forall \,\,\lambda\in(d_{\Omega},d_{\Omega}+\eps).$$
Note that, when $s=1$, $n\geq3$ and $\Omega=\mathbb{R}^{n}$, the condition $\liminf\limits_{x\in D,\,x\rightarrow\partial D}\omega(x)\geq0$ in Theorem \ref{NP-1} can be derived from $f\geq0$ and $\mathbf{(f_{3})}$ via entirely similar way as (2.24) in \cite{DQ0} by showing a lower bound on positive solution $u$ as fundamental solution (see (2.16) in \cite{DQ0}). Thus we obtain \eqref{pde-6} for any $\lambda\in(d_{\Omega},d_{\Omega}+\eps)$ and finish the proof of step 1.

\medskip	
	
\emph{Step 2. Dilate the sphere piece $S_{\lambda}\bigcap\Omega$ toward the interior of $\Omega$ until $\lambda=+\infty$.} Step 1 provides us a starting point to dilate the sphere piece $S_{\lambda}\bigcap\Omega$ from near $\lambda=d_\Omega$. Now we dilate the sphere piece $S_{\lambda}\bigcap\Omega$ toward the interior of $\Omega$ as long as \eqref{pde-6} holds. Let
	\begin{equation}\label{pde-13}
		\lambda_{0}:=\sup\{\lambda>d_{\Omega}\,|\, \omega^{\mu}\geq0 \,\, \text{in} \,\,    \Omega\bigcap\left(B_\mu(0)\setminus\{0\}\right), \,\, \forall \, d_{\Omega}<\mu\leq\lambda\}\in (d_{\Omega}, +\infty],
	\end{equation}
	it follows that
	\begin{equation}\label{pde-14}
		\omega^{\lambda_{0}}(x)\geq0, \quad\quad \forall \,\, x\in   \Omega\bigcap\left(B_{\lambda_0}(0)\setminus\{0\}\right).
	\end{equation}
	In what follows, we will prove $\lambda_{0}=+\infty$ by contradiction arguments.

\medskip
	
	Suppose on the contrary that $d_{\Omega}<\lambda_{0}<+\infty$. We will obtain a contradiction via showing that the sphere piece $S_{\lambda}\bigcap\Omega$ can be dilated toward the interior of $\Omega$ a little bit further, more precisely, there exists an $\varepsilon>0$ small enough such that $\omega^{\lambda}\geq0$ in $\Omega\bigcap\left(B_{\lambda}(0)\setminus\{0\}\right)$ for all $\lambda\in[\lambda_{0},\lambda_{0}+\varepsilon]$. By \eqref{pde-7}  and \eqref{pde-14}, we have
	\begin{equation}\label{pde-15}\\\begin{cases}
			(-\Delta)^{s}\omega^{\lambda_{0}}(x)>f(x,u_{\lambda_{0}}(x))-f(x,u(x))=c_{\lambda_{0}}(x)\omega^{\lambda_{0}}(x) \,\,\,\,\, \text{in} \,\,\, \Omega\bigcap\left(B_{\lambda_0}(0)\setminus\{0\}\right),\\
			\omega^{\lambda_{0}}(x)\geq 0,\quad x\in B_{\lambda_{0}}(0)\setminus\{0\},\\
			\omega^{\lambda_{0}}(x)=-(\omega^{\lambda_{0}})_{{\lambda_{0}}}(x),\quad   x\in B_{{\lambda_{0}}}(0)\setminus \{0\}.
	\end{cases}\end{equation}
We will show that
	\begin{equation}\label{pde-16}
		\omega^{\lambda_{0}}(x)>0, \,\qquad \forall \,\, x\in\Omega\bigcap\left(B_{\lambda_0}(0)\setminus\{0\}\right).
	\end{equation}
Suppose on the contrary that there exists $x_{0}\in\Omega\cap B_{\lambda_0}(0)$ such that $\omega^{\lambda_{0}}(x_{0})=\inf\limits_{B_{\lambda_0}(0)}\omega^{\lambda_{0}}(x)=0$, then from the first inequality in \eqref{pde-15} we obtain $(-\Delta)^{s}\omega^{\lambda_{0}}(x_{0})>0$, which will lead to a contradiction.

\medskip

When $s=1$, $\omega^{\lambda_{0}}(x_{0})=\inf\limits_{B_{\lambda_0}(0)}\omega^{\lambda_{0}}(x)=0$ implies $-\Delta\omega^{\lambda_{0}}(x_{0})\leq0$, which contradicts $-\Delta\omega^{\lambda_{0}}(x_{0})>0$.

\medskip

For $s\in(0,1)$, we can deduce from $\omega^{\lambda_{0}}(x_{0})=\inf\limits_{B_{\lambda_0}(0)}\omega^{\lambda_{0}}(x)=0$ and the spherical anti-symmetry of $\omega_{\lambda_{0}}$ that
$$0<(-\Delta)^{s}\omega^{\lambda_{0}}(x_{0})=C_{s,n}P.V. \int_{B_{\lambda_{0}}(0)} \left( \frac{1}{\left|\frac{|y|x_{0}}{\lambda_{0}}-\frac{\lambda_{0}y}{|y|}\right|^{n+2s}}-\frac{1}{|x_{0}-y|^{n+2s}}\right) \omega(y)\mathrm{d}y\leq0,$$
which is absurd. Therefore, \eqref{pde-16} holds for all $0<s\leq1$.

\medskip
	
Now we choose a constant $\delta_{0}$ small enough by discussing two different cases.

\medskip

\emph{Case (i).} $f$ satisfies $(\mathbf{f_{2}})$ with $q=+\infty$ when $s\in(0,1)$ and with $q=n$ when $s=1$. In such case, let $h_{2\lambda_0}\in L^q (\Omega\cap B_{2\lambda_0}(0))$ with $q=+\infty$ if $s\in(0,1)$ and $q=n$ if $s=1$ be the function such that $$h_{2\lambda_0}(x)\geq \sup_{\alpha,\beta\in [0, M_{2\lambda_0}] } \frac{|f(x, \alpha)- f(x, \beta)|}{|\alpha-\beta|}, \qquad  \forall\,\, x\in \Omega\bigcap\left(B_{2\lambda_0}(0)\setminus\{0\}\right) $$
	with $M_{2\lambda_0}=\sup\limits_{x\in \Omega\cap B_{2\lambda_0}(0)} u(x)$. We take a small constant $\delta_0>0$ such that $$\|h_{2\lambda_0}\|_{L^n(D)}\leq \frac{C_{n,s}}{4\lambda_0},  \qquad  \forall\, D\subset \Omega\cap B_{2\lambda_0}(0)\,\,\,\text{with}\,\,\, |D|\leq \delta_0, \quad    \  \text{if}\,\, s=1$$
and $$\delta_0=\left( \frac{C_{n,s}}{\left(4 \lambda_0\right)^s\|h_{2\lambda_0}\|_{L^\infty (\Omega\cap B_{2\lambda_0}(0))}} \right)^{\frac{n}{s}}\  \qquad    \  \text{if}\,\, s\in(0,1),$$
where $C_{n,s}$ is the same as in the small region principle in Theorem \ref{NP-1}.

\medskip

\emph{Case (ii).} $s=1$, $P\in\mathbb{R}^{n}\setminus\overline{\Omega}$ (i.e., $d_{\Omega}>0$), $\partial\Omega$ is locally Lipschitz, $f$ satisfies $(\mathbf{A})$ and $(\mathbf{f_{2}})$ with $q=\frac{n}{2}$ if $n\geq3$ and with $q=1+\delta$ if $n=2$, where $\delta>0$ is arbitrarily small. In such case, let $\widetilde{h}_{2\lambda_{0}} \in L^q (\Omega\cap B_{2\lambda_{0}}(0))$ with $q=\frac{n}{2}$ if $n\geq3$ and $q=1+\delta$ with arbitrarily small $\delta>0$ if $n=2$ such that
\begin{equation*}
  \widetilde{h}_{2\lambda_{0}}(x)\geq\sup_{\alpha,\beta\in [0, \widetilde{M}_{2\lambda_{0}}] } \frac{|f(x, \alpha)- f(x, \beta)|}{|\alpha-\beta|}, \qquad  \forall\,\, x\in \Omega\cap B_{2\lambda_0}(0)
\end{equation*}
with $\widetilde{M}_{2\lambda_0}=\left(\frac{2\lambda_{0}}{d_{\Omega}}\right)^{n-2s}\sup\limits_{x\in\Omega\cap B_{4\lambda_{0}^{2}/d_{\Omega}}(0)}u(x)$. We take a small constant $\delta_0>0$ such that $$\|\widetilde{h}_{2\lambda_0}\|_{L^q(D)}\leq c_{n},  \qquad  \forall\, D\subset \Omega\cap B_{2\lambda_0}(0)\,\,\,\text{with}\,\,\, |D|\leq \delta_0,$$
where $c_{n}$ is the same as in the maximum principle in Theorem \ref{mp}.

\medskip
	
	For such $\delta_0>0$,  we can choose a compact set $K\subset \Omega\bigcap\left(B_{\lambda_0}(0)\setminus\{0\}\right)$ (with regular boundary $\partial K$ such that the formula of
integration by parts holds in Case (ii)) such that
	\begin{equation}\label{pde-17}
		\left|\left(\Omega\bigcap\left(B_{\lambda_0}(0)\setminus\{0\}\right)\right)\setminus  K\right| \leq \frac{\delta_0}{2}.
	\end{equation}
	Since $K$ is a compact set, there is a positive constant $c_K>0$ such that
	$$\omega^{\lambda_0}\geq c_K>0 \quad \ \  \text{on}\,\, K.$$  Note that $K$ is far away from the point $0$. By the continuity of $\omega^\lambda$ with respect to $\lambda$, we can find a small constant $\varepsilon_0\in\left(0,\frac{\lambda_0}{2}\right)$ such that
	\begin{equation}\label{pde-19}
		\omega^\lambda\geq \frac{c_K}{2} \quad \  \ \text{on}\,\, K, \qquad  \   \forall\,\lambda\in [\lambda_0, \lambda_0+\varepsilon_0).
	\end{equation}
	
\medskip

	Next we take a small constant $\varepsilon_1\in (0, \varepsilon_0)$ such that
	\begin{equation}\label{pde-20}
		| B_{\lambda_0+\varepsilon_1}(0)\setminus  B_{\lambda_0}(0)|  \leq \frac{\delta_0}{2}.
	\end{equation}
	We will show that
	\begin{equation}\label{pde-22}
		\omega^\lambda\geq 0  \quad \text{in}\,\,  \Omega\bigcap\left(B_{\lambda }(0)\setminus\{0\}\right), \ \qquad \ \forall\, \lambda\in [\lambda_0, \lambda_0+\varepsilon_1),
	\end{equation}

\medskip

	Indeed, suppose on the contrary there is some $\lambda\in [\lambda_0, \lambda_0+\varepsilon_1)$ such that $\Omega_\lambda^-\not=\emptyset$. Then in view of \eqref{pde-19}, we  must have $$\Omega_\lambda^-\subset \left(\Omega\bigcap B_{\lambda }(0)\right)\setminus  K= \left((\Omega\bigcap B_{\lambda_0}(0))\setminus  K \right) \bigcup \left(\Omega\bigcap(B_\lambda(0)\setminus B_{\lambda_0}(0))\right),$$
	which implies that $\Omega_\lambda^-$ is an open set with  measure $|\Omega_\lambda^-|\leq |(\Omega\bigcap B_{\lambda }(0))\setminus  K|\leq\delta_0$.
	Noting that on $\Omega\bigcap\left(B_{\lambda}(0)\setminus\{0\}\right)$, $\omega^{\lambda}$ satisfies
\begin{eqnarray}\label{pde-21}
		&& (-\Delta)^{s}\omega^{\lambda}(x)=\left(\frac{\lambda}{|x|}\right)^{n+2s}f(x^\lambda, u(x^\lambda))-f(x,u(x)) \\
\nonumber &&\qquad\qquad\quad\,\,\, >f(x, u_\lambda(x))-f(x, u(x)) = c_\lambda(x)\omega^{\lambda}(x),
	\end{eqnarray}
	where $|c_\lambda(x)|=\left|\frac{f(x, u_\lambda(x))- f(x, u(x))}{u_\lambda(x)-u(x)}\right|\leq h_{2\lambda_0}(x)$ on $\Omega_{\lambda}^{-}$ or $|c_\lambda(x)|=\left|\frac{f(x, u_\lambda(x))- f(x, u(x))}{u_\lambda(x)-u(x)}\right|\leq \widetilde{h}_{2\lambda_0}(x)$ on $\Omega\bigcap B_{\lambda}(0)$ provided that $P\in\mathbb{R}^{n}\setminus\overline{\Omega}$.

\medskip	
	
By \eqref{pde-21} and the definition of $\delta_0$, we can apply Theorem \ref{NP-1} to $\omega^\lambda$ on $\Omega_{\lambda}^{-}$ in Case (i) and apply Theorem \ref{mp} to $\omega^\lambda$ on $\left(\Omega\bigcap B_{\lambda}(0)\right)\setminus K$ in Case (ii), and obtain $\omega^\lambda\geq 0$ on $\Omega_\lambda^-$ for any $\lambda\in[\lambda_{0},\lambda_{0}+\varepsilon_1)$, which is absurd and hence proves \eqref{pde-22}. Note that, when $s=1$, $n\geq3$ and $\Omega=\mathbb{R}^{n}$, the condition $\liminf\limits_{x\in D,\,x\rightarrow\partial D}\omega(x)\geq0$ in Theorem \ref{NP-1} can be proved in entirely similar way as (2.64) in \cite{DQ0}. However, \eqref{pde-22} contradicts the definition \eqref{pde-13} of $\lambda_{0}$ and hence we must have $\lambda_0=+\infty$.
	
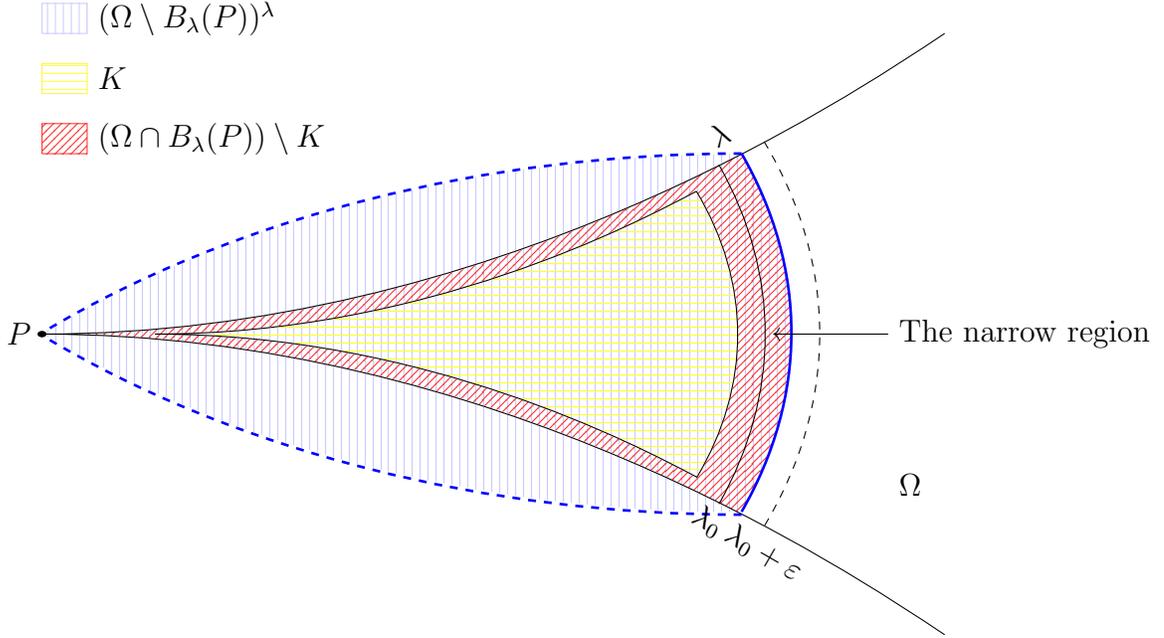
\begin{figure}[H]\label{F8}
    	\begin{tikzpicture}[ yscale=1, xscale=1.5]	
    		\filldraw[ draw=white, pattern color=blue!20,  pattern=vertical lines]  (6.2,  -2.4025) arc (-21:21:6.65)-- (6.2,  2.4025) arc (90:132:9.25)--(0,0)  arc ( 228:270:9.25)--(6.2,  -2.4025);
    		\filldraw[ draw=white, pattern color=yellow!80,  pattern=horizontal lines] (1,0) parabola bend (1,0) (5.8, 1.9) arc (21.7:-21.7:5.16)--(5.8,-1.9) parabola bend (1,0) (1,0);
    		\filldraw[ draw=white, pattern color=red!80,  pattern=north east lines, even odd rule]  (0,0) parabola bend (0,0)  (6.2,  2.4025) arc (21:-21:6.65)--(6.2,  -2.4025)    parabola bend (0,0)  (0,0)            (1,0) parabola bend (1,0) (5.8, 1.9) arc (21.7:-21.7:5.16)--(5.8,-1.9) parabola bend (1,0) (1,0);
    		\draw (0,0) parabola bend (0,0) (8,4)    (0,0) parabola bend (0,0) (8,-4)  (6,9/4) arc (20.6:-20.6:6.41 )   ;
    		\draw (1,0) parabola bend (1,0) (5.8, 1.9)    (1,0) parabola bend (1,0) (5.8,-1.9)  (5.8, 1.9) arc (21.7:-21.7:5.16);
    		\draw[bline] (6.2,  2.4025) arc (21:-21:6.65) ;
    		\draw[dashed, bline] (6.2,  2.4025) arc (90:132:9.25)  (6.2,  -2.4025) arc (-90:-132:9.25);
    		\draw[dashed]    (6.4,  2.56) arc (21.8:-21.8: 6.89);
    		 \draw[<-] (6.48,0)--(7.5,0) node[right]{The narrow region};    		
    		\filldraw[ ] (0:0) circle(1pt) node[left] {$P$};
    		\node[below, rotate=-28] at (6, -2.2) {$\lambda_0$};
    		\node[below, rotate=-30] at  (6.5,  -2.63) {$\lambda_0+\varepsilon$};
    		\node[above, rotate=29] at (6.1, 2.4) {$\lambda$};
    		\node[right] at (7.5, -2) {$\Omega$};
    		
    		\filldraw[ draw=blue!20, pattern color=blue!20,  pattern=vertical lines] (0, 4)--(0.4,4)--(0.4,4.4)--(0, 4.4)--(0, 4);
    		\node[right] at (0.4, 4.2) {$(\Omega\setminus B_\lambda(P))^\lambda$};
    		\filldraw[ draw=yellow!80, pattern color=yellow!80,  pattern=horizontal lines] (0, 3.2)--(0.4,3.2)--(0.4,3.6)--(0, 3.6)--(0, 3.2);
    		\node[right] at (0.4, 3.4) {$K$};
    		\filldraw[ draw=red!80, pattern color=red!80,  pattern=north east lines] (0, 2.4)--(0.4,2.4)--(0.4,2.8)--(0, 2.8)--(0, 2.4);
    		\node[right] at (0.4, 2.6) {$(\Omega\cap B_\lambda(P))\setminus K$ };

    	\end{tikzpicture}
    	\caption{Narrow regions}
    \end{figure}

	\emph{Step 3. Derive lower bound estimates on the asymptotic behavior of u via Bootstrap technique in a smaller cone.} Let $\mathcal{C}_{0,\Sigma} $ with $\mathcal{C}^{r,e}_{0,\Sigma}=(\mathcal{C}_{0,\Sigma} \setminus \overline{B_r(0)})\subset\Omega$ for some $r>d_{\Omega}$ be the cone in assumption $(\mathbf{f_{3}})$. Now $\lambda_{0}=+\infty$ implies that
	\begin{equation*}
		u(x)\geq\left(\frac{\lambda}{|x|}\right)^{n-2s}u\left(\frac{\lambda^{2}x}{|x|^{2}}\right), \quad\quad \forall \,\, x\in \Omega\setminus  B_{\lambda }(0), \quad \forall \,\, \lambda>d_{\Omega}.
	\end{equation*}
	In particular, we have
	\begin{equation}\label{pde-23}
		u(x)\geq\left(\frac{\lambda}{|x|}\right)^{n-2s}u\left(\frac{\lambda^{2}x}{|x|^{2}}\right), \quad\quad \forall \,\, x\in \mathcal{C}^{r,e}_{0, \Sigma} \, \,\, \text{with}\,\, |x|\geq\lambda, \quad \forall \,\,  \lambda>d_{\Omega}.
	\end{equation}
	For arbitrary $ x\in \mathcal C_{0, \Sigma}$ with $|x|\geq r+1$, take $d_{\Omega}<\lambda:=\sqrt{(r+1) |x| }$, then \eqref{pde-23} yields that
	\begin{equation}\label{pde-24}
		u(x)\geq\left(\frac{r+1}{|x|}\right)^{\frac{n-2s}{2}}u\left(\frac{(r+1) x}{|x|}\right).
	\end{equation}
	Then we arrive at the following lower bound estimate on asymptotic behavior of $u$ as $|x|\rightarrow+\infty$:
	\begin{equation}\label{pde-25}
		u(x)\geq\left(\min_{|x|=r+1 , \frac{x}{|x|}\in \Sigma}u(x)\right)\left(\frac{r+1}{ |x|}\right)^{\frac{n-2s}{2}}:=\frac{C_{0}}{|x|^{\frac{n-2s}{2}}}, \quad\quad \forall \,\,x\in  \mathcal C_{0, \Sigma} \  \   \text{with}\,\, |x|\geq r+1.
	\end{equation}

\medskip
	
	The lower bound estimate \eqref{pde-25} can be improved remarkably by using assumption $(\mathbf{f_{3}})$ and the ``Bootstrap" iteration technique. For any smooth cross-section $\Sigma_{1}$ such that $\overline{\Sigma_{1}}\subsetneq\Sigma$, there exists a point $Q\in\mathcal{C}^{r,e}_{0,\Sigma} \subset \Omega$ such that the cone $ \mathcal C_{Q, \Sigma_{1}}\subset  \mathcal C_{0, \Sigma}\bigcap \Omega$.  Let $$v_\delta(x):=C\int_{\mathcal C_{Q, \Sigma_{1}} \bigcap B_{\delta^{-1}}(0)} G^{s}_{\mathcal{C}_{\delta}}(x,y) |y|^a u^p(y) \mathrm{d}y,$$
	where $G^{s}_{\mathcal{C}_{\delta}}(x,y)$ stands for the Green function of $(-\Delta)^s$ in $\mathcal C_{Q, \Sigma_{1}}\bigcap B_{\delta^{-1}}(0)$ with zero Dirichlet boundary condition and $C$ is the constant in $(\mathbf{f_{3}})$. By using assumption $(\mathbf{f_{3}})$ and noticing that $u\geq 0 $ in $\Omega$, we have
	\begin{equation*}
		\begin{cases}
			(-\Delta)^s(u-v_\delta)\geq 0 \ \  \ &\text{in}\,\, \mathcal C_{Q, \Sigma_{1}} \cap B_{\delta^{-1}}(0),\\
			u-v_\delta\geq 0 &\text{on}\,\, \mathbb R^n\setminus (\mathcal C_{Q, \Sigma_{1}} \cap B_{\delta^{-1}}(0)).
		\end{cases}
	\end{equation*}
	Then applying the maximum principle, we get $u\geq v_\delta$ in $\mathcal C_{Q, \Sigma_{1}} \bigcap B_{\delta^{-1}}(0)$ for arbitrary $\delta>0$. Letting $\delta\to0$, by monotone convergence theorem we obtain the following integral inequality:
	\begin{equation}\label{pde-26}
		u(x)\geq C\int_{\mathcal C_{Q, \Sigma_{1}}} G^{s}_{\mathcal{C}}(x,y) |y|^a u^p(y) \mathrm{d}y, \quad\quad \forall \,\,x\in \mathcal C_{Q, \Sigma_{1}},
	\end{equation}
where $G^{s}_{\mathcal{C}}$ is the Green function for Dirichlet problem of $(-\Delta)^s$ with $0<s\leq1$ on $\mathcal C_{Q, \Sigma_{1}}$.

\medskip
	
	Set $\mu_{0}:=\frac{n-2s}{2}$, we infer from the integral inequality \eqref{pde-26} and property $\mathbf{(H_{2})}$ of the Green function $G^{s}_{\mathcal{C}}$ for Dirichlet problem of $(-\Delta)^s$ with $0<s\leq1$ on $\mathcal C_{Q, \Sigma_{1}}$ proved in Theorem \ref{pG} that, for any given cross-section $\Sigma_{2}$ such that $\overline{\Sigma_{2}}\subsetneq\Sigma_{1}$, there exist $R_{0}>0$ and $0<\sigma_{1}<\sigma_{2}$ such that, for any $x\in\mathcal C_{Q, \Sigma_2}$ with $|x|\geq R_{0}+\frac{4\sigma_{2}}{\sigma_{2}-\sigma_{1}}r+1$,
	\begin{eqnarray*}
		u(x)&\geq&C\int_{y\in \mathcal C_{Q, \Sigma_2}, \, \sigma_{1}|x-Q|\leq|x-y|\leq \sigma_2|x-Q|}G^{s}_{\mathcal{C}}(x,y)|y|^a u^p(y)\mathrm{d}y \\
		\nonumber &\geq&C\int_{y\in \mathcal C_{Q,\Sigma_2}, \, \frac{\sigma_{1}(5\sigma_{2}-\sigma_{1})}{4\sigma_{2}}|x|\leq|x-y|\leq \frac{\sigma_{1}+3\sigma_2}{4}|x|, \, |y|\geq R_{0}+\frac{4\sigma_{2}}{\sigma_{2}-\sigma_{1}}r+1}\frac{1}{|x-y|^{n-2s}}\cdot\frac{1}{|y|^{p\mu_{0}-a}}\mathrm{d}y \\
		\nonumber  &\geq&\frac{C_{1}}{|x|^{p\mu_{0}-(2s+a)}}.
	\end{eqnarray*}
	Now, let $\mu_{1}:=p\mu_{0}-(2s+a)$. Due to $1\leq p<p_{c}(a):=\frac{n+2s+2a}{n-2s}$ ($:=+\infty$ if $n=2s$), our important observation is
	\begin{equation*}
		\mu_{1}:=p\mu_{0}-(2s+a)>\mu_{0}.
	\end{equation*}
	Thus we have obtained a better lower bound estimate than \eqref{pde-25} after one iteration, that is,
	\begin{equation}\label{pde-27}
		u(x)\geq\frac{C_{1}}{|x|^{\mu_{1}}}, \quad\quad \forall \,\, x\in \mathcal C_{Q, \Sigma_2}  \  \  \text{ with} \ \ |x|>R_{0}+\frac{4\sigma_{2}}{\sigma_{2}-\sigma_{1}}r+1.
	\end{equation}

\medskip
	
	For $k=0,1,2,\cdots$, define
	\begin{equation}\label{pde-28}
		\mu_{k+1}:=p\mu_{k}-(2s+a).
	\end{equation}
	Since $1\leq p<p_{c}(a):=\frac{n+2s+2a}{n-2s}$ ($:=+\infty$ if $n=2s$) and $a>-2s$, it is easy to see that the sequence $\{\mu_{k}\}$ is monotone decreasing with respect to $k$. Continuing the above iteration process involving the integral inequality \eqref{pde-26}, we have the following lower bound estimates for every $k=0,1,2,\cdots$,
	\begin{equation}\label{pde-29}
		u(x)\geq\frac{C_{k}}{|x|^{\mu_{k}}}, \quad\quad \forall \,\, x\in  \mathcal C_{Q, \Sigma_2}  \  \  \text{ with} \ \ |x|>R_{0}+\frac{4\sigma_{2}}{\sigma_{2}-\sigma_{1}}r+1.
	\end{equation}
	Using the definition \eqref{pde-28} and the assumption $a>-2s$, one can check that as  $k\rightarrow+\infty$,
	\begin{equation}\label{pde-30}
		\mu_{k}\rightarrow-\infty, \,\,\,\quad \text{if} \,\,\, 1\leq p<p_{c}(a):=\frac{n+2s+2a}{n-2s} \, (:=+\infty \,\,\, \text{if} \,\, n=2s).
	\end{equation}
This concludes our proof of Theorem \ref{lower0}.
\end{proof}

	However, property $\mathbf{(H_{2})}$ of the Green function $G^{s}_{\mathcal{C}}$ for Dirichlet problem of $(-\Delta)^s$ with $0<s\leq1$ on $\mathcal C_{Q, \Sigma_{1}}$ proved in Theorem \ref{pG} yields that, there exist $\theta\in\mathbb{R}$ and a point $x_0\in\mathcal C_{Q, \Sigma_1}$ such that
$$\liminf\limits_{y\in \mathcal C_{Q, \Sigma_2},  |y-Q|\to+\infty} |y-Q|^{\theta}G^{s}_{\mathcal{C}}(x_0,y)>0,$$
where the point $Q\in\mathcal{C}^{r,e}_{P,\Sigma}$ comes from Theorem \ref{lower0}. As a consequence, we infer from \eqref{pde-26} and the lower bound estimates in Theorem \ref{lower0} that
	\begin{eqnarray*}
		&& +\infty>u(x_0)\geq  C\int_{\mathcal C_{Q, \Sigma_1}} G^{s}_{\mathcal{C}}(x_0,y) |y-P|^a u^p(y) \mathrm{d}y \\
&& \qquad\qquad\quad\,\,\,\, \geq C\int_{\mathcal C_{Q, \Sigma_2},\,|y-P|\geq R_{0}+\frac{4\sigma_{2}}{\sigma_{2}-\sigma_{1}}r+1}  |y-P|^{a+p\kappa-\theta} \mathrm{d}y=+\infty,
	\end{eqnarray*}
	which is absurd. Hence we must have $u\equiv0$ and $f(\cdot,0)\equiv0$. This concludes our proof of Theorems \ref{pde-unbd} and \ref{pde-unbd-a}.
\end{proof}

\begin{proof}[Proof of Theorem \ref{pde-unbd-2}]
Without loss of generalities, we may assume the center $P=0$. Instead of the small region principle in Theorem \ref{NP-1}, we need the following narrow region principle.		
\begin{thm}\label{NP-2}
		Assume  $0<s<1$, $\lambda>0$ and $D\subset (\Omega\cap B_\lambda(0))\setminus\{0\}$. Suppose that $\omega \in\mathcal{L}_{s}(\mathbb{R}^{n})\cap C^{[2s],\{2s\}+\eps}_{loc}(D)$ with arbitrarily small $\eps>0$ is lower semi-continuous on $\bar{D}$ and satisfies
		\begin{equation}\label{pde-31}\\\begin{cases}
				(-\Delta)^{s}\omega (x)-c_{\lambda}(x)\omega (x)\geq0, \,\,\,\,\, &x\in D,\\
				\omega \geq 0 &\text{a.e. in}\,  \, B_{\lambda}(0)\setminus D,\\
				\omega =-\omega_{\lambda} & \text{a.e. in}\,  \, B_{\lambda}(0)
		\end{cases}\end{equation}
		for some function $c$ satisfying
		\begin{equation}\label{pde-32}
			\limsup_{x\in D, \text{dist}(x, \partial(\Omega\cap B_\lambda(0)))\to0}  \text{dist}(x, \partial(\Omega\cap B_\lambda(0)))^{2s} c_{\lambda}^+(x)=0 \   \  \ \text{uniformly  for } \,\, \lambda\in I_0,
		\end{equation} where $I_0\subset (0, +\infty)$ is an interval. Then there is a constant $\delta_0>0$ depending only on $n, s, I_0$ such that, for any $\lambda\in I_0$,
		  $$ D \subset \{x\in\Omega\cap B_\lambda(0)\mid\,  \text{dist}(x, \partial(\Omega\cap B_\lambda(0)))\leq\delta_0\} \  \   \ \text{implies}\  \ \   \omega(x)\geq 0, \  \  \ \forall\, x\in D.$$
	\end{thm}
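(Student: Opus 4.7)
The plan is to mirror the structure of the proof of Theorem~\ref{NP-1} but to replace the Alexandrov--Bakelman--Pucci step—which required integrability of $c^{+}$—by a direct pointwise upper bound on $(-\Delta)^{s}\omega$ at a negative extremum, derived entirely from the spherical anti-symmetry of $\omega$. This bound, pitted against the uniform singular rate \eqref{pde-32}, will yield the contradiction. Concretely, suppose $D^{-}:=\{x\in D\mid\omega(x)<0\}\neq\emptyset$; by lower semi-continuity of $\omega$ on $\overline{D}$ together with $\omega\geq 0$ on $B_{\lambda}(0)\setminus D$, the infimum $\omega(x_{0})=-M<0$ is attained at an interior point $x_{0}\in D$ (if not attained, an approximating sequence reduces to this case). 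Set $d_{0}:=\operatorname{dist}(x_{0},\partial(\Omega\cap B_{\lambda}(0)))\leq\delta_{0}$.

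The key computation, in the spirit of the one performed in the proof of Theorem~\ref{NP-1}, is to rewrite $(-\Delta)^{s}\omega(x_{0})$ via the substitution $y=z^{\lambda}$ on $\mathbb{R}^{n}\setminus B_{\lambda}(0)$, the anti-symmetry $\omega(y)=-(\lambda/|y|)^{n-2s}\omega(y^{\lambda})$, and the identity $|y|\,|x_{0}-y^{\lambda}|=|x_{0}|\,|x_{0}^{\lambda}-y|$, as a single integral over $B_{\lambda}(0)$ against a strictly positive kernel $\mathcal{K}_{\lambda}(x_{0},\cdot)$ (positivity follows from $|x_{0}-z|\leq|x_{0}^{\lambda}-z|$ together with $(\lambda/|x_{0}|)^{n+2s}\geq 1$). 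Because $x_{0}$ realizes the infimum and $\omega\geq 0$ on $B_{\lambda}\setminus D$, the integrand is non-positive everywhere and bounded above by $-M\mathcal{K}_{\lambda}(x_{0},z)$ on $B_{\lambda}\setminus D$; hence
\begin{equation*}
(-\Delta)^{s}\omega(x_{0})\,\leq\,-C_{s,n}\,M\!\int_{B_{\lambda}(0)\setminus D}\mathcal{K}_{\lambda}(x_{0},z)\,dz.
\end{equation*}
Paired against $(-\Delta)^{s}\omega(x_{0})\geq c_{\lambda}(x_{0})\omega(x_{0})\geq -c_{\lambda}^{+}(x_{0})M$ from \eqref{pde-31}, this reduces the whole theorem to the scale-correct lower bound
\begin{equation*}
\int_{B_{\lambda}(0)\setminus D}\mathcal{K}_{\lambda}(x_{0},z)\,dz\,\geq\,\frac{c(n,s,I_{0})}{d_{0}^{2s}}. \qquad(\star)
\end{equation*}
Granting $(\star)$ yields $d_{0}^{2s}c_{\lambda}^{+}(x_{0})\geq\kappa:=C_{s,n}\,c(n,s,I_{0})>0$, which contradicts \eqref{pde-32} once $\delta_{0}=\delta_{0}(\kappa,n,s,I_{0})$ is taken small enough, forcing $D^{-}=\emptyset$.

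The main obstacle is the kernel lower bound $(\star)$: one needs a positive-measure subset of $B_{\lambda}\setminus D$ sitting at distance $\sim d_{0}$ from $x_{0}$, on which $\mathcal{K}_{\lambda}(x_{0},z)\gtrsim|x_{0}-z|^{-(n+2s)}\sim d_{0}^{-(n+2s)}$. When the nearest boundary point $\bar{x}_{0}$ lies on $\partial B_{\lambda}(0)$, a ball of radius $\sim d_{0}$ placed radially inward from $x_{0}$ sits in $\Omega\cap B_{\lambda}$ at distance $\geq\delta_{0}/2$ from $\partial(\Omega\cap B_{\lambda})$, hence in $B_{\lambda}\setminus D$, and $(\star)$ follows by a direct integration. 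When $\bar{x}_{0}\in\partial\Omega$, one must invoke the measure-theoretic positive-density condition on $\partial\Omega$ in force for the applications (the geometric hypothesis \eqref{pde-boundary}, which produces a chunk of $\mathbb{R}^{n}\setminus\Omega\subseteq B_{\lambda}\setminus D$ of measure $\gtrsim d_{0}^{n}$ in the relevant annulus). A simpler alternative, adequate for the applications to Theorem~\ref{pde-unbd-2}, is to note that when $c_{\lambda}^{+}$ itself blows up at $\partial\Omega$, the hypothesis \eqref{pde-32} forces $D^{-}$ to stay away from $\partial\Omega$, so that $d_{0}=\lambda-|x_{0}|$ and only the clean first case enters.
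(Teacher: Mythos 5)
Your strategy is the same as the paper's: locate a negative minimum $x_0\in D$, use the spherical anti-symmetry to bound $(-\Delta)^s\omega(x_0)$ from above, compare with the equation to get $d_0^{2s}c_\lambda^+(x_0)\geq\kappa>0$, and contradict \eqref{pde-32} after shrinking $\delta_0$. However, there is a genuine gap in the reduction to $(\star)$, and your proposed fix for the case $\bar{x}_0\in\partial B_\lambda(0)$ does not close it.

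When you fold the whole fractional Laplacian into a single $B_\lambda(0)$-integral and then throw away everything except $\int_{B_\lambda\setminus D}\mathcal{K}_\lambda(x_0,\cdot)(\omega(x_0)-\omega)$, you discard a term of the form $\omega(x_0)\int_{B_\lambda}\bigl(J+J'\bigr)(x_0,z)\,dz$ with $J+J'>0$, where the change of variables $y=z^\lambda$ identifies $\int_{B_\lambda}J(x_0,z)\,dz=\int_{\mathbb{R}^n\setminus B_\lambda(0)}|x_0-y|^{-(n+2s)}\,dy$. Precisely this piece carries the $d_0^{-2s}$ scaling when the nearest boundary point $\bar{x}_0$ lies on $\partial B_\lambda(0)$. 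What remains after you drop it, the kernel $\mathcal{K}_\lambda(x_0,z)=|x_0-z|^{-(n+2s)}-\lambda^{n+2s}|x_0|^{-(n+2s)}|x_0^\lambda-z|^{-(n+2s)}$, degenerates to zero as $z\to\partial B_\lambda(0)$, so integrating it over $B_\lambda\setminus D$ cannot recover the lost scaling: in the worst case $D$ fills the entire $\delta_0$-tube, all of $B_\lambda\setminus D$ inside $\Omega$ sits at distance $\geq\delta_0$ from $\partial(\Omega\cap B_\lambda)$, i.e.\ at distance $\gtrsim\delta_0-d_0$ from $x_0$, and a ball of radius $\sim d_0$ ``at distance $\geq\delta_0/2$ from the boundary'' yields only $\int\mathcal{K}\lesssim d_0^n\,\delta_0^{-(n+2s)}$, which is far below $d_0^{-2s}$ once $d_0\ll\delta_0$. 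So $(\star)$ fails in Case~(a). The correct move is exactly what the paper does: do not fold the exterior into $B_\lambda$; keep the term $C_{s,n}\int_{\mathbb{R}^n\setminus(\Omega\cap B_\lambda(0))}\omega(x_0)|x_0-y|^{-(n+2s)}\,dy$ and bound it by restricting to $B_{d_0}(\bar{x}_0)$, exploiting the trivially positive density of $\mathbb{R}^n\setminus B_\lambda(0)$ when $\bar{x}_0\in\partial B_\lambda(0)$ and the geometric hypothesis \eqref{pde-boundary} when $\bar{x}_0\in\partial\Omega$. Your ``simpler alternative'' at the end is also not valid: hypothesis \eqref{pde-32} controls $c_\lambda^+$ near the \emph{whole} boundary $\partial(\Omega\cap B_\lambda(0))$ and does not force $D^-$ away from $\partial\Omega$; there is no mechanism by which the blow-up of $c^+_\lambda$ on $\partial\Omega$ would confine the negative set to the spherical part of the boundary.
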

\begin{proof}
	Suppose on the contrary that there are $D_k\subset (\Omega\cap B_{\lambda_k}(0))\setminus\{0\}$ with $\lambda_k\in I_0$, $x_k\in D_k$ and $\omega_k$ such that $\omega_k$ solves \eqref{pde-31} in $B_{\lambda_{k}}(0)$ with $c_{\lambda_{k}}(x)$ satisfying \eqref{pde-32}, $\omega_k(x_k)=\min\limits_{D_k} \omega_k<0$ and $\text{dist}(x_k, \partial(\Omega\cap B_{\lambda_k}(0)))\to 0$ as $k\to+\infty$.
	By similar calculations as (2.16) and (2.17) in \cite{CLZ}, we use the third equation in \eqref{pde-31} to compute $(-\Delta)^s \omega_k(x_k)$ as follows:
	\begin{align*}
	&\quad	(-\Delta)^s \omega_k(x_k)=C_{s,n} P.V. \int_{\mathbb R^n} \frac{\omega_k(x_k)-\omega_k(y)}{|x_k-y|^{n+2s}}  \mathrm{d}y\\
		&=C_{s,n} \left(  P.V. \int_{\Omega\cap B_{\lambda_k}(0)}+\int_{(\Omega\cap B_{\lambda_k}(0))^{\lambda_{k}}}+\int_{(\Omega\setminus B_{\lambda_k}(0))\setminus ((\Omega\cap B_{\lambda_k}(0))^{\lambda_{k}})}+\int_{\mathbb R^n \setminus\overline{\Omega} } \right)\frac{\omega_k(x_k)-\omega_k(y)}{|x_k-y|^{n+2s}}  \mathrm{d}y\\
		&= C_{s,n}P.V. \int_{\Omega\cap B_{\lambda_k}(0)}\frac{\omega_k(x_k)-\omega_k(y)}{|x_k-y|^{n+2s}}\mathrm{d}y-C_{s,n}\int_{(\Omega\cap B_{\lambda_k}(0))^{\lambda_{k}}}\frac{\omega_k(y)}{|x_k-y|^{n+2s}} \mathrm{d}y\\
		&\quad -C_{s,n}\left(\int_{ B_{\lambda_k}(0)\setminus\Omega}+\int_{\left(B_{\lambda_k}(0)\setminus\Omega\right)^{\lambda_{k}}}\right)\frac{\omega_k(y)}{|x_k-y|^{n+2s}} \mathrm{d}y+  C_{s,n}\int_{\mathbb R^n \setminus( \Omega \cap B_{\lambda_k}(0))}  \frac{\omega_k(x_k) }{|x_k-y|^{n+2s}}  \mathrm{d}y\\
		&=C_{s,n}P.V. \int_{\Omega\cap B_{\lambda_k}(0)}\frac{\omega_k(x_k)-\omega_k(y)}{|x_k-y|^{n+2s}}  \mathrm{d}y +C_{s,n}\int_{\Omega\cap B_{\lambda_k}(0)}\frac{\omega_k(y)}{\left|\frac{|y|x_k}{\lambda_k}-\frac{{\lambda_k} y}{|y|}\right|^{n+2s}}\mathrm{d}y\\
		&\quad + C_{s,n}P.V. \int_{B_{\lambda_k}(0)\setminus\Omega}\left(   \frac{1}{\left|\frac{|y|x_k}{\lambda_k}-\frac{{\lambda_k} y}{|y|}\right|^{n+2s}}  -\frac{1}{|x_k-y|^{n+2s}}\right)\omega_k(y) \mathrm{d}y \\
&\quad + C_{s,n} \int_{\mathbb R^n \setminus( \Omega \cap B_{\lambda_k}(0))}  \frac{\omega_k(x_k) }{|x_k-y|^{n+2s}}  \mathrm{d}y\\
		&\leq C_{s,n}P.V. \int_{\Omega\cap B_{\lambda_k}(0)}\left(\frac{1}{\left|\frac{|y|x_k}{\lambda_k}-\frac{{\lambda_k} y}{|y|}\right|^{n+2s}}  -\frac{1}{|x_k-y|^{n+2s}}\right)\left(\omega_k(y)-\omega_k(x_k)\right)\mathrm{d}y\\
		&\quad  +C_{s,n}P.V. \int_{\Omega\cap B_{\lambda_k}(0)}  \frac{\omega_k(x_k) }{\left|\frac{|y|x_k}{\lambda_k}-\frac{{\lambda_k} y}{|y|}\right|^{n+2s}}  \mathrm{d}y+ C_{s,n} \int_{\mathbb R^n \setminus( \Omega \cap B_{\lambda_k}(0))}  \frac{\omega_k(x_k) }{|x_k-y|^{n+2s}}  \mathrm{d}y\\
		&\leq C_{s,n}\int_{\mathbb R^n \setminus( \Omega \cap B_{\lambda_k}(0) )}  \frac{\omega_k(x_k) }{|x_k-y|^{n+2s}}  \mathrm{d}y.
	\end{align*}
     Let $d_k:=\text{dist}(x_k, \partial(\Omega\cap B_{\lambda_k}(0)))$, then $d_k\to0$ as $k\to+\infty$. There is a point $\tilde x_k\in \partial(\Omega\cap B_{\lambda_k}(0))$ such that $d_k=|x_k-\tilde x_k|$. By the assumption \eqref{pde-boundary} on $\partial \Omega$, we have
     $$\left|\left(\mathbb R^n \setminus(\Omega \cap B_{\lambda_k}(0))\right)\cap B_{d_k}(\tilde x_k)\right|\geq Cd_k^n$$
     for some $C$ depending on $I_0$ but independent of $k$. Thus we reach that
     \begin{eqnarray}\label{pde-33}
     		&& (-\Delta)^s \omega_k(x_k)\leq C_{s,n}\int_{\mathbb R^n \setminus( \Omega \cap B_{\lambda_k}(0) )}  \frac{\omega_k(x_k) }{|x_k-y|^{n+2s}}  \mathrm{d}y \\
     \nonumber &&\qquad\qquad\quad\,\,\,\,\, \leq C_{s,n}\int_{\left(\mathbb R^n \setminus(\Omega \cap B_{\lambda_k}(0))\right)\cap B_{d_k}(\tilde x_k)}  \frac{\omega_k(x_k) }{|x_k-y|^{n+2s}}  \mathrm{d}y\leq C\frac{\omega_k(x_k)}{d_k^{2s}}.
     \end{eqnarray}
     On the other hand, we infer from the first equation in \eqref{pde-31} that
     \begin{equation}\label{pde-34}
     (-\Delta)^s \omega_k(x_k)\geq  c_{\lambda_k}^+(x_k)\omega_k(x_k).
 \end{equation}
 Therefore, we deduce from \eqref{pde-33} and \eqref{pde-34} that
 \begin{equation}\label{pde-35}
 	 d_k^{2s}c_{\lambda_k}^+(x_k)\geq C>0, \ \  \  \forall\,\, k\geq1
 \end{equation}
   for some $C$ depending on $n,s, I_0$.  Since $d_k\to0$ as $k\to+\infty$, \eqref{pde-35} contradicts \eqref{pde-32} when $k$ is sufficiently large. This completes our proof of Theorem \ref{NP-2}.
\end{proof}

\medskip

From \eqref{PDE} and \eqref{pde-7}, we know that $\omega^{\lambda}=u_{\lambda}-u$ (defined above \eqref{pde-3}) satisfies
\begin{equation}\label{pde-31a}\\\begin{cases}
				(-\Delta)^{s}\omega^{\lambda} (x)-c_{\lambda}(x)\omega^{\lambda} (x)\geq0, \,\,\,\,\, &x\in\Omega^{-}_{\lambda},\\
				\omega \geq 0 &\text{in}\,  \, B_{\lambda}(0)\setminus\Omega^{-}_{\lambda},\\
				\omega =-\omega_{\lambda} & \text{in}\,  \, B_{\lambda}(0)\setminus\{0\},
		\end{cases}\end{equation}
where $c_{\lambda}(x)$ satisfies \eqref{pde-8} on $\Omega^{-}_{\lambda}$. If follows from assumption $(\mathbf{f'_{2}})$ that, there exists a function $g:\,[0,+\infty)$ with $\lim\limits_{t\rightarrow0^{+}}g(t)=+\infty$ such that $g(\text{dist}(x, \partial\Omega))\text{dist}(x, \partial\Omega)^{2s}h_{\lambda}(x)\in L^{\infty}(\Omega\bigcap B_{\lambda}(0))$. Thus $\text{dist}(x, \partial\Omega)^{2s}c^{+}_{\lambda}(x)\leq\text{dist}(x, \partial\Omega)^{2s}|c_{\lambda}(x)|\leq\text{dist}(x, \partial\Omega)^{2s}h_{\lambda}(x)$ implies that $c_{\lambda}(x)$ satisfies \eqref{pde-32} with $D=\Omega^{-}_{\lambda}$ in Theorem \ref{NP-2}. As a consequence, we can prove Theorem \ref{pde-unbd-2} via the \emph{direct method of scaling spheres} in similar way as Theorem \ref{pde-unbd} by using Theorem \ref{NP-2} instead of Theorem \ref{NP-1}. This finishes our proof of Theorem \ref{pde-unbd-2}.
\end{proof}

\begin{proof}[Proof of Theorems \ref{pde-bd} and \ref{pde-bd-2}]
Without loss of generalities, we may assume that $P=0$ and $\rho_{\Omega}:=\max\limits_{x\in\Omega}|x-P|\in(0,+\infty]$. We define the Kelvin transform of the solution $u$ (possibly singular at $P$) to the supercritical Dirichlet problem \eqref{pdeb-1} by
$$\hat u(x):= \frac{1}{|x|^{n-2s}}  u\left(\frac{x}{|x|^2}\right)$$
for any $x\in \widehat \Omega:=\left\{x\in\mathbb R^n\setminus\{0\} \bigg|\,\, \frac{x}{|x|^2}\in \Omega\right\}$. If $\Omega$ is unbounded, we assume that $u=o\left(\frac{1}{|x|^{n-2s}}\right)$ as $|x|\rightarrow+\infty$ with $x\in\Omega$ so that $\hat{u}=0$ at $P=0\in\partial\widehat{\Omega}$. Since $\Omega$ is a g-radially convex domain with radially convex center $0$, one has $\widehat \Omega$ is a MSS applicable domain such that $\mathbb{R}^{n}\setminus\overline{\widehat{\Omega}}$ is a g-radially convex domain with radially convex center $0$. Now, one can verify that $\hat u$ satisfies
\begin{equation}\label{pdeb-2}
	\begin{cases}
		(-\Delta)^s \hat u(x)=\hat f(x,\hat u(x)),\ \ \ & x\in \widehat \Omega,\\
		u(x)=0, \ \ \  & x\in\mathbb R^n\setminus \widehat\Omega,
	\end{cases}
\end{equation}
where
\begin{equation}\label{pdeb-3}
	\hat f(x,\hat u(x))=\frac{1}{|x|^{n+2s}}f\left(\frac{x}{|x|^2},  |x|^{n-2s} \hat u(x)\right),\  \qquad \  \forall\,\, x\in \widehat \Omega
\end{equation}
 and $\hat{u}\in \mathcal{L}_{s}(\mathbb{R}^{n})\bigcap C^{[2s],\{2s\}+\eps}_{loc}(\widehat \Omega)\bigcap C(\overline{\widehat\Omega})$ with arbitrarily small $\eps>0$ if $0<s<1$, $\hat{u}\in C^{2}(\widehat\Omega)\bigcap C(\overline{\widehat\Omega})$ if $s=1$.

\medskip

By \eqref{pdeb-3}, one can verify that if $f$ is supercritical in the sense of Definition \ref{defn1} and satisfies $(\mathbf{\widehat{f}_{2}})$ and $(\mathbf{f_{3}})$, then the nonlinear term $\hat f(x,\hat u)$ is subcritical in the sense of Definition \ref{defn1} and satisfies the assumptions $(\mathbf{f_{2}})$ and $(\mathbf{f_{3}})$. Note also that $\Omega$ is bounded is equivalent to $P\in\mathbb{R}^{n}\setminus\overline{\widehat{\Omega}}$. Thus we deduce immediately from Theorems \ref{pde-unbd} and \ref{pde-unbd-a} that $\hat u\equiv0$ in $\widehat \Omega$, and hence the solution $u$ (possibly singular at $P$) to the supercritical Dirichlet problem \eqref{pdeb-1} must satisfies $u\equiv0$ in $\Omega$. This finishes our proof of Theorem \ref{pde-bd}.

\medskip

By \eqref{pdeb-3}, one can verify that if $f$ is supercritical in the sense of Definition \ref{defn1} and satisfies the assumptions $(\mathbf{\widehat{f}'_{2}})$ and $(\mathbf{f_{3}})$, then the nonlinear term $\hat f(x,\hat u)$ is subcritical in the sense of Definition \ref{defn1} and satisfies the assumptions $(\mathbf{f'_{2}})$ and $(\mathbf{f_{3}})$. Note also that $\Omega$ satisfies the geometric condition \eqref{pdeb-boundary} is equivalent to $\widehat\Omega$ satisfies the geometric condition \eqref{pde-boundary}. Thus we deduce immediately from Theorem \ref{pde-unbd-2} that $\hat u\equiv0$ in $\widehat \Omega$, and hence the solution $u$ (possibly singular at $P$) to the supercritical Dirichlet problem \eqref{pdeb-1} (with $f(x,u)$ itself is not locally $L^\infty$-Lipschitz) must satisfies $u\equiv0$ in $\Omega$. This completes our proof of Theorem \ref{pde-bd-2}.
\end{proof}

\section{The method of scaling spheres in integral forms and in local way on MSS applicable domains}

In this section, we will introduce the method of scaling spheres in integral forms and in local way on general MSS applicable domains. To illustrate our ideas, we first apply the method of scaling spheres in integral forms to the integral equation \eqref{IE} on MSS applicable domain $\Omega$ such that $\mathbb{R}^{n}\setminus\overline{\Omega}$ is a g-radially convex domain and prove Theorem  \ref{ie-unbd}. Then, we will apply the Kelvin transform or the method of scaling spheres in local way to the integral equation \eqref{IE} on MSS applicable domain $\Omega$ which itself is a g-radially convex domain and prove Theorem \ref{ie-bd}.

\subsection{MSS in integral forms and Liouville theorems for general $\leq n$-th order IEs: proof of Theorem \ref{ie-unbd}}
In this subsection, we will carry out the proof of Theorem \ref{ie-unbd} by applying the method of scaling spheres in integral forms.

\medskip

We will consider the  integral equations \eqref{IE}. That is,
\begin{equation}\label{I-1}
	u(x)=\int_{\Omega}K^{s}_{\Omega}(x,y)f(y,u(y)) \mathrm{d}y,
\end{equation}
where $\Omega$ is an arbitrary MSS applicable domain in $\mathbb{R}^{n}$ such that $\mathbb{R}^{n}\setminus\overline{\Omega}$ is a g-radially convex domain  with center $P$, $u\in C(\overline{\Omega})$, $s>0$, $n\geq\max\{2s,1\}$ and the integral kernel $K^{s}_{\Omega}(x,y)$ satisfies assumptions $(\mathbf{H_{1}})$, $(\mathbf{H_{2}})$ and $(\mathbf{H_{3}})$ given at the beginning of subsection 1.3. Moreover, the integral kernel $K^{s}_{\Omega}(x,y)=0$ if $x$ or $y\notin\Omega$. We assume that the nonlinear term $f\geq0$ is subcritical and satisfies  $(\mathbf{f_{1}})$, $(\mathbf{f_{2}})$ with $q=\frac{n}{2s}$ if $n>2s$ and $q=1+\delta$ ($\delta>0$ arbitrarily small) if $n=2s$ and $(\mathbf{f_{3}})$ with the same cross-section $\Sigma$ as $(\mathbf{H_{2}})$.

\medskip

We aim to prove that $u\equiv0$ in $\Omega$. Suppose on the contrary that $u$ is a nonnegative continuous solution of IE \eqref{I-1} but $u\not\equiv0$, then there is a point $x_0\in \Omega $ such that $0<u(x_0)= \int_{\Omega}K^{s}_{\Omega}(x_0,y)f(y,u(y)) \mathrm{d}y$. So we find that $f(\cdot,u(\cdot)) >0$ on a subset of $\Omega$ with positive Lebesgue measure and hence we get
$$u(x)>0, \qquad  \ \forall\,\, x\in\Omega$$
by the IEs \eqref{I-1}. We will apply the method of scaling spheres in integral forms to show the following lower bound estimates for positive
solution $u$, which contradicts the integrability of $u$ provided that $f$ is subcritical in the sense of Definition \ref{defn1}.
\begin{thm}\label{lowerIE-1}
   	Suppose that $u\in C({\overline {\Omega}})$ is a positive solution to \eqref{I-1}. Then it satisfies the following lower bound estimates:
	\begin{equation}\label{I-2}
		u(x)\geq C_{\kappa}|x-P|^{\kappa}, \quad\quad \forall \, \kappa<+\infty, \quad\quad \forall\,\, x\in \mathcal{C}^{R_{\ast},e}_{P,\Sigma},
	\end{equation}
	where $R_{\ast}>d_\Omega$ is a large constant and the cross-section $\Sigma\subseteq\Sigma^{R_{\ast}}_{\Omega}$ is the same as in $(\mathbf{f_{3}})$ and $(\mathbf{H_{2}})$.
\end{thm}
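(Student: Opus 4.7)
Without loss of generality, I assume $P=0$. The plan is to carry out the method of scaling spheres entirely in integral form, mirroring the three-step structure used in the proof of Theorem \ref{lower0} but replacing the ABP/maximum principle arguments by an integral comparison built out of assumption $(\mathbf{H_{3}})$. For $\lambda>d_{\Omega}$ and $x\in\Omega\setminus\{0\}$, set $x^{\lambda}:=\lambda^{2}x/|x|^{2}$, $u_{\lambda}(x):=(\lambda/|x|)^{n-2s}u(x^{\lambda})$, and $\omega^{\lambda}(x):=u_{\lambda}(x)-u(x)$. Because $\mathbb{R}^{n}\setminus\overline{\Omega}$ is g-radially convex with center $0$, the reflection of $\Omega\cap B_{\lambda}(0)$ through $S_{\lambda}(0)$ lies in $\Omega\setminus\overline{B_{\lambda}(0)}$, so the comparison is well-posed.

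The first key step is to derive, from the IE \eqref{I-1} and the kernel property $(\mathbf{H_{3}})$, the integral identity
\begin{equation*}
\omega^{\lambda}(x)=\int_{\Omega\cap B_{\lambda}(0)}\!\!\Big[K^{s}_{\Omega}(x^{\lambda},y)\big(\tfrac{\lambda}{|x|}\big)^{n-2s}-K^{s}_{\Omega}(x,y)\Big]\big[f(y^{\lambda},u(y^{\lambda}))\big(\tfrac{\lambda}{|y|}\big)^{n+2s}-f(y,u(y))\big]\,\dd y
\end{equation*}
for $x\in\Omega\cap B_{\lambda}(0)\setminus\{0\}$. Using $(\mathbf{f_{1}})$ and the subcritical inequality $\mu^{(n+2s)/(n-2s)}f(\mu^{2/(n-2s)}y,\mu^{-1}v)\leq f(y,v)$ for $\mu\geq 1$, together with the monotonicity of $f$ in $u$, the integrand above is controlled from below, for $y\in\Omega_{\lambda}^{-}:=\{y\in\Omega\cap B_{\lambda}(0)\mid \omega^{\lambda}(y)<0\}$, by a quantity of the form $c(y,u(y),u_{\lambda}(y))\,\omega^{\lambda}(y)$ times the nonnegative factor given by the first bracket (nonnegativity of which follows from the first inequality of $(\mathbf{H_{3}})$). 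This produces the pointwise bound
\begin{equation*}
-\omega^{\lambda}(x)\leq C\int_{\Omega_{\lambda}^{-}}K^{s}_{\Omega}(x,y)\,h_{\lambda}(y)\,|\omega^{\lambda}(y)|\,\dd y,\qquad x\in\Omega_{\lambda}^{-},
\end{equation*}
where $h_{\lambda}$ comes from $(\mathbf{f_{2}})$ and represents the local Lipschitz constant of $f$ in $u$ on bounded sets. Using $(\mathbf{H_{1}})$, the Hardy--Littlewood--Sobolev inequality and H\"older's inequality (with the exponent $q=n/(2s)$ in $(\mathbf{f_{2}})$, or $q=1+\delta$ when $n=2s$ via the HLS endpoint / logarithmic kernel estimate) yields
\begin{equation*}
\|\omega^{\lambda}_{-}\|_{L^{\frac{2n}{n-2s}}(\Omega_{\lambda}^{-})}\leq C\,\|h_{\lambda}\|_{L^{q}(\Omega_{\lambda}^{-})}\,\|\omega^{\lambda}_{-}\|_{L^{\frac{2n}{n-2s}}(\Omega_{\lambda}^{-})}.
\end{equation*}
Since $\Omega_{\lambda}^{-}\subset\Omega\cap(B_{\lambda}(0)\setminus B_{d_{\Omega}}(0))$ has small measure when $\lambda-d_{\Omega}$ is small, this forces $\Omega_{\lambda}^{-}=\emptyset$, giving the start of the scaling procedure.

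The second step is to define $\lambda_{0}:=\sup\{\lambda>d_{\Omega}\mid \omega^{\mu}\geq 0\ \text{on}\ \Omega\cap B_{\mu}(0)\setminus\{0\}\ \forall d_{\Omega}<\mu\leq\lambda\}$ and show $\lambda_{0}=+\infty$ by contradiction. If $\lambda_{0}<+\infty$, then from the integral identity together with the strict second inequality in $(\mathbf{H_{3}})$ one obtains $\omega^{\lambda_{0}}>0$ strictly in $\Omega\cap B_{\lambda_{0}}(0)\setminus\{0\}$ (otherwise the identity produces a contradiction at a zero of $\omega^{\lambda_{0}}$). One then picks a compact set $K\subset\Omega\cap B_{\lambda_{0}}(0)\setminus\{0\}$ on which $\omega^{\lambda_{0}}\geq c_{K}>0$ and whose complement in $\Omega\cap B_{\lambda_{0}+\varepsilon_{1}}(0)$ has Lebesgue measure smaller than a threshold $\delta_{0}$ chosen so that the HLS estimate above applied to $\Omega_{\lambda}^{-}\subset(\Omega\cap B_{\lambda}(0))\setminus K$ closes on itself. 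Then for $\lambda\in[\lambda_{0},\lambda_{0}+\varepsilon_{1})$ one concludes $\Omega_{\lambda}^{-}=\emptyset$, contradicting the definition of $\lambda_{0}$.

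The third step, once $\lambda_{0}=+\infty$, is standard: from $u(x)\geq(\lambda/|x|)^{n-2s}u(x^{\lambda})$ for all $\lambda>d_{\Omega}$ and all $x\in\Omega\setminus\overline{B_{\lambda}(0)}$, the choice $\lambda=\sqrt{(r+1)|x|}$ yields the initial decay $u(x)\geq C|x|^{-(n-2s)/2}$ on $\mathcal{C}_{0,\Sigma}$ with $|x|\geq r+1$. I then bootstrap using \eqref{I-1}, the assumption $(\mathbf{f_{3}})$ with cross-section $\Sigma$, and the lower bound on $K^{s}_{\Omega}$ coming from $(\mathbf{H_{2}})$: defining $\mu_{k+1}=p\mu_{k}-(2s+a)$ with $\mu_{0}=(n-2s)/2$, the subcriticality $p<p_{c}(a)$ forces $\mu_{k}\to-\infty$, producing \eqref{I-2} for any $\kappa$. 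I expect the main obstacle to be the careful manipulation of the integral identity via $(\mathbf{H_{3}})$ so that the HLS/small-region argument actually closes on $\Omega_{\lambda}^{-}$ uniformly for $\lambda$ in a neighborhood of $\lambda_{0}$; the logarithmic kernel case $n=2s$ also needs a separate bound using $(\mathbf{H_{1}})$ and local $L^{1+\delta}$ Lipschitz control of $f$. Everything else is then essentially parallel to the PDE proof of Theorem \ref{lower0}.
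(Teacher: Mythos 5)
Your proof follows essentially the same route as the paper's proof: the same three-step method-of-scaling-spheres-in-integral-forms structure (start dilating from near $\lambda=d_\Omega$, dilate to $\lambda=+\infty$, then bootstrap using $(\mathbf{f_3})$ and $(\mathbf{H_2})$), the same reduction via $(\mathbf{H_1})$, $(\mathbf{H_3})$, $(\mathbf{f_1})$, $(\mathbf{f_2})$ and the subcritical condition to a bound of the form $-\omega^\lambda(x)\leq C\int_{\Omega_\lambda^-}|x-y|^{2s-n}h_\lambda(y)|\omega^\lambda(y)|\,\dd y$ on $\Omega_\lambda^-$, and the same Hardy--Littlewood--Sobolev plus small-measure/narrow-region closure. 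The only imprecision is that your one-line display labelled an ``integral identity'' is not literally an \emph{equality} for general kernels $K^s_\Omega$: the paper's actual identity has two independent kernel brackets multiplying different $f$-factors plus a separate nonnegative remainder integral over $(\Omega\setminus B_\lambda(0))^\lambda\setminus(\Omega\cap B_\lambda(0))$, and the single factored-product form you wrote is only reached after invoking the \emph{second} inequality in $(\mathbf{H_3})$, so it should be understood as a one-sided inequality rather than an identity; this does not affect the remainder of the argument.
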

\begin{proof}
In order to apply the method of scaling spheres in integral forms, we first give some notations. Recall that $d_\Omega=\inf\limits_{x\in\Omega} |x-P|\in [0, +\infty)$. Let $\lambda>0$ be an arbitrary positive real number and let the scaling sphere piece be
\begin{equation}\label{I-3}
	S_{\lambda}(P)\bigcap\Omega=\{x\in\Omega\mid\, |x-P|=\lambda\}.
\end{equation}
We denote the reflection of $x\in\mathbb{R}^{n}$ about the sphere $S_{\lambda}(P)$ by $x^{\lambda}:=\frac{\lambda^{2}(x-P)}{|x-P|^{2}}+P$. For any subset $D\subset \mathbb R^n$, we denote $$D^\lambda:=\{x\in\mathbb R^n\mid\, x^\lambda\in D\}$$ as the reflection of $D$ with respect to the sphere $S_\lambda(P)$. In what follows, we assume $P=0$ and divide the rest of proof into two different cases: $2s<n$ and $2s=n$.

\medskip

\textbf{Proof of Theorem \ref{lowerIE-1} in the cases $2s<n$.}

\medskip

		Define the Kelvin transform of $u$ of order $2s$ centered at $0$ by
		\begin{equation}\label{I-4}
			u_{\lambda}(x)=\left(\frac{\lambda}{|x|}\right)^{n-2s}u\left(\frac{\lambda^{2}x}{|x|^{2}}\right).
		\end{equation}		
		It's obvious that the Kelvin transform $u_{\lambda}$ may have singularity at $0$ and $\lim\limits_{|x|\rightarrow+\infty}u_{\lambda}(x)=0$. Moreover, one can verify that $\mathbb R^n\setminus \overline \Omega$ is g-radially convex implies $$\Omega\cap B_\lambda(0)\subset (\Omega\setminus B_\lambda(0))^\lambda,\qquad \  \forall \,\, \lambda>d_\Omega:=\inf_{x\in\Omega}|x-P|.$$

\smallskip		
		
		We will compare the value of $u$ and $u_\lambda$ on $\Omega\cap B_\lambda(0)$ for every $\lambda>d_\Omega$. Let $\omega^{\lambda}(x):=u_{\lambda}(x)-u(x)$. Since $u$ satisfies \eqref{I-1}, by changing variables, we have
		\begin{align}\label{I-5}
			&\quad u(x)=\int_{\Omega}K^{s}_{\Omega}(x,y)f(y, u(y)) \mathrm dy\\
			&=\int_{\Omega\cap B_\lambda(0) }K^{s}_{\Omega}(x,y) f(y, u(y))\mathrm{d}y+\int_{\Omega\setminus B_\lambda(0)}K^{s}_{\Omega}(x,y)f(y, u(y)) \mathrm dy\nonumber\\
			&=\int_{\Omega\cap B_\lambda(0)  } K^{s}_{\Omega}(x,y) f(y, u(y))\mathrm dy  +\int_{\Omega\cap B_\lambda(0) } {\left(\frac{\lambda}{|y|}\right)}^{2n}K^{s}_{\Omega}(x,y^{\lambda})f\left[y^\lambda, {\left(\frac{\lambda}{|y|}\right)}^{2s-n}u_\lambda(y)\right] \mathrm dy\nonumber\\
			&\quad+\int_{(\Omega\setminus B_\lambda(0))^\lambda\setminus (\Omega\cap B_\lambda(0))} {\left(\frac{\lambda}{|y|}\right)}^{2n}K^{s}_{\Omega}(x,y^{\lambda})f\left[y^\lambda, {\left(\frac{\lambda}{|y|}\right)}^{2s-n}u_\lambda(y)\right] \mathrm dy. \nonumber
		\end{align}
		Similarly, by  \eqref{I-1} and \eqref{I-4}, we obtain
		\begin{align}\label{I-6}
			u_\lambda(x)&=\left(\frac{\lambda}{|x|}\right)^{n-2s}\int_{\Omega}K^{s}_{\Omega}(x^\lambda,y)f(y, u(y))\mathrm dy\\
			&=\left(\frac{\lambda}{|x|}\right)^{n-2s}\int_{\Omega\cap B_\lambda(0)} K^{s}_{\Omega}(x^\lambda,y)f(y, u(y))  \mathrm dy \nonumber\\
			&\quad+\left(\frac{\lambda}{|x|}\right)^{n-2s}\int_{\Omega\cap B_\lambda(0)}  {\left(\frac{\lambda}{|y|}\right)}^{2n} K^{s}_{\Omega}(x^\lambda,y^\lambda)f\left(y^\lambda, {\left(\frac{\lambda}{|y|}\right)}^{2s-n}u_\lambda(y)\right)\mathrm dy\nonumber\\
			&\quad+ \left(\frac{\lambda}{|x|}\right)^{n-2s}\int_{(\Omega\setminus B_\lambda(0))^\lambda\setminus (\Omega\cap B_\lambda(0))}{\left(\frac{\lambda}{|y|}\right)}^{2n} K^{s}_{\Omega}(x^\lambda,y^\lambda)f\left(y^\lambda, {\left(\frac{\lambda}{|y|}\right)}^{2s-n}u_\lambda(y)\right) \mathrm dy. \nonumber
		\end{align}
		Then, using    \eqref{I-5}, \eqref{I-6},  $(\mathbf{H_{3}})$ and  the assumption that $f$ is subcritical in the sense of Definition \ref{defn1}, we get, for $x\in \Omega\cap B_\lambda(0)$,
		
			\begin{align}\label{I-7}
				&\quad \omega^{\lambda}(x)=u_{\lambda}(x)-u(x)\\
				&=\int_{\Omega\cap B_\lambda(0)} \left[\left(\frac{\lambda^2}{|x||y|}\right)^{n-2s}K^{s}_{\Omega}(x^\lambda,y^\lambda)-{\left(\frac{\lambda}{|y|}\right)}^{n-2s}K^{s}_{\Omega}(x,y^\lambda)\right] \nonumber \\
&\qquad \times {\left(\frac{\lambda}{|y|}\right)}^{n+2s}f\left[y^\lambda, {\left(\frac{\lambda}{|y|}\right)}^{2s-n}u_\lambda(y)\right]  \mathrm dy\nonumber \\
				&\quad+\int_{\Omega\cap B_\lambda(0)}  \left(\left(\frac{\lambda}{|x|}\right)^{n-2s}K^{s}_{\Omega}(x^\lambda,y)- K^{s}_{\Omega}(x,y)\right) f(y, u(y))  \mathrm dy  \nonumber\\
				&\quad+ \int_{(\Omega\setminus B_\lambda(0))^\lambda\setminus (\Omega\cap B_\lambda(0))} \left[\left(\frac{\lambda^2}{|x||y|}\right)^{n-2s}K^{s}_{\Omega}(x^\lambda,y^\lambda)-{\left(\frac{\lambda}{|y|}\right)}^{n-2s}K^{s}_{\Omega}(x,y^\lambda)\right] \nonumber \\
&\qquad \times {\left(\frac{\lambda}{|y|}\right)}^{n+2s}f\left[y^\lambda, {\left(\frac{\lambda}{|y|}\right)}^{2s-n}u_\lambda(y)\right] \mathrm dy\nonumber \\
				&\geq \int_{\Omega\cap B_\lambda(0)} \left[\left(\frac{\lambda^2}{|x||y|}\right)^{n-2s}K^{s}_{\Omega}(x^\lambda,y^\lambda)-{\left(\frac{\lambda}{|y|}\right)}^{n-2s}K^{s}_{\Omega}(x,y^\lambda)\right] \nonumber \\
&\qquad \times \left[{\left(\frac{\lambda}{|y|}\right)}^{n+2s}f\left(y^\lambda, {\left(\frac{\lambda}{|y|}\right)}^{2s-n}u_\lambda(y)\right)-f(y, u(y))  \right] \mathrm dy\nonumber\\
				&>\int_{\Omega\cap B_\lambda(0)}\left[\left(\frac{\lambda^2}{|x||y|}\right)^{n-2s}K^{s}_{\Omega}(x^\lambda,y^\lambda)-{\left(\frac{\lambda}{|y|}\right)}^{n-2s}K^{s}_{\Omega}(x,y^\lambda)\right] \nonumber\\
&\qquad \times \left( f(y, u_\lambda(y))-     f(y, u(y)) \right) \mathrm dy.\nonumber
		\end{align}

\medskip		
		
		Now we carry out the method of scaling spheres in integral forms in three steps.
	
\medskip
	
		\emph{Step 1. Start dilating the sphere piece $S_{\lambda}(P)\bigcap\Omega$ from near $\lambda=d_\Omega$.}
		We will first show that, for $\lambda>d_\Omega$ sufficiently close to $d_{\Omega}$,
		\begin{equation}\label{I-8}
			\omega^\lambda(x)\geq 0,\,\,\,\quad \forall \,\, x\in  \Omega\cap B_\lambda(0).
		\end{equation}
		Define
		\begin{equation}\label{I-9}
			\Omega_\lambda^-:=\{x\in   \Omega\cap B_\lambda(0) \,|\,\omega_\lambda(x)<0\}.
		\end{equation}
		Then the proof of \eqref{I-8} is reduced to showing that $	\Omega_\lambda^-=\emptyset$ for $\lambda-d_\Omega>0$ sufficiently small.
		
\medskip
		
		Since $0<u_\lambda(x)<u(x)\leq M_\lambda $ for any $x\in 	\Omega_\lambda^-$ with $M_{\lambda}=\sup\limits_{\Omega\cap B_{\lambda}(0)}u<+\infty$,
		we derive from the assumptions $(\mathbf{f_{1}})$ and $(\mathbf{f_{2}})$ that
		\begin{equation}\label{I-10}
			0\leq c_\lambda(x):=\frac{f(x, u_\lambda(x))- f(x, u(x))}{u_\lambda(x)-u(x)}\leq \sup_{\alpha,\beta\in [0, M_\lambda] } \frac{|f(x, \alpha)- f(x, \beta)|}{|\alpha-\beta|}\leq h_\lambda(x),
		\end{equation}
		for some $h_\lambda \in L^{\frac{n}{2s}} (\Omega\cap B_R(0)) \   (\forall\, R>0)$. Moreover, it can be seen that $h_\lambda$ can be chosen increasingly depending on $\lambda$ since $M_\lambda$ is increasing as $\lambda$ increases.
		
\medskip		
		
		By using the assumption $\mathbf{(f_{1})}$ on $f$, $(\mathbf{H_{1}})$, $(\mathbf{H_{3}})$, inequalities \eqref{I-7} and \eqref{I-10}, we have, for any $x\in \Omega_\lambda^-$,
		\begin{eqnarray}\label{I-11}
			&&\quad\omega^{\lambda}(x) \\
			&&>\int_{\Omega\cap B_\lambda(0)}\left(\left(\frac{\lambda^2}{|x||y|}\right)^{n-2s}K^{s}_{\Omega}(x^\lambda,y^\lambda)-{\left(\frac{\lambda}{|y|}\right)}^{n-2s}K^{s}_{\Omega}(x,y^\lambda)\right) \left( f(y, u_\lambda(y))-     f(y, u(y)) \right) \mathrm dy  \nonumber\\
			&&\geq \int_{\Omega_\lambda^-}\left(\left(\frac{\lambda^2}{|x||y|}\right)^{n-2s}K^{s}_{\Omega}(x^\lambda,y^\lambda)-{\left(\frac{\lambda}{|y|}\right)}^{n-2s}K^{s}_{\Omega}(x,y^\lambda)\right) \left( f(y, u_\lambda(y))-     f(y, u(y)) \right) \mathrm dy  \nonumber\\
			&&\geq \int_{\Omega_\lambda^-} \left(\frac{\lambda^2}{|x||y|}\right)^{n-2s}K^{s}_{\Omega}(x^\lambda,y^\lambda)  \left(    f(y , u_\lambda(y))-  f(y, u(y))\right) \mathrm dy \nonumber\\
			&&\geq \int_{\Omega_\lambda^-} \frac{C_{2}}{|x-y|^{n-2s}}  h_\lambda(y)\omega^{\lambda}(y) \mathrm dy.\nonumber
		\end{eqnarray}
	
\medskip
	
Now we need the following Hardy-Littlewood-Sobolev inequality.
\begin{lem}\label{HL}(Hardy-Littlewood-Sobolev inequality)
Let $n\geq1$, $0<s<n$ and $1<p<q<\infty$ be such that $\frac{n}{q}=\frac{n}{p}-s$. Then we have
\begin{equation}\label{HLS}
  \Big\|\int_{\mathbb{R}^{n}}\frac{f(y)}{|x-y|^{n-s}}\mathrm{d}y\Big\|_{L^{q}(\mathbb{R}^{n})}\leq C_{n,s,p,q}\|f\|_{L^{p}(\mathbb{R}^{n})}
\end{equation}
for all $f\in L^{p}(\mathbb{R}^{n})$.
\end{lem}

		By \eqref{I-11}, Hardy-Littlewood-Sobolev inequality and H\"{o}lder inequality, for arbitrary $\frac{n}{n-2s}<q<\infty$, we obtain
		\begin{equation}\label{I-12}
			{\| \omega^{\lambda}\|}_{L^q(\Omega_\lambda^-)}
			\leq  C  {\left\|  h_\lambda(y) \omega^{\lambda}\right\|}_{L^{\frac{nq}{n+2sq}}(\Omega_\lambda^- )}
			\leq  C  {\left\| h_\lambda\right\|}_{L^{\frac{n}{2s}}(\Omega_\lambda^-)}{\left\| \omega^{\lambda}\right\|}_{L^{q}(\Omega_\lambda^-)}.
		\end{equation}
		Since it follows from the assumption on $f$ that $0\leq h_\lambda\leq h_{d_\Omega+1}$, $h_{d_\Omega+1}\in L^{\frac{n}{2s}} (\Omega\cap B_R(0)) \   (\forall\, R>0)$ and the measure of $\Omega_\lambda^-\subset \Omega\cap B_\lambda(0)$ is small as soon as $\lambda-d_\Omega>0$ is small, there exists $\varepsilon_0\in (0,1)$ sufficiently small, such that
		\begin{equation}\label{I-13}
			C  {\left\| h_\lambda\right\|}_{L^{\frac{n}{2s}}(\Omega_\lambda^-)}\leq C  {\left\| h_{d_{\Omega}+1}\right\|}_{L^{\frac{n}{2s}}(\Omega_\lambda^-)} <\frac{1}{2}
		\end{equation}
		for any $d_\Omega<\lambda<d_\Omega+\varepsilon_0$. Thus, by \eqref{I-12}, we must have
		\begin{equation}\label{I-14}
			{\| \omega^{\lambda}\|}_{L^q(\Omega_\lambda^-)}=0
		\end{equation}
		for any $d_\Omega<\lambda<d_\Omega+\varepsilon_0$. From the continuity of $\omega^{\lambda}$ in $\overline{\Omega}\setminus\{0\}$ and the definition of $\Omega_\lambda^-$, we immediately get that $\Omega_\lambda^-=\emptyset$ and hence \eqref{I-8} holds true for any $d_\Omega<\lambda<d_\Omega+\varepsilon_0$. This completes Step 1.
		
\medskip

		\emph{Step 2. Dilate the sphere piece $S_{\lambda}(P)\bigcap\Omega$ toward the interior of $\Omega$ until $\lambda=+\infty$.} Step 1 provides us a starting point to dilate the sphere piece $S_{\lambda}(P)\bigcap\Omega$ from near $\lambda=d_\Omega$. Now we dilate the sphere piece $S_{\lambda}(P)\bigcap\Omega$ toward the interior of $\Omega$ as long as \eqref{I-8} holds. Define the limiting radius
		\begin{equation}\label{I-15}
			\lambda_{0}:=\sup\{\lambda>d_{\Omega}\,|\, \omega^{\mu}\geq0 \,\, \text{in} \,\, \Omega\cap B_{\mu}(0), \,\, \forall \, d_{\Omega}<\mu\leq\lambda\}\in(d_\Omega,+\infty].
		\end{equation}
		By the continuity, one has
		\begin{equation}\label{I-16}
			\omega^{\lambda_{0}}(x)\geq0, \quad\quad \forall \,\, x\in \Omega\cap B_{\lambda_{0}}(0).
		\end{equation}

\medskip
		
		In what follows, we will prove $\lambda_{0}=+\infty$ by driving a contradiction under the assumption that $\lambda_0<+\infty$.

\medskip
		
		In fact, suppose on the contrary that $\lambda_0<+\infty$. In this case,  we must have
		\begin{equation}\label{I-17}
			\omega^{\lambda_{0}}(x)\equiv 0, \quad\quad \forall \,\, x\in \Omega\cap B_{\lambda_{0}}(0).
		\end{equation}
		Suppose that \eqref{I-17} does not hold, that is, there exists an $x_0 \in \Omega\cap B_{\lambda_{0}}(0)$ such that $\omega^{\lambda_{0}}(x_0)>0$. Then, by \eqref{I-11} and \eqref{I-16}, we have
		\begin{equation}\label{I-18}
			\omega^{\lambda_{0}}(x)>0, \quad\quad \forall \,\, x\in \Omega\cap B_{\lambda_{0}}(0).
		\end{equation}
	
\medskip
	
		Let $h_{ \lambda_0 +1}$ be the function such that
		\begin{equation*}
			\sup_{\alpha,\beta\in [0, M_{ \lambda_0+1}] } \frac{|f(x, \alpha)- f(x, \beta)|}{|\alpha-\beta|}\leq h_{\lambda_0+1}(x)
		\end{equation*}
		with $M_{\lambda_0+1}= \sup\limits_{\Omega\cap B_{\lambda_0+1}(0)} u<+\infty$ and  $h_{\lambda_0+1} \in L^{\frac{n}{2s}} (\Omega\cap B_R(0)) \   (\forall\, R>0)$. In particular, $h_{\lambda_0+1} \in L^{\frac{n}{2s}} (\Omega\cap B_{\lambda_0+1}(0))$. So there is a constant $\delta_0>0$ such that, for any subset $D\subset (\Omega\cap B_{\lambda_0+1}(0))$,
		\begin{equation}\label{I-18-1}
			|D|\leq \delta_0 \ \  \ \text{implies }\  \	C \left\|h_{\lambda_0+1} \right\|_{L^{\frac{n}{2s}}(D)}\leq\frac{1}{2},
		\end{equation}
		where $C$ is the constant in \eqref{I-12}.

\medskip		
		
		Choose $\varepsilon_1\in (0,1)$ sufficiently small such that  the narrow region $A_{\varepsilon_1}\subset  \Omega\cap B_{\lambda_{0}}(0)$ satisfies $|A_{\varepsilon_1}|<\frac{ \delta_0 }{2}$, where $A_{\varepsilon_1}$ is defined by
		\begin{equation}\label{I-19}
			A_{\varepsilon_1}:=\{x\in \Omega\cap B_{\lambda_{0}}(0)\,|\, \text{dist}(x,\partial (\Omega\cap B_{\lambda_{0}}(0)))<\varepsilon_1\}.
		\end{equation}

\medskip
		
		Since $\omega^{\lambda_{0}}$ is continuous in $\overline{\Omega}\setminus\{0\}$ and $ K_{\varepsilon_1}:=(\Omega\cap B_{\lambda_{0}}(0))\setminus A_{\varepsilon_1} $ is a compact subset, there exists a positive constant $C_{\varepsilon_1}$ such that
		\begin{equation}\label{I-20}
			\omega^{\lambda_{0}}(x)>C_{\varepsilon_1}, \quad\quad \forall \,\, x\in K_{\varepsilon_1}.
		\end{equation}
		By continuity, we can choose $0<\varepsilon_2<\varepsilon_1$ sufficiently small such that, for any $\lambda\in [\lambda_0,\,\lambda_0+\varepsilon_2]$,
		\begin{equation}\label{I-21}
			\omega^{\lambda}(x)>\frac{C_{\varepsilon_1}}{2}, \quad\quad \forall \,\, x\in K_{\varepsilon_1}.
		\end{equation}
		Taking $\varepsilon_2$ smaller if necessary, we can assume that the narrow annular domain has measure
		\begin{equation}\label{I-21-1}
			|B_{\lambda}(0)\setminus B_{\lambda_{0}}(0)|\leq \frac{ \delta_0 }{2},\ \quad\,  \forall \,\, \lambda\in [\lambda_0,\,\lambda_0+\varepsilon_2].
		\end{equation}
		By \eqref{I-21}, we must have the set $\Omega_{\lambda}^{-}$ where $\omega^\lambda$ is negative satisfies
		\begin{equation}\label{I-22}
			\Omega_\lambda^-\subset( \Omega\cap B_{\lambda}(0))\setminus K_{\varepsilon_1}=\left((B_{\lambda}(0)\setminus B_{\lambda_{0}}(0))\cap \Omega\right)\cup A_{\varepsilon_{1}}
		\end{equation}
		for any $\lambda\in [\lambda_0,\,\lambda_0+\varepsilon_2]$, which implies that $	\Omega_\lambda^-$ is contained in a sufficiently narrow region. Using \eqref{I-18-1}, \eqref{I-21-1} and \eqref{I-22}, we find that for any $\lambda\in [\lambda_0,\,\lambda_0+\varepsilon_2]$,
		\begin{equation}\label{I-23}
			C  {\left\| h_\lambda\right\|}_{L^{\frac{n}{2s}}(\Omega_\lambda^-)}\leq C  {\left\| h_{\lambda_0+1}\right\|}_{L^{\frac{n}{2s}}(\Omega_\lambda^-)} <\frac{1}{2},
		\end{equation}
		where $h_\lambda$ is the function given in \eqref{I-10} and $C$ is the constant in \eqref{I-12}.

\medskip
		
		Then, by the same argument as in step 1, we deduce from \eqref{I-12} and \eqref{I-23} that for any $\lambda\in[\lambda_0,\,\lambda_0+\varepsilon_2]$, it holds
		\begin{equation}\label{I-24}
			\omega^{\lambda}(x)\geq 0, \quad\quad \forall \,\, x\in  \Omega\cap B_\lambda(0).
		\end{equation}
		This contradicts the definition of $\lambda_0$ in \eqref{I-15} and hence \eqref{I-17} must hold true.
		
\medskip

		However, by \eqref{I-11} and  \eqref{I-17}, we obtain
		\begin{eqnarray*}
			&&	0=\omega^{\lambda_{0}}(x)=u_{\lambda_0}(x)-u(x)\\
			&& \,\,\,\, >\int_{\Omega\cap B_{\lambda_0}(0)}\left(\left(\frac{{\lambda_0}^2}{|x||y|}\right)^{n-2s}K^{s}_{\Omega}(x^{\lambda_0},y^{\lambda_0})-{\left(\frac{{\lambda_0}}{|y|}\right)}^{n-2s}K^{s}_{\Omega}(x,y^{\lambda_0})\right) \\
			&&\qquad\times \left( f(y, u_{\lambda_0}(y))-     f(y, u(y)) \right) \mathrm dy    =0\nonumber
		\end{eqnarray*}
		for any $x\in\, \Omega\cap B_{\lambda_{0}}(0) $, which is absurd. Thus we must have $\lambda_0=+\infty$, that is,
		\begin{equation}\label{I-27}
			u(x)\geq \left(\frac{\lambda}{|x|}\right)^{n-2s}u\left(\frac{\lambda^{2}x}{|x|^{2}}\right), \quad\quad \forall \,\, x\in\Omega \,\,\,\text{s.t.}\,\,|x|\geq\lambda,\,\,\, \quad \forall \,\, d_\Omega<\lambda<+\infty.
		\end{equation}

\medskip
		
		\emph{Step 3. Derive lower bound estimates on the asymptotic behavior of $u$ via Bootstrap technique in a smaller cone.}
		Let $\Sigma$ be the cross-section in $(\mathbf{f_{3}})$ and $\mathbf{(H_{2})}$, then $ \mathcal{C}^{R,e}_{0,\Sigma} \subset\Omega$ for some $R>d_{\Omega}$. We infer from \eqref{I-27} that
		\begin{equation}\label{I-28}
			u(x)\geq \left(\frac{\lambda}{|x|}\right)^{n-2s}u\left(\frac{\lambda^{2}x}{|x|^{2}}\right), \quad\quad \forall \,\, x\in \mathcal{C}^{R,e}_{0,\Sigma} \,\, \text{with} \,\, |x|\geq\lambda, \quad \forall \,\,  \lambda>R.
		\end{equation}
		For arbitrary $ x\in  \mathcal{C}^{R+1,e}_{0,\Sigma}$, take $0<\lambda:=\sqrt{(R+1) |x| }<|x|$, then \eqref{I-28} yields that
		\begin{equation}\label{I-29}
			u(x)\geq \left(\frac{R+1}{|x|}\right)^{\frac{n-2s}{2}}u\left(\frac{(R+1) x}{|x|}\right).
		\end{equation}
		Then we arrive at the following lower bound estimate on asymptotic behavior of $u$ as $|x|\rightarrow+\infty$:
		\begin{equation}\label{I-30}
			u(x)\geq \left(\frac{R+1}{|x|}\right)^{\frac{n-2s}{2}}\left(\min_{|x|=R+1 , \frac{x}{|x|}\in \Sigma}u(x)\right) :=\ \frac{C_0}{|x|^{\frac{n-2s}{2}}}, \quad\quad \forall \,\,x\in  \mathcal C^{R+1,e}_{0, \Sigma}.
		\end{equation}
	
\medskip
	
		The lower bound estimate \eqref{I-30} can be improved remarkably when $f$ satisfies $(\mathbf{f_{3}})$ with $p<p_{c}(a)$ using a ``Bootstrap" iteration technique. Let $R_\ast:=R_0+R+1$ with $R_0$ the constant in $(\mathbf{H_{2}})$. Set $\mu_{0}:=-\frac{n-2s}{2}$, we infer from  the integral equation \eqref{I-1},   assumptions $(\mathbf{H_{2}})$ and $(\mathbf{f_{3}})$ that, for any $x\in \mathcal C^{R_\ast,e}_{0, \Sigma}$,
		\begin{eqnarray*}
			u(x)&\geq& C\int_{y\in   \mathcal C^{R_\ast,e}_{0, \Sigma}, \sigma_1|x|\leq |x-y|\leq \sigma_2|x|}K^s_{\Omega} (x,y)|y|^a u^p(y)\mathrm dy \\
			\nonumber &\geq&C\int_{y\in   \mathcal C^{R_\ast,e}_{0, \Sigma}, \sigma_1|x|\leq |x-y|\leq \sigma_2|x|} \frac{|y|^a u^p(y)}{|x-y|^{n-2s}} \mathrm dy  \\
			\nonumber  &\geq& C_{1} |x|^{p\mu_{0}+(n+a)}.
		\end{eqnarray*}
		Now, let $\mu_{1}:=p\mu_{0}+(2s+a)$. Due to $1\leq p<p_c(a)=\frac{n+2s+2a}{n-2s}$ and $a>-2s$, our important observation is
		\begin{equation*}
			\mu_{1}=p\mu_{0}+(2s+a)>\mu_{0}.
		\end{equation*}
		Thus we have obtained a better lower bound estimate than \eqref{I-30} after one iteration, that is,
		\begin{equation}\label{I-31}
			u(x)\geq C_{1} |x|^{\mu_{1}}, \quad\quad \forall \,\, x\in \mathcal C^{R_\ast,e}_{0, \Sigma}.
		\end{equation}

\medskip
		
		For $k=0,1,2,\cdots$, define
		\begin{equation}\label{I-32}
			\mu_{k+1}:=p\mu_{k}+(2s+a).
		\end{equation}
		Since $1\leq p<p_c(a)$ and $a>-2s$, it is easy to see that the sequence $\{\mu_{k}\}$ is monotone increasing with respect to $k$. Repeating the above iteration process involving the integral equation \eqref{I-1}, we have the following lower bound estimates for every $k=0,1,2,\cdots$,
		\begin{equation}\label{I-33}
			u(x)\geq C_{k} |x|^{\mu_{k}}, \quad\quad \forall \,\, x\in  \mathcal C^{R_\ast,e}_{0, \Sigma}.
		\end{equation}
		Using the definition \eqref{I-32} and the assumption $a>-2s$, one can check that as  $k\rightarrow+\infty$,
		\begin{equation}\label{I-34}
			\mu_{k}\rightarrow+\infty \quad \text{provided that } \,\,\, 1\leq p<p_c(a),
		\end{equation}
		which combined with \eqref{I-33} imply the desired estimates \eqref{I-2} in Theorem \ref{lowerIE-1} in the case $2s<n$.

\bigskip

Next, we will prove Theorem \ref{lowerIE-1} in critical order case (i.e. $2s=n$) with more technical difficulties.

\medskip

\textbf{Proof of Theorem \ref{lowerIE-1} in the critical order case $2s=n$.}

\medskip

	Recall that we have assumed $P=0$. In this case, we define the Kelvin transform of $u$ of order $\frac{n}{2}$ centered at $P=0$ by
	\begin{equation}\label{c-5}
		u_{\lambda}(x):=u(x^\lambda)=u\left(\frac{\lambda^{2}x}{|x|^{2}}\right).
	\end{equation}
    It's obvious that the Kelvin transform $u_{\lambda}$ may have singularity at $0$ and $\lim\limits_{|x|\rightarrow+\infty}u_{\lambda}(x)=u(0)=0$.

\medskip

    We compare the value of $u_\lambda$ and $u$ on $\Omega\cap B_{\lambda}(0)$ and let $\omega^{\lambda}(x):=u_{\lambda}(x)-u(x)$.
    Using \eqref{I-1}, \eqref{c-5}, $(\mathbf{H_{3}})$ and  the assumption that $f$ is sub-critical w.r.t. singularities in the sense of Definition \ref{defn1},   through similar calculations as the proof of \eqref{I-7}, we get, for $x\in \Omega\cap B_\lambda(0)$,
    \begin{align}\label{c-9}
    	&\omega^{\lambda}(x)=u_{\lambda}(x)-u(x)\\
    	&\qquad\,\,\, >\int_{\Omega\cap B_\lambda(0)}\left(K^{\frac{n}{2}}_{\Omega}(x^\lambda,y^\lambda)-K^{\frac{n}{2}}_{\Omega}(x,y^\lambda)\right)\left( f(y, u_\lambda(y))-     f(y, u(y)) \right) \mathrm dy.\nonumber
    \end{align}

\medskip

    Now we carry out the method of scaling spheres in integral forms in three steps.

\medskip

    \emph{Step 1. Start dilating the sphere piece $S_{\lambda}(P)\bigcap\Omega$ from near $\lambda=d_{\Omega}\geq0$.}
    We will first show that, for $\lambda>d_\Omega$ such that $\lambda-d_{\Omega}$ sufficiently small,
    \begin{equation}\label{c-10}
    	\omega^\lambda(x)\geq 0,\,\,\,\quad \forall \,\, x\in  \Omega\bigcap B_\lambda(0).
    \end{equation}
    Define
    \begin{equation}\label{c-11}
    	\Omega_\lambda^-:=\{x\in \Omega\bigcap B_\lambda(0)\,|\,\omega^\lambda(x)<0\}.
    \end{equation}

\medskip

    Since $0<u_\lambda(x)<u(x)\leq M_\lambda $ for any $x\in 	\Omega_\lambda^-$ with $M_{\lambda}=\sup\limits_{\Omega\cap B_{\lambda}(0)}u<+\infty$,
    we derive from the assumptions $(\mathbf{f_{1}})$ and $(\mathbf{f_{2}})$ that
    \begin{equation}\label{c-12}
    	0\leq c_\lambda(x):=\frac{f(x, u_\lambda(x))- f(x, u(x))}{u_\lambda(x)-u(x)}\leq \sup_{\alpha,\beta\in [0, M_\lambda] } \frac{|f(x, \alpha)- f(x, \beta)|}{|\alpha-\beta|}\leq h_\lambda(x)
    \end{equation}
    for some $h_\lambda \in L^{1+\delta} (\Omega\cap B_R(0)) \  (\forall\, R>0)$ with $\delta>0$ arbitrarily small. Moreover, it can be seen that $h_\lambda$ can be chosen increasingly depending on $\lambda$ since $M_\lambda$ is increasing as $\lambda$ increases.

\medskip

    One can easily verify that
    \begin{equation}\label{c-13}
    	\ln{\frac{1}{t}}\leq  \frac{C_\varepsilon}{t^\varepsilon}, \quad \,\quad \forall \  t\in(0,\frac{1}{2}),
    \end{equation}
    where $\varepsilon$ is an arbitrary positive real number and $C_\varepsilon $ is a constant depending on $\varepsilon $. Using $(\mathbf{H_{1}})$, we find that, for any $x,y \in  \Omega\cap B_\lambda(0)$,
    \begin{eqnarray}\label{c-13-1}
    &&	K^{\frac{n}{2}}_{\Omega} (x^\lambda, y^\lambda)\leq C_1\ln \left[\frac{C_0(1+ |x^\lambda|) (1+|y^\lambda|)}{|x^\lambda-y^\lambda|}\right]
    =C_1\ln \left[\frac{C_0  (\lambda^2+|y|+|x|+1)}{|x -y |}\right] \\
    \nonumber && \qquad\qquad\quad \leq C_1\ln \left[\frac{C_0  (\lambda+1 )^2}{|x -y |}\right].
    \end{eqnarray}

\medskip

    By the assumption $\mathbf{(f_{1})}$ on $f$, inequalities \eqref{c-9}, \eqref{c-12}, \eqref{c-13} and \eqref{c-13-1}, we have, for any $x\in \Omega_\lambda^-$,
    \begin{eqnarray}\label{c-14}
    	\omega^{\lambda}(x)
    	&>& \int_{ \Omega\cap B_\lambda(0) }\left(K^{\frac{n}{2}}_{\Omega}(x^\lambda,y^\lambda)-K^{\frac{n}{2}}_{\Omega}(x,y^\lambda)\right) \left(    f(y , u_\lambda(y))-  f(y, u(y))\right)\mathrm dy\nonumber\\
    	&\geq& \int_{\Omega_\lambda^-}\left(K^{\frac{n}{2}}_{\Omega}(x^\lambda,y^\lambda)-K^{\frac{n}{2}}_{\Omega}(x,y^\lambda)\right) \left(    f(y , u_\lambda(y))-  f(y, u(y))\right)\mathrm dy\nonumber\\
    	&\geq& \int_{\Omega_\lambda^-} K^{\frac{n}{2}}_{\Omega}(x^\lambda,y^\lambda) \left(    f(y , u_\lambda(y))-  f(y, u(y))\right)\mathrm dy \\
    	&\geq&  \int_{\Omega_\lambda^-} K^{\frac{n}{2}}_{\Omega}(x^\lambda,y^\lambda)h_\lambda(y)\omega^{\lambda}(y) \mathrm dy\nonumber\\
    	&\geq& C \int_{\Omega_\lambda^-\,\cap\,B_{\frac{C_0 (\lambda+1)^2}{2}}(x)}\frac{(1+\lambda)^{2\varepsilon}}{|x-y|^{\varepsilon}} h_\lambda(y) \omega^{\lambda}(y)\mathrm dy  +C \int_{\Omega_\lambda^-\,\setminus\,B_{\frac{C_0(\lambda+1)^2}{2}}(x)}  h_\lambda(y) \omega^{\lambda}(y)\mathrm dy.\nonumber
    \end{eqnarray}

\medskip

    By \eqref{c-14}, Hardy-Littlewood-Sobolev inequality and H\"{o}lder inequality, we obtain, for arbitrary $\frac{n}{\varepsilon}<q<\infty$,
    \begin{eqnarray}\label{c-15}
    	{\| \omega^{\lambda}\|}_{L^q(\Omega_\lambda^-)}
    	&\leq& C (1+\lambda)^{2\varepsilon}{\left\| {\int_{\Omega_\lambda^-}\frac{1}{|x-y|^{\varepsilon}} h_\lambda \omega^{\lambda}(y)\mathrm dy}\right\|}_{L^q(\Omega_\lambda^-)}  +C |\Omega_\lambda^- |^{\frac{1}{q}}\int_{\Omega_\lambda^- } h_\lambda(y) |\omega^{\lambda}(y)|\mathrm dy \nonumber \\
    	&\leq& C (1+\lambda)^{2\varepsilon}{\left\|  h_\lambda  \omega^{\lambda}\right\|}_{L^{\frac{nq}{n+(n-\varepsilon)q}}(\Omega_\lambda^- )}    +C |\Omega_\lambda^- |^{\frac{1}{q}}\int_{\Omega_\lambda^- } h_\lambda(y) |\omega^{\lambda}(y)|\mathrm dy\\
    	&\leq& C (1+\lambda)^{2\varepsilon} {\left\| h_\lambda\right\|}_{L^{\frac{n}{n- \varepsilon}}(\Omega_\lambda^-)}{\left\| \omega^{\lambda}\right\|}_{L^{q}(\Omega_\lambda^-)}  +C |\Omega_\lambda^-|^{\frac{1}{q}}{\left\| h_\lambda \right\|}_{L^{\frac{q}{q-1}}(\Omega_\lambda^-)}{\left\| \omega^{\lambda}\right\|}_{L^{q}(\Omega_\lambda^-)}.\nonumber
    \end{eqnarray}
     Since $h_{d_{\Omega}+1} \in L^{1+\delta} (\Omega\cap B_R(0)) \  (\forall\, R>0)$ with arbitrarily small $\delta>0$. We first choose $\varepsilon>0$ sufficiently small such that $\frac{n }{n- \varepsilon}\leq 1+\delta$, then choose $q>\frac{n}{ \varepsilon}$ sufficiently large such that $\frac{q }{q-1}\leq 1+\delta$.   Therefore, there exists $\varepsilon_0\in(0,1)$ sufficiently small such that $|\Omega\bigcap B_{\lambda}(P)|$ is small enough and hence
    \begin{equation}\label{c-16}
    	\begin{split}
    	&\quad C(1+\lambda)^{2\varepsilon}{\left\| h_\lambda\right\|}_{L^{\frac{n}{n- \varepsilon}}(\Omega_\lambda^-)}  +C |\Omega_\lambda^-|^{\frac{1}{q}}{\left\| h_\lambda \right\|}_{L^{\frac{q}{q-1}}(\Omega_\lambda^-)} \\
    	&\leq C(1+\lambda)^{2\varepsilon}{\left\| h_{d_{\Omega}+1}\right\|}_{L^{\frac{n}{n- \varepsilon}}(\Omega_\lambda^-)}  +C |\Omega_\lambda^-|^{\frac{1}{q}}{\left\| h_{d_{\Omega}+1} \right\|}_{L^{\frac{q}{q-1}}(\Omega_\lambda^-)} <\frac{1}{2}\end{split}
    \end{equation}
    for any $d_{\Omega}<\lambda<d_{\Omega}+\varepsilon_0$. Thus, by \eqref{c-15}, we must have
    \begin{equation}\label{c-17}
    	{\| \omega^{\lambda}\|}_{L^q(\Omega_\lambda^-)}=0
    \end{equation}
    for any $d_{\Omega}<\lambda<d_{\Omega}+\varepsilon_0$. From the continuity of $\omega^{\lambda}$ in $\overline{\Omega}\setminus\{0\}$ and the definition of $\Omega_\lambda^-$, we immediately get   $\Omega_\lambda^-=\emptyset$ and hence \eqref{c-10} holds true for any $d_{\Omega}<\lambda<d_{\Omega}+\varepsilon_0$. This completes Step 1.

\medskip

    \emph{Step 2. Dilate the sphere piece $S_{\lambda}(P)\bigcap\Omega$ toward the interior of $\Omega$ until $\lambda=+\infty$.} Step 1 provides us a starting point to dilate the sphere piece $S_{\lambda}(P)\bigcap\Omega$ from near $\lambda=d_{\Omega}$. Now we dilate the sphere piece $S_{\lambda}(P)\bigcap\Omega$ toward the interior of $\Omega$ as long as \eqref{c-10} holds. Let
    \begin{equation}\label{c-18}
    	\lambda_{0}:=\sup\{\lambda>d_{\Omega}\,|\, \omega^{\mu}\geq0 \,\, \text{in} \,\, \Omega\cap B_{\mu}(0), \,\, \forall \, d_{\Omega}<\mu\leq\lambda\}\in(d_{\Omega},+\infty],
    \end{equation}
    and hence, one has
    \begin{equation}\label{c-19}
    	\omega^{\lambda_{0}}(x)\geq0, \quad\quad \forall \,\, x\in \Omega\cap B_{\lambda_{0}}(0).
    \end{equation}

\medskip

    In what follows, we will prove $\lambda_{0}=+\infty$ by driving a contradiction under the assumption that $\lambda_0<+\infty$.

\medskip

    In fact, suppose $\lambda_0<+\infty$, we must have
    \begin{equation}\label{c-20}
    	\omega^{\lambda_{0}}(x)\equiv 0, \quad\quad \forall \,\, x\in \Omega\cap B_{\lambda_{0}}(0).
    \end{equation}
    Suppose on the contrary that \eqref{c-20} does not hold, that is, there exists an $x_0 \in \Omega\bigcap B_{\lambda_{0}}(0)$ such that $\omega^{\lambda_{0}}(x_0)>0$. Then, by \eqref{c-14} and \eqref{c-19}, we have
    \begin{equation}\label{c-21}
    	\omega^{\lambda_{0}}(x)>0, \quad\quad \forall \,\, x\in \Omega\cap B_{\lambda_{0}}(0).
    \end{equation}

\medskip

    Let $h_{ \lambda_0 +1}$ be the function such that
    \begin{equation*}
    	\sup_{\alpha,\beta\in [0, M_{ \lambda_0+1}] } \frac{|f(x, \alpha)- f(x, \beta)|}{|\alpha-\beta|}\leq h_{\lambda_0+1}(x)
    \end{equation*}
    with $M_{\lambda_0+1}= \sup\limits_{\Omega\bigcap B_{\lambda_0+1}(0)} u<+\infty$ and  $h_{\lambda_0+1} \in L^{\frac{n}{2s}} (\Omega\bigcap B_R(0)) \   (\forall\, R>0)$.  In particular, $h_{\lambda_0+1} \in L^{\frac{n}{2s}} (\Omega\bigcap B_{\lambda_0+1}(0))$. So there is a constant $\delta_0>0$ such that, for any subset $D\subset \Omega\bigcap B_{\lambda_0+1}(0)$,
    \begin{equation}\label{c-21-1}
    	|D|\leq \delta_0 \ \  \ \text{implies }\  \	C (2+\lambda_{0})^{2\varepsilon}{\left\| h_{\lambda_{0}+1}\right\|}_{L^{\frac{n}{n- \varepsilon}}(D)}  +C |B_{\lambda_0+1}|^{\frac{1}{q}}{\left\| h_{\lambda_{0}+1} \right\|}_{L^{\frac{q}{q-1}}(D)}\leq\frac{1}{2},
    \end{equation}
    where $C$ is the constant in \eqref{c-15}.

\medskip

    Choose $\varepsilon_1\in (0,1)$ sufficiently small such that  the narrow region $A_{\varepsilon_1}\subset  \Omega\bigcap B_{\lambda_{0}}(0)$ satisfies $|A_{\varepsilon_1}|<\frac{ \delta_0 }{2}$, where $A_{\varepsilon_1}$ is defined by
    \begin{equation}\label{c-22}
    	A_{\varepsilon_1}:=\{x\in \Omega\cap B_{\lambda_{0}}(0)\,|\, \text{dist}(x,\partial (\Omega\cap B_{\lambda_{0}}(0)))<\varepsilon_1\}.
    \end{equation}

\medskip

   Since that $\omega^{\lambda_{0}}$ is continuous in $\overline{\Omega}\setminus\{0\}$ and $ K_{\varepsilon_1}:=(\Omega\bigcap B_{\lambda_{0}}(0))\setminus A_{\varepsilon_1} $ is a compact subset, there exists a positive constant $C_{\varepsilon_1}$ such that
    \begin{equation}\label{c-23}
    		\omega^{\lambda_{0}}(x)>C_{\varepsilon_1}, \quad\quad \forall \,\, x\in K_{\varepsilon_1}.
    \end{equation}
    By continuity, we can choose $0<\varepsilon_2<\varepsilon_1$ sufficiently small such that, for any $\lambda\in [\lambda_0,\,\lambda_0+\varepsilon_2]$,
    \begin{equation}\label{c-24}
    	\omega^{\lambda}(x)>\frac{C_{\varepsilon_1}}{2}, \quad\quad \forall \,\, x\in K_{\varepsilon_1},
    \end{equation}
as well as
\begin{equation}\label{c-24-1}
	|B_{\lambda}(0)\setminus B_{\lambda_{0}}(0)|\leq \frac{ \delta_0 }{2}.
\end{equation}
    Hence the set where $\omega^\lambda$ is negative must be contained in a sufficiently narrow region, that is,
    \begin{equation}\label{c-25}
    	\Omega_\lambda^-\subset( \Omega\cap B_{\lambda}(0))\setminus K_{\varepsilon_1}=\left((B_{\lambda}(0)\setminus B_{\lambda_{0}}(0))\cap \Omega\right)\cup A_{\varepsilon_{1}}
    \end{equation}
    for any $\lambda\in [\lambda_0,\,\lambda_0+\varepsilon_2]$. By   \eqref{c-21-1}, \eqref{c-24-1} and $|A_{\varepsilon_1}|<\frac{ \delta_0 }{2}$, we get, for any $\lambda\in [\lambda_0,\,\lambda_0+\varepsilon_2]$,
    \begin{equation}\label{c-26}\begin{split}
    	C &(1+\lambda)^{2\varepsilon}{\left\| h_\lambda\right\|}_{L^{\frac{n}{n- \varepsilon}}(\Omega_\lambda^-)}  +C |\Omega_\lambda^-|^{\frac{1}{q}}{\left\| h_\lambda \right\|}_{L^{\frac{q}{q-1}}(\Omega_\lambda^-)}\\
    	&\leq C (2+\lambda_{0})^{2\varepsilon}{\left\| h_{\lambda_0+1}\right\|}_{L^{\frac{n}{n- \varepsilon}}(\Omega_\lambda^-)}  +C |B_{\lambda_0+1}|^{\frac{1}{q}}{\left\| h_{\lambda_0+1} \right\|}_{L^{\frac{q}{q-1}}(\Omega_\lambda^-)} <\frac{1}{2}.\end{split}
    \end{equation}
    Then, by the same argument as in deriving \eqref{c-17} in step 1, we obtain that, for any $\lambda\in[\lambda_0,\,\lambda_0+\varepsilon_2]$,
    \begin{equation}\label{c-27}
    	\omega^{\lambda}(x)\geq 0, \quad\quad \forall \,\, x\in  \Omega\cap B_\lambda(0).
    \end{equation}
    This contradicts the definition of $\lambda_0$ and hence \eqref{c-20} must hold true.

\medskip

    However, by \eqref{c-9} and  \eqref{c-20}, we obtain
    \begin{eqnarray*}
    	0&=&\omega^{\lambda_{0}}(x)=u_{\lambda_0}(x)-u(x)\\
    	&>& \int_{\Omega\cap B_{\lambda_{0}}(0)}\left(K^{\frac{n}{2}}_{\Omega}(x^{\lambda_{0}},y^{\lambda_{0}})-K^{\frac{n}{2}}_{\Omega}(x,y^{\lambda_{0}})\right) \left(f(y , u_{\lambda_{0}}(y))-  f(y, u(y))\right)\mathrm dy\nonumber=0\nonumber
    \end{eqnarray*}
    for any $x\in\, \Omega\cap B_{\lambda_{0}}(0) $, which is absurd. Thus we must have $\lambda_0=+\infty$, that is,
    \begin{equation}\label{c-28}
    	u(x)\geq u\left(\frac{\lambda^{2}x}{|x|^{2}}\right), \quad\quad \forall \,\, x\in\Omega \,\,\,\text{s.t.}\,\,|x|\geq\lambda, \,\,\, \quad \forall \,\, d_{\Omega}<\lambda<+\infty.
    \end{equation}

\medskip

     \emph{Step 3. Derive lower bound estimates on asymptotic behavior of $u$ via Bootstrap technique in a smaller cone.}
     Let $\Sigma$ be the cross-section in $(\mathbf{f_{3}})$ and $\mathbf{(H_{2})}$ such that $ \mathcal{C}^{R,e}_{0,\Sigma} \subset\Omega$ for some $R>d_{\Omega}\geq0$. We infer from \eqref{c-28} that
     \begin{equation}\label{c-29}
     	u(x)\geq  u\left(\frac{\lambda^{2}x}{|x|^{2}}\right), \quad\quad \forall \,\, x\in \mathcal{C}^{R,e}_{0,\Sigma} \,\,\text{with}\,\, |x|\geq\lambda, \quad \forall \,\,  \lambda>R.
     \end{equation}
     For arbitrary $ x\in  \mathcal{C}^{R+1,e}_{0,\Sigma}$, take $0<\lambda:=\sqrt{(R+1) |x| }<|x|$, then \eqref{c-29} yields that
     \begin{equation}\label{c-30}
     	u(x)\geq  u\left(\frac{(R+1) x}{|x|}\right).
     \end{equation}
     Then we arrive at the following lower bound estimate on asymptotic behavior of $u$ as $|x|\rightarrow+\infty$:
     \begin{equation}\label{c-31}
     	u(x)\geq \min_{|x|=R+1 , \frac{x}{|x|}\in \Sigma}u(x)=:\  C_{0}>0, \quad\quad \forall \,\,x\in  \mathcal C^{R+1,e}_{0, \Sigma}.
     \end{equation}

\medskip

     By using a ``Bootstrap" iteration technique, the lower bound estimate \eqref{c-31} can be improved remarkably when $f$ satisfies $(\mathbf{f_{3}})$ with $1\leq p<+\infty$. Let $R_\ast:=R_0+R+1$, where $R_0$ is the constant in $(\mathbf{H_{2}})$. Set $\mu_{0}:=0$, we infer from  the integral equation \eqref{I-1}, assumptions $(\mathbf{H_{2}})$ and $(\mathbf{f_{3}})$ that, for any $x\in \mathcal C^{R_\ast,e}_{0, \Sigma}$,
     \begin{eqnarray*}
     	u(x)&\geq& C\int_{y\in   \mathcal C^{R_\ast,e}_{0, \Sigma}, \, \sigma_1|x|\leq |x-y|\leq \sigma_2|x|}K^s_{\Omega} (x,y)|y|^a u^p(y)\mathrm dy \\
     	\nonumber &\geq&C\int_{y\in   \mathcal C^{R_{\ast},e}_{0, \Sigma}, \, \sigma_1|x|\leq |x-y|\leq \sigma_2|x|}|y|^a u^p(y) \mathrm  dy  \\
     	\nonumber  &\geq& C_{1} |x|^{p\mu_{0}+(n+a)}.
     \end{eqnarray*}
     Now, let $\mu_{1}:=p\mu_{0}+(n+a)$. Due to $1\leq p<+\infty$ and $a>-n$, our important observation is
     \begin{equation*}
     	\mu_{1}=p\mu_{0}+(n+a)>\mu_{0}.
     \end{equation*}
     Thus we have obtained a better lower bound estimate than \eqref{c-31} after an iteration, that is,
     \begin{equation}\label{c-32}
     	u(x)\geq C_{1} |x|^{\mu_{1}}, \quad\quad \forall \,\, x\in \mathcal C^{R_{\ast},e}_{0, \Sigma}.
     \end{equation}

\medskip

     For $k=0,1,2,\cdots$, define
     \begin{equation}\label{c-33}
     	\mu_{k+1}:=p\mu_{k}+(n+a).
     \end{equation}
     Since $1\leq p+\infty$ and $a>-n$, it is easy to see that the sequence $\{\mu_{k}\}$ is monotone increasing with respect to $k$. Repeating the above iteration process involving the integral equation \eqref{I-1}, we have the following lower bound estimates for every $k=0,1,2,\cdots$,
     \begin{equation}\label{c-34}
     	u(x)\geq C_{k} |x|^{\mu_{k}}, \quad\quad \forall \,\, x\in  \mathcal C^{R_\ast,e}_{0, \Sigma}.
     \end{equation}
     Using the definition \eqref{c-33} and the assumption $a>-n$, one can verify that as  $k\rightarrow+\infty$,
     \begin{equation}\label{c-35}
     	\mu_{k}\rightarrow+\infty \quad \text{provided that } \,\,\, 1\leq p<+\infty,
     \end{equation}
     which combined with \eqref{c-34} imply the desired estimates \eqref{I-2} in Theorem \ref{lowerIE-1} for the critical order case $2s=n$.

\medskip

The proof of Theorem \ref{lowerIE-1} is thus completed.
\end{proof}	

\medskip

	Now with the lower bound estimates \eqref{I-2} in Theorem \ref{lowerIE-1} in hand, we are ready to prove Theorem \ref{ie-unbd}.
	
\begin{proof}[\textbf{Proof of Theorem \ref{ie-unbd}}]	
	The assumption $(\mathbf{H_{2}})$ provides a fixed point $x_0\in\Omega$ such that, for some $\theta\in\mathbb{R}$ and constant $\hat R>d_{\Omega}$, it holds
	\begin{equation}\label{I-35}
		K^{s}_{\Omega}(x_0,y)\geq \frac{C}{|y-P|^{\theta}},\ \qquad  \forall \,\, y\in \mathcal C^{\hat R,e}_{P, \Sigma}.
	\end{equation}
	Then we infer from $\mathbf{(f_{3})}$, \eqref{I-1}, \eqref{I-2} with $\kappa=\frac{\theta-a}{p}$ and \eqref{I-35} that
	\begin{equation*}
		+\infty>u(x_0)\geq   \int_{\mathcal C^{\max\{R_\ast, \hat R\},e}_{P, \Sigma}} K^{s}_{\Omega}(x_0,y) f(y, u(y)) \mathrm dy\geq \int_{\mathcal C^{\max\{R_\ast, \hat R\},e}_{P, \Sigma}} C \mathrm dy=+\infty,
	\end{equation*}
	which is absurd. Hence we must have $u\equiv0$ and $f(\cdot,0)\equiv0$. This concludes our proof of Theorem \ref{ie-unbd}.
\end{proof}

\medskip

\subsection{MSS in local way and Proof of Theorem \ref{ie-bd}}

Similar as the proof of Theorems \ref{pde-bd} and \ref{pde-bd-2}, Theorem \ref{ie-bd} $(i)$ follows easily from Theorem \ref{ie-unbd} and Kelvin transforms, so we omit the details.

\medskip

Next, we shall give the proof of Theorem  \ref{ie-bd} $(ii)$ via the method of scaling spheres in local way.

\medskip

Let $\Omega\subset \mathbb R^n$ is a bounded g-radially convex domain with $P\in \overline{\Omega}$ as the center. Recall that we consider the integral equation \eqref{IE} in  $\Omega \subset\mathbb{R}^n$, that is,
\begin{equation}\label{b-1}
	u(x)=\int_{\Omega}K^{s}_{\Omega}(x,y)f(y,u(y))\mathrm dy,
\end{equation}
where $u\in C(\overline{\Omega}\setminus\{P\})$, $n\geq1$ and $s\in\left(0,\frac{n}{2}\right)$, the integral kernel $K^{s}_{\Omega}$ satisfies the assumptions $(\mathbf{H_{1}})$, $(\mathbf{\widetilde{H}_{2}})$ and $(\mathbf{\widetilde{H}_{3}})$, $f\geq0$ is critical or supercritical in the sense that $\mu^{\frac{n+2s}{n-2s}}f(\mu^{\frac{2}{n-2s}}(x-P)+P,\mu^{-1}u)$ is non-increasing w.r.t. $\mu\geq1$ or $\mu\leq1$ for all $(x,u)\in(\Omega\setminus\{P\})\times\mathbb{R}_{+}$ as well as  satisfies the assumptions $(\mathbf{f_{1}})$, $(\mathbf{\widetilde{f}_{2}})$ and $(\mathbf{f_{3}})$ with the same cross-section $\Sigma$ as $(\mathbf{\widetilde{H}_{2}})$. Moreover, the integral kernel $K^{s}_{\Omega}(x,y)=0$ if $x$ or $y\notin\Omega$.

\medskip

We aim to prove that $u\equiv0$ in $\Omega$. Suppose on the contrary that $u$ is a nonnegative continuous solution of IE \eqref{b-1} but $u\not\equiv0$, then there is a point $x_0\in \Omega $ such that $0<u(x_0)= \int_{\Omega}K^{s}_{\Omega}(x_0,y)f(y,u(y)) \mathrm{d}y$. So we find that $f(\cdot,u(\cdot)) >0$ on a subset of $\Omega$ with positive Lebesgue measure and hence we get $$u(x)>0, \qquad  \ \forall\,\, x\in\Omega$$ by using \eqref{b-1}. We will derive a contradiction via the method of scaling spheres in local way. The key step is to establish the following \emph{lower bound estimates on the asymptotic behavior} of $u$.
\begin{thm}\label{lowerIE-2}
	Suppose that $u\in C({\overline {\Omega}}\setminus\{P\})$ is a positive solution to IEs \eqref{b-1}. Then it satisfies the following lower bound estimates:
	\begin{itemize}
		\item [(i).] If $f$ satisfies $(\mathbf{ {f}_{3}})$ with only $p=p_{c}(a):=\frac{n+2s+2a}{n-2s}$, then
		\begin{equation}\label{lb-1}
			u(x)\geq C |x-P|^{-\frac{n-2s}{2}},  \quad\quad \forall\,\, x\in \mathcal{C}^{\tilde r}_{P,\Sigma}.
		\end{equation}
		\item [(ii).]  If $f$ satisfies $(\mathbf{ {f}_{3}})$ with  $p_{c}(a):=\frac{n+2s+2a}{n-2s}<p<+\infty$, then 	\begin{equation}\label{lb-2}
			u(x)\geq C_{\kappa}|x-P|^{-\kappa}, \quad\quad \forall \, \kappa<+\infty, \quad\quad \forall\,\, x\in \mathcal{C}^{\tilde r}_{P,\Sigma}.
		\end{equation}
	\end{itemize}
Here $0<\tilde r<\rho_\Omega:=\max\limits_{x\in\overline{\Omega}}|x-P|$ is a small radius and $\Sigma\subseteq\Sigma^{\tilde r}_{\Omega}$ is the cross-section in assumptions  $(\mathbf{\widetilde{H}_{2}})$ and $(\mathbf{ {f}_{3}})$. 	
\end{thm}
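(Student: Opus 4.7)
The plan is to prove Theorem \ref{lowerIE-2} by carrying out the \emph{method of scaling spheres in a local way}, shrinking the sphere piece $S_\lambda(P)\bigcap\Omega$ starting near $\lambda=\rho_\Omega$ and moving inward toward $\lambda=0$. Without loss of generality, assume $P=0$. Since $\Omega$ is g-radially convex with center $0$, for every $\lambda\in(0,\rho_\Omega)$ and every $x\in\Omega\setminus\overline{B_\lambda(0)}$ the Kelvin reflection $x^\lambda=\frac{\lambda^2 x}{|x|^2}$ belongs to $\Omega\bigcap B_\lambda(0)$. Define the Kelvin transform $u_\lambda(x):=(\lambda/|x|)^{n-2s}u(x^\lambda)$ and the defect $\omega^\lambda:=u-u_\lambda$. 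Using $(\mathbf{\widetilde{H}_{3}})$, the critical/super-critical scaling monotonicity of $f$ in Definition \ref{defn1}, and a calculation entirely parallel to \eqref{I-7} but with $\Omega\bigcap B_\lambda(0)$ replaced by $\Omega\setminus\overline{B_\lambda(0)}$, one obtains on $\Omega\setminus\overline{B_\lambda(0)}$ a pointwise integral inequality of the form
\begin{equation*}
\omega^\lambda(x)\geq \int_{\Omega\setminus \overline{B_\lambda(0)}}\!\!\Bigl[K^{s}_{\Omega}(x,y)-\bigl(\tfrac{\lambda}{|x|}\bigr)^{n-2s}K^{s}_{\Omega}(x^\lambda,y)\Bigr]\bigl(f(y,u_\lambda(y))-f(y,u(y))\bigr)\,\mathrm{d}y.
\end{equation*}

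The argument is organized in three steps. \textbf{Step 1 (start):} for $\lambda<\rho_\Omega$ with $\rho_\Omega-\lambda$ sufficiently small, the set $\Omega\setminus\overline{B_\lambda(0)}$ is a thin shell, so the Hardy--Littlewood--Sobolev estimate combined with $(\mathbf{\widetilde{f}_{2}})$ (giving locally $L^{n/2s}$-Lipschitz control of $f$ away from $P$) and the small-region technique from \eqref{I-11}--\eqref{I-14} forces the negative set $\{\omega^\lambda<0\}\subset\Omega\setminus\overline{B_\lambda(0)}$ to be empty. \textbf{Step 2 (shrink):} define
\begin{equation*}
\lambda_0:=\inf\bigl\{\lambda\in(0,\rho_\Omega)\,\big|\,\omega^\mu\geq 0\text{ in }\Omega\setminus\overline{B_\mu(0)},\ \forall\,\mu\in[\lambda,\rho_\Omega)\bigr\},
\end{equation*}
and show $\lambda_0=0$. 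Suppose $\lambda_0>0$. The analogue of the dichotomy in the proof of Theorem \ref{ie-unbd} (Step~2) shows that either $\omega^{\lambda_0}\equiv 0$ on $\Omega\setminus\overline{B_{\lambda_0}(0)}$, or else $\omega^{\lambda_0}>0$ there, in which case a narrow-shell argument around $S_{\lambda_0}$ permits shrinking $\lambda$ slightly below $\lambda_0$, contradicting the definition of $\lambda_0$. In case (ii), strict super-criticality of $f$ in Definition \ref{defn1} (together with $f(x,u)>0$ when $u>0$, as explicitly assumed) makes the strict inequality in the integral comparison above strict, ruling out $\omega^{\lambda_0}\equiv 0$. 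In case (i) (critical), the identity $u\equiv u_{\lambda_0}$ on $\Omega\setminus\overline{B_{\lambda_0}(0)}$ forces $u(x)=(\lambda_0/|x|)^{n-2s}u(x^{\lambda_0})$ as $x\to 0$; because $x^{\lambda_0}$ then migrates to a region where $u$ remains bounded away from $0$, this produces $u(x)\gtrsim |x|^{-(n-2s)}$, contradicting the decay hypothesis $u(x)=o(|x|^{-(n-2s)/2})$. Either way $\lambda_0=0$.

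\textbf{Step 3 (extraction of the base lower bound):} with $\lambda_0=0$ in hand, for any fixed $\tilde r\in(0,\rho_\Omega)$ and any $x\in\mathcal{C}^{\tilde r}_{P,\Sigma}$ sufficiently close to $P$, choose $\lambda=\sqrt{\tilde r\,|x|}$. Then $x^\lambda$ lies at distance $\tilde r$ from $P$ inside the cone, where $u$ is bounded below by a positive constant, and
\begin{equation*}
u(x)\geq\Bigl(\tfrac{\lambda}{|x|}\Bigr)^{n-2s}u(x^\lambda)\geq C\,|x|^{-(n-2s)/2},
\end{equation*}
proving \eqref{lb-1} and providing the base exponent $\mu_0=(n-2s)/2$ for case (ii). \textbf{Bootstrap for (ii):} by the Kelvin-plus-monotone-convergence argument used to derive \eqref{pde-26} (adapted to the g-radially convex case via $(\mathbf{\widetilde{H}_{2}})$), $u$ satisfies $u(x)\geq C\int_{\mathcal{C}^{\tilde r}_{P,\Sigma}}K^{s}_{\Omega}(x,y)|y|^{a}u^{p}(y)\,\mathrm{d}y$. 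Inserting $u(y)\geq C|y|^{-\mu_k}$ into this inequality, using the lower bound of $K^{s}_{\Omega}$ on the annular set $\{\sigma_1|y|\leq|x-y|\leq\sigma_2|y|\}$ from $(\mathbf{\widetilde{H}_{2}})$, and integrating, improves the exponent to $\mu_{k+1}:=p\mu_k-(2s+a)$. Since $p>p_c(a)$, the fixed point $\mu^\ast=\frac{2s+a}{p-1}<\frac{n-2s}{2}=\mu_0$, hence $\mu_k\nearrow+\infty$, giving \eqref{lb-2} for every $\kappa<+\infty$.

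The main obstacle I anticipate is the rigorous exclusion of the forced-equality scenario $\omega^{\lambda_0}\equiv 0$ in the critical case (i): here one must exploit the strict comparison in $(\mathbf{\widetilde{H}_{3}})$ to show that any equality necessarily propagates to a pointwise identity $u(x)=(\lambda_0/|x|)^{n-2s}u(x^{\lambda_0})$ valid up to the boundary, and then combine the explicit near-$P$ behavior encoded in $(\mathbf{\widetilde{H}_{2}})$ with the fact that $x^{\lambda_0}$ stays in a compact subset of $\Omega$ where $u$ is bounded below, so that the asymptotic hypothesis $u(x)=o(|x|^{-(n-2s)/2})$ can indeed be contradicted. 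A secondary technical point is guaranteeing the Kelvin-based integral inequality used in the bootstrap on the g-radially convex domain $\Omega$: this requires verifying that the relevant Green-type kernel dominated by $K^{s}_{\Omega}$ still enjoys the cone-annular lower bound of $(\mathbf{\widetilde{H}_{2}})$ after truncation to $\mathcal{C}^{\tilde r}_{P,\Sigma}\subset\Omega$, which is built into the assumption $\mathcal{C}^{r}_{P,\Sigma}\subset\Omega$ in $(\mathbf{\widetilde{H}_{2}})$.
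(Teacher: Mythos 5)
There is a genuine and structural gap in your proposal: you run the scaling-spheres comparison on the wrong side of the sphere $S_\lambda(P)$. In the g-radially convex case the MSS in local way must be carried out on the \emph{reflected interior} piece $(\Omega\setminus B_{\lambda}(P))^{\lambda}\subset \Omega\bigcap B_{\lambda}(P)$, i.e.\ the image under $x\mapsto x^{\lambda}$ of the exterior part of $\Omega$, and the conclusion needed is $u\geq u_{\lambda}$ there (equivalently $\omega^{\lambda}:=u_\lambda-u\leq 0$ there, as in \eqref{b-4}--\eqref{b-5}). You instead set up the comparison on $\Omega\setminus \overline{B_{\lambda}(P)}$ and try to show $u-u_{\lambda}\geq 0$ there. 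By the antisymmetry $(u-u_{\lambda})(y)=-\left(\tfrac{\lambda}{|y-P|}\right)^{n-2s}(u-u_{\lambda})(y^{\lambda})$, the inequality you establish on the exterior is \emph{equivalent} to $u(x)\leq u_{\lambda}(x)$ on the reflected interior, which is the reverse of what Step~3 invokes — in Step~3 you pick $x$ with $|x-P|<\lambda$ (interior) and use $u(x)\geq(\lambda/|x-P|)^{n-2s}u(x^{\lambda})$. So Steps~1--2 and Step~3 of your proposal are mutually contradictory. Relatedly, the kernel positivity you rely on, $K^{s}_{\Omega}(x,y)-(\lambda/|x-P|)^{n-2s}K^{s}_{\Omega}(x^{\lambda},y)>0$, is furnished by $(\mathbf{\widetilde H_{3}})$ only for $x\in\Omega\bigcap B_{\lambda}(P)\setminus\{P\}$; for $x$ in the exterior it reverses sign (apply $(\mathbf{\widetilde H_{3}})$ with $x$ replaced by $x^{\lambda}$), so your claimed integral inequality on $\Omega\setminus\overline{B_{\lambda}(P)}$ is not supported by the assumptions.

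A secondary gap: to exclude $\omega^{\lambda_0}\equiv 0$ in the critical case you invoke the decay hypothesis $u(x)=o\bigl(|x-P|^{-\frac{n-2s}{2}}\bigr)$, but that hypothesis is \emph{not} among the hypotheses of Theorem~\ref{lowerIE-2}; it appears only in Theorem~\ref{ie-bd}(ii), precisely \emph{after} the lower bound \eqref{lb-1} has already been established, as the contradiction mechanism. The paper's proof avoids the dichotomy at $\lambda_0$ entirely: using the running hypothesis from Theorem~\ref{ie-bd}(ii) (that $f$ is strictly supercritical, or else $f(x,u)>0$ whenever $u>0$), the chain \eqref{b-9}--\eqref{b-11} yields a \emph{strict} inequality $\omega^{\lambda_0}<0$ everywhere on $(\Omega\setminus B_{\lambda_0}(P))^{\lambda_0}$ as soon as $\lambda_0>0$, and then the compact-set/narrow-shell argument extends the interval, giving the contradiction without ever facing the case $\omega^{\lambda_0}\equiv 0$. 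Your bootstrap in case~(ii) is fine up to the sign convention ($\mu_0=\frac{n-2s}{2}$ vs.\ the paper's $-\frac{n-2s}{2}$), but it rests on the corrected Step~3.
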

\begin{proof}
Without loss of generalities, we may assume that $P=0$ and $\rho_{\Omega}:=\max\limits_{x\in\overline{\Omega}}|x-P|=1$. Given any $0<\lambda<1$, we define $x^{\lambda}:=\frac{\lambda^{2}x}{|x|^{2}}$ and the Kelvin transform of $u$ centered at $0$ by
\begin{equation}\label{b-2}
	u_{\lambda}(x)=\left(\frac{\lambda}{|x|}\right)^{n-2s}u\left(\frac{\lambda^{2}x}{|x|^{2}}\right).
\end{equation}

\medskip

Now, we will carry out the process of scaling spheres in $\Omega$ with respect to the center $P=0\in\mathbb{R}^{n}$.

\medskip

Let $0<\lambda<1$ be an arbitrary positive real number and let $\omega^{\lambda}(x):=u_{\lambda}(x)-u(x)$. We aim to show that
\begin{equation}\label{b-4}
	\omega^{\lambda}(x)\leq0, \quad\,\,\,\,\, \forall \,\, x\in ( \Omega\setminus  B_{\lambda}(0) )^\lambda:=\{x\in\Omega |\, x^\lambda\in \Omega\setminus  B_{\lambda}(0) \}.
\end{equation}

\medskip

Note that the g-radial convexity of $\Omega$ implies $$( \Omega\setminus  B_{\lambda}(0) )^\lambda\subset \Omega\cap B_{\lambda}(0).$$
We first prove \eqref{b-4} for $0<\lambda<\rho_{\Omega}=1$ sufficiently close to $1$. Then, we start shrinking the sphere piece $S_{\lambda}\bigcap\Omega$ from near the unit sphere $S_{1}$ toward the interior of $\Omega$ as long as \eqref{b-4} holds, until its limiting position $\lambda=0$ and derive lower bound estimates on asymptotic behavior of $u$ as $x\rightarrow P=0$. Therefore, the method of scaling spheres in local way will be carried out by three steps.

\medskip

\emph{Step 1. Start shrinking the sphere piece $S_{\lambda}\bigcap\Omega$ from near $\lambda=\rho_{\Omega}=1$.} Define
\begin{equation}\label{b-5}
	\Omega_\lambda ^{+}:=\{x\in ( \Omega\setminus  B_{\lambda}(0) )^\lambda \, | \, \omega^{\lambda}(x)>0\}.
\end{equation}
We will show that, for $0<\lambda<1$ sufficiently close to $1$,
\begin{equation}\label{b-6}
	\Omega_\lambda^{+}=\emptyset.
\end{equation}

\medskip

Since the positive solution $u$ solves the integral equation \eqref{b-1}, through direct calculations, we get, for any $0<\lambda<1$,
\begin{equation}\label{b-7}\begin{split}
		u(x)&=\int_{\Omega\cap B_{\lambda}(0) }K^{s}_{\Omega}(x,y) f(y, u(y))\mathrm dy\\
		&+\int_{( \Omega\setminus  B_{\lambda}(0) )^\lambda}\left(\frac{\lambda}{|y|}\right)^{n-2s}K^{s}_{\Omega}(x,y^{\lambda})
		\left(\frac{\lambda}{|y|}\right)^{n+2s} f\left(y^\lambda, \left(\frac{\lambda}{|y|}\right)^{ 2s-n} u_{\lambda}(y) \right)\mathrm dy.\end{split}
\end{equation} By similar calculations, one can also verify that $u_{\lambda}$ satisfies
\begin{align}\label{b-8}
	u_{\lambda}(x)&=\left(\frac{\lambda}{|x|}\right)^{n-2s} \int_{\Omega}K^{s}_{\Omega}(x^{\lambda},y)f(y, u(y))\mathrm  dy\nonumber\\
	&=\int_{\Omega\cap B_{\lambda}(0) }K^{s}_{\Omega} (x^{\lambda},y)\left(\frac{\lambda}{|x|}\right)^{n-2s}f(y, u(y)) \mathrm  dy \\
	\nonumber   &+\int_{( \Omega\setminus  B_{\lambda}(0) )^\lambda}\left(\frac{\lambda^{2}}{|x| |y|}\right)^{n-2s}K^{s}_{\Omega}(x^{\lambda},y^{\lambda})\left(\frac{\lambda}{|y|}\right)^{n+2s}
	f\left(y^\lambda, \left(\frac{\lambda}{|y|}\right)^{ 2s-n} u_{\lambda}(y) \right)\mathrm  dy.
\end{align}
Therefore, we have,  for any $x\in ( \Omega\setminus  B_{\lambda}(0))^\lambda$,
\begin{eqnarray}\label{b-9}
	&& \quad \omega^{\lambda}(x)=u_{\lambda}(x)-u(x) \\
	\nonumber &&= \int_{( \Omega\setminus  B_{\lambda}(0) )^\lambda}\left[\left(\frac{\lambda^{2}}{|x| |y|}\right)^{n-2s}K^{s}_{\Omega}(x^{\lambda},y^{\lambda})
	-\left(\frac{\lambda}{|y|}\right)^{n-2s}K^{s}_{\Omega}(x,y^{\lambda})\right]\\
	&&\quad \times   \left(\frac{\lambda}{|y|}\right)^{n+2s}
	f\left(y^\lambda, \left(\frac{\lambda}{|y|}\right)^{2s-n} u_{\lambda}(y) \right) \mathrm dy	\nonumber \\
	&& \quad -\int_{( \Omega\setminus  B_{\lambda}(0) )^\lambda}\left[K^{s}_{\Omega}(x,y)-\left(\frac{\lambda}{|x|}\right)^{n-2s}K^{s}_{\Omega}(x^{\lambda},y)\right] f(y, u(y))\mathrm dy \nonumber\\
	\nonumber && \quad + \int_{(\Omega\cap B_\lambda(0))\setminus ( \Omega\setminus B_{\lambda}(0))^\lambda}\left[\left(\frac{\lambda}{|x|}\right)^{n-2s}K^{s}_{\Omega}(x^{\lambda},y)-K^{s}_{\Omega}(x,y)\right] f(y, u(y))\mathrm dy.
\end{eqnarray}

Since $0<u(x)<u_\lambda(x)\leq M_\lambda $ for any $x\in \Omega_\lambda^+$ with $\sup\limits_{( \Omega\setminus  B_{\lambda}(0) )^\lambda} u_\lambda \leq M_{\lambda}:=\lambda^{2s-n}\sup\limits_{\Omega\setminus B_{\lambda}(0)} u<+\infty$, we derive from the assumptions $(\mathbf{f_{1}})$ and $(\mathbf{f_{2}})$ that
\begin{equation}\label{b-10}
	0\leq c_\lambda(x):=\frac{f(x, u_\lambda(x))- f(x, u(x))}{u_\lambda(x)-u(x)}\leq \sup_{\alpha,\beta\in [0, M_\lambda] } \frac{|f(x, \alpha)- f(x, \beta)|}{|\alpha-\beta|}\leq h_\lambda(x)
\end{equation}
for some $h_\lambda \in L^{\frac{n}{2s}} (\Omega\setminus B_\varepsilon(0))\  (\forall\, \varepsilon>0)$. Moreover, it can be seen that $h_\lambda$ can be chosen decreasingly depending on $\lambda$ since $M_\lambda$ is decreasing as $\lambda$ increases.

\medskip

From assumptions $\mathbf{(H_1)}$, $\mathbf{(\widetilde{H}_3)}$, the integral equations \eqref{b-9}, \eqref{b-10}, $\mathbf{(f_{1})}$ and the non-subcritical assumption on $f$, one can derive that, for any $x\in(\Omega\setminus  B_{\lambda}(0))^\lambda$,
\begin{eqnarray}\label{b-11}
	&&\quad \omega^{\lambda}(x)=u_{\lambda}(x)-u(x)  \\
	\nonumber &\leq&\int_{( \Omega\setminus  B_{\lambda}(0) )^\lambda}\Bigg[K^{s}_{\Omega}(x,y)-\left(\frac{\lambda}{|x|}\right)^{n-2s}K^{s}_{\Omega}(x^{\lambda},y)\Bigg]  \\
\nonumber &&\times\left[ \left(\frac{\lambda}{|y|}\right)^{n+2s}
	f\left(y^\lambda, \left(\frac{\lambda}{|y|}\right)^{2s-n} u_{\lambda}(y) \right)	- f(y, u(y))\right]  \mathrm dy\\
\nonumber && + \int_{(\Omega\cap B_\lambda(0))\setminus ( \Omega\setminus B_{\lambda}(0))^\lambda}\left[\left(\frac{\lambda}{|x|}\right)^{n-2s}K^{s}_{\Omega}(x^{\lambda},y)-K^{s}_{\Omega}(x,y)\right] f(y, u(y))\mathrm dy \\
	\nonumber &<&\int_{( \Omega\setminus  B_{\lambda}(0) )^\lambda}\Bigg(K^{s}_{\Omega}(x,y)-\left(\frac{\lambda}{|x|}\right)^{n-2s}K^{s}_{\Omega}(x^{\lambda},y)\Bigg) (f(y, u_\lambda(y))- f(y, u(y)))\mathrm dy\\
\nonumber &\leq&    \int_{\Omega_\lambda^{+}}K^{s}_{\Omega}(x,y)(f(y, u_\lambda(y))- f(y, u(y)))\mathrm dy \\
	\nonumber &\leq&    \int_{\Omega_\lambda^{+}}\frac{C }{|x-y|^{n-2s}}h_\lambda(y)\omega^{\lambda}(y)\mathrm dy.
\end{eqnarray}
The second strict inequality in \eqref{b-11} comes from the assumption that $f$ is supercritical in the sense of Definition \ref{defn1} or else $f(x,u)>0$ provided that $u>0$ for a.e. $x\in\Omega\setminus\{P\}$.

\medskip

By Hardy-Littlewood-Sobolev inequality and \eqref{b-11}, we have, for any $\frac{n}{n-2s}<q<\infty$,
\begin{eqnarray}\label{b-12}
	\|\omega^{\lambda}\|_{L^{q}(\Omega_\lambda^{+})} \leq  C  \left\|h_\lambda \omega^{\lambda}\right\|_{L^{\frac{nq}{n+2sq}}(\Omega_\lambda^{+})}
	\leq  C \left\|h_\lambda \right\|_{L^{\frac{n}{2s}}(\Omega_\lambda^{+})}
	\cdot\|\omega^{\lambda}\|_{L^{q}(\Omega_\lambda^{+})}.
\end{eqnarray}
Since $h_\lambda \in L^{\frac{n}{2s}} (\Omega\setminus B_\varepsilon(0)) \ (\forall\, \varepsilon>0)$ and $0\leq h_\lambda\leq h_{\frac{1}{2}}$ if $\lambda\geq\frac{1}{2}$, there exists an $\epsilon_{0}\in\left(0,\frac{1}{2}\right)$ small enough, such that $|( \Omega\setminus  B_{\lambda}(0) )^\lambda|$ is sufficiently small and hence
\begin{equation}\label{b-13}
	C \left\|h_\lambda \right\|_{L^{\frac{n}{2s}}(\Omega_\lambda^+)}\leq\frac{1}{2}
\end{equation}
for all $1-\epsilon_{0}\leq\lambda<1$, so \eqref{b-12} implies
\begin{equation}\label{b-14}
	\|\omega^{\lambda}\|_{L^{q}(\Omega_\lambda^+)}=0,
\end{equation}
which means $\Omega_\lambda^+=\emptyset$. Therefore, we have proved that $\Omega_\lambda^+=\emptyset$ for all $1-\epsilon_{0}\leq\lambda<1$, that is,
\begin{equation}\label{b-15}
	\omega^{\lambda}(x)\leq0, \,\,\,\,\,\,\, \forall \, x\in ( \Omega\setminus  B_{\lambda}(0) )^\lambda.
\end{equation}
This completes Step 1.

\medskip

\emph{Step 2. Shrink the sphere piece $S_{\lambda}\bigcap\Omega$ toward the interior of $\Omega$ until $\lambda=0$.} Step 1 provides us a starting point to shrink the sphere piece $S_{\lambda}\bigcap\Omega$ from near $\lambda=\rho_{\Omega}=1$. Now we shrink the sphere piece $S_{\lambda}\bigcap\Omega$ toward the interior of $\Omega$ as long as \eqref{b-4} holds. Let
\begin{equation}\label{b-16}
	\lambda_{0}:=\inf\{0<\lambda<1\,|\, \omega^{\mu}\leq0 \,\, \text{in} \,\, ( \Omega\setminus  B_{\mu}(0) )^\mu, \,\, \forall \, \lambda\leq\mu<1\}\in[0,1),
\end{equation}
and hence, one has
\begin{equation}\label{b-17}
	\omega^{\lambda_{0}}(x)\leq0, \quad\quad \forall \,\, x\in  ( \Omega\setminus  B_{\lambda_0}(0) )^{\lambda_0 }.
\end{equation}
In what follows, we will prove $\lambda_{0}=0$ by contradiction arguments.

\medskip

Indeed, suppose on the contrary that $\lambda_0>0$. It follows from \eqref{b-11}, \eqref{b-17} and $\mathbf{(f_{1})}$ that
\begin{equation}\label{b-18}
	\omega^{\lambda_{0}}(x)<0, \,\,\,\,\quad\, \forall \, x\in  ( \Omega\setminus  B_{\lambda_0}(0) )^{\lambda_0 }.
\end{equation}
Let $h_{\frac{\lambda_0}{2}}$ be the function such that
\begin{equation*}
	\sup_{\alpha,\beta\in \left[0, M_{\frac{\lambda_0}{2}}\right] } \frac{|f(x, \alpha)- f(x, \beta)|}{|\alpha-\beta|}\leq h_{\frac{\lambda_0}{2}}(x)
\end{equation*}
with $M_{\frac{\lambda_0}{2}}:= \left({\frac{\lambda_0}{2}}\right)^{2s-n}\sup\limits_{\Omega\setminus B_{\frac{\lambda_0}{2}}(0)} u<+\infty$ and  $h_{\frac{\lambda_0}{2}} \in L^{\frac{n}{2s}} (\Omega\setminus B_\varepsilon(0)) \  (\forall\, \varepsilon>0)$. In particular, $h_{\frac{\lambda_0}{2}} \in L^{\frac{n}{2s}}\left(\Omega\setminus B_{\frac{\lambda_0^2}{4}}(0)\right)$. So there is a constant $\varepsilon_0>0$ such that, for any subset $D\subset\Omega\setminus B_{\frac{\lambda_0^2}{4}}(0)$,
\begin{equation}\label{b-19}
	|D|\leq \varepsilon_0 \quad\,\, \text{implies} \,\,\quad\,\, C \left\|h_{\frac{\lambda_0}{2}} \right\|_{L^{\frac{n}{2s}}(D)}\leq\frac{1}{2},
\end{equation}
where $C$ is the constant in \eqref{b-12}.

\medskip

We choose a small constant $\varepsilon_1\in (0, \frac{\lambda_0}{2})$ and  compact set $K_0\subset ( \Omega\setminus  B_{\lambda_0}(0) )^{\lambda_0 }$ such that
\begin{equation}\label{b-20}
	\text{dist}(K_0, S_{\lambda_0})=\varepsilon_1>0 \quad \  \text{and}\quad\, | ( \Omega\setminus  B_{\lambda_0}(0) )^{\lambda_0 } \setminus K_0|\leq \frac{\varepsilon_0}{2}.
\end{equation}
Now, for any $\lambda\in[\lambda_{0}-\varepsilon_1,\lambda_{0}]$,  we have $K_0\subset ( \Omega\setminus  B_{\lambda}(0) )^{\lambda}$. So there exists a constant $\varepsilon_2\in (0, \varepsilon_1)$ such that, for any $\lambda\in[\lambda_{0}-\varepsilon_2,\lambda_{0}]$,
\begin{equation}\label{b-21}
	|( \Omega\setminus  B_{\lambda}(0) )^{\lambda }\setminus K_0|\leq  \varepsilon_0
\end{equation}
due to $| ( \Omega\setminus  B_{\lambda}(0) )^{\lambda }| \to |( \Omega\setminus  B_{\lambda_0}(0) )^{\lambda_0 }|$ as $\lambda\to\lambda_0$.
\begin{figure}[H]\label{F9}
	\begin{tikzpicture}[scale=15]
		\draw (0:0) -- (-15:1) arc (-15:15:1) (15:1) -- (0:0);	
		\filldraw[fill=gray!20] (-14:0.4)--(-14:0.77) arc (-14:14:0.77) (14:0.77)--(14:0.4) arc (14:-14:0.4);
		\filldraw[ draw=red!60, pattern color=red!60,  pattern=north east lines] (-15:0.36) arc (-15:15:0.36) --(15:0.4) arc (15:-15:0.4)--(-15:0.36);
		\filldraw[ draw=red!60, pattern color=red!60,  pattern=north east lines] (-15:0.77) arc (-15:15:0.77) --(15:0.785) arc (15:-15:0.785)--(-15:0.77);
		\filldraw[ draw=red!60, pattern color=red!60,  pattern=north east lines] (-15:0.4) arc (-15:-14:0.4) --(-14:0.77) arc (-14:-15:0.77)--(-15:0.4);
		\filldraw[ draw=red!60, pattern color=red!60,  pattern=north east lines] (15:0.4) arc (15:14:0.4) --(14:0.77) arc (14:15:0.77)--(15:0.4);
		\draw (-15:0.8) arc (-15:15:0.8)  ;
		\draw (-15:0.38) arc (-15:15:0.38)  ;
		\draw[dashed] (-15:0.35) arc (-15:15:0.35) (-15:0.77) arc (-15:15:0.77);
		\draw[bline] (-15:0.36) arc (-15:15:0.36)  (-15:0.785) arc (-15:15:0.785);
		\filldraw (0:0) circle (0.07pt) node[left]{$P$};
		\node[below] at(-15:1){1};
		\node[rotate=-15, below, font=\small] at(-15:0.81){$\lambda_0$};
		\node[rotate=15, above, font=\small] at(15:0.785){$\lambda$};
		\node[rotate=15, above, font=\small] at(15:0.36){$\lambda^2$};
		\node[rotate=-15,below, font=\small] at(-15:0.745){$\lambda_0-\varepsilon_1$};
		\node[rotate=-15, below, font=\small] at(-15:0.385){$\lambda_0^2$};
		\node[rotate=-15,below, font=\small] at(-15:0.32){$(\lambda_0 -\varepsilon_1)^2$};
		\node[right] at (0:1.02) {$\mathcal C_{P,\Sigma}^1$};
		\node[right] at (0:0.55) {$K_0$};
		\node[right] at (90:0.25) {$\Omega=\mathcal C_{P,\Sigma}^1, \  \ ( \Omega\setminus  B_{\lambda}(P) )^{\lambda}=(B_\lambda(P)\setminus B_{\lambda^2}(P))\cap \mathcal C_{P,\Sigma}^1$};
		\filldraw[ draw=red!60, pattern color=red!60,  pattern=north east lines] (0.01,0.192)--(0.04,0.192)--(0.04,0.212)--(0.01,0.212)--(0.01,0.192);
		\node[right] at (0.04, 0.2) {$( \Omega\setminus  B_{\lambda}(P) )^{\lambda}\setminus K_0$};
		\draw[->] (20:0.2)--(0.375,0) ;\node[above] at (30:0.12) {The narrow region};
	\end{tikzpicture}
	\caption{Narrow regions when $\Omega$ is a bounded cone}
\end{figure}
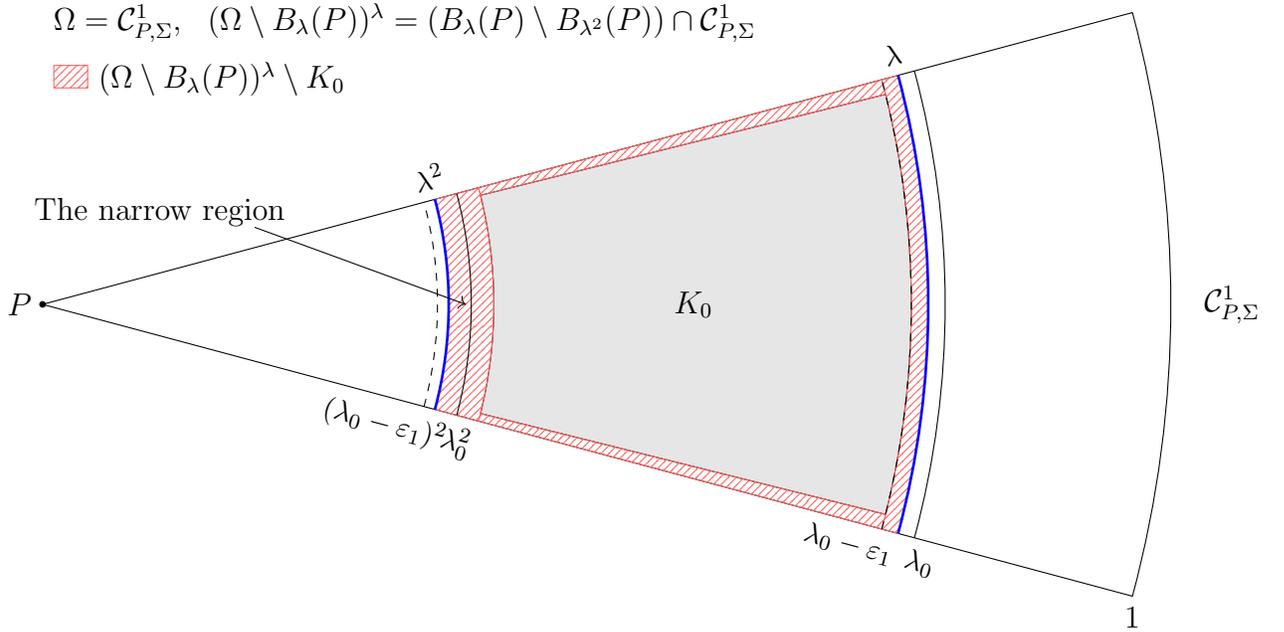

\medskip

By \eqref{b-18}, the continuity of $u$ and the compactness of $K_0$, there is a constant $c_0<0$ such that
\begin{equation*}
	\omega^{\lambda_{0}}(x)\leq c_0<0, \,\,\,\,\,\quad \forall \, x\in K_0.
\end{equation*}
Then by the continuity of $\omega^\lambda$ with respect to $\lambda$, there exists a constant  $\varepsilon_3\in (0, \varepsilon_2)$ such that
\begin{equation*}
	\omega^{\lambda}(x)\leq \frac{c_0}{2}<0, \,\quad \,\,\,\, \forall \, x\in K_0,\,\, \forall \,\,\lambda\in [\lambda_{0}-\varepsilon_3,\lambda_{0}],
\end{equation*}
which implies that $\Omega_\lambda^+\subset ( \Omega\setminus  B_{\lambda}(0) )^{\lambda }\setminus K_0$ and hence
\begin{equation}\label{b-22}
	|\Omega_\lambda^+|\leq \varepsilon_0,\,\quad\,\, \forall \,\,\lambda\in [\lambda_{0}-\varepsilon_3,\lambda_{0}]
\end{equation}
due to \eqref{b-21}. By \eqref{b-11}, one can easily verify that inequality as \eqref{b-12} (with the same constant $C$) also holds for any $\lambda\in[\lambda_{0}-\varepsilon_3,\lambda_{0}]$. On the other hand, by \eqref{b-19} and \eqref{b-22}, we find
\begin{equation}\label{b-23}
	C \left\|h_\lambda \right\|_{L^{\frac{n}{2s}}(\Omega_\lambda^+)}\leq C \left\|h_{\frac{\lambda_0}{2}} \right\|_{L^{\frac{n}{2s}}(\Omega_\lambda^+)} \leq\frac{1}{2},\,\,\quad\, \forall \,\,\lambda\in [\lambda_{0}-\varepsilon_3,\lambda_{0}],
\end{equation}
which together with \eqref{b-12} show that $\Omega_\lambda^+=\emptyset$. That is,
\begin{equation}\label{b-24}
	\omega^{\lambda}(x)\leq0, \,\quad\,\,\,\,\, \forall \,\, x\in  ( \Omega\setminus  B_{\lambda}(0) )^{\lambda}, \,\, \forall\,\, \lambda\in [\lambda_{0}-\varepsilon_3,\lambda_{0}],
\end{equation}
which contradicts the definition \eqref{b-16} of $\lambda_{0}$. As a consequence,  we must have $\lambda_{0}=0$, that is,
\begin{equation}\label{b-25}
	u(x)\geq\left(\frac{\lambda}{|x|}\right)^{n-2s}u\left(\frac{\lambda^{2}x}{|x|^{2}}\right), \quad\quad \forall \,\, x\in (\Omega\setminus B_{\lambda}(0))^{\lambda}, \,\quad \forall \,\, 0<\lambda<1.
\end{equation}

\medskip

\emph{Step 3. Derive lower bound estimates on asymptotic behavior of $u$ as $x\rightarrow P=0$ in a smaller cone via Bootstrap technique.}
Let the bounded cone $\mathcal{C}^{r_0}_{P,\Sigma}\subset\Omega$ with $r_{0}<\rho_\Omega=1$ be given in $(\mathbf{\widetilde{H}_{2}})$. For arbitrary $ x\in \mathcal{C}^{r_0}_{P,\Sigma}$ with $0<|x|<\frac{r_0}{2}$, let $\lambda:=\sqrt{ \frac{r_0|x|}{2} }$, then one has $x\in \Omega\cap B_\lambda(0)$ and $x^\lambda\in \mathcal{C}^{r_0}_{0,\Sigma}\subset \Omega$. So \eqref{b-25} yields that
\begin{equation}\label{b-26}
	u(x)\geq\left(\frac{r_0}{2|x|}\right)^{\frac{n-2s}{2}}u\left(\frac{r_0 x}{2|x|}\right),\ \quad  \ \forall \,\, x\in \mathcal{C}^{\frac{r_0}{2}}_{0,\Sigma}.
\end{equation}
Therefore, we arrive at the following lower bound estimate on asymptotic behavior of $u$ as $x\rightarrow P=0$:
\begin{equation}\label{b-27}
	u(x)\geq\left(\min_{|x|=\frac{r_0}{2},\,\, \frac{x}{|x|}\in \Sigma}u(x)\right)\left(\frac{r_0}{2|x|}\right)^{\frac{n-2s}{2}}=:C_{0} |x|^{-\frac{n-2s}{2}}, \quad\quad \forall \,\,x\in \mathcal{C}^{\frac{r_0}{2}}_{0,\Sigma}.
\end{equation}
This proves Theorem \ref{lowerIE-2} in the case that $f$ satisfies $(\mathbf{ {f}_{3}})$ with only  $p=p_{c}(a):=\frac{n+2s+2a}{n-2s}$.

\medskip

The lower bound estimate \eqref{b-27} can be improved remarkably when $f$ satisfies $(\mathbf{ {f}_{3}})$ with some $p_{c}(a):=\frac{n+2s+2a}{n-2s}<p<+\infty$ via the Bootstrap iteration technique and the integral equation \eqref{b-1}.

\medskip

In fact, let $\mu_{0}:=-\frac{n-2s}{2}$ and $\tilde r:=\min\left\{\frac{r_0}{2}, r\right\}$, where $r$ is the radius in $(\mathbf{ {f}_{3}})$, we infer from the integral equation \eqref{b-1}, $(\mathbf{\widetilde{H}_{2}})$, $(\mathbf{ {f}_{3}})$  and \eqref{b-27} that, for any $x\in \mathcal C_{0,\Sigma}^{\tilde r}$,
\begin{eqnarray}\label{b-28}
	u(x)&\geq&C\int_{\mathcal C_{0,\Sigma}^{\tilde r}, \, \sigma_1|y|\leq |x-y|\leq \sigma_2|y|}K^{s}_{\Omega}(x,y)f(y, u(y))\mathrm dy \\
\nonumber &\geq&C\int_{\mathcal C_{0,\Sigma}^{\tilde r}, \, \sigma_1|y|\leq |x-y|\leq \sigma_2|y|,\,\frac{|x|}{1+\sigma_{2}}\leq |y|\leq \frac{|x|}{1+\sigma_{1}}}K^{s}_{\Omega}(x,y)f(y, u(y))\mathrm dy \\
	\nonumber &\geq&C\int_{ \mathcal C_{0,\Sigma}^{\tilde r}, \, \frac{\sigma_1}{1+\sigma_{1}}|x|\leq |x-y|\leq \frac{\sigma_{2}}{1+\sigma_{2}}|x|, \, \frac{|x|}{1+\sigma_{2}}\leq |y|\leq \frac{|x|}{1+\sigma_{1}}}\frac{ |y|^{p\mu_{0}+a}}{|x-y|^{n-2s}} \mathrm  dy \\
	\nonumber  &\geq& C_{1} |x|^{p\mu_{0}+(2s+a)}.
\end{eqnarray}
Now, let $\mu_{1}:=p\mu_{0}+(2s+a)$. Due to $p_{c}(a):=\frac{n+2s+2a}{n-2s}<p<+\infty$, our important observation is
\begin{equation}\label{b-29}
	\mu_{1}:=p\mu_{0}+(2s+a)<\mu_{0}.
\end{equation}
Thus we have obtained a better lower bound estimate than \eqref{b-27} after one iteration, that is,
\begin{equation}\label{b-30}
	u(x)\geq C_{1} |x|^{\mu_{1}}, \quad\quad \forall \,\,x\in \mathcal C_{0,\Sigma}^{\tilde r}.
\end{equation}

\medskip

For $k=0,1,2,\cdots$, define
\begin{equation}\label{b-31}
	\mu_{k+1}:=p\mu_{k}+(2s+a).
\end{equation}
Since $p_{c}(a)=\frac{n+2s+2a}{n-2s}<p<+\infty$, it is easy to see that the sequence $\{\mu_{k}\}$ is monotone decreasing with respect to $k$. Continuing the above iteration process involving the integral equation \eqref{b-1}, we have the following lower bound estimates for every $k=0,1,2,\cdots$:
\begin{equation}\label{b-32}
	u(x)\geq C_{k}|x|^{\mu_{k}}, \quad\quad \forall \,\, x \in \mathcal C_{0,\Sigma}^{\tilde r}.
\end{equation}
Then Theorem \ref{lowerIE-2} follows easily from \eqref{b-32} and the obvious property that as $k\rightarrow+\infty$,
\begin{equation}\label{b-33}
	\mu_{k}\rightarrow-\infty, \qquad \text{if} \,\,\, \frac{n+2s+2a}{n-2s}<p<+\infty.
\end{equation}
This concludes our proof of Theorem \ref{lowerIE-2}.
\end{proof}

\medskip

	Thanks to the lower bound estimates \eqref{lb-1} and \eqref{lb-2} in Theorem \ref{lowerIE-2}, we are ready to prove the Liouville results in Theorem \ref{ie-bd}.

\medskip

\begin{proof}[\textbf{Proof of Theorem \ref{ie-bd}}]	
	
	 In the case that $(\mathbf{f_{3}})$ is only fulfilled by $f$  for $p=p_{c}(a)$, we derive a contradiction from the assumption $u(x)=o\left(\frac{1}{|x-P|^{\frac{n-2s}{2}}}\right)$ as $x\to P$ and the estimate \eqref{lb-1}.
	
\medskip

	In the cases that $(\mathbf{f_{3}})$ is fulfilled by $f$ for some $p>p_{c}$. The assumption $(\mathbf{\widetilde{H}_{2}})$ provides a fixed point $x_0\in\Omega$ such that, for some $\theta\in\mathbb{R}$ and small radius $\hat r>0$, one has
	\begin{equation}\label{b-34}
		K^{s}_{\Omega}(x_0,y)\geq C |y-P|^{ \theta},\ \qquad  \forall \,\, y\in \mathcal C^{\hat r}_{P, \Sigma}.
	\end{equation}
	Then we infer from \eqref{b-1}, \eqref{lb-2} with $\kappa=\frac{n+\theta+a}{p}$ and \eqref{b-34} that
	\begin{equation*}
		+\infty>u(x_0)\geq   \int_{\mathcal C^{\min\{\tilde r, \hat r\},e}_{P, \Sigma}} K^{s}_{\Omega}(x_0,y) f(y, u(y)) \mathrm dy\geq \int_{\mathcal C^{\min\{\tilde r, \hat r\},e}_{P, \Sigma}} \frac{C}{|y-P|^n} \mathrm dy=+\infty,
	\end{equation*}
	which is absurd. Hence we must have $u\equiv0$ and $f(\cdot,0)\equiv0$.
	This finishes our proof of Theorem \ref{ie-bd}.
\end{proof}

\medskip

\subsection{Properties of Green's function for Dirichlet problem of $(-\Delta)^{s}$ with $0<s\leq1$ on MSS applicable domains and proof of Theorem \ref{pG}}

To show Theorem \ref{pG}, we first prove some  properties of the Green function  in a unbounded cone $\mathcal{C}_{P,\Sigma}$ such that $\overline{\mathcal{C}_{P,\Sigma}}\neq\mathbb{R}^{n}$, the cross-section $\Sigma$ is $C^{1,1}$ if $s\in(0,1)$ and is Dirichlet-regular if $s=1$.

\medskip

In what follows, we will assume $P=0$ and  abbreviate $\mathcal{C}_{0,\Sigma}$ to $\mathcal{C}$ for simplicity when there is no confusions.

\medskip

We need the following classification result for $s$-harmonic functions in cones $\mathcal{C}$, which was proved in \cite{A, BaBo}, see also \cite{TTV}.
\begin{thm}[\cite{A, BaBo}]\label{RMI-h}
	For $s\in(0, 1]$, all non-negative $s$-harmonic functions in $\mathcal C$ with zero Dirichlet boundary condition are the functions $h$ of the form
	\begin{equation*}
		h(x)=C |x|^{\gamma_s( \mathcal C)}\varphi_s\left(\frac{x}{|x|}\right),
	\end{equation*}
where $\gamma_s( \mathcal C)$ is a nonnegative constant depending on $n$, $s$, the cross-section $\Sigma$ of $\mathcal C$ ($\gamma_{s}(\mathcal{C})=0$ if $\Sigma=S^{n-1}$, $\gamma_{1}(\mathcal{C})=k$ if $\Sigma=S^{n-1}_{2^{-k}}$ with $k=1,\cdots,n$), independent of the vertex of the cone $\mathcal{C}$ and decreasing w.r.t. the inclusion relationship of the cross-section $\Sigma$ (i.e., $\gamma_s( \mathcal C_{P,\Sigma})\leq\gamma_s( \mathcal C_{Q,\Sigma_{1}})$ provided that $\Sigma_{1}\subseteq\Sigma$), and $\varphi_s$ is a positive function on $\Sigma$ vanishing on $S^{n-1}\setminus\Sigma$. Moreover, when $s=1$, $\gamma_1(\mathcal C)=\frac{2-n+\sqrt{(n-2)^2+\lambda_1}}{2}$, where $\lambda_1$ is the first eigenvalue of the spherical Laplacian $-\Delta_{S^{n-1}}$ on $\Sigma$ and  $\varphi_1$ is the first eigenfunction of $-\Delta_{S^{n-1}}$ on $\Sigma$ with zero Dirichlet boundary condition.
\end{thm}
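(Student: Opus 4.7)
\textbf{Proof proposal for Theorem \ref{RMI-h}.} The plan is to combine the scale invariance of the cone $\mathcal{C}=\mathcal{C}_{0,\Sigma}$ and of the operator $(-\Delta)^s$ with a Martin-type uniqueness argument in order to produce a one-dimensional cone of non-negative solutions generated by a single homogeneous profile. The starting observation is that for any $r>0$, if $h$ is $s$-harmonic in $\mathcal{C}$ with $h\equiv 0$ on $\mathbb{R}^n\setminus\mathcal{C}$, then so is $h_{r}(x):=r^{-\gamma}h(rx)$ for every $\gamma\in\mathbb{R}$; this family of rescalings maps non-negative solutions to non-negative solutions and fixes the zero Dirichlet condition on $\partial\mathcal{C}$. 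The classification will therefore follow once we establish (i) the existence of a distinguished non-negative homogeneous solution $h_\ast(x)=|x|^{\gamma_s(\mathcal{C})}\varphi_s(x/|x|)$, and (ii) uniqueness up to a multiplicative constant of non-negative $s$-harmonic functions vanishing on $\partial\mathcal{C}$.

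For the classical case $s=1$, I would proceed by direct separation of variables in spherical coordinates. Writing $h(x)=r^{\gamma}\varphi(\theta)$ with $r=|x|$ and $\theta=x/|x|\in\Sigma$, the equation $-\Delta h=0$ reduces to the spherical eigenvalue problem
\begin{equation*}
-\Delta_{S^{n-1}}\varphi=\gamma(\gamma+n-2)\varphi \quad\text{on }\Sigma,\qquad \varphi=0\text{ on }\partial\Sigma.
\end{equation*}
Dirichlet-regularity of $\Sigma$ gives a complete discrete spectrum $0<\lambda_1<\lambda_2\leq\cdots$ with simple first eigenvalue and a positive first eigenfunction $\varphi_1$, forcing $\gamma=\gamma_1(\mathcal{C})=\frac{2-n+\sqrt{(n-2)^2+\lambda_1}}{2}$ as the unique positive root compatible with a positive angular profile. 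For general (not a priori homogeneous) non-negative $h$, I would expand in the basis of eigenfunctions on each sphere $S_r\cap\mathcal{C}$, so that the coefficients $c_k(r)$ solve an Euler ODE whose non-negative, bounded-at-the-origin solutions are forced by non-negativity of $h$ to coincide with a single power, yielding the classification. The values $\gamma_1(S^{n-1})=0$ and $\gamma_1(S^{n-1}_{2^{-k}})=k$ follow by inspection (constants, respectively $x_1\cdots x_k$) and coincide with the unique positive root associated to $\lambda_1$.

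For the fractional case $s\in(0,1)$, separation of variables is unavailable because of non-locality, and this is where the main obstacle lies. I would replace it by the boundary Harnack principle for $(-\Delta)^s$ on Lipschitz/$C^{1,1}$ domains (Bogdan, Bogdan--Kulczycki--Kwa\'snica, etc.), applied on annular sub-cones $\mathcal{C}\cap(B_{2R}\setminus\overline{B_R})$. This yields that any two non-negative $s$-harmonic functions in $\mathcal{C}$ with zero Dirichlet data on $\partial\mathcal{C}$ have a ratio bounded above and below by constants independent of $R$. Combining this with the scaling $h_r(x)=r^{-\gamma}h(rx)$ and a compactness/normalization argument (fix the value at a reference point $x_0$ with $|x_0|=1$ and extract a limit as $R\to\infty$), one obtains a scale-invariant non-negative solution $h_\ast$, which must therefore be of the form $|x|^{\gamma_s(\mathcal{C})}\varphi_s(x/|x|)$ for some $\gamma_s(\mathcal{C})\in\mathbb{R}$ and some positive $\varphi_s$ on $\Sigma$. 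Non-negativity of $h_\ast$, together with $s$-harmonicity applied to the radial profile, forces $\gamma_s(\mathcal{C})\geq 0$; taking $\Sigma=S^{n-1}$ and noting that positive constants are $s$-harmonic on $\mathbb{R}^n\setminus\{0\}$ gives $\gamma_s=0$ in that case. Uniqueness of an arbitrary non-negative $h$ up to a constant multiple then follows from the boundary Harnack comparison $C^{-1}h_\ast\leq h\leq Ch_\ast$, scaling both sides and passing to the limit, which pins down $h=ch_\ast$.

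Finally, the remaining structural claims are verified as follows. The dependence of $\gamma_s(\mathcal{C})$ only on $n,s,\Sigma$ is immediate from the definition via the homogeneity exponent, and invariance under translation of the vertex is automatic since the equation and the cone are both translation-covariant. Monotonicity $\gamma_s(\mathcal{C}_{P,\Sigma})\leq \gamma_s(\mathcal{C}_{Q,\Sigma_1})$ whenever $\Sigma_1\subseteq\Sigma$ follows from a comparison argument: the profile on the larger cone restricted to the smaller cone is $s$-superharmonic (or harmonic, in the local case $s=1$, via monotonicity of the first Dirichlet eigenvalue $\lambda_1(\Sigma)\leq\lambda_1(\Sigma_1)$), and the maximum principle forces the exponent on the larger cone to be no larger than on the smaller one. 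For the specific identifications $\gamma_1(\mathcal{C}_{P,S^{n-1}_{2^{-k}}})=k$ with $k=1,\dots,n$, I would exhibit the explicit non-negative homogeneous harmonic function $h(x)=x_{1}\cdots x_{k}$ on the standard $\frac{1}{2^k}$-space (after an orthogonal change of variables) and invoke uniqueness from Step~(ii) to conclude that this is the profile. The principal difficulty of the whole argument is the fractional uniqueness step, where the absence of separation of variables means one must genuinely rely on the boundary Harnack principle for the fractional Laplacian in unbounded cones, together with a careful compactness argument as the radius sent to infinity.
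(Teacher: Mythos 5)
The paper does not prove Theorem \ref{RMI-h}; it cites [A] (Ancona) for the case $s=1$ and [BaBo] (Ba\~nuelos--Bogdan) for $s\in(0,1)$, so there is no in-paper proof to compare against. Your two-track plan (separation of variables plus eigenfunction expansion for $s=1$, boundary Harnack plus scaling for $s\in(0,1)$) does mirror those references in spirit. Two of your steps, however, have genuine gaps.

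First, for $s\in(0,1)$ your uniqueness/existence step is circular as written. You propose normalizing $h_r(x):=r^{-\gamma}h(rx)$ by fixing its value at a reference point $|x_0|=1$ and extracting a limit as $r\to\infty$, but this normalization already presumes the homogeneity exponent $\gamma=\gamma_s(\mathcal C)$, which is precisely the unknown the argument must produce. What actually drives the proof in [BaBo] is a scale-invariant boundary Harnack inequality on annuli $\mathcal C\cap(B_{2R}\setminus B_R)$, valid uniformly in $R$; from this one first proves that the ratio of any two non-negative $s$-harmonic functions vanishing outside $\mathcal C$ is bounded above and below globally in $\mathcal C$, and only then deduces that the Martin kernel at infinity (resp.\ at the vertex) is homogeneous of some degree, rather than normalizing against an as-yet-unknown power. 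In other words, the boundary Harnack comparability is what establishes homogeneity, not a consequence of it; your sketch reverses that order.

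Second, the monotonicity argument for $s\in(0,1)$ is incorrect. You assert that $h_\ast^\Sigma$, restricted to $\mathcal C_{0,\Sigma_1}$ with $\Sigma_1\subseteq\Sigma$, is $s$-superharmonic there. In fact $(-\Delta)^s h_\ast^\Sigma(x)$ depends only on $h_\ast^\Sigma$ on all of $\mathbb R^n$, not on the choice of domain, so $h_\ast^\Sigma$ is still $s$-harmonic at every $x\in\mathcal C_{0,\Sigma_1}$; the issue is that it fails to vanish on $\mathbb R^n\setminus\mathcal C_{0,\Sigma_1}$, so it is simply not a competitor for the smaller cone's Dirichlet problem, and the maximum principle comparison you invoke has no footing. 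A correct route to $\gamma_s(\mathcal C_{P,\Sigma})\leq\gamma_s(\mathcal C_{Q,\Sigma_1})$ is domain monotonicity of the Green function $G^s_{\mathcal C_{0,\Sigma_1}}\leq G^s_{\mathcal C_{0,\Sigma}}$ combined with the decay asymptotics $G^s_{\mathcal C}(x_0,y)\asymp |y|^{-(n-2s+\gamma_s(\mathcal C))}$ as $|y|\to\infty$, which forces the exponent inequality. Separately, the displayed formula for $\gamma_1(\mathcal C)$ both in the paper and in your write-up should read $\tfrac{2-n+\sqrt{(n-2)^2+4\lambda_1}}{2}$; from $-\Delta_{S^{n-1}}\varphi=\gamma(\gamma+n-2)\varphi$ the discriminant is $(n-2)^2+4\lambda_1$, and the version without the factor $4$ fails already at $n=2$, $\Sigma$ a half-circle, $\lambda_1=1$, $\gamma_1=1$.
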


Recall that the Green function $G^{s}_{\mathcal{C}}(x,y)$ for Dirichlet problem of $(-\Delta)^{s}$ on the cone $\mathcal{C}$ solves
\begin{equation}\label{CG-1}
	\begin{cases}(-\Delta)^{s} G^{s}_{\mathcal{C}}(x, y)=\delta(x-y), & y \in\mathcal{C} \\ G^{s}_{\mathcal{C}}(x, y)=0, & y \in \mathbb{R}^{n}\setminus \mathcal{C},\\
		\lim\limits_{y\in\mathcal{C}, |y|\to+\infty} G^{s}_{\mathcal{C}}(x,y)=0
	\end{cases}
\end{equation}
for any given $x\in\mathcal{C}$. We establish the following lemma on properties of  $G^{s}_{\mathcal{C}}(x,y)$.
\begin{lem}\label{PCG}
	Assume $n\geq1$, $0<s\leq1$, $2s\leq n$ and $G^{s}_{\mathcal{C}}(x,y)$ is the Green function for $(-\Delta)^{s}$ on the cone $\mathcal{C}$ defined by \eqref{CG-1}. Then, for  any smaller cone $\mathcal{C}_1:=\mathcal{C}_{0,\Sigma_1}\subset \mathcal{C}$ with   $\overline{\Sigma_1} \subsetneq \Sigma$, there are constant $0<C_1\leq C_1'<+\infty$ depending on $\mathcal{C}_1$ such that
	\begin{equation}\label{fG-5}
		\frac{C_1 |x|^{\gamma_s(\mathcal C)}} {|y|^{n-2s+\gamma_s( \mathcal C)}}\leq   G^{s}_{\mathcal{C}}(x,y)\leq \frac{C_1' |x|^{\gamma_s( \mathcal C)}} {|y|^{n-2s+\gamma_s( \mathcal C)}},\ \quad \ \forall\, x, y\in\mathcal C_1\,\,\text{with} \,\, \frac{|x|}{|y|}\leq \frac{1}{2},
	\end{equation}
	where $\gamma_s(\mathcal C)$ is the constant in Theorem \ref{RMI-h}.
	Furthermore, there are constants $C_{0}>0$,  $0<\sigma_1<\sigma_2$ depending on $\mathcal{C}_1$ such that
	\begin{equation}\label{fG-6}
		G^{s}_{\mathcal{C}}(x, y)\geq \frac{C_0}{|x-y|^{n-2s}}, \ \quad  \ \forall \,\,  x  \in  \mathcal{C}_1, \, y\in \mathcal{C} \,\, \text{with}\,\,    \sigma_1|x| \leq |x-y|\leq \sigma_2 |x|.
	\end{equation}
\end{lem}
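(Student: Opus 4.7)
The plan rests on three ingredients: (a) the scaling invariance of the cone $\mathcal{C}=\mathcal{C}_{0,\Sigma}$, which translates to the homogeneity relation $G^{s}_{\mathcal{C}}(\lambda x,\lambda y)=\lambda^{2s-n}G^{s}_{\mathcal{C}}(x,y)$ for all $\lambda>0$; (b) the classification Theorem \ref{RMI-h}, which asserts that any nonnegative function $s$-harmonic in $\mathcal{C}$ and vanishing on $\partial\mathcal{C}$ must be a multiple of the homogeneous profile $h(x):=|x|^{\gamma_{s}(\mathcal{C})}\varphi_{s}(x/|x|)$; and (c) the boundary Harnack principle for $(-\Delta)^{s}$, valid in $\mathcal{C}$ under the regularity assumed on $\Sigma$ ($C^{1,1}$ if $s\in(0,1)$, Dirichlet-regular if $s=1$). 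I will also use the symmetry $G^{s}_{\mathcal{C}}(x,y)=G^{s}_{\mathcal{C}}(y,x)$.

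For the two-sided bound \eqref{fG-5}, I first normalize the larger variable: by the homogeneity relation, it suffices to prove $c|x|^{\gamma_{s}(\mathcal{C})}\leq G^{s}_{\mathcal{C}}(x,y)\leq C|x|^{\gamma_{s}(\mathcal{C})}$ for $y$ with $|y|=1$ and $y/|y|\in\overline{\Sigma_{1}}$, and $x\in\mathcal{C}_{1}\cap B_{1/2}(0)$. For such a fixed $y$, the map $x\mapsto G^{s}_{\mathcal{C}}(x,y)$ is nonnegative, $s$-harmonic on $\mathcal{C}\cap B_{1/2}(0)$, and vanishes on $\partial\mathcal{C}\cap B_{1/2}(0)$; the profile $h$ has the same three properties. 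Applying the boundary Harnack principle at the unit scale yields $c_{y}h(x)\leq G^{s}_{\mathcal{C}}(x,y)\leq C_{y}h(x)$ for $x\in\mathcal{C}_{1}\cap B_{1/4}(0)$, and interior Harnack at a fixed reference point combined with the compactness of $\overline{\Sigma_{1}}$ makes the constants uniform as $y/|y|$ ranges in $\overline{\Sigma_{1}}$. Since $\varphi_{s}$ is bounded above and below on $\overline{\Sigma_{1}}$, this gives the claim at the unit scale, and rescaling back via $\lambda=1/|y|$ delivers \eqref{fG-5}.

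For \eqref{fG-6}, I rescale by $\lambda=1/|x|$ to reduce to the case $|x|=1$, $y\in\mathcal{C}$ with $\sigma_{1}\leq|x-y|\leq\sigma_{2}$. The normalized $x$ lies in the compact set $\mathcal{C}_{1}\cap S^{n-1}$, which is uniformly bounded away from $\partial\mathcal{C}$, and the admissible $y$ vary in the compact annular region $\{\sigma_{1}\leq|x-y|\leq\sigma_{2}\}$. Compactness of the configuration produces a single ball $B\Subset\mathcal{C}$ containing all admissible pairs $(x,y)$ uniformly. The domain monotonicity $G^{s}_{\mathcal{C}}(x,y)\geq G^{s}_{B}(x,y)$, together with the explicit two-sided estimate on the Green function of a ball, gives $G^{s}_{\mathcal{C}}(x,y)\geq C_{0}|x-y|^{-(n-2s)}$ at the unit scale, and undoing the scaling preserves the form of the bound.

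The main obstacle will be justifying the boundary Harnack principle up to the vertex of the cone: standard formulations are local near smooth boundary points, whereas the vertex is a non-smooth conical boundary point. I plan to handle this by an annular decomposition together with the scaling relation, transporting the unit-scale comparison constant to every dyadic shell $\{2^{-k}\leq|x|\leq 2^{-k+1}\}$ via the joint homogeneity of $G^{s}_{\mathcal{C}}$ and $h$, so that no deterioration occurs as $x\to 0$. In the critical case $2s=n$, the domain-monotonicity step in the argument for \eqref{fG-6} still applies (with a possibly different explicit form of $G^{s}_{B}$ on small scales, but $|x-y|$ is bounded below after scaling so no singularity is picked up), and the exponent $n-2s+\gamma_{s}(\mathcal{C})=\gamma_{s}(\mathcal{C})$ in \eqref{fG-5} remains consistent with the boundary-Harnack/classification argument at the unit scale; this subcase will be recorded separately when checking consistency.
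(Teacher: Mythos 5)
Your overall tools (joint homogeneity of $G^{s}_{\mathcal{C}}$, the classification in Theorem \ref{RMI-h}, boundary Harnack, compactness) are the right ones and, for estimate \eqref{fG-6}, your rescaling-plus-compactness argument is essentially what the paper does (the paper skips domain monotonicity to a ball and directly uses continuity and positivity of $G^{s}_{\mathcal{C}}$ on the compact configuration, which avoids the small issue that $\mathcal{C}_{1}\cap S^{n-1}$ need not fit in a single ball inside $\mathcal{C}$ — but a finite cover would patch that).

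For \eqref{fG-5}, however, the step you flag as "the main obstacle" does in fact leave a genuine gap, and the fix you propose does not close it. You plan to transport the unit-scale boundary Harnack comparison to the shell $\{2^{-k}\leq|x|\leq 2^{-k+1}\}$ using the joint homogeneity $G^{s}_{\mathcal{C}}(\lambda x,\lambda y)=\lambda^{2s-n}G^{s}_{\mathcal{C}}(x,y)$. But this relation moves $y$ together with $x$: for fixed $y$ with $|y|=1$ and $x$ in the $k$-th shell, rescaling by $2^{k}$ brings $x$ to unit scale but sends $y$ to $|y|=2^{k}$, so the would-be transported estimate concerns $G^{s}_{\mathcal{C}}(\cdot,\,y')$ with $|y'|=2^{k}$, not $|y'|=1$. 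Knowing the constant there is equivalent to knowing \eqref{fG-5} for that $y'$, which is circular. What the paper does instead (for $s=1$) is establish the two-sided comparison of $G^{1}_{\mathcal{C}}(\cdot,y)$ against $u_{h}$ only on the half-sphere $\tfrac{1}{2}\Sigma$ (by interior comparison on the compact inner piece \eqref{fG-10} and the boundary Harnack inequality on the collar \eqref{fG-11}), and then invoke the maximum principle on $B_{1/2}(0)\cap\mathcal{C}$: both $G^{1}_{\mathcal{C}}(\cdot,y)$ and $u_{h}$ are harmonic there and vanish on the lateral boundary $\partial\mathcal{C}\cap B_{1/2}(0)$, so the ordering on the spherical cap propagates to the interior, including toward the vertex — no shell translation is needed. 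This is the missing idea. Note also that this maximum-principle step relies on the locality of $-\Delta$; for $s\in(0,1)$ the fractional maximum principle needs global exterior data, which is why the paper does not run this argument for $0<s<1$ at all but instead cites the sharp Green function estimates of Michalik \cite{Mich} for cones (Theorem 3.11, generalized to $C^{1,1}$ cross-sections). Your proposal implicitly claims to handle $0<s<1$ by the same boundary Harnack scheme, and that would require a correct substitute for the maximum-principle propagation (e.g. an Ancona-style iterated boundary Harnack or a direct appeal to the literature as the paper does).
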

\begin{proof}
	
	We first prove \eqref{fG-5}. For the fractional cases $s\in(0,1)$, \eqref{fG-5} follows immediately from Theorem 3.11 in \cite{Mich} if the cone $\mathcal C$ is symmetry. We note that   estimates in  Theorem 3.11 in \cite{Mich} can be easily generalized to any cones $\mathcal C_{P, \Sigma}$
	with $C^{1,1}$ cross-section $\Sigma\subset\partial B_1(0)$, since the reasoning and proof in  \cite{Mich} rely on a boundary Harnack principle and on sharp estimates for the Green function for bounded $C^{1,1}$ domains. As a consequence, the proof of \eqref{fG-5} in the cases $s\in(0,1)$ is finished.
	
	\medskip
	
	Next, we consider the classical case $s=1$. The proof of \eqref{fG-5} for $s=1$ relies on the asymptotic behavior of $G^{1}_{\mathcal C}(x,y)$ as $|x|\to 0$. Let $u_h(x)=|x|^{\gamma_1(  \mathcal C)}\varphi_1\left(\frac{x}{|x|}\right)$ be the harmonic function in $\mathcal C$ obtained in Theorem \ref{RMI-h}. For $\delta>0$ small enough, define the smaller cone $\mathcal C^\delta=\mathcal C_{0, \Sigma^\delta}$ with $\Sigma^\delta:=\{x\in \Sigma\, \mid\, \text{dist}(x , \Sigma)>\delta\}$. We will show that, for $\delta$ sufficiently small,
	\begin{equation}\label{fG-9}
		C^{-1} u_h(x)	\leq G^{1}_{\mathcal C}(x,y)\leq C u_h(x), \quad  \  \ \forall\, x\in B_{\frac{1}{2}}(0)\cap\mathcal{C}, \, y\in \Sigma^\delta,
	\end{equation}
	where $C>0$ is a positive constant depending on $\delta$.

\medskip

	Indeed, since $ G^{1}_{\mathcal C} (x,y)$ is positive for $x,y\in\mathcal C$ and continuous as long as $x\not=y$, there is a positive constant $C>0$ such that
	\begin{equation}\label{fG-10}
		C^{-1} u_h(x)\leq  G^{1}_{\mathcal C}(x,y)\leq C u_h(x), \quad  \  \ \forall\, x\in   \frac{1}{2}\Sigma^\delta, \, y\in  \Sigma^\delta,
	\end{equation}
	For any $y\in   \Sigma^\delta$ fixed, $G^{1}_{\mathcal C}(x,y)$ is  harmonic in $x\in B_{\frac{2}{3}}(0)\cap \mathcal{C}$. Therefore, the boundary Harnack inequality (see e.g. \cite{DS} and Theorem B.1 in \cite{FR}) provides a positive constant $C>0$ such that, for $\delta>0$ sufficiently small,
	\begin{equation}\label{fG-11}
		C^{-1} u_h(x)\leq G^{1}_{\mathcal C}(x,y)\leq C u_h(x), \  \quad  \ \forall\, x\in   \frac{1}{2}(\Sigma\setminus\Sigma^\delta), \, y\in  \Sigma^\delta.
	\end{equation}
For any $y\in  \Sigma^\delta$ fixed, note that both  $G^{1}_{\mathcal C}(\cdot,y)$ and $u_h$ are  harmonic in $ B_{\frac{1}{2}}(0)\cap \mathcal{C}$ and vanish on $\partial C\cap B_{\frac{1}{2}}(0)$.  So using \eqref{fG-10} and \eqref{fG-11}, we infer from the maximum principle that \eqref{fG-9} is true.

\medskip
	
	Now we turn to prove \eqref{fG-5} for $s=1$. For any cone $\mathcal{C}_1=\mathcal{C}_{0, \Sigma_1}\subset \mathcal{C}$ with   $\overline{\Sigma_1 } \subsetneq \Sigma$, there is a $\delta>0$ small enough such that $\mathcal{C}_1\subset \mathcal{C}^\delta$. Then \eqref{fG-5} is a consequence of \eqref{fG-9}, the fact $0<\inf\limits_{\mathcal C^\delta} \varphi_1\leq \sup\limits_{\mathcal C^\delta} \varphi_1<+\infty$ and the following identity
	\begin{equation*}
		G^{1}_{\mathcal C}(x,y)=|y|^{2-n} G^{1}_{\mathcal C}\left(\frac{x}{|y|}, \frac{y}{|y|}\right).
	\end{equation*}

\medskip
	
	Next we prove  \eqref{fG-6}. Since $\overline{\Sigma_1 } \subsetneq \Sigma$, one has $d_1:=\text{dist}(\Sigma_1 ,   \partial \Sigma)>0$. So we can take a constant $\sigma_2>0$ sufficiently small such that, for any $x\in \Sigma_1 $, if $|x-y|\leq \sigma_2$, then $y\in \Sigma$ and $\text{dist}(y,   \partial \Sigma)>\frac{d_1}{2}$.  Taking $\sigma_1=\frac{1}{2}\sigma_2$, by the continuity and positivity of $ G^{s}_{\mathcal C}$, we obtain a constant $C_{2}>0$ such that
	$$ G^{s}_{\mathcal C}(x,y)>C_2,\ \quad \  \forall\, x\in \Sigma_1 , \  \sigma_1\leq |x-y|\leq \sigma_2,$$ which together with the identity $G^{s}_{\mathcal C}(x,y)=|x|^{2s-n}G^{s}_{\mathcal C}\left(\frac{x}{|x|}, \frac{y}{|x|}\right) $ prove \eqref{fG-6} immediately. This finishes our proof of Lemma \ref{PCG}.	
\end{proof}	

\medskip

Now, we are in position to prove Theorem \ref{pG}.

\begin{proof}[\textbf{Proof of Theorem \ref{pG}}]	
	
	For $s\in (0, 1]$, we first consider the case that $\Omega\subset \mathbb R^n$ is a domain such that $\mathbb R^n\setminus \overline\Omega$ is g-radially convex with $P$ as the center. We aim to show that  the Green function $G^{s}_{\Omega}(x,y)$ satisfies  all the assumptions $(\mathbf{H_{1}})$, $(\mathbf{H_{2}})$ and $(\mathbf{H_{3}})$.

\medskip		

	For the sake of simplicity, we may assume $P=0$ from now on.

\bigskip

	\noindent{\bf   Proof of  $(\mathbf{H_{1}})$.} The Green function can be written as $G^{s}_{\Omega}(x,y)=\Gamma_{s}(x,y)-H^s_{\Omega}(x,y)$, where the fundamental solution $\Gamma_{s}(x,y)=\begin{cases} \frac{C_{n,s}}{|x-y|^{n-2s}},\ \  \ &\text{if}\,\, 0<s<\frac{n}{2},\\ c_{n}\ln\frac{1}{|x-y|}, &\text{if}\,\,  s=\frac{n}{2},\end{cases}$ $H^s_\Omega$ is the harmonic part and $c_{n}=\frac{1}{2\pi}$ if $n=2$. In the subcritical order cases $0<s<\frac{n}{2}$, $(\mathbf{H_{1}})$ is an easy consequence of the maximum principle.

\medskip
	
We only need to take the critical order cases $s=\frac{n}{2}$ with $n=1$ or $2$ into account. In the critical order cases, we require $\mathbb R^n\setminus \overline\Omega$ is g-radially convex and nonempty.

\medskip

If $n=2s=1$, then $\Omega$ can only take these forms: $(-\infty,P)$ or $(P,+\infty)$. By assuming $P=0$, we may assume that $\Omega=(-\infty,0)$ or $(0,+\infty)$. Theorem 3.1 in \cite{Buc} gives the expression of Green's function for Dirichlet problem of $(-\Delta)^{\frac{1}{2}}$ on $(-r,r)$:
\begin{equation*}
G^{\frac{1}{2}}_{(-r,r)}(x, y)=c\ln \left(\frac{r^2-xy+\sqrt{\left(r^2-x^2\right)\left(r^2-y^2\right)}}{r|x-y|}\right).
\end{equation*}
Consequently, the Green function for $(-\Delta)^{\frac{1}{2}}$ on $(0,2r)$ is $G^{\frac{1}{2}}_{(0,2r)}(x, y)=G^{\frac{1}{2}}_{(-r,r)}(x-r, y-r)$, taking the limit $r\rightarrow+\infty$, we derive the Green function for $(-\Delta)^{\frac{1}{2}}$ on $(0,+\infty)$ is $G^{\frac{1}{2}}_{(0,+\infty)}(x, y)=c\ln\left[\frac{x+y+2\sqrt{xy}}{|x-y|}\right]$. Thus we have
\begin{equation*}
  G^{\frac{1}{2}}_{\Omega}(x,y)=c\ln\left[\frac{|x|+|y|+2\sqrt{|xy|}}{|x-y|}\right]\leq c\ln\left[\frac{(1+|x|)(1+|y|)}{|x-y|}\right],
\end{equation*}
that is, $(\mathbf{H_{1}})$ holds true for $n=2s=1$.

\medskip

If $n=2s=2$, we take a point $x_0\in \mathbb R^n\setminus \overline \Omega$ such that $0<\text{dist}(x_0, \Omega) <1$. Let $\lambda_0:=\frac{\text{dist}(x_0, \Omega)}{2} <\frac{1}{2}$. Define $x^{\lambda_0}:=\frac{{\lambda_0}^2}{|x-x_0|^2}(x-x_{0})+x_{0}$ and $\Omega_{\lambda_0}:=\{x\in\mathbb R^n |\, x^{\lambda_0}\in \Omega\}$. Then one can see that $\Omega_{\lambda_0}\subset B_{{\lambda_0}}(x_0)$ and hence $\text{diam}(\Omega_{\lambda_0})\leq 1$. We consider the function $\tilde G(x,y):=G^{1}_{\Omega}(x^{\lambda_0}, y^{\lambda_0})$.

\medskip

Now we discuss two different cases on the position of the possibly singular point $x_{0}$ to show $\tilde G(x,y)$ is the Green function of $-\Delta$ on $\Omega_{\lambda_0}$. If $\partial \Omega$ is unbounded, we have $x_0\in \partial \Omega_{\lambda_0}$ and $$\lim\limits_{x\in \Omega_{\lambda_0},\, x\to x_0} \tilde G(x,y)=\lim\limits_{x\in \Omega,\, |x|\to +\infty} G^{1}_{\Omega}(x, y^{\lambda_0})=0.$$
Thus we obtain
$$\tilde G(x,y)\equiv0,\ \qquad   \forall \,\, x\in \partial \Omega_{\lambda_0}, \,\, y\in  \Omega_{\lambda_0},$$
that is, the zero Dirichlet boundary condition holds. If $\partial \Omega$ is bounded, then $x_0\in \Omega_{\lambda_0}$ and
$$\lim\limits_{x\in \Omega_{\lambda_0},\, x\to x_0} \frac{\tilde G(x,y)}{-\ln |x-x_0|}=\lim\limits_{x\in \Omega,\, |x|\to +\infty} \frac{G^1_{\Omega}(x, y^{\lambda_0})}{\ln( |x-x_0|/{\lambda_0}^2)}=0.$$
Hence $x_0$ is a removable singularity of $\tilde G(x,y)$ in $\Omega_{\lambda_{0}}$. Therefore, by the properties of Kelvin transforms, we have $\tilde G(x,y)$ is the Green function of $-\Delta$ on $\Omega_{\lambda_0}$ and hence
$$\tilde G(x,y)=\frac{1}{2\pi} \ln\frac{1}{|x-y|}-H^1_{\Omega_{\lambda_0}}(x,y),$$
where $H^1_{\Omega_{\lambda_0}}(x,y)$ is the harmonic part. Since $\text{diam}(\Omega_{\lambda_0})\leq 1$, one has $\ln \frac{1}{|x-y|}\geq 0$ for any $x\in \partial \Omega_{\lambda_0}, \, y\in \Omega_{\lambda_0}$, and hence $ H^1_{\Omega_{\lambda_0}}\geq 0$ in $\Omega_{\lambda_0}$ by the maximum principle.

\medskip

On the other hand, by the definition of $\tilde G(x,y)$, we have
	\begin{equation}\label{g-13}
		\begin{split}
			\tilde G(x,y)&=G^1_{\Omega}(x^{\lambda_0}, y^{\lambda_0})=\frac{1}{2\pi} \ln\frac{1}{|x^{\lambda_0}- y^{\lambda_0}|}-H^1_{\Omega}(x^{\lambda_0}, y^{\lambda_0})\\
			&=\frac{1}{2\pi} \ln\frac{1}{|x - y |}+\frac{1}{2\pi} \ln\frac{|x-x_0||y-x_0|}{{\lambda_0}^2}-H^1_{\Omega}(x^{\lambda_0}, y^{\lambda_0}).
		\end{split}
	\end{equation}
	Thus we find
	\begin{equation*}
		\begin{split}
			\frac{1}{2\pi} \ln\frac{|x-x_0||y-x_0|}{{\lambda_0}^2}-H^1_{\Omega}(x^{\lambda_0}, y^\lambda_{0})=-H^1_{\Omega_{\lambda_0}}(x,y)\leq 0,\  \  \ \forall \,\, x, y\in \Omega_{\lambda_0}.
		\end{split}
	\end{equation*}
	Then, it follows from the above inequality that
	\begin{equation*}
		H^1_{\Omega}(x^{\lambda_0}, y^{\lambda_0})\geq  \frac{1}{2\pi} \ln\left[\frac{|x-x_0||y-x_0|}{{\lambda_0}^2}\right],\  \quad  \ \forall \,\, x, y\in \Omega_{\lambda_{0}},
	\end{equation*}
and hence, for any $x, \, y\in \Omega$,
	\begin{eqnarray*}
		&& G^{1}_{\Omega}(x,y)=\Gamma_{1}(x,y)-H^1_{\Omega}(x, y)\leq \frac{1}{2\pi}\ln\frac{1}{|x-y|}+\frac{1}{2\pi} \ln\left[\frac{{\lambda_0}^2}{|x^{\lambda_{0}}-x_0||y^{\lambda_{0}}-x_0|}\right] \\
\nonumber &&\qquad\qquad \leq \frac{1}{2\pi}\ln\left[\frac{(|x|+|x_{0}|)(|y|+|y_{0}|)}{\lambda_{0}^{2}|x-y|}\right]
\leq \frac{1}{2\pi}\ln\left[\frac{(1+|x_{0}|)(1+|y_{0}|)(1+|x|)(1+|y|)}{\lambda_{0}^{2}|x-y|}\right],
	\end{eqnarray*}
which proves $(\mathbf{H_{1}})$ when $n=2s=2$.
	
	\bigskip
	
	\noindent{\bf   Proof of $(\mathbf{H_{2}})$.} For any $d_{\Omega}<r<+\infty$, the g-radially convexity of $\mathbb R^n\setminus \overline\Omega$ implies  $\mathcal C^{r,e}_{0, \Sigma^{r}_{\Omega}} \subset\Omega$. Choose arbitrarily a smooth simply connected cross-section $\widehat{\Sigma}$ such that $\overline{\widehat{\Sigma}}\subsetneq\Sigma^{r}_{\Omega}$. Then one can infer that there exists $Q\in\mathcal C^{r,e}_{0, \Sigma^{r}_{\Omega}}$ such that the cone $Q+\mathcal C_{0, \widehat\Sigma}\subset \mathcal C^{r,e}_{0, \Sigma^{r}_{\Omega}}\subset\Omega$. The maximum principle implies that
\begin{equation}\label{a+1}
  G^s_{\Omega}(x,y)\geq G^{s}_{Q+\mathcal C_{0, \widehat\Sigma}}(x,y), \ \qquad  \forall\,\, x,y \in Q+\mathcal C_{0, \widehat\Sigma},
\end{equation}
where $G^{s}_{Q+\mathcal C_{0, \widehat\Sigma}}(x,y)$ denotes the Green function of $(-\Delta)^s$ on the cone $Q+\mathcal C_{0, \widehat\Sigma}$ with zero  Dirichlet boundary condition. It follows from estimates of the Green function on cones in Lemma \ref{PCG} that, for any smaller cone $Q+\mathcal{C}_{0,\Sigma_{1}}$ with $\overline{\Sigma_{1}}\subsetneq\widehat\Sigma$, there exists $C_{0},\,C_{1}>0$ and $0<\sigma_{1}<\sigma_{2}$ such that
\begin{equation}\label{a+2}
  G^{s}_{Q+\mathcal C_{0, \widehat\Sigma}}(x,y)\geq\frac{C_{1}|x-Q|^{\gamma_{s}(\mathcal{C}_{0,\widehat\Sigma})}}{|y-Q|^{n-2s+\gamma_{s}(\mathcal{C}_{0,\widehat\Sigma})}}, \qquad \forall \,\, x,y\in Q+\mathcal{C}_{0,\Sigma_{1}} \,\,\, \text{with} \,\, \frac{|x-Q|}{|y-Q|}\leq\frac{1}{2},
\end{equation}
and
\begin{equation}\label{a+3}
  G^{s}_{Q+\mathcal C_{0, \widehat\Sigma}}(x,y)\geq\frac{C_{0}}{|x-y|^{n-2s}}, \qquad \forall \,\, x\in\mathcal{C}_{Q,\Sigma_{1}}, \, y\in \mathcal{C}_{Q,\widehat\Sigma} \,\,\, \text{with} \,\, \sigma_1\leq\frac{|x-y|}{|x-Q|}\leq\sigma_{2}.
\end{equation}
For any cross-section $\Sigma_{2}$ such that $\overline{\Sigma_{2}}\subsetneq\Sigma_{1}$, we can take $R_{0}>\frac{4\sigma_{2}}{\sigma_{2}-\sigma_{1}}|Q|>0$ large enough such that $\mathcal{C}^{R_{0},e}_{0,\Sigma_{2}}\subset\mathcal{C}_{Q,\Sigma_{1}}$. Define $\theta:=n-2s+\gamma_{s}(\mathcal{C}_{0,\Sigma_{2}})$. Combining \eqref{a+1} with \eqref{a+2} and \eqref{a+3} yields that, for arbitrarily chosen $x_{0}\in Q+\mathcal{C}_{0,\Sigma_{1}}\subset\Omega$,
\[ G^s_{\Omega}(x_{0},y)\geq\frac{C}{|y|^{\theta}}, \qquad \forall \,\, y\in \mathcal{C}_{0,\Sigma_{2}} \,\,\, \text{with} \,\, |y|>R_{0} \,\, \text{sufficiently large},\]
\[G^s_{\Omega}(x,y)\geq\frac{C_{0}}{|x-y|^{n-2s}}, \qquad \forall \,\, x,y\in\mathcal{C}_{0,\Sigma_{2}} \,\,\, \text{with} \,\, |x|,\,|y|>R_{0}, \,\, \frac{\sigma_1(5\sigma_{2}-\sigma_{1})}{4\sigma_{2}}\leq\frac{|x-y|}{|x|}\leq\frac{3\sigma_{2}+\sigma_{1}}{4}.\]
Recall that we have assumed $P=0$, so property $\mathbf{(H_{2})}$ has been derived.

\medskip

If $\mathbb{R}^{n}\setminus\overline{\Omega}$ is a nonempty bounded g-radially convex domain, then we pick arbitrarily a point $Q\in\mathbb{R}^{n}\setminus\overline{\Omega}$ and define the Kelvin transform $\widetilde{x}:=\frac{x-Q}{|x-Q|^{2}}+Q$. Define $\widetilde{\Omega}:=\left\{x\in\mathbb{R}^{n}\mid\,\frac{x-Q}{|x-Q|^{2}}+Q\in\Omega\right\}$, then $\widetilde{\Omega}$ is a bounded domain with $Q\in\widetilde{\Omega}$. One can easily verify that $\left(\frac{1}{|x-Q|\cdot|y-Q|}\right)^{n-2s}G^{s}_{\Omega}(\widetilde{x},\widetilde{y})$ is the Green function for Dirichlet problem of $(-\Delta)^{s}$ on $\widetilde{\Omega}$. Thus we can deduce from the uniqueness of the Green function (guaranteed by maximum principle) that $G^{s}_{\widetilde{\Omega}}(x,y)=\left(\frac{1}{|x-Q|\cdot|y-Q|}\right)^{n-2s}G^{s}_{\Omega}(\widetilde{x},\widetilde{y})$. For any $x_{0}\in\Omega$, we have
\[\lim_{|y-P|\rightarrow+\infty}|y-P|^{n-2s}G^{s}_{\Omega}(x_{0},y)
=\lim_{|y-P|\rightarrow+\infty}\frac{1}{|x_{0}-Q|^{n-2s}}G^{s}_{\widetilde{\Omega}}\left(\widetilde{x_{0}},\widetilde{y}\right)
=\frac{G^{s}_{\widetilde{\Omega}}\left(\widetilde{x_{0}},Q\right)}{|x_{0}-Q|^{n-2s}}>0.\]
This proved property $\mathbf{(H_{2})}$ with $\Sigma=S^{n-1}$ and $\theta=n-2s$ if $\mathbb{R}^{n}\setminus\overline{\Omega}$ is nonempty and bounded.

	\bigskip
	
	\noindent{\bf   Proof of  $(\mathbf{H_{3}})$.}
	For any fixed $y\in (\Omega\setminus B_{\lambda}(0))^\lambda$, we define $$w(x):=\left(\frac{\lambda }{|x| }\right)^{n-2s}G^{s}_{\Omega}(x^\lambda, y^\lambda) - G^{s}_{\Omega}(x, y^\lambda), \  \qquad  \forall\,\, x\in  \Omega\bigcap B_\lambda(0),$$
where $x^\lambda:=\frac{\lambda^2}{|x|^2}x$. For any set $D$, let $D^\lambda:=\{x\in \mathbb R^n\,|\, x^\lambda\in D\}$ denote the reflection of $D$ with respect to the sphere $S_\lambda$. Note that the g-radially convexity of $\mathbb R^n\setminus \overline\Omega$ with respect to the center $P=0$ implies
	$$\Omega\cap B_\lambda(0)\subset (\Omega\setminus B_{\lambda}(0))^\lambda.$$

\medskip

For $s\in(0,1)$, we need the following maximum principle for spherical anti-symmetric functions from \cite{CLZ}.
	\begin{thm}[Proposition 2.1 in \cite{CLZ}]\label{MPfa}
		Let $D$ be an open subset in $B_{{\lambda_{0}}}(0)$. Assume that $w\in \mathcal{L}_{s}(\mathbb{R}^{n})\bigcap C_{loc}^{[2s],\{2s\}+\eps}(D)$ (with arbitrarily small $\eps>0$) is lower semi-continuous on $\overline{D}$. If
		\begin{equation*}
			\begin{cases}
				(-\Delta)^{s}w(x)\geq 0,\quad &x\in D,\\
				w \geq 0 &\text{a.e.  in }\,\,   B_{\lambda_{0}}(0)\setminus D,\\
				w =-w_{\lambda_{0}}=-\left(\frac{\lambda_{0}}{|x|}\right)^{n-2s}w(x^{\lambda_{0}}) &\text{a.e.  in }\,\,  B_{\lambda_{0}}(0),
			\end{cases}
		\end{equation*}
		then $w(x)\geq 0$ for every $x\in D$. Moreover, if $w(x)=0$ for some $x\in D$, then $w(x)\equiv0$ for almost every $x\in \mathbb R^n$.
	\end{thm}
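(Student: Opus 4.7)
The plan is to prove Theorem~\ref{MPfa} by a contradiction argument based on evaluating $(-\Delta)^{s}w$ at a putative negative minimum and exploiting the spherical anti-symmetry via a Kelvin-type change of variables, following the standard template for nonlocal maximum principles on half-spaces/balls. Suppose, toward a contradiction, that $w(x)<0$ at some $x\in D$. Since $w$ is lower semi-continuous on $\overline{D}$, is $\geq 0$ a.e.\ on $B_{\lambda_0}(0)\setminus D$, and the anti-symmetry forces $w\to 0$ in a controlled fashion near $S_{\lambda_0}$, the infimum of $w$ over $\overline{B_{\lambda_0}(0)}$ is attained at some interior point $\tilde x\in D$ with $m:=w(\tilde x)<0$.

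I would then compute $(-\Delta)^{s}w(\tilde x)$ directly from the singular integral representation, splitting the region of integration as
\[
(-\Delta)^{s}w(\tilde x)=C_{s,n}\,\text{P.V.}\!\int_{B_{\lambda_0}(0)}\!\frac{w(\tilde x)-w(y)}{|\tilde x-y|^{n+2s}}\,dy+C_{s,n}\!\int_{\mathbb{R}^{n}\setminus B_{\lambda_0}(0)}\!\frac{w(\tilde x)-w(y)}{|\tilde x-y|^{n+2s}}\,dy.
\]
In the second integral, substitute $y=z^{\lambda_0}$ with $z\in B_{\lambda_0}(0)$, so that $dy=(\lambda_0/|z|)^{2n}dz$ and, by the anti-symmetry hypothesis, $w(y)=-(|z|/\lambda_0)^{n-2s}w(z)$. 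Regrouping and using the elementary identity $|\tilde x-z^{\lambda_0}|\,|z|\geq\lambda_0|\tilde x-z|$ for $\tilde x,z\in B_{\lambda_0}(0)$ (equivalent to $(\lambda_0^{2}-|\tilde x|^{2})(\lambda_0^{2}-|z|^{2})\geq 0$), one arrives at
\[
(-\Delta)^{s}w(\tilde x)=\text{P.V.}\!\int_{B_{\lambda_0}(0)}\!K_{1}(\tilde x,z)\bigl(w(\tilde x)-w(z)\bigr)\,dz+\int_{B_{\lambda_0}(0)}K_{2}(\tilde x,z)\,w(\tilde x)\,dz,
\]
where both $K_{1}, K_{2}$ are strictly positive kernels on $B_{\lambda_0}(0)$. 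Since $\tilde x$ is a global minimum over $B_{\lambda_0}(0)$, the first integrand is non-positive and the second is strictly negative because $w(\tilde x)<0$ and $K_{2}>0$. Hence $(-\Delta)^{s}w(\tilde x)<0$, contradicting $(-\Delta)^{s}w\geq 0$ on $D$. This forces $w\geq 0$ throughout $D$.

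For the strong conclusion, suppose $w(x_{*})=0$ at some $x_{*}\in D$; then $x_{*}$ is a non-negative minimum, the term involving $w(x_{*})$ vanishes, and the identity reduces to
\[
0\leq(-\Delta)^{s}w(x_{*})=-\text{P.V.}\!\int_{B_{\lambda_0}(0)}\!K_{1}(x_{*},z)\,w(z)\,dz\leq 0,
\]
since $K_{1}>0$ and $w\geq 0$ a.e.\ on $B_{\lambda_0}(0)$. The strict positivity of $K_{1}$ then yields $w\equiv 0$ a.e.\ on $B_{\lambda_0}(0)$, and the anti-symmetry extends this to $w\equiv 0$ a.e.\ on $\mathbb{R}^{n}$. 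The main technical point—and the only step that is not purely algebraic—is justifying the change of variables and the combination of the two P.V.\ integrals into a single well-defined integral with the asserted kernel positivity; this requires the $\mathcal{L}_{s}(\mathbb{R}^{n})$-integrability of $w$ and the pointwise regularity $w\in C^{[2s],\{2s\}+\varepsilon}_{\mathrm{loc}}(D)$ near $\tilde x$ (or $x_{*}$) to make sense of the principal value and swap limits, and this is precisely the role played by the hypotheses of the theorem.
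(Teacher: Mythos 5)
The paper does not prove this theorem itself---it is imported verbatim from Proposition 2.1 of \cite{CLZ}. Your computation is correct and is the standard Kelvin-inversion argument; indeed, it is exactly the technique the paper employs on its own behalf in the proofs of Theorems \ref{NP-1} and \ref{NP-2} (splitting the singular integral at $S_{\lambda_0}$, substituting $y=z^{\lambda_0}$ with Jacobian $(\lambda_0/|z|)^{2n}$, invoking the anti-symmetry $w(z^{\lambda_0})=-(|z|/\lambda_0)^{n-2s}w(z)$, and using the pointwise inequality $|\tilde x-z^{\lambda_0}|\,|z|\geq\lambda_0|\tilde x-z|$). The strong maximum principle step via strict positivity of the reduced kernel is also right.

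There is, however, one concrete gap in the opening step. You assert that ``the infimum of $w$ over $\overline{B_{\lambda_0}(0)}$ is attained at some interior point $\tilde x\in D$.'' Two problems: first, $w$ is only lower semi-continuous on $\overline{D}$, not on all of $\overline{B_{\lambda_0}(0)}$, so the infimum over the full ball need not be attained; the correct compactness argument gives attainment of $\inf_{\overline{D}}w$. Second, the hypothesis $w\geq 0$ only a.e.\ in $B_{\lambda_0}(0)\setminus D$ gives no pointwise control on the (Lebesgue-null) set $\partial D$, so lower semi-continuity alone does not rule out the minimizer lying on $\partial D$, where the inequality $(-\Delta)^s w\geq 0$ is unavailable. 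Your ``anti-symmetry forces $w\to 0$ near $S_{\lambda_0}$'' does not close this, since $\partial D$ need not touch $S_{\lambda_0}$. In every application one takes $D$ to be the negativity set of a function continuous on its closure, so that $w$ vanishes identically on $\partial D$ and the negative minimizer is automatically interior; a fully rigorous proof should either strengthen the boundary hypothesis to pointwise positivity on $B_{\lambda_0}(0)\setminus D$ or work directly with the negativity set. Apart from this localization of the minimum, the remainder of your proof---the decomposition into the two positive kernels, the sign analysis, and the passage to $w\equiv 0$ a.e.\ on $\mathbb R^n$ in the rigidity case---is sound.
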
	
	
	On the one hand, if $y\in  (\Omega\setminus B_{\lambda}(0))^\lambda \setminus \left(\Omega\bigcap B_\lambda(0)\right)$, we have $w(x)$ is smooth in $\Omega\bigcap B_\lambda(0)$. Using the definition of Green's function and the properties of Kelvin transform, we find that $w$ satisfies
	\begin{equation*}
		\begin{cases}
			(-\Delta)^{s} w(x)=0,\ \ \ \ &x\in \Omega\bigcap B_\lambda(0),\\
			w(x)=\left(\frac{\lambda }{|x| }\right)^{n-2s}G^{s}_{\Omega}(x^\lambda, y^\lambda)\geq 0, &x\in (\Omega\setminus B_{\lambda}(0))^\lambda \setminus  (\Omega\bigcap B_\lambda(0)),\\
			w(x)\equiv0, &x\in B_\lambda(0)\setminus (\Omega\setminus B_{\lambda}(0))^\lambda,\\
			w(x)=-\left(\frac{\lambda}{|x|}\right)^{n-2s}w(x^\lambda), &x\in  B_\lambda(0)\setminus\{0\}.
		\end{cases}
	\end{equation*}
When $s=1$, we deduce from the maximum principle for $-\Delta$ that $w>0$ in $ \Omega\bigcap B_\lambda(0)$. When $s\in(0,1)$, we apply the maximum principle for anti-symmetric functions in Theorem \ref{MPfa} with $D=\Omega\bigcap B_\lambda(0)$ to obtain that $w>0$ in $\Omega\bigcap B_\lambda(0)$.

\medskip
	
	On the other hand, if  $y\in  \Omega\bigcap B_\lambda(0)$, then for $\delta>0$ sufficiently small, one has $w>0$ in $B_{\delta}(y)$ and $w(x)$ is smooth in $\left(\Omega\bigcap B_\lambda(0)\right)\setminus B_{\delta}(y)$. Thus we obtain
	\begin{equation*}
		\begin{cases}
			(-\Delta)^{s} w(x)=0,\ \ \ \ &x\in \left(\Omega\bigcap B_\lambda(0)\right)\setminus B_{\delta}(y),\\
			w(x)>0, &x\in  B_{\delta}(y),\\
			w(x)=\left(\frac{\lambda }{|x| }\right)^{n-2s}G^{s}_{\Omega}(x^\lambda, y^\lambda)\geq 0, &x\in (\Omega\setminus B_{\lambda}(0))^\lambda \setminus  (\Omega\bigcap B_\lambda(0)),\\
			w(x)\equiv0, &x\in B_\lambda(0)\setminus (\Omega\setminus B_{\lambda}(0))^\lambda,\\
			w(x)=-\left(\frac{\lambda}{|x|}\right)^{n-2s}w(x^\lambda), &x\in  B_\lambda(0)\setminus\{0\}.
		\end{cases}
	\end{equation*}
Similar as above, we can get $w>0$ in $\Omega\bigcap B_\lambda(0)\setminus B_{\delta}(y)$ via the maximum principle for $-\Delta$ if $s=1$, and the maximum principle for anti-symmetric functions in Theorem \ref{MPfa} with $D=\Omega\bigcap B_\lambda(0)\setminus B_{\delta}(y)$ if $s\in(0,1)$. Then, we take the limit by letting $\delta\to0$ and get $w>0$ in $\Omega\bigcap B_\lambda(0)$. Now the first inequality in $(\mathbf{H_{3}})$ follows easily from $w>0$ in $\Omega\bigcap B_\lambda(0)$ by changing $y$ to $y^{\lambda}$ and taking Kelvin transforms.

\medskip
	
	In order to prove the second inequality in $(\mathbf{H_{3}})$, for arbitrarily fixed $y\in \Omega\bigcap B_\lambda(0)$, let us define
	\begin{equation*}
		\begin{split}\tilde w(x)&=\left(\frac{\lambda^{2}}{|x|\cdot|y|}\right)^{n-2s}G^{s}_{\Omega}(x^\lambda, y^\lambda)  -\left(\frac{\lambda}{|y|}\right)^{n-2s}G^{s}_{\Omega}(x, y^\lambda)\\&\quad -\left[G^{s}_{\Omega}(x, y)-\left(\frac{\lambda}{|x|}\right)^{n-2s}G^{s}_{\Omega}(x^\lambda, y) \right].\end{split}
	\end{equation*}
	By direct calculations, we get
	\begin{equation*}
		\begin{split}\tilde w(x)&=-\left(\frac{\lambda^{2}}{|x|\cdot|y|}\right)^{n-2s}H^{s}_{\Omega}(x^\lambda, y^\lambda)  +\left(\frac{\lambda}{|y|}\right)^{n-2s}H^{s}_{\Omega}(x, y^\lambda)\\&\quad+\left[H^{s}_{\Omega}(x, y)-\left(\frac{\lambda}{|x|}\right)^{n-2s}H^{s}_{\Omega}(x^\lambda, y) \right],\end{split}
	\end{equation*}
	where $H^{s}_{\Omega}$ denotes the harmonic part in the Green function $G^{s}_{\Omega}$, which implies that $\tilde w$ is smooth in $\Omega\bigcap B_\lambda(0)$.  Using the definition of Green's function and the properties of Kelvin transform, one can verify that $\tilde w$ satisfies
	\begin{equation*}
		\begin{cases}
			(-\Delta)^{s} \tilde w(x)=0,\ \ \ \ &x\in \Omega\bigcap B_\lambda(0),\\
			\tilde w(x)=\left(\frac{\lambda^{2}}{|x|\cdot|y|}\right)^{n-2s}G^{s}_{\Omega}(x^\lambda, y^\lambda) +\left(\frac{\lambda}{|x|}\right)^{n-2s}G^{s}_{\Omega}(x^\lambda, y)\geq 0, &x\in (\Omega\setminus B_{\lambda}(0))^\lambda \setminus  (\Omega\bigcap B_\lambda(0)),\\
			\tilde w(x)\equiv0, &x\in B_\lambda(0)\setminus (\Omega\setminus B_{\lambda}(0))^\lambda,\\
			\tilde w(x)=-\left(\frac{\lambda}{|x|}\right)^{n-2s}\tilde w(x^\lambda), &x\in  B_\lambda(0)\setminus\{0\}.
		\end{cases}
	\end{equation*}
	Then, by applying the maximum principle for $-\Delta$ if $s=1$ and the maximum principle for anti-symmetric functions in Theorem \ref{MPfa} with $D=\Omega\bigcap B_\lambda(0)$ if $s\in(0,1)$, we obtain  $$\tilde w\geq 0\  \qquad  \text{in}\,\, \Omega\bigcap B_\lambda(0),$$
	which proves the second inequality in $(\mathbf{H_{3}})$ by applying Kelvin transforms.
		
		\bigskip
	
	Now, it remains to show that the Green function $G^s_{\Omega}$ satisfies $(\mathbf{H_{1}})$, $(\mathbf{\widetilde{H}_{2}})$ and $(\mathbf{\widetilde{H}_{3}})$ when the MSS applicable domain $\Omega$ is a g-radially convex domain with the center $P\in \overline{\Omega}$. We take the Kelvin transform $x\mapsto \frac{x-P}{|x-P|^2}+P$ and define
	$$\widetilde G^s(x,y):=\left(\frac{1}{|x-P|\cdot|y-P|}\right)^{n-2s} G^s_{\Omega}\left(\frac{x-P}{|x-P|^2}+P, \frac{y-P}{|y-P|^2}+P \right),$$
	and $$\Omega_1:=\left\{x\in \mathbb R^n \,\bigg |\,\frac{x-P}{|x-P|^2}+P\in \Omega\right\}.$$
	One can verify that $\Omega_1$ is a MSS applicable domain such that $\mathbb R^n\setminus \overline{\Omega_1}$ is g-radially convex. Moreover, by the properties of Kelvin transform, we have $\widetilde G^s(x,y)$ is the Green function for $(-\Delta)^s$ with zero Dirichlet boundary condition on $\Omega_1$. Thus  we conclude that $\widetilde G^s(x,y)$  satisfies $(\mathbf{H_{1}})$, $(\mathbf{ {H}_{2}})$ and $(\mathbf{ {H}_{3}})$ on $\Omega_{1}$, which implies immediately that $G^s_{\Omega}$ satisfies $(\mathbf{H_{1}})$, $(\mathbf{\widetilde{H}_{2}})$ and $(\mathbf{\widetilde{H}_{3}})$ on $\Omega$. The proof of Theorem \ref{pG} is therefore completed.
\end{proof}

\subsection{The equivalence between IEs and PDEs and proofs of Theorems \ref{equi} and \ref{equin}}

\begin{proof}[Proof of Theorem \ref{equi}]

We prove Theorem \ref{equi} by using ideas from \cite{ZCCY} (see Remark 1 in \cite{ZCCY}, c.f. also \cite{CDQ0,CDQ,DQ3,DQ0}). If $\Omega$ is a bounded g-radially convex domain, then the equivalence between the Dirichlet problem of PDEs \eqref{PDE} and IEs \eqref{IE1} in Theorem \ref{equi} is obvious, so we omit the details. We only consider the cases that $\Omega=\mathcal{C}_{P,\Sigma}$ such that $\overline{\mathcal{C}_{P,\Sigma}}\neq\mathbb{R}^{n}$ or $\mathbb{R}^{n}\setminus\overline{\Omega}\neq\emptyset$ is a bounded g-radially convex domain with convex center $P$ (when $s=1$).

\medskip

Suppose $u$ is a nonnegative classical solution to Dirichlet problem of PDEs \eqref{PDE} and $f$ satisfies $(i)$ in $(\mathbf{f_{3}})$ with the same cross-section as $\mathbf{(H_{2})}$, under assumptions in Theorem \ref{equi}, we will show that $u$ also solves the IEs \eqref{IE1}. To this end, for arbitrary $R>0$, let
\begin{equation}\label{2-1-29}
v_R(x):=\int_{\Omega^{R}}G^{s}_{\Omega^{R}}(x,y)f(y,u(y))\mathrm{d}y,
\end{equation}
where $G^{s}_{\Omega^{R}}(x,y)$ is the Green function for $(-\Delta)^{s}$ with $0<s\leq1$ on $\Omega\bigcap B_{R}(P)$. Then, one can derive that $v_{R}\in C^{[2s],\{2s\}+\eps}_{loc}\left(\Omega^{R}\right)\bigcap C(\mathbb{R}^{n})\bigcap\mathcal{L}_{s}(\mathbb{R}^{n})$ with arbitrarily small $\eps>0$ if $s\in(0,1)$, $v_{R}\in C^{2}\left(\Omega^{R}\right)\bigcap C(\mathbb{R}^{n})$ if $s=1$ and satisfies
\begin{equation}\label{2-1-31}\\\begin{cases}
(-\Delta)^{s}v_R(x)=f(x,u(x)), \qquad x\in \Omega^{R},\\
v_R(x)=0,\ \ \ \ \ \ \ x\in \mathbb{R}^{n}\setminus \Omega^{R}.
\end{cases}\end{equation}
Let $w_R(x):=u(x)-v_R(x)$. By \eqref{PDE} and \eqref{2-1-31}, we have $w_R\in C^{[2s],\{2s\}+\eps}_{loc}\left(\Omega^{R}\right)\bigcap C(\mathbb{R}^{n})\bigcap\mathcal{L}_{s}(\mathbb{R}^{n})$ with arbitrarily small $\eps>0$ if $s\in(0,1)$ and $w_R\in C^{2}\left(\Omega^{R}\right)\cap C(\mathbb{R}^{n})$ if $s=1$ and satisfies
\begin{equation}\label{2-1-32}\\\begin{cases}
(-\Delta)^{s}w_R(x)=0, \qquad x\in \Omega^{R},\\
w_R(x)\geq0, \qquad x\in \mathbb{R}^{n}\setminus \Omega^{R}.
\end{cases}\end{equation}

\medskip

Now we need the following maximum principle for fractional Laplacians.
\begin{lem}\label{max}(Maximum principle, \cite{CLL,Si})
Let $\Omega$ be a bounded domain in $\mathbb{R}^{n}$ and $0<s<1$. Assume that $u\in\mathcal{L}_{s}(\mathbb{R}^{n})\bigcap C^{[2s],\{2s\}+\eps}_{loc}(\Omega)$ with arbitrarily small $\eps>0$ and is l.s.c. on $\overline{\Omega}$. If $(-\Delta)^{s}u\geq 0$ in $\Omega$ and $u\geq 0$ in $\mathbb{R}^n\setminus\Omega$, then $u\geq 0$ in $\mathbb{R}^n$. Moreover, if $u=0$ at some point in $\Omega$, then $u=0$ a.e. in $\mathbb{R}^{n}$. These conclusions also hold for an unbounded domain $\Omega$ if we assume further that
\[\liminf_{|x|\rightarrow+\infty}u(x)\geq0.\]
\end{lem}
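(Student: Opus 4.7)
The plan is to prove Lemma \ref{max} by a direct contradiction argument that exploits the nonlocal representation \eqref{nonlocal defn} of $(-\Delta)^{s}$ at an interior minimizer. Unlike the local case $s=1$, where the minimum principle follows from $-\Delta u(x_0)\le 0$ at a minimum, here the strictly nonlocal nature of the operator actually helps: the fractional Laplacian at a global minimizer has a sign determined by the values of $u$ on the \emph{entire} $\mathbb R^n$, so it suffices to use boundary nonnegativity combined with interior minimality to force a contradiction.

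First I would prove the weak form. Suppose for contradiction that $\inf_{\mathbb R^n} u =: -m <0$. In the bounded case, since $u$ is l.s.c. on the compact set $\overline{\Omega}$, the infimum on $\overline{\Omega}$ is attained at some $x_0\in\overline{\Omega}$; since $u\ge 0$ on $\mathbb R^n\setminus\Omega$, we must have $x_0\in\Omega$ and $u(x_0)=-m$. By the interior regularity $u\in C^{[2s],\{2s\}+\eps}_{loc}(\Omega)$, the principal value
\begin{equation*}
(-\Delta)^{s}u(x_0)=C_{s,n}\,P.V.\int_{\mathbb R^n}\frac{u(x_0)-u(y)}{|x_0-y|^{n+2s}}\,\dd y
\end{equation*}
is well defined and, using the integrability guaranteed by $u\in\mathcal L_s(\mathbb R^n)$, can be taken in the ordinary Lebesgue sense once the symmetric principal value cancellation near $x_0$ is applied (so the singularity is absorbed by the $C^{[2s],\{2s\}+\eps}_{loc}$ regularity of $u$). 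Since $u(x_0)-u(y)\le 0$ everywhere, the integrand is nonpositive; moreover on the set $\{y\in\mathbb R^n\setminus\Omega\}$, which has positive Lebesgue measure, we have $u(y)\ge 0>u(x_0)$, so the integrand is \emph{strictly} negative on that set. Hence $(-\Delta)^{s}u(x_0)<0$, contradicting the hypothesis $(-\Delta)^{s}u\ge 0$ in $\Omega$. Therefore $u\ge 0$ in $\mathbb R^n$.

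Second, for the strong maximum principle: assume $u(x_1)=0$ at some $x_1\in\Omega$. Since we have just proved $u\ge 0$ in $\mathbb R^n$, the point $x_1$ is an interior global minimizer, and
\begin{equation*}
0\le (-\Delta)^{s}u(x_1)=C_{s,n}\int_{\mathbb R^n}\frac{-u(y)}{|x_1-y|^{n+2s}}\,\dd y\le 0,
\end{equation*}
where the principal value is again justified at a smooth minimizer. This forces $u(y)=0$ for a.e.\ $y\in\mathbb R^n$, as claimed.

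The only genuine obstacle is the unbounded case, where $\overline{\Omega}$ is no longer compact and the infimum need not be attained a priori. I would handle it by the hypothesis $\liminf_{|x|\to+\infty}u(x)\ge 0$: together with l.s.c.\ on $\overline{\Omega}$ this ensures that if $\inf u<0$, then there exists $R>0$ such that $u(x)>-m/2$ for all $|x|>R$, so the infimum over the compact set $\overline{\Omega}\cap\overline{B_R}$ equals the global infimum and is attained at some $x_0$ lying in $\Omega\cap B_R$ (boundary values are still nonnegative). The computation of $(-\Delta)^{s}u(x_0)$ then proceeds verbatim as in the bounded case, using $u\in\mathcal L_s(\mathbb R^n)$ to guarantee convergence of the tail integral. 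The strong form extends in the same manner. The delicate point is strictly technical: verifying that, at the minimizer $x_0$, the principal-value integral is absolutely convergent away from $x_0$ and that the cancellation near $x_0$ is controlled by the modulus of continuity of $u$, but the regularity class $C^{[2s],\{2s\}+\eps}_{loc}$ is precisely the sharp condition that makes this routine.
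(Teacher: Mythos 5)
Your proposal is correct and is essentially the Chen--Li--Li proof that the paper cites for this lemma (see Remark~\ref{rem13}), namely: locate a negative interior global minimizer via lower semicontinuity and compactness (using the liminf hypothesis to restore compactness in the unbounded case), observe that the $C^{[2s],\{2s\}+\eps}_{loc}$ regularity makes $(-\Delta)^{s}u$ well-defined and finite at that minimizer, and read off a strict sign from the nonlocal integral because $\mathbb{R}^n\setminus\Omega$ has positive measure. The strong form via the vanishing of $\int u(y)|x_1-y|^{-n-2s}\dd y$ is also the standard step; there is no gap.
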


\begin{rem}\label{rem13}
Lemma \ref{max} has been established first by Silvestre \cite{Si} without the assumption $u\in C^{[2s],\{2s\}+\eps}_{loc}(\Omega)$. In \cite{CLL}, Chen, Li and Li provided a much more elementary and simpler proof for Lemma \ref{max} under the assumption $u\in C^{[2s],\{2s\}+\eps}_{loc}(\Omega)$.
\end{rem}

By Lemma \ref{max} and maximal principles for $-\Delta$, we deduce that for any $R>0$,
\begin{equation}\label{2-1-33}
  w_R(x)=u(x)-v_{R}(x)\geq0, \,\,\,\, \forall \,\, x\in\mathbb{R}^{n}.
\end{equation}
Now, for each fixed $x\in\mathbb{R}^{n}$, letting $R\rightarrow\infty$ in \eqref{2-1-33}, we have
\begin{equation}\label{2-1-34}
u(x)\geq \int_{\Omega}G^{s}_{\Omega}(x,y)f(y,u(y))\mathrm{d}y=:v(x)\geq0,
\end{equation}
where $G^{s}_{\Omega}(x,y)$ denotes the Green function for $(-\Delta)^{s}$ with $0<s\leq1$ on $\Omega$. One can observe that $v\in C^{[2s],\{2s\}+\eps}_{loc}(\Omega)\bigcap C(\mathbb{R}^{n})\bigcap\mathcal{L}_{s}(\mathbb{R}^{n})$ with arbitrarily small $\eps>0$ if $s\in(0,1)$ and $v\in C^{2}(\Omega)\cap C(\mathbb{R}^{n})$ if $s=1$ is a solution of
\begin{equation}\label{2-1-36}\\\begin{cases}
(-\Delta)^{s}v(x)=f(x,u(x)),  \,\,\,\,\,\forall \, x\in \Omega,\\
v(x)=0, \qquad x\in \mathbb{R}^{n}\setminus \Omega.
\end{cases}\end{equation}
Define $w(x)=u(x)-v(x)\geq0$, then by \eqref{PDE} and \eqref{2-1-36}, we have $w\in C^{[2s],\{2s\}+\eps}_{loc}(\Omega)\bigcap C(\mathbb{R}^{n})\bigcap\mathcal{L}_{s}(\mathbb{R}^{n})$ with arbitrarily small $\eps>0$ if $s\in(0,1)$, $v\in C^{2}(\Omega)\cap C(\mathbb{R}^{n})$ if $s=1$ and satisfies
\begin{equation}\label{2-1-37}\\\begin{cases}
(-\Delta)^{s}w(x)=0, \,\,\,\,\,  x\in\Omega,\\
w(x)=0, \,\,\,\,\,\,  x\in \mathbb{R}^n\setminus\Omega.
\end{cases}\end{equation}

\medskip

Next we will discuss two different cases and show that equality in \eqref{2-1-34} must hold, i.e., $u=v$.

\medskip

\emph{Case (i).} $\Omega=\mathcal{C}_{P,\Sigma}$ such that $\overline{\mathcal{C}_{P,\Sigma}}\neq\mathbb{R}^{n}$ and $0<s\leq1$. The classification result for $s$-harmonic functions in cone from \cite{A, BaBo} (see also Theorem \ref{RMI-h}) implies that $w$ must take the following form
	\begin{equation*}
		w(x)=u(x)-v(x)=C |x-P|^{\gamma_s( \mathcal {C}_{P,\Sigma})}\varphi_s\left(\frac{x-P}{|x-P|}\right),
	\end{equation*}
where $C\geq0$, $\gamma_s( \mathcal C)$ is a nonnegative constant depending on $n$, $s$, the cross-section $\Sigma$ of $\mathcal C=\mathcal{C}_{P,\Sigma}$, independent of the vertex of the cone $\mathcal{C}$ and decreasing w.r.t. the inclusion relationship of the cross-section $\Sigma$ (i.e., $\gamma_s( \mathcal C_{P,\Sigma})\leq\gamma_s( \mathcal C_{Q,\Sigma_{1}})$ provided that $\Sigma_{1}\subseteq\Sigma$), and $\varphi_s$ is a positive function on $\Sigma$ vanishing on $S^{n-1}\setminus\Sigma$. Thus, we reach that
\begin{eqnarray}\label{2-1-38}
  && u(x)=\int_{\Omega}G^{s}_{\Omega}(x,y)f(y,u(y))\mathrm{d}y+C |x-P|^{\gamma_s( \mathcal {C}_{P,\Sigma})}\varphi_s\left(\frac{x-P}{|x-P|}\right) \\
 \nonumber && \qquad\, \geq C |x-P|^{\gamma_s( \mathcal {C}_{P,\Sigma})}\varphi_s\left(\frac{x-P}{|x-P|}\right)\geq0.
\end{eqnarray}
Since Theorem \ref{pG} implies that the Green function $G^{s}_{\Omega}$ satisfy $\mathbf{(H_{2})}$ with arbitrary $r\in(d_{\Omega},+\infty)$, any cross-section $\Sigma_{1}$ such that $\overline{\Sigma_{1}}\subsetneq\Sigma^{r}_{\Omega}=\Sigma$ and arbitrary $\theta>n-2s+\gamma_{s}(\mathcal{C}_{P,\Sigma})$, and $f$ satisfies $(i)$ in $(\mathbf{f_{3}})$, so there exist $r>0$, cross-section $\Sigma_{1}$ such that $\overline{\Sigma_{1}}\subsetneq\Sigma$, $C>0$, $a>-2s$, $p\geq1$, $x_{0}\in\Omega$ and $\theta$ arbitrarily close to $n-2s+\gamma_{s}(\mathcal {C}_{P,\Sigma})$ such that $f(x,u)\geq C|x-P|^{a}u^{p}$ for any $x\in\mathcal{C}^{r,e}_{P,\Sigma_{1}}$, and $G^{s}_{\Omega}(x_{0},y)\geq\frac{C}{|y-P|^{\theta}}$ for any $y\in\mathcal{C}^{r,e}_{P,\Sigma_{1}}$. Therefore, \eqref{2-1-34} implies that $u$ satisfies the following integrability
\begin{equation}\label{2-1-35}
+\infty>u(x_{0})\geq C\int_{\mathcal{C}^{r,e}_{P,\Sigma_{1}}}\frac{u^{p}(y)}{|y-P|^{\theta-a}}\mathrm{d}y.
\end{equation}
Now, by combining \eqref{2-1-35} with \eqref{2-1-38} and noting that $\inf\limits_{x\in\mathcal{C}_{P,\Sigma_{1}}}\varphi_s\left(\frac{x-P}{|x-P|}\right)>c_{0}>0$, we get
\begin{equation}\label{2-1-39}
  C^{p}\int_{\mathcal{C}^{r,e}_{P,\Sigma_{1}}}\frac{1}{|y-P|^{\theta-a-p\gamma_s(\mathcal {C}_{P,\Sigma})}}\mathrm{d}y
 \leq \int_{\mathcal{C}^{r,e}_{P,\Sigma_{1}}}\frac{u^{p}(y)}{|y-P|^{\theta-a}}\mathrm{d}y<+\infty,
\end{equation}
from which and $\theta$ is larger than but arbitrarily close to $n-2s+\gamma_{s}(\mathcal {C}_{P,\Sigma})$, we can infer immediately that $C=0$, and hence $w(x)=u(x)-v(x)=0$. Therefore, we arrive at
\begin{equation}\label{2-1-40}
  u(x)=\int_{\Omega}G^{s}_{\Omega}(x,y)f(y,u(y))\mathrm{d}y,
\end{equation}
that is, $u$ satisfies the integral equation \eqref{IE1}.

\medskip

\emph{Case (ii).} $s=1$, $u\in C^{0,1}(\overline{\Omega})$, $\partial\Omega$ is Lipschitz and $\mathbb{R}^{n}\setminus\overline{\Omega}\neq\emptyset$ is a bounded g-radially convex domain with convex center $P$. Pick $Q\in\mathbb{R}^{n}\setminus\overline{\Omega}$. Theorem \ref{pG} implies that the Green function $G^{1}_{\Omega}$ satisfies $\mathbf{(H_{2})}$ with $\Sigma=S^{n-1}$ and $\theta=n-2$, hence there exist $x_{0}\in\Omega$ and $R_{0}>diam(\mathbb{R}^{n}\setminus\overline{\Omega})$ large enough such that $G^{s}_{\Omega}(x_{0},y)\geq\frac{C}{|y-Q|^{\theta}}$ for any $|y-Q|\geq R_{0}$. Since $f$ satisfies $(i)$ in $(\mathbf{f_{3}})$ with cross-section equals to $S^{n-1}$, there exist $C>0$, $a>-2$, $p\geq1$ such that $f(x,u)\geq C|x-Q|^{a}u^{p}$ for any $|x-Q|\geq R_{0}$ (choose $R_{0}$ larger if necessary). Therefore, \eqref{2-1-34} implies that $u$ satisfies the following integrability
\begin{equation}\label{2-1-41}
+\infty>u(x_{0})\geq C\int_{|y-Q|\geq R_{0}}\frac{u^{p}(y)}{|y-Q|^{n-2-a}}\mathrm{d}y.
\end{equation}
Note that $0\leq w(x)\leq u(x)$, we also have $w\in C^{0,1}(\overline{\Omega})$ and
\begin{equation}\label{2-1-41'}
\int_{|y-Q|\geq R_{0}}\frac{w^{p}(y)}{|y-Q|^{n-2-a}}\mathrm{d}y<+\infty.
\end{equation}
Now define the spherical average of function $g$ over spheres $S_{r}(Q)$ centered at $Q$ by
$$\overline{g}(r):=\frac{1}{\sigma_{n-1}r^{n-1}}\int_{|x-Q|=r}g(x)\dd\sigma,$$
where $\sigma_{n-1}$ denotes the area of $n-1$-dimensional sphere $S^{n-1}\subset\mathbb{R}^{n}$. For any $r>R_{0}$, it follows from \eqref{2-1-37} and integrating by parts that
\[\int_{R_{0}\leq |x-Q|\leq r}\Delta w(x)\mathrm{d}x=\int_{|x-Q|=r}\frac{\partial w}{\partial r}\dd\sigma-\int_{|x-Q|=R_{0}}\frac{\partial w}{\partial r}\dd\sigma=0\]
and
\[\int_{B_{R_{0}}(Q)\setminus(\mathbb{R}^{n}\setminus\Omega)}\Delta w(x)\mathrm{d}x=\int_{|x-Q|=R_{0}}\frac{\partial w}{\partial r}\dd\sigma+\int_{\partial\Omega}\frac{\partial w}{\partial \vec{n}}\dd\sigma=0,\]
where $\frac{\partial w}{\partial \vec{n}}$ denotes the outward normal derivative of $w$ on $\partial\Omega$. As a consequence, one has
\[\int_{|x-Q|=r}\frac{\partial w}{\partial r}\dd\sigma=\sigma_{n-1}r^{n-1}\frac{\mathrm{d} \overline{w}(r)}{\mathrm{d} r}=-\int_{\partial\Omega}\frac{\partial w}{\partial \vec{n}}\dd\sigma=C_{\Omega}\geq0, \qquad \forall \, r>R_{0}.\]
If $C_{\Omega}>0$, one has $\frac{\mathrm{d} \overline{w}(r)}{\mathrm{d} r}\geq\frac{C_{\Omega}}{\sigma_{n-1}r^{n-1}}$. Integrating on $[R_{0},r]$ yields that
\begin{equation*}
  \overline{w}(r)-\overline{w}(R_{0})\geq\int_{R_{0}}^{r}\frac{C_{\Omega}}{\sigma_{n-1}s^{n-1}}\mathrm{d}s=
  \begin{cases}
\frac{C_{\Omega}}{(n-2)\sigma_{n-1}}\left(\frac{1}{R_{0}^{n-2}}-\frac{1}{r^{n-2}}\right), \,\,\,\,\, \text{if} \,\, n\geq2,\\
\frac{C_{\Omega}}{\sigma_{n-1}}\ln\left[\frac{r}{R_{0}}\right], \,\,\,\,\,\,  \text{if} \,\, n=2,
\end{cases}
\end{equation*}
that is, for any $r>R_{0}$,
\begin{equation*}
  \int_{|x-Q|=r}w^{p}(x)\dd\sigma\geq\frac{1}{(\sigma_{n-1}r^{n-1})^{p-1}}\left(\int_{|x-Q|=r}w(x)\dd\sigma\right)^{p}\geq
  \begin{cases}
C_{n,p,\Omega}r^{n-1}, \,\,\,\,\, \text{if} \,\, n\geq2,\\
C_{p,\Omega}r\left(\ln\left[\frac{r}{R_{0}}\right]\right)^{p}, \,\,\,\,\,\,  \text{if} \,\, n=2.
\end{cases}
\end{equation*}
It follows that
\begin{equation*}
  \int_{|y-Q|\geq R_{0}}\frac{w^{p}(y)}{|y-Q|^{n-2-a}}\mathrm{d}y\geq
  \begin{cases}
C_{n,p,\Omega}\int_{R_{0}}^{+\infty}r^{1+a}\dd r=+\infty, \,\,\,\,\, \text{if} \,\, n\geq2,\\
C_{p,\Omega}\int_{R_{0}}^{+\infty}r^{1+a}\left(\ln\left[\frac{r}{R_{0}}\right]\right)^{p}\dd r=+\infty, \,\,\,\,\,\,  \text{if} \,\, n=2,
\end{cases}
\end{equation*}
which contradicts \eqref{2-1-41'}. Thus we must have $C_{\Omega}=0$, then $\overline{w}(r)\equiv C$, that is, $\int_{|x-Q|=r}w(x)\dd\sigma=C\sigma_{n-1}r^{n-1}$ ($\forall \, r>R_{0}$). If $C>0$, then H\"{o}lder inequality yields that $\int_{|x-Q|=r}w^{p}(x)\dd\sigma\geq C^{p}\sigma_{n-1}r^{n-1}$, hence
\[\int_{|y-Q|\geq R_{0}}\frac{w^{p}(y)}{|y-Q|^{n-2-a}}\mathrm{d}y\geq C^{p}\sigma_{n-1}\int_{R_{0}}^{+\infty}r^{1+a}\dd r=+\infty,\]
which contradicts \eqref{2-1-41'}. Thus we must have $C=0$, that is, $w\equiv0$ in $\mathbb{R}^{n}\setminus B_{R_{0}}(Q)$. Then by \eqref{2-1-37} and applying maximum principle on $B_{R_{0}}(Q)\setminus(\mathbb{R}^{n}\setminus\Omega)$, we deduce that $w(x)=u(x)-v(x)\equiv0$ in $\mathbb{R}^{n}$. Therefore, we have reached that
\begin{equation}\label{2-1-40-2}
  u(x)=\int_{\Omega}G^{1}_{\Omega}(x,y)f(y,u(y))\mathrm{d}y,
\end{equation}
that is, $u$ satisfies the integral equation \eqref{IE1}.

\medskip

Conversely, assume that $u$ is a nonnegative solution of IEs \eqref{IE1}, then it is obvious that $u$ also solves the Dirichlet problem of PDEs \eqref{PDE}. This completes the proof of equivalence between PDEs \eqref{PDE} and IEs \eqref{IE1}.
\end{proof}

\begin{proof}[Proof of Theorem \ref{equin}]

Suppose $u$ is a nonnegative classical solution to Navier problem of PDEs \eqref{PDE} with $f$ satisfying $(\mathbf{f_{3}})$ if $\Omega=\mathcal{C}_{P,S^{n-1}_{2^{-k}}}$, and $(-\Delta)^{i}u\geq0$ in $\Omega$ ($i=1,\cdots,s-1$), under assumptions in Theorem \ref{equin}, we will show that $u$ also solves the IEs \eqref{IEN}. Recall that the exact formulae for Green's functions $G^{s}_{\Omega}$ have been given in Remark \ref{rem8}.

\medskip

If $\Omega=\mathcal{C}^{R}_{P,S^{n-1}_{2^{-k}}}$ is bounded, then the equivalence between the Navier problem of PDEs \eqref{PDE} and IEs \eqref{IEN} in Theorem \ref{equin} is obvious, so we omit the details. We only consider the cases that $\Omega=\mathcal{C}_{P,S^{n-1}_{2^{-k}}}$ with $k=1,\cdots,n$.

\medskip

Since $v_{i}:=(-\Delta)^{i}u\geq0$ in $\Omega$ ($i=1,\cdots,s-1$), then the Navier problem of \eqref{PDE} is equivalent to the following system
\begin{equation}\label{PDES}
\left\{{\begin{array}{l} {-\Delta v_{s-1}(x)=f(x,u(x))},  \\
	{-\Delta v_{s-2}(x)= v_{s-1}(x)}, \\ \cdots\cdots \\ {-\Delta u(x)= v_1(x)}. \\ \end{array}}\right.
\end{equation}

\medskip

We need the following Lemma on the Green function.
\begin{lem}\label{lem3-0}
	Assume that integer $k\geq2$ such that $2k\leq n$, then
	\begin{equation}\label{3-6-1}
	G^{k}_{\Omega}(x,y)=\int_{\Omega} G^{1}_{\Omega}(x,z)G^{k-1}_{\Omega}(z,y)\mathrm{d}z.
	\end{equation}
\end{lem}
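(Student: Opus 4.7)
The plan is to interpret both sides of \eqref{3-6-1} as the representing kernel of one and the same boundary value problem, and then invoke the uniqueness of the Navier Green function. Concretely, for any suitable test datum $g$ (say, $g\in C_{c}^{\infty}(\Omega)$), I would consider the Navier problem
\begin{equation*}
\begin{cases}
(-\Delta)^{k} w(x)=g(x), & x\in\Omega, \\
w=(-\Delta)w=\cdots=(-\Delta)^{k-1}w=0, & \text{on}\ \partial\Omega,
\end{cases}
\end{equation*}
whose unique solution, by the very definition of the Navier Green function, is $w(x)=\int_{\Omega}G^{k}_{\Omega}(x,z)g(z)\mathrm{d}z$ (with the natural decay at infinity, when $\Omega$ is unbounded, inherited from the convention on $G^{k}_{\Omega}$).

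Next I would split this problem into a cascade by setting $v:=-\Delta w$. Then $v$ solves the Navier problem of order $k-1$, namely $(-\Delta)^{k-1}v=g$ in $\Omega$ with $v=(-\Delta)v=\cdots=(-\Delta)^{k-2}v=0$ on $\partial\Omega$, so that
\[
v(y)=\int_{\Omega}G^{k-1}_{\Omega}(y,z)g(z)\mathrm{d}z,
\]
while $w$ solves the Dirichlet problem $-\Delta w=v$ in $\Omega$ with $w=0$ on $\partial\Omega$, so that $w(x)=\int_{\Omega}G^{1}_{\Omega}(x,y)v(y)\mathrm{d}y$. Substituting the formula for $v$, using the symmetry $G^{k-1}_{\Omega}(y,z)=G^{k-1}_{\Omega}(z,y)$ and Fubini, I would obtain
\[
w(x)=\int_{\Omega}\left(\int_{\Omega}G^{1}_{\Omega}(x,y)G^{k-1}_{\Omega}(y,z)\mathrm{d}y\right)g(z)\mathrm{d}z.
\]
Comparing the two representations of $w(x)$ and using the arbitrariness of $g\in C_{c}^{\infty}(\Omega)$, the identity \eqref{3-6-1} follows.

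An alternative, equivalent route is to directly verify that $\widetilde{G}(x,y):=\int_{\Omega}G^{1}_{\Omega}(x,z)G^{k-1}_{\Omega}(z,y)\mathrm{d}z$ satisfies the defining properties of $G^{k}_{\Omega}(x,y)$: iterated applications of $-\Delta_{y}$ peel off one factor at a time, using $(-\Delta)^{j}_{y}G^{k-1}_{\Omega}(z,y)=0$ on $\partial\Omega$ for $j=0,\ldots,k-2$ (Navier BC of $G^{k-1}_{\Omega}$), and at the step $j=k-1$ one picks up $(-\Delta)^{k-1}_{y}G^{k-1}_{\Omega}(z,y)=\delta(z-y)$, which produces $(-\Delta)^{k-1}_{y}\widetilde{G}(x,y)=G^{1}_{\Omega}(x,y)$, vanishing on $\partial\Omega$ by the Dirichlet BC of $G^{1}_{\Omega}$; one more $-\Delta_{y}$ then yields $(-\Delta)^{k}_{y}\widetilde{G}(x,y)=\delta(x-y)$.

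The main obstacle is ensuring uniqueness (and Fubini) on the possibly unbounded domains $\Omega=\mathcal{C}_{P,S^{n-1}_{2^{-k}}}$ appearing in the proof of Theorem \ref{equin}, since uniqueness of the Navier problem there requires a prescribed decay of $w,v$ at infinity; this has to be pinned down using $2k\leq n$ and the explicit decay of $G^{1}_{\Omega}$, $G^{k-1}_{\Omega}$ from Remark \ref{rem8}. For the domains considered here, in fact, the explicit reflection formulae in Remark \ref{rem8} make the identity \eqref{3-6-1} directly verifiable by term-by-term Fubini, providing a safety net should the abstract uniqueness argument be felt to need further justification.
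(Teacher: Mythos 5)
Your proposal is correct and follows essentially the same route as the paper: the paper's proof amounts precisely to verifying that $\int_{\Omega}G^{1}_{\Omega}(x,z)G^{k-1}_{\Omega}(z,y)\mathrm{d}z$ satisfies the defining equations and Navier boundary conditions for $G^{k}_{\Omega}$ and then invoking uniqueness (via the maximum principle), which is exactly your "alternative, equivalent route"; your test-function cascade with $v:=-\Delta w$ is just a dual rephrasing of the same uniqueness argument. Your closing remark about the unbounded-domain case and Fubini is a sensible precaution that the paper leaves implicit.
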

One can easily verify that $\int_{\Omega} G^{1}_{\Omega}(x,z)G^{k-1}_{\Omega}(z,y)\mathrm{d}z$ is the Green function for Navier problem of $(-\Delta)^{k}$ on $\Omega$. Thus we can deduce from the uniqueness of the Green function (guaranteed by maximum principle) that $G^{k}_{\Omega}(x,y)=\int_{\Omega} G^{1}_{\Omega}(x,z)G^{k-1}_{\Omega}(z,y)\mathrm{d}z$.

\medskip

Now we abbreviate $\mathcal{C}_{P,S^{n-1}_{2^{-k}}}$ to $\mathcal{C}_{k}$ with $k=1,\cdots,n$ for simplicity, i.e., $\Omega=\mathcal{C}_{k}$. By \eqref{PDES}, similar to the proof of \eqref{2-1-38}, we arrive at
\begin{equation}\label{3-14}
\left\{{\begin{array}{l} {v_{s-1}(x)=\int_{\mathcal{C}_{k}}G^{1}_{\mathcal{C}_{k}}(x,y)f(y,u(y))\mathrm{d}y+C_{s-1} |x-P|^{\gamma_1(\mathcal{C}_{k})}\varphi_1\left(\frac{x-P}{|x-P|}\right),}  \\{}\\
	{v_{s-2}(x)=\int_{\mathcal{C}_{k}}G^{1}_{\mathcal{C}_{k}}(x,y) v_{s-1}(y)\mathrm{d}y+C_{s-2}|x-P|^{\gamma_1( \mathcal{C}_{k})}\varphi_1\left(\frac{x-P}{|x-P|}\right), }\\{}\\
	\cdots\cdots \\ {}\\
	{u(x)=\int_{\mathcal{C}_{k}}G^{1}_{\mathcal{C}_{k}}(x,y)v_1(y)\mathrm{d}y+C_{0}|x-P|^{\gamma_1(\mathcal{C}_{k})}\varphi_1\left(\frac{x-P}{|x-P|}\right),} \\ \end{array}}\right.
\end{equation}
where the constants $C_{j}\geq0$ for $0\leq j\leq s-1$. For the definition of $\gamma_1$ and $\varphi_1$, see Theorem \ref{RMI-h}. Note that the nonnegative harmonic function in $\frac{1}{2^{k}}$-space $\{x=(x_{1},\cdots,x_{n})\mid\,x_{j_{1}}>0,x_{j_{2}}>0,\cdots,x_{j_{k}}>0\}$ with $1\leq j_{1}<j_{2}<\cdots<j_{k}\leq n$ is $\prod\limits_{1\leq j_{1}<j_{2}<\cdots<j_{k}\leq n}x_{j_{1}}\cdots x_{j_{k}}$, it follows from Theorem \ref{RMI-h} that $\gamma_1(\mathcal{C}_{k})=k$. Next, we aim to show that $C_{j}=0$ for $0\leq j\leq s-1$.

\medskip

Since Remark \ref{rem8} implies that, for any integer $s\geq1$ such that $2s\leq n$, the Green function $G^{s}_{\mathcal{C}_{k}}$ satisfy $\mathbf{(H_{2})}$ with arbitrary $r\in(d_{\Omega},+\infty)$, any cross-section $\Sigma$ such that $\overline{\Sigma}\subsetneq\Sigma^{r}_{\Omega}=S^{n-1}_{2^{-k}}$ and $\theta=n-2s+k$, so there exist $C>0$, $x^{(s)}_{0}\in\Omega=\mathcal{C}_{k}$ and $R_{0}>0$ large enough such that
\begin{equation}\label{a0}
  G^{s}_{\mathcal{C}_{k}}(x^{(s)}_{0},y)\geq\frac{C}{|y-P|^{n-2s+k}}, \qquad \forall \,\, y\in\mathcal{C}_{P,\Sigma},  \,\, |y-P|\geq R_{0},
\end{equation}
where $\Sigma$ is any cross-section such that $\overline{\Sigma}\subsetneq S^{n-1}_{2^{-k}}$. Note that $f$ satisfies $(i)$ in $(\mathbf{f_{3}})$, there exist smaller cross-section $\Sigma_{1}\subseteq\Sigma\subsetneq S^{n-1}_{2^{-k}}$, $C>0$, $a>-2s$, $p\geq1$ such that $f(x,u)\geq C|x-P|^{a}u^{p}$ for any $x\in\mathcal{C}_{P,\Sigma_{1}}$ and $|x-P|\geq R_{0}$ (choose $R_{0}$ larger if necessary).

\medskip

Now choose arbitrarily a cross-section $\Sigma$ such that $\overline{\Sigma}\subsetneq S^{n-1}_{2^{-k}}$ and $\Sigma_{1}\subset\Sigma$, where $\Sigma_{1}$ comes from $(i)$ in $(\mathbf{f_{3}})$. Suppose on the contrary that $C_{s-1}>0$, then by the 1st equation in system \eqref{3-14} and noting that $\inf\limits_{x\in\mathcal{C}_{P,\Sigma}}\varphi_1\left(\frac{x-P}{|x-P|}\right)>c_{\Sigma}>0$, we get
\[v_{s-1}(x)\geq c_{\Sigma}C_{s-1}|x-P|^{k}, \qquad \forall \, x\in\mathcal{C}_{P,\Sigma},\]
and hence the 2nd equation and property $\mathbf{(H_{2})}$ for $G^{1}_{\mathcal{C}_{k}}$ yields that, for some $x^{(1)}_{0}\in \mathcal{C}_{k}$,
\[v_{s-2}\left(x^{(1)}_{0}\right)\geq \int_{\mathcal{C}_{k}}G^{1}_{\mathcal{C}_{k}}\left(x^{(1)}_{0},y\right) v_{s-1}(y)\mathrm{d}y\geq C\int_{\mathcal{C}_{P,\Sigma}, \, |y-P|\geq R_{0}} \frac{C_{s-1}}{|y-P|^{n-2}}\mathrm{d}y=+\infty, \]
which is absurd. Thus we must have $C_{s-1}=0$. Continuing in this way, we derive (from the $j-1$-th and $j$-th equations in system \eqref{3-14}) that $C_{j}=0$ for all $1\leq j\leq s-1$. The main task is to show $C_{0}=0$.

\medskip

Suppose on the contrary that $C_{0}>0$, then the last equation in system \eqref{3-14} yields that
\begin{equation}\label{3-16}
  u(x)\geq c_{\Sigma_{1}}C_{0}|x-P|^{k}, \qquad \forall \, x\in\mathcal{C}_{P,\Sigma_{1}}.
\end{equation}
By \eqref{3-14} with $C_{j}=0$ ($j=1,\cdots,s-1$) and Lemma \ref{lem3-0}, it is easy to see that
\begin{eqnarray}\label{3-15+}
u(x)&=&\int_{\mathcal{C}_{k}}G^{1}_{\mathcal{C}_{k}}(x,y)v_{1}(y)\mathrm{d}y+C_{0}|x-P|^{k}\varphi_1\left(\frac{x-P}{|x-P|}\right) \\
\nonumber &=&\int_{\mathcal{C}_{k}}G^{1}_{\mathcal{C}_{k}}(x,y)\int_{\mathcal{C}_{k}}G^{1}_{\mathcal{C}_{k}}(y,z)v_2(z)\mathrm{d}z\mathrm{d}y
+C_{0}|x-P|^{k}\varphi_1\left(\frac{x-P}{|x-P|}\right)  \\
\nonumber &=&\int_{\mathcal{C}_{k}}G^{2}_{\mathcal{C}_{k}}(x,y)v_2(y)\mathrm{d}y
+C_{0}|x-P|^{k}\varphi_1\left(\frac{x-P}{|x-P|}\right)\\
\nonumber &=&\cdots\\
\nonumber &=&\int_{\mathcal{C}_{k}} G^{s}_{\mathcal{C}_{k}}(x,y)f(y,u(y))\mathrm{d}y+C_{0}|x-P|^{k}\varphi_1\left(\frac{x-P}{|x-P|}\right).
\end{eqnarray}
By substituting \eqref{3-16} into \eqref{3-15+}, we get from $(i)$ in $(\mathbf{f_{3}})$ that
\[u(x^{(s)}_{0})>\int_{\mathcal{C}_{k}}G^{s}_{\mathcal{C}_{k}}(x^{(s)}_{0},y)f(y,u(y))\mathrm{d}y\geq C\int_{\mathcal{C}_{P,\Sigma_{1}}, \, |y-P|\geq R_{0}} \frac{c_{\Sigma_{1}}^{p}C_{0}^{p}}{|y-P|^{n-2s+k-a-kp}}\mathrm{d}y=+\infty,\]
which is absurd. Therefore, we must have $C_{0}=0$, and hence \eqref{3-15+} immediately implies
\begin{equation}\label{3-15}
u(x)=\int_{\mathcal{C}_{k}} G^{s}_{\mathcal{C}_{k}}(x,y)f(y,u(y))\mathrm{d}y,
\end{equation}
that is, $u$ satisfies the integral equation \eqref{IEN}.

\medskip

Conversely, assume that $u$ is a nonnegative solution of IEs \eqref{IEN}, then it is obvious that $u$ also solves the Navier problem of PDEs \eqref{PDE} on $\frac{1}{2^{k}}$-space $\mathcal{C}_{P,S^{n-1}_{2^{-k}}}$ with super poly-harmonic property $(-\Delta)^{i}u\geq0$ in $\mathcal{C}_{P,S^{n-1}_{2^{-k}}}$ ($i=1,\cdots,s-1$). This completes the proof of equivalence between PDEs \eqref{PDE} and IEs \eqref{IEN}.
\end{proof}

\section{Blowing-up analysis and the a priori estimates of solutions on BCB domains}

\subsection{Proof of Theorem \ref{prid}}
\begin{proof}[Proof of Theorem \ref{prid}]

Suppose $\Omega$ is a BCB domain such that the boundary H\"{o}lder estimates for $(-\Delta)^{s}$ holds uniformly on the scaling domain $\Omega_{\rho}$ w.r.t. the scale $\rho\in(0,1]$. We will prove the a priori estimates in Theorem \ref{prid} by contradiction arguments and the Liouville theorems (Theorems \ref{pde-unbd}, \ref{pde-unbd-a}, \ref{pde-unbd-2}, \ref{pde-bd}, \ref{pde-bd-2}, \ref{pG}, \ref{f2PDE} and Corollary \ref{cone-cor}). Suppose on the contrary that \eqref{f-3} does not hold, then there exist  a sequence
	of solutions $\{u_j\}_{j=1}^\infty$ to the Dirichlet problem \eqref{f-1} and a sequence of points $\{x_j\}_{j=1}^\infty \subset \Omega$ such that
	\begin{equation}\label{f-4}
		u_j(x_j)=\max_{x\in\Omega} \,u_j(x)=:m_j \rightarrow +\infty, \  \ \ \text{as}\, j\to+\infty.
	\end{equation}

\smallskip	
	
	Let the scale sequence $\{\lambda_{j}\}$ be defined by $\lambda_{j}:=m_{j}^{\frac{1-p}{2s}}\rightarrow0$, as $j\rightarrow+\infty$. For $x\in\Omega_{j}:=\Omega_{\lambda_{j}}=\{x\in\mathbb{R}^{n}\,|\,\lambda_{j}x+x_{j}\in\Omega\}$, we define
	\begin{equation}\label{f-5}
		v_{j}(x):=\frac{1}{m_{j}}u_{j}(\lambda_{j}x+x_j).
	\end{equation}
	Then $v_{j}(x)$ satisfies $\|v_{j}\|_{L^{\infty}(\overline{\Omega_{j}})}=v_{j}(0)=1$ and
	\begin{equation}\label{f-6}
		\begin{cases}
			(-\Delta)^{s}v_j(x)=f_j(x, v_j(x)),  & x \in \Omega_j,\\
			v_j(x)\equiv0, &x \not\in \Omega_j,
		\end{cases}
	\end{equation}
	where $f_j(x, v_j(x)):=\frac{f(\lambda_{j}x+x_j, m_j v_j(x))}{m_j^p}$.

\medskip
	
	Let $d_j=dist(x_j, \partial\Omega)$. We will carry out the proof using the contradiction argument while exhausting all three possibilities.
	
\vspace{12pt}
	
	\emph{Case (i).} $\lim\limits_{j \rightarrow +\infty} \frac{d_j}{\lambda_j} =+\infty$.

\medskip

In this case, it can be seen easily that
$$\Omega_j \rightarrow \mathbb{R}^n, \,\  \  \   \text{ as } j \to +\infty.$$
	In order to obtain a limiting equation, we need the following interior H\"older estimate for fractional Laplacians (see e.g. Proposition 2.3 and Corollary 2.4 in \cite{RS} or Propositions 2.8 and 2.9 in \cite{Si}).
	\begin{prop}[Interior regularity]\label{fr-in-re}
		The following two interior regularity results hold.
		\begin{itemize}
			\item[(a)]If   $w\in L^\infty(\mathbb R^n)$ and $(-\Delta)^s w\in L^\infty(B_2(0))$, then for every $\beta\in (0, 2s)$, one has $w\in C^\beta(\overline{B_1(0)})$ and for some constant $C>0$,
 $$\|w\|_{C^\beta(\overline{B_1(0)})}\leq C\left(\|w\|_{L^\infty(\mathbb R^n)}+\|(-\Delta)^s w\|_{L^\infty(B_2(0))}\right).$$
			\item[(b)]	If $(1+|x|)^{-n-2s}w\in L^1({\mathbb R^n})$ and  $(-\Delta)^s w\in C^\beta(\overline{B_2(0)})$ for some $\beta$ such that neither $\beta$ nor $\beta+2s$ is an integer. Then, there is a constant $C$ such that
			$$\|w\|_{C^{\beta+2s}(\overline{B_1(0)})}\leq C\left(\|(1+|x|)^{-n-2s}w\|_{L^1(\mathbb R^n)}+\|w\|_{C^{\beta}(\overline{B_2(0)})}+\|(-\Delta)^s w\|_{C^\beta(\overline{B_2(0)})}\right).$$
		\end{itemize}
	\end{prop}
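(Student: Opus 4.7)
The plan is to establish both parts by the standard near-field/far-field decomposition combined with potential-theoretic estimates for the Riesz kernel. Pick a smooth cutoff $\eta\in C_c^\infty(\mathbb R^n)$ with $\eta\equiv 1$ on $B_{3/2}(0)$, $\eta\equiv 0$ outside $B_{7/4}(0)$ and $0\leq\eta\leq 1$, and write $w=w_1+w_2$ with $w_1:=\eta w$ and $w_2:=(1-\eta)w$. The key observation is that $w_2$ vanishes on $B_{3/2}(0)$, so for $x\in B_1(0)$ the quantity $(-\Delta)^s w_2(x)$ reduces to a smooth integral operator acting on $w$ with kernel bounded and smooth uniformly for $y$ away from $B_1(0)$; every derivative of $w_2$ on $\overline{B_1(0)}$ is therefore controlled by $\|w\|_{L^\infty}$ (part (a)) or by $\|(1+|x|)^{-n-2s}w\|_{L^1}$ (part (b)). The nontrivial content is the near-field analysis of $w_1$.

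For part (a), I would observe that $w_1\in L^\infty(\mathbb R^n)$ has compact support and
\[
(-\Delta)^s w_1 = \eta(-\Delta)^s w + \big[(-\Delta)^s w_1-\eta(-\Delta)^s w\big],
\]
where the bracket is a commutator that, when evaluated at $x\in B_1(0)$, is again a smooth integral of $w$ against a bounded kernel and hence belongs to $L^\infty(B_1(0))$ with norm $\lesssim \|w\|_{L^\infty(\mathbb R^n)}$. Thus $g:=(-\Delta)^s w_1\in L^\infty(\mathbb R^n)$ with compactly supported bound. Inverting via the Riesz potential, $w_1 = I_{2s}\ast g$ up to a constant harmonic term absorbed by the support condition; standard Riesz potential mapping properties (Hardy--Littlewood--Sobolev together with the elementary observation that $I_{2s}\ast L^\infty_{\text{comp}}\subset C^\beta$ for every $\beta<\min\{2s,1\}$, extended to $\beta<2s$ via iteration when $2s\geq1$) then give $w_1\in C^\beta(\mathbb R^n)$ with the desired quantitative control. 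Adding the contribution from $w_2$ concludes part (a).

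Part (b) is the Schauder-type bootstrap. With the same splitting, I would prove that if $g\in C^\beta(\overline{B_2(0)})$ and $g$ is globally bounded after cutoff, then $I_{2s}\ast g\in C^{\beta+2s}$ locally, with the stated estimate, provided that neither $\beta$ nor $\beta+2s$ is an integer (this non-integer hypothesis is exactly what prevents the Zygmund/log-loss phenomenon). The cleanest route is through the Caffarelli--Silvestre extension: letting $W$ denote the $s$-harmonic extension of $w$ to $\mathbb R^{n+1}_+$, the function $W$ satisfies a degenerate elliptic equation with $A_2$ weight $y^{1-2s}$, with Neumann-type boundary data given by $(-\Delta)^s w$. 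Classical Schauder estimates for this degenerate equation (see the references cited in the paper) yield $W\in C^{\beta+2s}$ up to the boundary in a smaller cylinder, from which $w=W|_{y=0}$ inherits the same regularity. The $L^1$-tail hypothesis on $(1+|x|)^{-n-2s}w$ is exactly what is needed to control the Poisson-type correction coming from the far-field contribution of $w_2$ and to make the extension well defined; it enters the final estimate as the first term on the right.

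The main obstacle I anticipate is the bookkeeping of the commutator $(-\Delta)^s(\eta w)-\eta(-\Delta)^s w$ and the far-field tail: one must verify that both depend linearly on $\|w\|_{L^\infty}$ (resp.\ $\|(1+|x|)^{-n-2s}w\|_{L^1}$) with constants that are uniform in the choice of cutoff and independent of $w$. Once these estimates are packaged, the interior regularity statement follows by combining the near-field potential/Schauder estimate with the smooth far-field correction, and the non-integer hypothesis in part (b) is used only to avoid the endpoint loss in the Schauder step.
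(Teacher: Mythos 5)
The paper does not give its own proof of Proposition \ref{fr-in-re}: it is quoted as a known result, with a pointer to Proposition 2.3 and Corollary 2.4 in \cite{RS} and Propositions 2.8 and 2.9 in \cite{Si}. So there is nothing in the paper to compare your argument against line by line; what you have written is a sketch of a proof that the paper simply cites.

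Your sketch follows the same circle of ideas used in those references: a smooth cutoff $\eta$ to split $w$ into a compactly supported piece $w_1=\eta w$ and a far-field piece $w_2=(1-\eta)w$, a commutator identity that isolates the far-field tail, then Riesz-potential mapping properties for part (a) and Schauder theory (via the Caffarelli--Silvestre extension or equivalent potential estimates) for part (b). That is exactly the standard route, and as an outline it is sound. Two technical points deserve to be flagged, because as written they are overstated. First, the claim ``$g:=(-\Delta)^sw_1\in L^\infty(\mathbb R^n)$ with compactly supported bound'' is imprecise: $(-\Delta)^sw_1$ is not compactly supported (it decays like $|x|^{-n-2s}$), and proving it is globally bounded when $2s\geq 1$ requires the second-order Taylor expansion of $\eta$ to kill the principal value of the term $(\eta(x)-\eta(y))w(y)$ near the diagonal; the first-order cancellation alone is not enough. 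For the purposes of part (a) it is cleaner to only control $g$ on a slightly larger ball plus its tail integrated against the Riesz kernel, rather than assert global boundedness. Second, the parenthetical ``extended to $\beta<2s$ via iteration when $2s\geq 1$'' is not how the Hölder gain for $I_{2s}\ast(L^\infty_{\rm comp})$ is obtained when $2s>1$; one does not bootstrap the $\beta<1$ case, one differentiates the kernel once and estimates the resulting $|x-y|^{2s-1-n}$ convolution directly, yielding $C^{1,\beta-1}$ for $\beta<2s$. Your appeal to the extension and weighted Schauder estimates for part (b) is the right idea, and the role you assign to the non-integer hypothesis (avoiding the endpoint Zygmund loss) and to the $L^1$ weighted tail (making the extension well defined and controlling the far-field correction) is correct. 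With the two points above repaired, this would be a reasonable self-contained proof of a result the paper treats as external.
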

	
	Since $\lim\limits_{j \rightarrow +\infty} \frac{d_j}{\lambda_j} =+\infty$, for any $R>0$, one has  $B_{2R}(0) \subset \Omega_j$  when $j$ is sufficiently large. For $j$ large, by using assumptions \eqref{f-2} and \eqref{f-2-2}, we find
\begin{equation}\label{fj}
  \|f_j\|_{L^\infty(B_{2R}(0))}\leq 2(1+\|h\|_{L^\infty(\Omega)}).
\end{equation}
	Therefore, we infer from Proposition \ref{fr-in-re} (a) that
	\begin{equation}\label{f-9}
		\|v_j\|_{C^\beta(\overline{B_R(0)})}\leq C_R(1+\|h\|_{L^\infty(\Omega)})
	\end{equation}
	for every $\beta\in (0, 2s)$ and some constant $C_R$ depending on $R$ but independent of $j$.  Now we verify that $f_j\in C^\sigma( \overline{B_R(0)})$ for some $\sigma>0$. Indeed, by assumptions \eqref{f-2-1} and \eqref{f-2-2} and the regularity of $v_j$ in \eqref{f-9}, we obtain, for any $x,y\in B_R(0)$,
	\begin{align}\label{fj1}
		&|f_j(x, v_j(x))-f_j(y, v_j(y))|\\
		\nonumber =&m_j^{-p}|f(\lambda_{j} x+x_j, m_j v_j(x))-f(\lambda_{j}y+x_j, m_j v_j(y))|\\
		\nonumber \leq& m_j^{-p}|f(\lambda_{j}x+x_j, m_j v_j(x))-f(\lambda_{j}x+x_j, m_j v_j(y))|\\
		\nonumber &+m_j^{-p}|f(\lambda_{j}x+x_j, m_j v_j(y))-f(\lambda_{j}y+x_j, m_j v_j(y))|\\
		\nonumber \leq &C m_j^{-p+\sigma_2}(1+m_j|v_j(x)|+m_j|v_j(y)|)^{p-\sigma_2} |v_j(x)-v_j(y)|^{\sigma_2}\\
		\nonumber &+Cm_j^{-p}(1+m_j|v_j(y)|)^{p+\frac{(p-1)\sigma_1}{2s}}|(\lambda_j x+x_j)-(\lambda_j y+x_j)|^{\sigma_1}\\
		\nonumber \leq& C|x-y|^{\min\{\beta\sigma_2, \sigma_1\}}.
	\end{align}
	Proposition \ref{fr-in-re} (b) implies that, there exists a small constant $\epsilon>0$ such that
	\begin{equation}\label{f-10}
		\|v_j\|_{C^{[2s],\{2s\}+2\epsilon}\left(\overline{B_{\frac{R}{2}}(0)}\right)}\leq C_R
	\end{equation}
	for some constant $C_{R}$ depending on $R$ but independent of $j$. Then Arzel$\grave{a}$-Ascoli theorem and a diagonal argument give a subsequence (still denoted by $\{v_j\}$) that converges to some nonnegative function $v$ in $C^{[2s],\{2s\}+ \epsilon}(K)$ for any compact subset $K\subset\mathbb R^n$. Moreover, by the pointwise convergence, one has $v(0)=\lim\limits_{j\to+\infty}v_j(0)=1$.
	
\medskip

	Next, we show that
	\begin{equation}\label{f-11}
		(-\Delta)^s v_j(x)\rightarrow (-\Delta)^s v(x),\ \qquad \ \forall\, x\in \mathbb{R}^n, \ \ \ \text{as}\, j\to+\infty.
	\end{equation}
	For any fixed $x\in \mathbb{R}^n$, taking $R=2(|x|+1)$ in \eqref{f-10}, we get, for $y\in B_1(0)$,
$$\frac{|2v_j(x)-v_j(x-y)-v_j(x+y)|}{|y|^{n+2s}}\leq C\frac{ |y|^{2s+\epsilon}}{|y|^{n+2s}}=\frac{C}{|y|^{n-\epsilon}}.$$
	On the other hand, we have, for $y\in \mathbb{R}^n\setminus B_1(0)$,
	$$\frac{|2v_j(x)-v_j(x-y)-v_j(x+y)|}{|y|^{n+2s}}\leq  \frac{4}{|y|^{n+2s}}\leq \frac{8}{1+|y|^{n+2s}}.$$ Hence, we obtain $$\frac{|2v_j(x)-v_j(x-y)-v_j(x+y)|}{|y|^{n+2s}} \leq  \frac{C}{1+|y|^{n+2s}}\left(1+\frac{1}{|y|^{n-\epsilon}}\right),\ \quad \ \forall\, y\in \mathbb{R}^n.$$
	Since $\frac{1}{1+|y|^{n+2s}}\left(1+\frac{1}{|y|^{n-\epsilon}}\right)\in L^{1}(\mathbb R^n)$, by the definition of $(-\Delta)^{s}$ and applying the Lebesgue's dominated convergence theorem, we derive \eqref{f-11}.
	
\medskip

	Since we have proved in \eqref{fj} and \eqref{fj1} that $\|f_j\|_{C^\sigma(B_R(0))}\leq C_R$ for any $R>0$ and some $\sigma>0$, by Arzel$\grave{a}$-Ascoli theorem, we can select a subsequence (still denoted by $\{f_{j}\}$) such that $f_j\to f_0$ uniformly in any compact set $K$ in $\mathbb R^n$ as $j\to+\infty$ for some $f_0\in L^\infty(\mathbb R^n)\cap C^{\frac{\sigma}{2}}(\mathbb R^n)$.  Then we infer from \eqref{f-11} that $v$ satisfies
	$$(-\Delta)^s v(x)=f_0(x), \qquad v(x)\geq 0,\,\qquad\, x\in \mathbb{R}^n.$$	
Now we determine the expression of $f_0$. Without loss of generalities, we may assume that $x_j\to x_0$ for some $x_0\in \overline \Omega$. For any $x\in \mathbb R^n$ such that $v(x)>0$,  one has $ v_{j}(x)\geq \frac{v(x)}{2}>0$   for $j$ sufficiently large. Then the assumption \eqref{f-2} on $f$ implies
	$$  f_j(x,  v_{j }(x))= \frac{f(\lambda_{j}x+x_j, m_j v_j(x))}{m_j^p}\to h(x_0)  v^p(x),  $$
	as $j\to +\infty$. That is, \begin{equation}\label{f-12}
		 f_0(x)=h(x_0)  v^p(x), \ \qquad  \  \forall\,\, x\in \mathbb R^n \ \ \text{with}\,\,  v(x)>0.
	\end{equation}
     We claim that
	\begin{equation}\label{f-13}
		v(x)>0,\ \qquad \ \forall\,\, x\in \mathbb R^n.
	\end{equation}
Indeed, suppose that the set $\{x\mid\,v(x)=0\}\not=\emptyset$, then there exist a point $\tilde x\in \partial \{x\mid\,v(x)>0\}$ due to $v(0)=1>0$. Using \eqref{f-12}, the continuity of $f_0$ and $v$, we have $$f_0(\tilde x)=0.$$ However, direct calculations yields that
$$ f_0(\tilde x)= (-\Delta)^s v(\tilde x)=C_{s,n} \, P.V.\int_{\mathbb{R}^n}\frac{v(\tilde x)-v(y)}{|\tilde x-y|^{n+2s}}\mathrm{d}y=C_{s,n} \, P.V.\int_{\mathbb{R}^n}\frac{ -v(y)}{|\tilde x-y|^{n+2s}}\mathrm{d}y<0,$$
which is absurd. So the claim \eqref{f-13} must hold and we infer from \eqref{f-12} that
$v$ satisfies
$$(-\Delta)^s v(x)=h(x_0)  v^p(x), \qquad v(x)\geq 0,\,\,\,\quad \forall \,\, x\in \mathbb{R}^n.$$	
Then the Liouville theorem in $\mathbb{R}^{n}$ in \cite{CLL} (also contained in our Theorems \ref{pde-unbd}, \ref{pde-unbd-a}, \ref{pde-unbd-2}, \ref{pde-bd}, \ref{pde-bd-2} and \ref{f2PDE}) implies $v\equiv0$, which contradicts $v(0)=1$.

\vspace{12pt}

	\emph{Case (ii).} $\lim\limits_{j \rightarrow+\infty} \frac{d_j}{\lambda_j} =C>0$.

\medskip	
	
	For each $j$, we can find a point $\bar x_j\in \partial\Omega$ such that $|\bar x_j-x_j|=d_j$. Since  $\lim\limits_{j \rightarrow \infty} \frac{d_j}{\lambda_j} =C>0$, we may assume that $\lim\limits_{j \rightarrow \infty} \frac{\bar x_j-x_j}{\lambda_j} =\hat{x}_{0}$ for some point $\hat{x}_{0}\in \mathbb R^n$. Note that
$$\Omega_j=\{x\in\mathbb R^n\  \mid \lambda_j x+x_j\in \Omega\}=\left\{x\in\mathbb R^n\  \mid \lambda_j x+\bar x_j\in \Omega\right\}+\frac{\bar x_j-x_j}{\lambda_j}$$
and $\Omega$ is a BCB domain, one immediately has (up to a subsequence)
	$$\Omega_j\rightarrow \mathcal C, \ \ \quad \text{as}\,\, j\to+\infty$$
for some cone $\mathcal C$ containing $0$.  	
	
\medskip

Since the boundary H\"{o}lder estimates for $(-\Delta)^{s}$ holds uniformly on the scaling domain $\Omega_{\rho}$ w.r.t. the scale $\rho\in(0,1]$, applying the boundary H\"older estimates to equation \eqref{f-6}, we obtain a constant $C_R$ independent of $j$ such that, for $j$ large,
	\begin{equation}\label{f-15}
		\| v_{j }\|_{C^{\alpha}(\Omega_j\cap B_{ R}(0))} \leq C_R
	\end{equation}
	for some $\alpha>0$ and any $R>0$. In the cone $\mathcal C$, through arguments similar as \emph{Case (i)}, one can prove that there is a subsequence (still denoted by $\{v_j\}_{j=1}^\infty$) that converges to some bounded function $v$ pointwisely in $\mathbb R^n$. Estimate \eqref{f-15} ensures that passing to a subsequence, the limiting functions $ v \in C(\overline{\mathcal{C}})$ and $  v =0 $ on $\mathbb R^n\setminus \mathcal C$. In addition, $ v_j\rightarrow v$ in $C^{[2s],\{2s\}+ \epsilon}(K)$ and hence $(-\Delta)^s v_j\rightarrow (-\Delta)^s v$ pointwisely for any compact subset $K\subset\mathcal C$. Moreover, one has $v(0)=\lim\limits_{j\to+\infty}v_j(0)=1$ and $v>0$ in $\mathcal C$. Therefore, we obtain a function $v$ satisfying
	\begin{equation*}
		\begin{cases}
			(-\Delta)^sv(x)=h(x_0)v^p(x), \qquad v(x)\geq 0,\,\,\, &x\in \mathcal C,\\
			v(x)=0,&x\in \mathbb R^n\setminus \mathcal C,
		\end{cases}
	\end{equation*}
	which implies $v\equiv0$ by our Liouville theorems in cones in Theorems \ref{pde-unbd}, \ref{pde-unbd-a}, \ref{pde-unbd-2}, \ref{pde-bd}, \ref{pde-bd-2}, \ref{f2PDE} and Corollary \ref{cone-cor}. This contradicts $v(0)=1$.
	
	\vspace{12pt}

	\emph{Case (iii).} $\lim\limits_{j \rightarrow+\infty} \frac{d_j}{\lambda_j} = 0$.

\medskip
	
	Now we only need to rule out \emph{Case (iii)}. In this case, there is a point $x_0\in \partial \Omega$ such that $$x_j\to x_0,\ \qquad \ \text{as}\,\, j\to+\infty.$$ We can find a point $\bar x_j\in \partial \Omega$ with $|\bar x_j-x_j| =d_j$. Denote $\tilde x_j:= \frac{\bar x_j-x_j}{\lambda_j}$, it can be seen that  $\tilde x_j\in \partial \Omega_j$ and $|\tilde x_j|\to 0$ as $j\to+\infty$. On the one hand, since the boundary H\"{o}lder estimates for $(-\Delta)^{s}$ holds uniformly on the scaling domain $\Omega_{\rho}$ w.r.t. the scale $\rho\in(0,1]$, using the boundary H\"older estimate \eqref{f-15}, there exist $\alpha>0$ and $C>0$ such that $\|v_j\|_{C^\alpha(B_1(0)\bigcap\Omega_{j})}\leq C$ for sufficiently large $j$. This implies
$$|v_j(\tilde x_j)-v_j(0)|\leq  C|\tilde x_j|^\alpha=o_j(1).$$
On the other hand, one easily has $$v_j(0)-v_j(\tilde x_j)=1,$$
which leads to a contradiction. The proof of Theorem \ref{prid} is therefore finished.
\end{proof}

\subsection{Proof of Theorem \ref{prid2}}

\begin{proof}[Proof of Theorem \ref{prid2}]

Suppose $\Omega$ is a bounded BCB domain such that the boundary H\"{o}lder estimates for the general 2nd order elliptic operator $L$ holds uniformly on the scaling domain $\Omega_{\rho}$ w.r.t. the scale $\rho\in(0,1]$. We will prove the a priori estimates in Theorem \ref{prid2} by contradiction arguments and the Liouville theorems (Theorems \ref{pde-unbd}, \ref{pde-unbd-a}, \ref{pde-bd}, \ref{pG}, \ref{f2PDE} and Corollary \ref{cone-cor}). Suppose on the contrary that \eqref{0-4-2} does not hold, then there exists a sequence of solutions $\{u_j\}_{j=1}^\infty$ to \eqref{PDE-D} and a sequence of points $\{ x_j\}_{j=1}^\infty \subset \Omega$ such that
		\begin{equation}\label{N-4}
			u_j(  x_j)=\max_{x\in\Omega} \,u_j(x)=:m_j \rightarrow +\infty, \quad  \ \ \text{as}\, j\to+\infty.
		\end{equation}

\medskip
		
	 	Let the scale sequence $\{\lambda_{j}\}$ be defined by $\lambda_{j}:=m_{j}^{\frac{1-p}{2}}\rightarrow0$, as $j\rightarrow+\infty$. For $x\in\Omega_{j}:=\Omega_{\lambda_{j}}=\{x\in\mathbb{R}^{n}\,|\,\lambda_{j}x+x_j\in\Omega\}$, we define
	 \begin{equation}\label{L-5}
	 	v_{j}(x):=\frac{1}{m_{j}}u_{j}(\lambda_{j}x+x_j).
	 \end{equation}
	 	Then $v_{j}(x)$ satisfies $\|v_{j}\|_{L^{\infty}( \Omega_{j})}=v_{j}(0)=1$ and
	 \begin{equation}\label{L-6}
	 	\begin{cases}
	 		A_j (x)v_j(x)+\sum\limits_{m=1}^{n}b_{j,m}\partial_{x_m} v_j(x)+ c_j(x) v_j(x)=f_j(x, v_j(x)),  & x \in \Omega_j,\\
	 		v_j(x)\equiv0, &x  \in \partial \Omega_j,
	 	\end{cases}
	 \end{equation}
	 where $A_j (x)= -\sum\limits_{i,m=1}^{n}  a_{j,im}(x)\frac{\partial^{2}}{\partial x_{i}\partial x_{m}}$ with $ a_{j,im}(x):=a_{im}(\lambda_j x+x_j)$, $  b_{j,m}(x)=\lambda_j b_m(\lambda_jx+x_j)$, $ c_j(x)=\lambda_j^2 c(\lambda_jx+x_j)$  and $f_j(x, v_j(x))=\frac{f(\lambda_{j}x+x_j, m_j v_j(x))}{m_j^p}$.

\medskip
	
	 Let $d_j=dist(x_j, \partial\Omega)$. We will carry out the proof using the contradiction argument while exhausting all three possibilities.

\vspace{12pt}
		
			\emph{Case (i).} $\lim\limits_{j \rightarrow+\infty} \frac{d_j}{\lambda_j} =+\infty$.

\medskip

In this case, it can be easily seen that $$\Omega_j  \rightarrow \mathbb{R}^{n},\,\  \  \   \text{ as } j \to +\infty.$$
		Then, for any $R>0$, one has  $B_{2 R}(0) \subset \Omega_j$  when $j$ is sufficiently large.

\medskip
		
			We assume $x_j\to \bar x\in\overline{\Omega}$ in the sense of subsequence. Note that   the ellipticity constant and the $L^\infty$ -norm of the coefficients of 2nd operators $A_j$ are the same as $A$, but the modulus of continuity of $a_{j,im}(x) $ is arbitrarily smaller than that of $a_{im}$ for $j$ large enough, and $\|b_{j,m}\|_{L^\infty}$ and $\|c_j\|_{L^\infty}$ can be as small as we wish for sufficiently large $j$.
		By assumptions \eqref{0-3-2} and \eqref{0-4-2} on $f$, we find that, for $j$ large, 	
\begin{equation}\label{L-8}
				\|f_j\|_{L^\infty(\Omega_j)}\leq C(1+\|h\|_{L^\infty(\Omega)}).
			\end{equation}
		Therefore, using  the interior $W^{2,p}$ estimate for elliptic equations (see e.g. Theorem 9.11 in \cite{GT}), we find, for any $1<q<+\infty$,
		\begin{equation}\label{L-9}
			\| v_{j }\|_{W^{2, q}(B_{  R}(0))}\leq C_R
		\end{equation}
		for some constant $C_R$ depending only on $R$.  Then the Sobolev embedding Theorem  gives
		\begin{equation}\label{L-10}
			\| v_{j }\|_{C^{1,\frac{1}{2}}(B_{  R}(0))}\leq C_R.
		\end{equation}
	    Since $R>0$ is arbitrary, the   Arzel$\grave{a}$-Ascoli theorem  and a diagonal argument give a subsequence (still denoted by $\{  v_{j }\}$) that converges to some function $ v $   in $W^{1,q}_{loc}(\mathbb R^n)$ and $C^{1,\frac{1}{3}}_{loc}(\mathbb R^n)$ as $j\to+\infty$. Moreover, one can verify that
		\begin{equation}\label{L-12}
			   v(0)=\lim_{j\to+\infty}    v_{j } (0)=1
		\end{equation}  and $0\leq   v \leq 1$ in $\mathbb R^n$  by the pointwise convergence. Since $ \|\nabla^2  v_{j}\|_{L^{q}(B_{  R}(0))}\leq C_R$, we may assume that $\nabla^2 v_{j }\to \nabla^2   v $ weakly in $L^q$. Now using the uniform boundedness of $f_j$ in \eqref{L-8}, we may assume that $f_j\to f_0$ weakly star in $L^\infty$ for some $f_0\in L^\infty(\mathbb R^n)$. Thus we obtain that $v \in W^{1,q}_{loc}(\mathbb R^n)$ satisfies the following equation in weak sense (c.f. e.g. Chapter 8 in \cite{GT} for weak solutions to elliptic equations):
	    \begin{equation}\label{L-13}
	    	-\sum\limits_{i,m=1}^{n}a_{im}(\bar x) \frac{\partial ^2}{\partial x_i \partial x_m} v = f_0,\ \qquad  0\leq v \leq 1 \ \qquad \, \text{in}\,\, \mathbb{R}^n.
	    \end{equation}
		Since $f_0\in  L^\infty(\mathbb R^n)$, we infer from the  $W^{2,p}$ estimate for elliptic equations that $v\in  W^{2,q}_{loc}(\mathbb R^n)$ for any $1<q<+\infty$, which means that $v$ is a strong solution to \eqref{L-13}.

\medskip
		
		Next, we determine the value of $f_0$ in \eqref{L-13}. For any $x\in \mathbb R^n$ such that $v(x)>0$,  one has $ v_{j}(x)\geq \frac{v(x)}{2}>0$   for $j$ sufficiently large. Then the assumption \eqref{0-3-2} on $f$ implies
		$$  f_j(x,  v_{j }(x))= \frac{f(\lambda_{j}x+x_j, m_j v_j(x))}{m_j^p}\to h(\bar x)v^p(x),$$
		as $j\to +\infty$.   On the other hand, we have ${\partial x_i \partial x_m} v =0\,\, \text{a.e. on }\, \{x\mid \, v(x)=0\}$ due to $v\in  W^{2,q}_{loc}(\mathbb R^n)$. Hence we conclude that $v$ is a strong solution of
		 \begin{equation}\label{L-14}
			-\sum\limits_{i,m=1}^{n}a_{im}(\bar x) \frac{\partial ^2}{\partial x_i \partial x_m} v = h(\bar x)v^p,\quad\,\, \ 0\leq v \leq 1 \ \ \, \text{in}\,\, \mathbb{R}^n.
		\end{equation}
		Since $h(\bar x)v^p\geq 0$, $v\geq 0$ and $v\not\equiv0$ in $\mathbb{R}^n$, we deduce from the strong maximum principle (see Theorem 9.6 in \cite{GT}) that $v>0$ in $\mathbb{R}^n$.
	
\medskip
	
Standard regularity theory and a bootstrap argument yield  that $ v \in C^\infty(\mathbb{R}^n)$. Through a suitable variable transformation (still denote the resulting function by $v$), we obtain the standard form of equation for $v$:
		\begin{equation*}
			 -\Delta   v =  v ^p,\ \quad\,  v>0  \ \ \, \text{in}\,\, \mathbb{R}^n,
		\end{equation*}
		which is a contradiction to the Liouville theorem in $\mathbb{R}^{n}$ proved by Gidas and Spruck in their celebrated article \cite{GS} (also contained in our Theorems \ref{pde-unbd}, \ref{pde-unbd-a}, \ref{pde-bd} and \ref{f2PDE}).

\vspace{12pt}
		
		\emph{Case (ii).} $\lim\limits_{j \rightarrow+\infty} \frac{d_j}{\lambda_j} =C>0$.
		
\medskip
		
		For each $j$, we can find a point $\bar x_j\in \partial\Omega$ such that $|\bar x_j-x_j|=d_j$. Since  $\lim\limits_{j \rightarrow +\infty} \frac{d_j}{\lambda_j} =C>0$, we may assume that $\lim\limits_{j \rightarrow \infty} \frac{\bar x_j-x_j}{\lambda_j} =\hat x_0 $ for some point $\hat x_0\in \mathbb R^n$. Note that
$$\Omega_j=\{x\in\mathbb R^n\  \mid \lambda_j x+x_j\in \Omega\}=\left\{x\in\mathbb R^n\  \mid \lambda_j x+\bar x_j\in \Omega\right\}+\frac{\bar x_j-x_j}{\lambda_j}$$
and $\Omega$ is a BCB domain, one immediately has  (up to a subsequence)
		$$\Omega_j\rightarrow \mathcal C, \ \quad \ \text{as}\, j\to+\infty$$
for some cone $\mathcal C$ containing $0$.

\medskip
		
		Since the boundary H\"{o}lder estimates for the general 2nd order elliptic operator $L$ holds uniformly on the scaling domain $\Omega_{\rho}$ w.r.t. the scale $\rho\in(0,1]$, applying the boundary H\"older estimates to equation \eqref{L-6}, we obtain a constant $C_R$ independent of $j$ such that, for $j$ large,
		\begin{equation}\label{L-15}
		   \| v_{j }\|_{C^{\alpha}(\Omega_j\cap B_{ R}(0))} \leq C_R
		\end{equation}
		for some $\alpha>0$ and any $R>0$. Estimate \eqref{L-15} ensures that passing to a subsequence, the limiting functions $ v \in C(\overline {\mathcal C})$ and $  v =0 $ on $\partial \mathcal C$. In the cone $\mathcal C$,  using the interior $W^{2,p}$ estimate  for elliptic equations and the Sobolev embedding theorem, through arguments similar as \emph{Case (i)}, one can prove that the limiting functions $v \in C^\infty(\mathcal C)\cap  C(\overline {\mathcal C})$ solve the same equation as \eqref{L-14}. That is, we arrive at
		\begin{equation*}
			\begin{cases}
				-\sum\limits_{i,m=1}^{n}a_{im}(\bar x ) \frac{\partial ^2}{\partial x_i \partial x_m}   v = h(\bar x ) v^p &\text{in}\,\,  \mathcal C,\\
				0\leq  v \leq 1 &\text{in}\,\,  \mathcal C,\\
				 v=0 &\text{on}\,\, \partial  \mathcal C,\\
				 v(0)=1.
			\end{cases}
		\end{equation*}
		Through a suitable transformation (still denote the resulting function by $v$), we obtain the standard form of equation for $v$:
		\begin{equation*}
			\begin{cases}
				 -\Delta   v =  v ^p,\ \    v \geq0, \,\, v\not\equiv0\,\,&\text{in}\,\,  \widetilde{\mathcal C},\\
				 v =0 &\text{on}\,\, \partial \widetilde {\mathcal C},
			\end{cases}
		\end{equation*}
		where the cone $\widetilde {\mathcal C}$ is the transformation of the cone $\mathcal  C$. This contradicts the Liouville theorem on cones in our Theorems \ref{pde-unbd}, \ref{pde-unbd-a}, \ref{pde-bd}, \ref{f2PDE} and Corollary \ref{cone-cor}.

\vspace{12pt}		

		\emph{Case (iii).} $\lim\limits_{j \rightarrow +\infty} \frac{d_j}{\lambda_j} =0$.

\medskip
		
Now we only need to rule out \emph{Case (iii)}. In this case, there is a point $x_0\in \partial \Omega$ such that
$$x_j\to x_0,\ \quad \ \text{as}\, j\to+\infty.$$
We can find a point $\bar x_j\in \partial \Omega$ with $|\bar x_j-x_j| =d_j$. Denote $\tilde x_j:= \frac{\bar x_j-x_j}{\lambda_j}$, it can be seen that  $\tilde x_j\in \partial \Omega_j$ and $|\tilde x_j|\to 0$ as $j\to+\infty$. On the one hand, the boundary H\"older estimate \eqref{L-15} implies
$$ |  v_{j }(\tilde x_j)- v_{j}(0)|\leq  C|\tilde x_j|^\alpha=o_j(1).$$
On the other hand, one easily gets $$ | v_{j }(\tilde x_j)-  v_{j }(0)| =   v_{j } (0)=1,$$
which leads to a contradiction. The proof of Theorem \ref{prid2} is therefore finished.
\end{proof}

\subsection{Proof of Theorem \ref{prin}}

\begin{proof}[Proof of Theorem \ref{prin}]

Suppose $\Omega$ is a bounded BCB domain such that the limiting cone $\mathcal{C}$ is $\frac{1}{2^{k}}$-space ($k=1,\cdots,n$) and the boundary H\"{o}lder estimates for $-\Delta$ holds uniformly on the scaling domain $\Omega_{\rho}$ w.r.t. the scale $\rho\in(0,1]$. We will prove the a priori estimates in Theorem \ref{prin} by contradiction arguments and the Liouville theorems for higher order Navier problem on $\frac{1}{2^{k}}$-space $\mathcal{C}_{P,S^{n-1}_{2^{-k}}}$ (Theorems \ref{nie} and \ref{NPDE} in subsection 1.3). Suppose on the contrary that \eqref{0-4-n} does not hold, then there exists a sequence of solutions $\{u_j\}_{j=1}^\infty$ to \eqref{PDE-N} and a sequence of points $\{\hat x^j\}_{j=1}^\infty \subset \Omega$ such that
		\begin{equation}\label{N-4}
			u_j(\hat x^j)=\max_{x\in\Omega} \,u_j(x) \rightarrow +\infty, \quad  \ \ \text{as}\,\, j\to+\infty.
		\end{equation}
		
\medskip

		 We define $$v_{j,l}=(-\Delta)^{l}  u_j, \,\qquad\, l=0, \cdots, s-1,\quad\, j=1,2,\cdots,$$
where we assume $v_{j,0}=(-\Delta)^{0}  u_j=u_j$. Then, we infer from \eqref{PDE-N} that $(v_{j,0}, \cdots, v_{j,s-1})$ satisfies the following system:
		\begin{equation}\label{N-S}
			\begin{cases}
				-\Delta v_{j,0}=v_{j,1} \  \ \  \ \  \ \  \ \ \ &\text{in}\,\, \Omega,\\
				\cdots,\\
				-\Delta v_{j,s-2}=v_{j,s-1} &\text{in}\,\, \Omega,\\
				-\Delta v_{j,s-1}=f(x, v_{j,0}) &\text{in}\,\, \Omega,\\
				v_{j,0}=\cdots=v_{j,s-1}=0 &\text{on}\,\, \partial \Omega.
			\end{cases}
		\end{equation}
		 The regularity assumption on $u_j$ implies $v_{j,l}\in C^2(\Omega)\cap C(\overline {\Omega})$ for $l=0, \cdots, s-1$. Moreover, since $f\geq 0$ in $\Omega$, we deduce from the maximum principle that $v_{j,s-1}\geq 0$ in $\Omega$. If $v_{j,s-1}(x_0)=0$ for some $x_0\in \Omega$, then the strong maximum principle yields that $ v_{j,s-1}\equiv0$ in $\Omega$, which further implies $ v_{j,s-2}\equiv0$ in $\Omega$ by the equation
$$\begin{cases} 	-\Delta v_{j,s-2}=v_{j,s-1} &\text{in}\,\, \Omega,\\ v_{j,s-1}=0 &\text{on}\,\, \partial \Omega \end{cases}$$
and the strong maximum principle. Repeating the argument, we will obtain $u_{j}=v_{j,0}\equiv0$ in $\Omega$, which contradicts \eqref{N-4}. Thus we must have $ v_{j,s-1}>0$ in $\Omega$. Similarly, one can prove $v_{j,l}> 0$ in $\Omega$ for $l=s-2,\cdots, 0$ by the strong maximum principle.
		
\medskip
		
		Set  $$\beta_l=2l+\frac{2s}{p-1},\,\qquad\, l=0, \cdots, s-1.$$
		In view of \eqref{N-4}, there exists $x_j\in \Omega$ such that
		\begin{equation}\label{N-4-1}
			 \sum_{l=0}^{s-1} v_{j,l}^{\frac{1}{\beta_l}}(x_j)=\max_{x\in\Omega} \,\left(\sum_{l=0}^{s-1} v_{j,l}^{\frac{1}{\beta_l}}(x)\right) \rightarrow +\infty, \  \qquad \ \text{as}\, j\to+\infty.
		\end{equation}
        Let $\lambda_j:=\left( \sum\limits_{l=0}^{s-1} v_{j,l}^{\frac{1}{\beta_l}}(x_j)\right)^{-1}$. One has $\lambda_j\to0$, as $j\to+\infty$.
		For $x\in\Omega_{j}:=\Omega_{\lambda_{j}}=\{x\in\mathbb{R}^{n}\,|\,\lambda_{j}x+x^{j}\in\Omega\}$, we define
		\begin{equation}\label{N-5}
			\tilde v_{j,l}(x):=\lambda_j^{\beta_l} v_{j,l}(\lambda_{j}x+x^{j}), \,\qquad\, l=0, \cdots, s-1.
		\end{equation}
		Then $\tilde v_{j,l}(x)$ satisfies $\sum\limits_{l=0}^{s-1} {\tilde v_{j,l}}^{\frac{1}{\beta_l}}(0)=1$ and for any $l=0, \cdots, s-1,\,\, j=1,2,\cdots,$
		\begin{equation}\label{N-6}
			\max_{\Omega_j} \tilde v_{j,l}=\left(\max_{\Omega_j} \tilde v_{j,l}^{\frac{1}{\beta_l}}\right)^{\beta_l}\leq \left(\max_{\Omega_j}\left(\sum_{m=0}^{s-1} {\tilde v_{j,m}}^{\frac{1}{\beta_m}}\right)\right)^{\beta_l}=\left(\lambda_j\max_{\Omega }\left(\sum_{m=0}^{s-1}v_{j,m}^{\frac{1}{\beta_m}}\right)\right)^{\beta_l}=1.
		\end{equation}

\medskip		
		
		By direct calculations, we infer from \eqref{N-S} and \eqref{N-6} that $(\tilde v_{j,0}, \ldots, \tilde v_{j,s-1})$ satisfies the following system:
	\begin{equation}\label{N-7}
		\begin{cases}
			-\Delta \tilde v_{j,0}=\tilde v_{j,1} \  \ \  \ \  \ \  \ \ \ &\text{in}\,\, \Omega_j,\\
			\cdots,\\
			-\Delta  \tilde v_{j,s-2}=\tilde v_{j,s-1} &\text{in}\,\, \Omega_j,\\
			-\Delta \tilde v_{j,s-1}=\tilde f_j(x, \tilde v_{j,0}) &\text{in}\,\, \Omega_j,\\
			0<\tilde v_{j,0}, \ldots, \tilde v_{j,s-1}\leq 1 &\text{in}\,\, \Omega_j,\\
			\tilde v_{j,0}=\cdots=\tilde v_{j,s-1}=0 &\text{on}\,\, \partial \Omega_j,
		\end{cases}
	\end{equation}
		where   $\tilde f_j(x, \tilde v_{j,0})=\lambda_j^{2+\beta_{s-1}}f(\lambda_jx+x_j, \lambda_j^{-\beta_0}  \tilde v_{j,0})= \frac{f(\lambda_jx+x_j, \lambda_j^{-\beta_0}  \tilde v_{j,0})}{\lambda_j^{-p\beta_0}}$.
		
\medskip

		Let $d_{j}:=dist(x_j, \partial\Omega)$. We will carry out the proof using the contradiction argument while exhausting all three possibilities.
		
\vspace{12pt}
		
		\emph{Case (i).} $\lim\limits_{j \rightarrow +\infty} \frac{d_j}{\lambda_j} =+\infty$.

\medskip

In this case, it can be seen easily that $$\Omega_j  \rightarrow \mathbb{R}^n,\,\  \  \   \text{ as } j \to +\infty.$$
Then, for any $R>0$, one has  $B_{2^{s+1}R}(0) \subset \Omega_j$, when $j$ is sufficiently large.

\medskip

	Using assumption \eqref{0-3-n} and \eqref{0-6-n}, we find, for $j$ large,
	\begin{equation}\label{N-8}
		 \| \tilde f_j\|_{L^\infty(\Omega_j)}\leq C(1+\|h\|_{L^\infty(\Omega)}).
	\end{equation}
		Therefore, using  the interior $W^{2,p}$ estimate for elliptic equations, we get, for any $1<q<+\infty$,
		\begin{equation}\label{N-9}
			\|\tilde v_{j,s-1}\|_{W^{2, q}(B_{2^s R}(0))}\leq C_R
		\end{equation}
		for some constant $C_R$ depending only on $R$.  Then the Sobolev embedding theorem  gives
			\begin{equation}\label{N-10}
			\|\tilde v_{j,s-1}\|_{C^{1,\frac{1}{2}}(B_{2^s R}(0))}\leq C_R,
		\end{equation}
	which in particular implies that $\|\tilde v_{j,s-1}\|_{L^\infty(B_{2^s R}(0))}\leq C_R.$ Then by the equation $ -\Delta \tilde v_{j,s-2}=\tilde v_{j,s-1} \,\, \text{in}\,\, \Omega_j$, the interior $W^{2,p}$ estimate  for elliptic equations and the Sobolev embedding theorem, we get $$\|\tilde v_{j,s-2}\|_{W^{2, q}(B_{2^{s-1} R}(0))}+\|\tilde v_{j,s-2}\|_{C^{1,\frac{1}{2}}(B_{2^{s-1} R}(0))}\leq C_R. $$
Repeating this procedure, we can show that
	\begin{equation}\label{N-11}
		\sum_{l=0}^{s-1} \left(\|\tilde v_{j,l}\|_{W^{2, q}(B_{2  R}(0))} + \|\tilde v_{j,l}\|_{C^{1,\frac{1}{2}}(B_{2  R}(0))}\right)\leq C_R
	\end{equation}
for some constant $C_R>0$ independent of $j$. Since $R>0$ is arbitrary, the Arzel$\grave{a}$-Ascoli theorem and a diagonal argument give a subsequence (still denoted by $\{\tilde v_{j,l}\}$) that converges to some function $\tilde v_l$ in $W^{1,q}_{loc}(\mathbb R^n)$ and $C^{1,\frac{1}{3}}_{loc}(\mathbb R^n)$ as $j\to+\infty$ for  $l=0, \ldots, s-1$. Moreover, one can verify that
\begin{equation}\label{N-12}
	\sum\limits_{l=0}^{s-1} {\tilde v_{l}}^{\frac{1}{\beta_l}}(0)=\lim_{j\to+\infty}  \sum_{l=0}^{s-1} {\tilde v_{j,l}}^{\frac{1}{\beta_l}}(0)=1
\end{equation}  and $0\leq \tilde v_l\leq 1$ in $\mathbb R^n$  for $l=0, \ldots, s-1$ by the pointwise convergence. Since
$$\sum_{l=0}^{s-1}  \|\nabla^2\tilde v_{j,l}\|_{L^{q}(B_{2  R}(0))}\leq C_R,$$
we may assume that $\nabla^2\tilde v_{j,l}\to \nabla^2 \tilde v_{l}$ weakly in $L^q$.

\medskip

Assume $x_j\to \bar x\in\overline{\Omega}$ in the sense of subsequence. It is easy to see that $\tilde v_0\in  W^{1,q}_{loc}(\mathbb R^n)$  satisfies (at least in the weak sense) the equation (and then in the classical sense by the regularity theory since $\tilde v_1\in C^{1,\frac{1}{3}}_{loc}(\mathbb R^n)$):
$$-\Delta  \tilde v_0=\tilde v_1\geq 0,\quad \ 0\leq \tilde v_0\leq1\,\,\qquad \text{in}\,\,  D,$$
and hence the strong maximum principle shows that either $\tilde v_0>0$ or $\tilde v_0\equiv0$ in $D$, where $D$ is an arbitrary bounded open set containing $0$ in $\mathbb{R}^{n}$. Note that $\tilde v_0\equiv0$ gives $\tilde v_1\equiv0$ by the equation of $ \tilde v_0$. Using the limiting equation $-\Delta \tilde v_l=\tilde v_{l+1}\geq 0 \ \,  \text{in}\,\,  D$ for $l=1,\cdots, s-2$,  we will obtain $\tilde v_l\equiv0$ for all $l=0,1,\cdots,s-1$ through a similar way, which leads to a contradiction to \eqref{N-12}. Therefore, $\tilde v_0>0$ in $D$, and by the arbitrariness of $D$, we must have $$\tilde v_0>0  \qquad  \   \text{  in }\,\, \mathbb R^n.$$

\medskip

For any bounded open set $D\subset \mathbb R^n$ containing $0$, by the continuity of $\tilde v_0$, one gets $\tilde v_0$ has positive lower bound on $\overline{D}$. So $\tilde v_{j,0}$ is bounded from below by a positive constant independent of $j$ on $\overline{D}$ for $j$ sufficiently large. Then the assumption \eqref{0-3-n} on $f$ implies
$$\tilde f_j(x, \tilde v_{j,0}(x))= \frac{f(\lambda_jx+x_j, \lambda_j^{-\beta_0}  \tilde v_{j,0})}{\lambda_j^{-p\beta_0}}\to h(\bar x)\tilde v_0^p(x), $$
as $j\to +\infty$ uniformly w.r.t. $x\in D$.  Then we obtain the equation for $\tilde v_{s-1}\in W^{1,q}_{loc}(\mathbb R^n)$ in  weak sense
\begin{equation}\label{N-13}
	-\Delta \tilde v_{s-1}= h(\bar x)\tilde v_0^p \qquad \  \text{in}\,\,  D.
\end{equation}
Combining this with $-\Delta \tilde v_l=\tilde v_{l+1}\geq 0 \ \,  \text{in}\,\,  D$ for $l=0,\cdots, s-2$, standard regularity theory and a bootstrap argument yield that $\tilde v_{0}\in C^\infty(D)$. Through a suitable scaling, by the arbitrariness of $D$, we obtain the equation for $\tilde v_{0}$:
		\begin{equation}\label{N-14}
			(-\Delta)^{s} \tilde v_{0}= \tilde v_0^p, \qquad \ 0<\tilde v_0\leq 1 \ \ \, \text{in}\,\, \mathbb{R}^n,
		\end{equation}
	which is a contradiction to the Liouville theorem for higher order Lane-Emden equation in \cite{Lin,WX} (c.f. also \cite{CDQ0,CDQ,DQ0} and Corollary \ref{cor-g}).

\vspace{12pt}
		
		\emph{Case (ii).} $\lim\limits_{j \rightarrow +\infty} \frac{d_j}{\lambda_j} =C>0$.
		
\medskip
		
		For each $j$, we can find a point $\bar x_j\in \partial\Omega$ such that $|\bar x_j-x_j|=d_j$. Since  $\lim\limits_{j \rightarrow \infty} \frac{d_j}{\lambda_j} =C>0$, we may assume that $\lim\limits_{j \rightarrow+\infty} \frac{\bar x_j-x_j}{\lambda_j} =\bar x_0 $ for some point $\bar x_0\in \mathbb R^n$. Note that $$\Omega_j=\{x\in\mathbb R^n\  \mid \lambda_j x+x_j\in \Omega\}=\left\{x\in\mathbb R^n\  \mid \lambda_j x+\bar x_j\in \Omega\right\}+\frac{\bar x_j-x_j}{\lambda_j}.$$
Since $\Omega$ is a bounded BCB domain such that the limiting cone $\mathcal{C}$ is $\frac{1}{2^{k}}$-space ($k=1,\cdots,n$), one has (up to a subsequence)
		$$\Omega_j\rightarrow \mathcal{C}_{k}:=\mathcal C_{P,S^{n-1}_{2^{-k}}}, \ \quad \ \text{as}\, j\to+\infty$$
		for some $\frac{1}{2^{k}}$-space $\mathcal C_{k}$ containing $0$.
	
\medskip
	
Since the boundary H\"{o}lder estimates for $-\Delta$ holds uniformly on the scaling domain $\Omega_{\rho}$ w.r.t. the scale $\rho\in(0,1]$, we deduce that there is a constant $C_R$ independent of $j$ such that, for $j$ large enough,
		\begin{equation}\label{N-15}
			\sum_{l=0}^{s-1}   \|\tilde v_{j,l}\|_{C^{\alpha}(\Omega_j\cap B_{ R}(0))} \leq C_R
		\end{equation}
		for some $\alpha>0$ and any $R>0$.
		Estimate \eqref{N-15} ensures that passing to a subsequence, the limiting functions $\tilde v_l\in C(\overline {\mathcal C_{k}})$ and $\tilde v_l=0 $ on $\partial \mathcal C_{k}$ for $l=0,\ldots,s-1$. In the $\frac{1}{2^{k}}$-space $ \mathcal C_{k}$,  using the   interior $W^{2,p}$ estimate  for elliptic equations and the Sobolev embedding theorem, through arguments similar as \emph{Case (i)}, one can prove that the limiting functions $\tilde v_l\in C^\infty(\mathcal C_{k})\cap  C(\overline {\mathcal C_{k}})$ solve the following system
			\begin{equation*}
			\begin{cases}
				-\Delta  \tilde v_{0}=\tilde v_{1} \  \ \  \ \  \ \  \ \ \ &\text{in}\,\,  \mathcal C_{k},\\
				\cdots,\\
				-\Delta  \tilde v_{ s-2}=\tilde v_{ s-1} &\text{in}\,\,  \mathcal C_{k},\\
				-\Delta \tilde v_{ s-1}= h(\bar x )\tilde v_0^p &\text{in}\,\,  \mathcal C_{k},\\
				0<\tilde v_{ 0}, \ldots, \tilde v_{ s-1}\leq 1 &\text{in}\,\,  \mathcal C_{k},\\
				\tilde v_{ 0}=\cdots=\tilde v_{ s-1}=0 &\text{on}\,\, \partial  \mathcal C_{k}.
			\end{cases}
		\end{equation*}
Through a suitable scaling, the above system gives the following equation
 		\begin{equation*}
 	\begin{cases}
 	(-\Delta)^{s} \tilde v_{0}=  \tilde v_0^p(x),\ \ 0<\tilde v_{ 0},  -\Delta\tilde v_{0}, \ldots, (-\Delta)^{s-1} \tilde v_{0} \leq 1 \,\,&\text{in}\,\,   \mathcal C_{k},\\
 		\tilde v_{ 0}=-\Delta \tilde v_{0}=\cdots=(-\Delta)^{s-1}\tilde v_{0}=0 &\text{on}\,\, \partial  \mathcal C_{k},
 	\end{cases}
 \end{equation*}
which contradicts the Liouville theorem for higher order Navier problem on $\frac{1}{2^{k}}$-space $\mathcal{C}_{P,S^{n-1}_{2^{-k}}}$ in Theorems \ref{nie} and \ref{NPDE} in subsection 1.3.

\bigskip	
	
		\emph{Case (iii).} $\lim\limits_{j \rightarrow+\infty} \frac{d_j}{\lambda_j} =0$.

\medskip
		
Now we only need to rule out Case (iii). In this case, there is a point $x_0\in \partial \Omega$ such that $$x_j\to x_0,\ \quad \ \text{as}\,\, j\to+\infty.$$ We can find a point $\bar x_j\in \partial \Omega$ with $|\bar x_j-x_j| =d_j$. Denote $\tilde x_j:= \frac{\bar x_j-x_j}{\lambda_j}$, it can be seen that  $\tilde x_j\in \partial \Omega_j$ and $|\tilde x_j|\to 0$ as $j\to+\infty$. On the one hand, the boundary H\"{o}lder estimate \eqref{N-15} implies $$\sum_{l=0}^{s-1} |\tilde v_{j,l}(\tilde x_j)-\tilde v_{j,l}(0)|\leq  C|\tilde x_j|^\alpha=o_j(1).$$ On the other hand, one easily gets $$\sum_{l=0}^{s-1} |\tilde v_{j,l}(\tilde x_j)-\tilde v_{j,l}(0)|^{\frac{1}{\beta_l}}=\sum_{l=0}^{s-1} {\tilde v_{j,l}}^{\frac{1}{\beta_l}}(0)=1,$$ which leads to a contradiction. The proof of Theorem \ref{prin} is therefore finished.
\end{proof}

\section{Existence of solutions on bounded BCB domains}

\subsection{Proof of Theorem \ref{exisd}}
\begin{proof}[Proof of Theorem \ref{exisd}]
In this subsection, by applying the a priori estimates in Corollary \ref{cor-prid} and the following Leray-Schauder fixed point theorem (see e.g. \cite{CLM}), we will prove the existence of positive solutions to the fractional and second order Lane-Emden equations \eqref{Dirichlet} on bounded BCB domain $\Omega$ with Dirichlet boundary conditions.

\begin{thm}\label{L-S}
	Suppose that $X$ is a real Banach space with a closed positive cone $P$, $U\subset P$ is bounded open and contains $0$. Assume that there exists $\rho>0$ such that $B_{\rho}(0)\cap P\subset U$ and that $K:\,\overline{U}\rightarrow P$ is compact and satisfies
	\vskip 5pt
	\noindent (i) For any $x\in P$ with $|x|=\rho$ and any $\lambda\in[0,1)$, $x\neq\lambda Kx$;
	\vskip 3pt
	\noindent (ii) There exists some $y\in P\setminus\{0\}$ such that $x-Kx\neq ty$ for any $t\geq0$ and $x\in\partial U$.
	\vskip 5pt
	\noindent Then, $K$ possesses a fixed point in $\overline{U_{\rho}}$, where $U_{\rho}:=U\setminus B_{\rho}(0)$.
\end{thm}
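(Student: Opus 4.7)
The plan is to prove Theorem \ref{L-S} via the fixed point index theory on the positive cone $P$. Since $P$ is a closed convex cone in the Banach space $X$, it is an absolute retract: there exists a retraction $r: X \to P$. For any open subset $V \subseteq X$ and any compact map $\Phi: \overline{V} \cap P \to P$ with no fixed points on $\partial(V \cap P)$, one extends $\Phi \circ r$ to a compact map on $\overline{V}$ (Dugundji) and defines $i(\Phi, V \cap P, P)$ via the classical Leray-Schauder degree. This index enjoys the standard properties: \emph{normalization} ($i(\mathrm{const}, V \cap P, P) = 1$ when the constant lies in $V \cap P$), \emph{additivity} over disjoint open subdomains, \emph{homotopy invariance} under admissible compact homotopies, and \emph{solution existence} (nonzero index implies a fixed point in $V \cap P$). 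I would quickly sketch this setup or cite it from standard references on cone maps.

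With the index in hand, I would carry out two key computations. First, using condition (i): if $Kx_0 = x_0$ for some $x_0 \in P$ with $|x_0| = \rho$, then $x_0 \in \overline{U_\rho}$ and we are done. Otherwise, condition (i) in fact extends to $\lambda \in [0,1]$ on $\partial B_\rho(0) \cap P$, so the homotopy $H(\lambda, x) = \lambda K x$ from the zero map to $K$ has no fixed points on $\partial B_\rho(0) \cap P$; homotopy invariance and normalization give
\begin{equation*}
  i(K, B_\rho(0) \cap P, P) = i(0, B_\rho(0) \cap P, P) = 1.
\end{equation*}
Second, using condition (ii): since $K(\overline{U})$ is relatively compact it is bounded, so I may choose $T > 0$ with $T|y| > \sup_{x \in \overline{U}}(|x| + |Kx|)$. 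Consider the homotopy $G(t, x) = Kx + ty$ for $t \in [0, T]$. Condition (ii) rules out fixed points on $\partial U \cap P$ for any $t \in [0, T]$, while at $t = T$ the inequality $T|y| = |x - Kx| \leq |x| + |Kx|$ fails, so $G(T, \cdot)$ has no fixed points in $\overline{U} \cap P$ at all. Homotopy invariance then yields $i(K, U \cap P, P) = i(G(T,\cdot), U \cap P, P) = 0$.

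Finally, by additivity over the disjoint pair $B_\rho(0) \cap P$ and $U_\rho \cap P$ inside $U \cap P$ (noting that condition (i) also guarantees no fixed points on $\partial B_\rho(0) \cap P$ in the good case),
\begin{equation*}
  0 = i(K, U \cap P, P) = i(K, B_\rho(0) \cap P, P) + i(K, U_\rho \cap P, P) = 1 + i(K, U_\rho \cap P, P),
\end{equation*}
hence $i(K, U_\rho \cap P, P) = -1 \neq 0$. The solution existence property of the index then produces a fixed point of $K$ in $\overline{U_\rho}$.

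The main obstacle is the cone index setup: verifying that the Dugundji-extension-plus-retraction construction yields a well-defined index independent of choices, and that the four axioms above hold in the cone setting. This is a classical but delicate piece of topological machinery; I would either reproduce the core lemmas succinctly or cite a standard source (e.g.\ Granas--Dugundji, Amann). Once the index theory is in place, the two degree computations and the additivity argument above are short and essentially mechanical.
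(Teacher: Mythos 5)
The paper does not actually prove Theorem~\ref{L-S}: it is stated as a known result and cited from \cite{CLM} without proof, since it is a standard variant of the Krasnoselskii/Leray--Schauder fixed point theorem on cones. Your proposal therefore supplies something the paper omits, and the route you take (fixed point index on the cone, with the two degree computations $i(K,B_\rho(0)\cap P,P)=1$ via the contraction homotopy $\lambda K$ and $i(K,U,P)=0$ via the translation homotopy $K+ty$, followed by additivity) is exactly the standard proof of this class of results; it is the same device used in Amann's survey and in de Figueiredo--Lions--Nussbaum, so it is consistent with what the cited reference would do.

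Two small technical points are worth tightening. First, in the additivity step the set $U_\rho=U\setminus B_\rho(0)$ is not open in $P$ (it contains $\partial B_\rho(0)\cap U$), so additivity should be applied to the disjoint open pair $B_\rho(0)\cap P$ and $U\setminus\overline{B_\rho(0)}$; the fixed point produced then lies in $U\setminus\overline{B_\rho(0)}\subset U_\rho\subset\overline{U_\rho}$, which is what is wanted. Second, in the preliminary ``done'' case where $K$ has a fixed point $x_0$ on $\partial B_\rho(0)\cap P$, it is worth spelling out why $x_0\in\overline{U_\rho}$: condition (ii) with $t=0$ forces $x_0\notin\partial U$, so $x_0$ is interior to $U$, and then the dilation $(1+t)x_0\in P$ lies in $U_\rho$ for small $t>0$ and converges to $x_0$, showing $x_0\in\overline{U_\rho}$. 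With those two remarks your argument is complete and correct.
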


Now we let
\begin{equation}\label{fe-1}
	X:=C^{0}(\overline{\Omega}) \quad\quad \text{and} \quad\quad P:=\{u\in X \,|\, u\geq0\}.
\end{equation}
For $0<s\leq1$, define
\begin{equation}\label{fe-2}
	K_s(u)(x) =\int_{\Omega}G_{\Omega}^s(x,y)  u^{p}(y )
	\mathrm dy,
\end{equation}
where $G_{\Omega}^s(x,y)$ is the Green  function for $(-\Delta)^s$ with Dirichlet boundary condition in $\Omega$. Suppose $u\in C^{0}(\overline{\Omega})$ is a fixed point of $K_s$, i.e., $u=K_{s}u$, then it is easy to see that  $u$ is a solution to the Dirichlet problem \eqref{Dirichlet}.

\medskip

Our goal is to show the existence of a fixed point for $K_{s}$ in $P\setminus B_{\rho}(0)$ for some $\rho>0$ (to be determined later) by using Theorem \ref{L-S}. To this end, we need to verify the two conditions (i) and (ii) in Theorem \ref{L-S} separately.

\medskip

\emph{(i)} First, we show that there exists $\rho>0$ such that for any $u\in\partial B_{\rho}(0)\cap P$ and $0\leq\lambda<1$,
\begin{equation}\label{fe-4}
	u-\lambda K_s(u)\neq0.
\end{equation}
For any $x\in\overline{\Omega}$, it holds that
\begin{equation}\label{fe-5}
	|K_s(u)(x)| = \left|\int_{\Omega}G_{\Omega}^s(x,y)  u^{p}(y)
	\mathrm dy\right|
	 \leq  \rho^{p-1}\left\|\int_{\Omega}G_{\Omega}^s(x,y)	\mathrm{d}y \right\|_{C^{0}(\overline{\Omega})}\cdot\|u\|_{C^{0}(\overline{\Omega})}.
\end{equation}
Let $h(x):=\int_{\Omega}G_{\Omega}^s(x,y)	\mathrm{d}y $, then it solves
\begin{equation}\label{fe-6}\\\begin{cases}
		(-\Delta)^s h(x)=1, \,\,\,\,\,\,\,\,\,\,   \,\,\, &x\in\Omega, \\
		h(x)=0, &x\in \mathbb R^n\setminus\Omega.
\end{cases}\end{equation}
For a fixed point $x^{0}\in\Omega$, it is well-known that the function
\begin{equation}\label{fe-7}
	\zeta_s(x):=C_{n,s} (diam\,\Omega)^{2s} \left(1-\frac{|x-x^{0}|^{2}}{(diam\,\Omega)^{2}}\right)^s_{+}
\end{equation}
 satisfies
\begin{equation}\label{fe-8}\\\begin{cases}
		(-\Delta )^s\zeta_s(x)=1, \,\,\,\,\,\,\,\,\,\, &x\in\Omega, \\
		\zeta_s(x)>0, \,\,\,\,\,\,\,\,  & x\in B_{diam\,\Omega}(x^{0})\setminus\Omega, \\
        \zeta_s(x)=0, \,\,\,\,\,\,\,\,  & x\in \mathbb R^n\setminus B_{diam\,\Omega}(x^{0}),
\end{cases}\end{equation}
where $C_{n,s}$ is a constant depending only on $n,s$ and $C_{n,1}=\frac{1}{2n}$. By \eqref{fe-6}, \eqref{fe-7}, \eqref{fe-8} and maximum principle, we get
\begin{equation}\label{fe-9}
	0\leq h(x)<\zeta_s(x)\leq C_{n,s} (diam\,\Omega)^{2s}, \quad\quad \forall \,\, x\in\overline{\Omega}.
\end{equation}
Therefore, we infer from \eqref{fe-5} and \eqref{fe-9} that
\begin{equation}\label{fe-10}
	\|K_s(u)\|_{C^{0}(\overline{\Omega})}<C_{n,s}\rho^{p-1} (diam\,\Omega)^{2s} \|u\|_{C^{0}(\overline{\Omega})}=\|u\|_{C^{0}(\overline{\Omega})}
\end{equation}
if we take
\begin{equation}\label{fe-11}
	\rho=\left(\frac{1}{C_{n,s}^{\frac{1}{2s}}\,diam\,\Omega}\right)^{\frac{2s}{p-1}}>0.
\end{equation}
This implies that $u\neq\lambda K_s(u)$ for any $u\in\partial B_{\rho}(0)\cap P$ and $0\leq\lambda<1$.

\medskip

\emph{(ii)} Now let $\eta_s$ be the unique positive solution of
\begin{equation}\label{fe-12}\\\begin{cases}
		(-\Delta)^{s}\eta_s(x)=1, \,\,\,\,\,\,\,\,\,\, \,\,\,& x\in\Omega, \\
		\eta_s(x)=0, \,\,\,\,\,\,\,\,\,\,\,\, &x\in\mathbb R^n\setminus\Omega.
\end{cases}\end{equation}
We will show that
\begin{equation}\label{fe-13}
	u-K_s(u)\neq t\eta_s, \quad\quad \forall \,\, t\geq0, \quad \forall u\in\partial U,
\end{equation}
where $U:=B_{R}(0)\cap P$ with sufficiently large $R>\rho$ (to be determined later). First, observe that, for any $u\in\overline{U}$,
\begin{equation}\label{fe-14}
	\left\|(-\Delta)^{s}K_s(u)\right\|_{C^{0}(\overline{\Omega})}=\|u\|^{p}_{C^{0}(\overline{\Omega})}\leq R^{p},
\end{equation}
and hence
\begin{equation}\label{fe-15}
	\|K_s(u)\|_{C^{0,\gamma}(\Omega)}\leq CR^{p}, \quad\quad \forall \,\, 0<\gamma<\min\{1,2s\},
\end{equation}
thus $K_s:\, \overline{U}\rightarrow P$ is compact.

\medskip

We use contradiction arguments to prove \eqref{fe-13}. Suppose on the contrary that, there exists some $u\in\partial U$ and $t\geq0$ such that
\begin{equation}\label{fe-16}
	u-K_s(u)=t\eta_s,
\end{equation}
then one has $\|u\|_{C^{0}(\overline{\Omega})}=R>\rho>0$ and   $u  $ satisfies the Dirichlet problem
\begin{equation}\label{fe-17}\\\begin{cases}
		(-\Delta)^{s}u(x)=u^{p}(x)+t, \,\,\,\,\,\,\,\,\, u(x)>0, \,\,\,\,\,\,\,\,\,\, &x\in\Omega, \\
		u(x)=0, \,\,\,\,\,\,\,\,\,\,\,\, &x\in\mathbb R^n\setminus\Omega.
\end{cases}\end{equation}
Let $\lambda_1$ be the first eigenvalue of the corresponding Dirichlet problem, i.e.,
$$\lambda_1:=\sup\left\{\lambda\in\mathbb R\, |\, (-\Delta)^s u\geq \lambda u,\, u>0\,\, \text{in}\,\,\Omega, \,\, u=0\,\, \text{in}\,\, \mathbb R^n\setminus\Omega\right\}.$$
Choose a constant $C_{1}>\lambda_{1}$. Since $u(x)>0$ in $\Omega$ and $p>1$, it is easy to see that, there exists another constant $C_{2}>0$ (say, take $C_{2}=C_{1}^{\frac{p}{p-1}}$) such that
\begin{equation}\label{fe-18}
	u^{p}(x)\geq C_{1}u(x)-C_{2}.
\end{equation}
It follows from \eqref{fe-17} and \eqref{fe-18} that
\begin{equation}\label{fe-19}
	(-\Delta)^{s}u(x)=u^{p}(x)+t\geq C_{1}u(x)-C_{2}+t  \quad\quad \text{in} \,\, \Omega.
\end{equation}
If $t\geq C_2$, then we get $(-\Delta)^{s}u(x)\geq C_{1}u(x)$, which contradicts the definition of $\lambda_1$. Thus, we must have $0\leq t<C_{2}$. By the a priori estimates in Corollary \ref{cor-prid}, we know that
\begin{equation}\label{fe-22}
  \|u\|_{L^{\infty}(\overline{\Omega})}\leq C(n,s,p,t,\Omega).
\end{equation}
We will show that the above a priori estimates are uniform with respect to $0\leq t<C_{2}$, i.e., for $0\leq t<C_{2}$,
\begin{equation}\label{fe-23}
  \|u\|_{L^{\infty}(\overline{\Omega})}\leq C(n,s,p,C_{2},\Omega)=:C_{0}.
\end{equation}

\medskip

Indeed, if \eqref{fe-23} does not hold, there exist sequences $\{t_{k}\}\subset[0,C_{2})$, $\{x^{k}\}\subset\Omega$ and $\{u_{k}\}$ satisfying
\begin{equation}\label{fe-24}\\\begin{cases}
		(-\Delta)^{s}u_{k}(x)=u_{k}^{p}(x)+t_{k}, \,\,\,\,\,\,\,\,\, u_{k}(x)>0, \,\,\,\,\,\,\,\,\,\, &x\in\Omega, \\
		u_{k}(x)=0, \,\,\,\,\,\,\,\,\,\,\,\, &x\in\mathbb R^n\setminus\Omega,
\end{cases}\end{equation}
but $m_{k}:=u_{k}(x^{k})=\|u_{k}\|_{L^{\infty}(\overline{\Omega})}\rightarrow+\infty$ as $k\rightarrow+\infty$. For $x\in\Omega_{k}:=\{x\in\mathbb{R}^{n}\,|\,\lambda_{k}x+x^{k}\in\Omega\}$, we define $v_{k}(x):=\frac{1}{m_{k}}u_{k}(\lambda_{k}x+x^{k})$ with $\lambda_{k}:=m^{\frac{1-p}{2s}}_{k}\rightarrow0$ as $k\rightarrow+\infty$. Then $v_{k}(x)$ satisfies $\|v_{k}\|_{L^{\infty}(\overline{\Omega_{k}})}=v_{k}(0)=1$ and
\begin{equation}\label{fe-25}
(-\Delta)^{s}v_{k}(x)=v^{p}_{k}(x)+\frac{t_{k}}{m^{p}_{k}}
\end{equation}
for any $x\in\Omega_{k}$. Since $0\leq t<C_{2}$ and $m_{k}\rightarrow+\infty$, note that $\Omega$ is a BCB domain such that the boundary H\"{o}lder estimates for $(-\Delta)^{s}$ ($0<s\leq1$) holds uniformly on the scaling domain $\Omega_{\rho}$ w.r.t. the scale $\rho\in(0,1]$, by completely similar blowing-up methods as in the proofs of Theorems \ref{prid} and \ref{prid2} in subsections 4.1 and 4.2, we can also derive a contradiction from the Liouville theorem for fractional and second order Lane-Emden equation $(-\Delta)^{s}v(x)=v^{p}(x)$ in $\mathbb{R}^{n}$ (c.f. e.g. \cite{CLL,GS}, also contained in our Theorems \ref{pde-unbd}, \ref{pde-unbd-a}, \ref{pde-unbd-2}--\ref{pde-bd-2} and \ref{f2PDE}) and for Dirichlet problem of Lane-Emden equation $(-\Delta)^{s}v(x)=v^{p}(x)$ in cones in our Theorems \ref{pde-unbd}, \ref{pde-unbd-a}, \ref{pde-unbd-2}--\ref{pde-bd-2}, \ref{f2PDE} and Corollary \ref{cone-cor}. Therefore, the uniform estimates \eqref{fe-23} must hold.

\medskip

Now we let $R:=C_{0}+\rho$ and $U:=B_{C_{0}+\rho}(0)\cap P$, then \eqref{fe-23} implies
\begin{equation}\label{fe-28}
	\|u\|_{L^{\infty}(\overline{\Omega})}\leq C_{0}<C_{0}+\rho,
\end{equation}
which contradicts $u\in\partial U$. This implies that
\begin{equation}\label{4-29}
	u-K_s(u)\neq t\eta_s
\end{equation}
for any $t\geq0$ and $u\in\partial U$ with $U=B_{C_{0}+\rho}(0)\cap P$.

\medskip

From Theorem \ref{L-S}, we deduce that there exists a $u\in\overline{\big(B_{C_{0}+\rho}(0)\cap P\big)\setminus B_{\rho}(0)}$ satisfies
\begin{equation}\label{4-30}
	u=K_s(u),
\end{equation}
and hence $\rho\leq\|u\|_{L^{\infty}(\overline{\Omega})}\leq C_{0}+\rho$ solves the Dirichlet problem
\begin{equation}\label{fe-31}\\\begin{cases}
		(-\Delta)^{s}u(x)=u^{p}(x), \,\,\,\,\,\,\, u(x)>0, \,\,\,\,\,\,\,\,\, &x\in\Omega, \\
		u(x)=0, \,\,\,\,\,\,\,\,\,\,\,\, &x\in\mathbb R^n\setminus\Omega.
\end{cases}\end{equation}
By regularity theory, we can see that $u\in C^{\infty}(\Omega)\cap C^{0}(\overline{\Omega})$. This concludes our proof of Theorem \ref{exisd}.
\end{proof}

\subsection{Proof of Theorem \ref{exisn}}
\begin{proof}[Proof of Theorem \ref{exisn}]

In this subsection, by applying the a priori estimates in Corollary \ref{cor-n} and the Leray-Schauder fixed point theorem in Theorem \ref{L-S}, we will prove the existence of positive solutions to the higher order Lane-Emden equations \eqref{Navier} on bounded BCB domain $\Omega$ such that the limiting cone is $\frac{1}{2^{k}}$-space $\mathcal{C}_{P,S^{n-1}_{2^{-k}}}$ with Navier boundary conditions.

\medskip

Now we let
\begin{equation}\label{4-1}
  X:=C^{0}(\overline{\Omega}) \quad\quad \text{and} \quad\quad P:=\{u\in X \,|\, u\geq0\}.
\end{equation}
For $s\in\mathbb{Z}^{+}$ such that $2s\leq n$, define
\begin{equation}\label{4-2}
  K_{s}(u)(x):=\int_{\Omega}G^{1}_{\Omega}(x,y^{s})\int_{\Omega}G^{1}_{\Omega}(y^{s},y^{s-1})\int_{\Omega}\cdots\int_{\Omega}G^{1}_{\Omega}(y^{2},y^{1})u^{p}(y^{1})
  \mathrm{d}y^{1}\mathrm{d}y^{2}\cdots \mathrm{d}y^{s},
\end{equation}
where $G^{1}_{\Omega}(x,y)$ is the Green's function for $-\Delta$ with Dirichlet boundary condition in $\Omega$. Suppose $u\in C^{0}(\overline{\Omega})$ is a fixed point of $K_{s}$, i.e., $u=K_{s}u$, then it is easy to see that $u\in C^{2s}(\Omega)$ satisfies $(-\Delta)^{i}u\in C(\overline{\Omega})$ ($\forall \, i=0,\cdots,s-1$) and solves the Navier problem
\begin{equation}\label{4-3}\\\begin{cases}
(-\Delta)^{s}u(x)=u^{p}(x) \,\,\,\,\,\,\,\,\,\, \text{in} \,\,\, \Omega, \\
u(x)=-\Delta u(x)=\cdots=(-\Delta)^{s-1}u(x)=0 \,\,\,\,\,\,\,\, \text{on} \,\,\, \partial\Omega.
\end{cases}\end{equation}

\medskip

Our goal is to show the existence of a fixed point for $K_{s}$ in $P\setminus B_{\rho}(0)$ for some $\rho>0$ (to be determined later) by using Theorem \ref{L-S}. To this end, we need to verify the two conditions (i) and (ii) in Theorem \ref{L-S} separately.

\medskip

\emph{(i)} First, we show that there exists $\rho>0$ such that, for any $u\in\partial B_{\rho}(0)\cap P$ and $0\leq\lambda<1$,
\begin{equation}\label{4-4}
  u-\lambda K_{s}(u)\neq0.
\end{equation}
For any $x\in\overline{\Omega}$, it holds that
\begin{eqnarray}\label{4-5}
 \nonumber |K_{s}(u)(x)|&=&\left|\int_{\Omega}G^{1}_{\Omega}(x,y^{s})\int_{\Omega}G^{1}_{\Omega}(y^{s},y^{s-1})\cdots\int_{\Omega}G^{1}_{\Omega}(y^{2},y^{1})u^{p}(y^{1})
  \mathrm{d}y^{1}\cdots \mathrm{d}y^{s}\right| \\
  &\leq& \int_{\Omega}G^{1}_{\Omega}(x,y^{s})\int_{\Omega}G^{1}_{\Omega}(y^{s},y^{s-1})\cdots\int_{\Omega}G^{1}_{\Omega}(y^{2},y^{1})\mathrm{d}y^{1}\cdots \mathrm{d}y^{s}\cdot\|u\|^{p}_{C^{0}(\overline{\Omega})} \\
 \nonumber &\leq& \rho^{p-1}\left\|\int_{\Omega}G^{1}_{\Omega}(x,y)\mathrm{d}y\right\|^{s}_{C^{0}(\overline{\Omega})}\cdot\|u\|_{C^{0}(\overline{\Omega})}.
\end{eqnarray}
Let $h(x):=\int_{\Omega}G^{1}_{\Omega}(x,y)\mathrm{d}y$, then it solves
\begin{equation}\label{4-6}\\\begin{cases}
-\Delta_{x}h(x)=1 \,\,\,\,\,\,\,\,\,\, \text{in} \,\,\, \Omega, \\
h(x)=0 \,\,\,\,\,\,\,\, \text{on} \,\,\, \partial\Omega.
\end{cases}\end{equation}
For a fixed point $x^{0}\in\Omega$, we define the function
\begin{equation}\label{4-7}
  \zeta(x):=\frac{(diam\,\Omega)^{2}}{2n}\left(1-\frac{|x-x^{0}|^{2}}{(diam\,\Omega)^{2}}\right)_{+},
\end{equation}
then it satisfies
\begin{equation}\label{4-8}\\\begin{cases}
-\Delta_{x}\zeta(x)=1 \,\,\,\,\,\,\,\,\,\, \text{in} \,\,\, \Omega, \\
\zeta(x)>0 \,\,\,\,\,\,\,\, \text{on} \,\,\, \partial\Omega.
\end{cases}\end{equation}
By maximum principle, we get
\begin{equation}\label{4-9}
  0\leq h(x)<\zeta(x)\leq\frac{(diam\,\Omega)^{2}}{2n}, \quad\quad \forall \,\, x\in\overline{\Omega}.
\end{equation}
Therefore, we infer from \eqref{4-5} and \eqref{4-9} that
\begin{equation}\label{4-10}
  \|K_{s}(u)\|_{C^{0}(\overline{\Omega})}<\rho^{p-1}\frac{(diam\,\Omega)^{2s}}{(2n)^{s}}\|u\|_{C^{0}(\overline{\Omega})}=\|u\|_{C^{0}(\overline{\Omega})}
\end{equation}
if we take
\begin{equation}\label{4-11}
  \rho=\left(\frac{\sqrt{2n}}{diam\,\Omega}\right)^{\frac{2s}{p-1}}>0.
\end{equation}
This implies that $u\neq\lambda K_{s}(u)$ for any $u\in\partial B_{\rho}(0)\cap P$ and $0\leq\lambda<1$.

\medskip

\emph{(ii)} Now let $\eta_{s}\in C^{2s}(\Omega)$ such that $(-\Delta)^{i}\eta_{s}\in C(\overline{\Omega})$ ($\forall\, i=0,\cdots,s-1$) be the unique positive solution of
\begin{equation}\label{4-12}\\\begin{cases}
(-\Delta)^{s}\eta_{s}(x)=1, \,\,\,\,\,\,\,\,\,\, \,\,\, x\in\Omega, \\
\eta_{s}(x)=-\Delta\eta_{s}(x)=\cdots=(-\Delta)^{s-1}\eta_{s}(x)=0, \,\,\,\,\,\,\,\,\,\,\,\, x\in\partial\Omega.
\end{cases}\end{equation}
We will show that
\begin{equation}\label{4-13}
  u-K_{s}(u)\neq t\eta_{s}, \quad\quad \forall \,\, t\geq0, \quad \forall u\in\partial U,
\end{equation}
where $U:=B_{R}(0)\cap P$ with sufficiently large $R>\rho$ (to be determined later). First, observe that, for any $u\in\overline{U}$,
\begin{equation}\label{4-14}
  \left\|(-\Delta)^{s}K_{s}(u)\right\|_{C^{0}(\overline{\Omega})}=\|u\|^{p}_{C^{0}(\overline{\Omega})}\leq R^{p},
\end{equation}
and hence
\begin{equation}\label{4-15}
  \|K_{s}(u)\|_{C^{0,\gamma}(\Omega)}\leq CR^{p}, \quad\quad \forall \,\, 0<\gamma<1,
\end{equation}
thus $K:\, \overline{U}\rightarrow P$ is compact.

\medskip

We use contradiction arguments to prove \eqref{4-13}. Suppose on the contrary that, there exists some $u\in\partial U$ and $t\geq0$ such that
\begin{equation}\label{4-16}
  u-K_{s}(u)=t\eta_{s},
\end{equation}
then one has $\|u\|_{C^{0}(\overline{\Omega})}=R>\rho>0$, $u\in C^{2s}(\Omega)$ satisfies $(-\Delta)^{i}u\in C(\overline{\Omega})$ ($\forall\, i=0,\cdots,s-1$) and the Navier problem
\begin{equation}\label{4-17}\\\begin{cases}
(-\Delta)^{s}u(x)=u^{p}(x)+t, \,\,\,\,\,\,\,\,\, u(x)>0, \,\,\,\,\,\,\,\,\,\, x\in\Omega, \\
u(x)=-\Delta u(x)=\cdots=(-\Delta)^{s-1}u(x)=0, \,\,\,\,\,\,\,\,\,\,\,\, x\in\partial\Omega.
\end{cases}\end{equation}
Choose a constant $C_{1}>\lambda_{1}$, where $\lambda_{1}$ is the first eigenvalue for $(-\Delta)^{s}$ in $\Omega$ with Navier boundary condition and let $\phi$ be the corresponding eigenfunction. Since $u(x)>0$ in $\Omega$ and $p>1$, it is easy to see that, there exists another constant $C_{2}>0$ (say, take $C_{2}=C_{1}^{\frac{p}{p-1}}$) such that
\begin{equation}\label{4-18}
  u^{p}(x)\geq C_{1}u(x)-C_{2}.
\end{equation}
If $t\geq C_{2}$, then we have
\begin{equation}\label{4-19}
  (-\Delta)^{s}u(x)=u^{p}(x)+t\geq C_{1}u(x)-C_{2}+t\geq C_{1}u(x) \quad\quad \text{in} \,\, \Omega.
\end{equation}
Multiplying both side of \eqref{4-19} by the eigenfunction $\phi(x)$, and integrating by parts yield
\begin{eqnarray}\label{4-20}
  C_{1}\int_{\Omega}u(x)\phi(x)\mathrm{d}x&\leq&\int_{\Omega}(-\Delta)^{s}u(x)\cdot\phi(x)\mathrm{d}x=\int_{\Omega}u(x)\cdot(-\Delta)^{s}\phi(x)\mathrm{d}x \\
 \nonumber &=&\lambda_{1}\int_{\Omega}u(x)\phi(x)\mathrm{d}x,
\end{eqnarray}
and hence
\begin{equation}\label{4-21}
  0<(C_{1}-\lambda_{1})\int_{\Omega}u(x)\phi(x)\mathrm{d}x\leq0,
\end{equation}
which is absurd. Thus we must have $0\leq t<C_{2}$. By the a priori estimates in Corollary \ref{cor-n}, we know that
\begin{equation}\label{4-22}
  \|u\|_{L^{\infty}(\overline{\Omega})}\leq C(n,s,p,t,\Omega).
\end{equation}
We can actually show that the above a priori estimates are uniform with respect to $0\leq t<C_{2}$, i.e., for $0\leq t<C_{2}$,
\begin{equation}\label{4-23}
  \|u\|_{L^{\infty}(\overline{\Omega})}\leq C(n,s,p,C_{2},\Omega)=:C_{0}.
\end{equation}
Similar to the uniform estimate \eqref{fe-23} in the proof of Theorem \ref{exisd} in subsection 5.1, note that $\Omega$ is a bounded BCB domain such that the boundary H\"{o}lder estimates for $-\Delta$ holds uniformly on the scaling domain $\Omega_{\rho}$ w.r.t. the scale $\rho\in(0,1]$, through entirely similar blowing-up methods as in the proof of Theorem \ref{prin} in subsection 4.3, we can also derive a contradiction from the Liouville theorem for higher order Lane-Emden equation $(-\Delta)^{s}v(x)=v^{p}(x)$ in $\mathbb{R}^{n}$ (c.f. \cite{CDQ0,CDQ,DQ0,Lin,WX} and the references therein, see also Corollary \ref{cor-g}) and for Navier problem of Lane-Emden equation $(-\Delta)^{s}v(x)=v^{p}(x)$ in $\frac{1}{2^{k}}$-space $\mathcal{C}_{P,S^{n-1}_{2^{-k}}}$ with super poly-harmonic property $(-\Delta)^{i}v(x)\geq0$ in our Theorem \ref{NPDE}. Therefore, the uniform estimates \eqref{4-23} must hold.

\medskip

Now we let $R:=C_{0}+\rho$ and $U:=B_{C_{0}+\rho}(0)\cap P$, then \eqref{4-23} implies
\begin{equation}\label{4-28}
  \|u\|_{L^{\infty}(\overline{\Omega})}\leq C_{0}<C_{0}+\rho,
\end{equation}
which contradicts $u\in\partial U$. This implies that
\begin{equation}\label{4-29}
  u-K(u)\neq t\eta_{s}
\end{equation}
for any $t\geq0$ and $u\in\partial U$ with $U=B_{C_{0}+\rho}(0)\cap P$.

\medskip

From Theorem \ref{L-S}, we deduce that there exists a $u\in\overline{\big(B_{C_{0}+\rho}(0)\cap P\big)\setminus B_{\rho}(0)}$ satisfies
\begin{equation}\label{4-30}
  u=K_{s}(u),
\end{equation}
and hence $\rho\leq\|u\|_{L^{\infty}(\overline{\Omega})}\leq C_{0}+\rho$ solves the higher order Navier problem
\begin{equation}\label{4-31}\\\begin{cases}
(-\Delta)^{s}u(x)=u^{p}(x), \,\,\,\,\,\,\, u(x)>0, \,\,\,\,\,\,\,\,\, x\in\Omega, \\
u(x)=-\Delta u(x)=\cdots=(-\Delta)^{s-1}u(x)=0, \,\,\,\,\,\,\,\,\,\,\,\, x\in\partial\Omega.
\end{cases}\end{equation}
By regularity theory, we can see that $u\in C^{2s}(\Omega)$ and $(-\Delta)^{i}u\in C(\overline{\Omega})$ ($\forall\, i=0,\cdots,s-1$). This concludes our proof of Theorem \ref{exisn}.
\end{proof}

\end{document}